\numberwithin{equation}{section}
\newtheoremstyle{thmlemcorr}{10pt}{10pt}{\itshape}{}{\bfseries}{.}{10pt}{{\thmname{#1}\thmnumber{ #2}\thmnote{ (#3)}}}
\newtheoremstyle{thmlemcorr*}{10pt}{10pt}{\itshape}{}{\bfseries}{.}\newline{{\thmname{#1}\thmnumber{ #2}\thmnote{ (#3)}}}
\newtheoremstyle{remexample}{10pt}{10pt}{}{}{\bfseries}{.}{10pt}{{\thmname{#1}\thmnumber{ #2}\thmnote{ (#3)}}}
\theoremstyle{thmlemcorr}
\newtheorem{theorem}{Theorem}
\numberwithin{theorem}{section}
\newtheorem{lemma}[theorem]{Lemma}
\newtheorem{proposition}[theorem]{Proposition}
\newtheorem{definition}[theorem]{Definition}
\theoremstyle{thmlemcorr*}
\newtheorem{theorem*}{Theorem}
\newtheorem{lemma*}[theorem]{Lemma}
\newtheorem{corollary*}[theorem]{Corollary}
\newtheorem{proposition*}[theorem]{Proposition}
\newtheorem{problem*}[theorem]{Problem}
\newtheorem{conjecture*}[theorem]{Conjecture}
\newtheorem{definition*}[theorem]{Definition}
\theoremstyle{remexample}
\newtheorem{remark}[theorem]{Remark}
\newcommand{\Crm}{\mathrm{C}}
\newcommand{\Irm}{\mathrm{I}}
\newcommand{\Lrm}{\mathrm{L}}
\newcommand{\Nrm}{\mathrm{N}}
\newcommand{\Prm}{\mathrm{P}}
\newcommand{\Rrm}{\mathrm{R}}
\newcommand{\Trm}{\mathrm{T}}
\newcommand{\Wrm}{\mathrm{W}}
\newcommand{\Bcal}{\mathcal{B}}
\newcommand{\Dcal}{\mathcal{D}}
\newcommand{\Ecal}{\mathcal{E}}
\newcommand{\Hcal}{\mathcal{H}}
\newcommand{\Lcal}{\mathcal{L}}
\newcommand{\Mcal}{\mathcal{M}}
\newcommand{\Qcal}{\mathcal{Q}}
\newcommand{\Wcal}{\mathcal{W}}
\newcommand{\Fbf}{\mathbf{F}}
\newcommand{\Mbf}{\mathbf{M}}
\newcommand{\Ebb}{\mathbb{E}}
\DeclareMathOperator*{\esssup}{ess\,sup}
\DeclareMathOperator{\Id}{Id}
\DeclareMathOperator*{\wslim}{w*-lim}
\DeclareMathOperator{\supmod}{sup}
\DeclareMathOperator{\diverg}{div}
\DeclareMathOperator{\curl}{curl}
\DeclareMathOperator{\dist}{dist}
\DeclareMathOperator{\tr}{tr}
\DeclareMathOperator{\spn}{span}
\DeclareMathOperator{\supp}{supp}
\DeclareMathOperator{\proj}{proj}
\DeclareMathOperator{\Wedge}{{\textstyle\bigwedge}}
\newcommand{\ee}{\mathrm{e}}
\newcommand{\setb}[2]{\bigl\{\, #1 \ \ \textup{\textbf{:}}\ \ #2 \,\bigr\}}
\newcommand{\setB}[2]{\Bigl\{\, #1 \ \ \textup{\textbf{:}}\ \ #2 \,\Bigr\}}
\newcommand{\setBB}[2]{\biggl\{\, #1 \ \ \textup{\textbf{:}}\ \ #2 \,\biggr\}}
\newcommand{\norm}[1]{\|#1\|}
\newcommand{\abs}[1]{|#1|}
\newcommand{\absb}[1]{\bigl|#1\bigr|}
\newcommand{\absBB}[1]{\biggl|#1\biggr|}
\newcommand{\altnorm}[1]{{\left|\kern-0.25ex\left|\kern-0.25ex\left| #1 \right|\kern-0.25ex\right|\kern-0.25ex\right|}}
\newcommand{\spr}[1]{( #1 )}
\newcommand{\dpr}[1]{\langle #1 \rangle}
\newcommand{\dprb}[1]{\bigl\langle #1 \bigr\rangle}
\newcommand{\dbr}[1]{\llbracket #1 \rrbracket}
\newcommand{\cl}[1]{\overline{#1}}
\newcommand{\di}{\mathrm{d}}
\newcommand{\dd}{\;\mathrm{d}}
\newcommand{\DD}{\mathrm{D}}
\newcommand{\N}{\mathbb{N}}
\newcommand{\R}{\mathbb{R}}
\newcommand{\Z}{\mathbb{Z}}
\newcommand{\loc}{\mathrm{loc}}
\newcommand{\sym}{\mathrm{sym}}
\newcommand{\skw}{\mathrm{skew}}
\newcommand{\TV}{\mathrm{TV}}
\newcommand{\toweak}{\rightharpoonup}
\newcommand{\toweakstar}{\overset{*}\rightharpoonup}
\newcommand{\toup}{\uparrow}
\newcommand{\todown}{\downarrow}
\newcommand{\embed}{\hookrightarrow}
\newcommand{\cembed}{\overset{c}{\embed}}
\newcommand{\hodge}{\star}
\newcommand{\SmallO}{\mathrm{\textup{o}}}
\newcommand{\sbullet}{\begin{picture}(1,1)(-0.5,-2.5)\circle*{2}\end{picture}}
\newcommand{\frarg}{\,\sbullet\,}
\newcommand{\BV}{\mathrm{BV}}
\newcommand{\toS}{\overset{s}{\to}}
\newcommand{\eps}{\epsilon}
\newcommand{\ttilde}[1]{\tilde{\raisebox{0pt}[0.85\height]{$\tilde{#1}$}}}
\newcommand{\tv}[1]{\norm{#1}}
\newcommand{\term}[1]{\textbf{#1}}
\newcommand{\proofstep}[1]{\medskip\textit{#1}}
\newcommand{\Diss}{\mathrm{Diss}}
\newcommand{\Disl}{\mathrm{Disl}}
\newcommand{\DislF}{\overline{\mathrm{Disl}}}
\newcommand{\Slip}{\mathrm{Slip}}
\newcommand{\SlipF}{\overline{\mathrm{Slip}}}
\newcommand{\Rd}{\bm{R}}
\newcommand{\Sd}{\bm{S}}
\newcommand{\Td}{\bm{T}}
\newcommand{\ff}{\gg}
\newcommand{\tbf}{\mathbf{t}}
\newcommand{\pbf}{\mathbf{p}}
\newcommand{\Lip}{\mathrm{Lip}}
\DeclareMathOperator{\Tan}{T}
\DeclareMathOperator{\Var}{Var}
\DeclareMathOperator{\Argmin}{Argmin}
\DeclareMathOperator{\free}{fr}
\newcounter{assumption}
\newcommand{\nextas}[1]{%
	\refstepcounter{assumption}%
	\protected@write \@auxout{}{\string\newlabel{#1}{{(A\theassumption)}{\thepage}{(A\theassumption)}{#1}{}}}%
	\hypertarget{#1}{(A\theassumption)}%
}
\newcommand{\nextasnamed}[2]{%
	\refstepcounter{assumption}%
	\protected@write \@auxout{}{\string\newlabel{#1}{{(#2)}{\thepage}{(#2)}{#1}{}}}%
	\hypertarget{#1}{(#2)}%
}
\def\Xint#1{\mathchoice 
	{\XXint\displaystyle\textstyle{#1}}%
	{\XXint\textstyle\scriptstyle{#1}}%
	{\XXint\scriptstyle\scriptscriptstyle{#1}}%
	{\XXint\scriptscriptstyle\scriptscriptstyle{#1}}%
	\!\int} 
\def\XXint#1#2#3{{\setbox0=\hbox{$#1{#2#3}{\int}$} 
		\vcenter{\hbox{$#2#3$}}\kern-.5\wd0}} 
\def\dashint{\,\Xint-}
\newcommand{\restrict}{\begin{picture}(10,8)\put(2,0){\line(0,1){7}}\put(1.8,0){\line(1,0){7}}\end{picture}}
\newcommand{\intprod}{\begin{picture}(10,8)\put(9,0){\line(0,1){7}}\put(1.8,0){\line(1,0){7}}\end{picture}}
\renewcommand{\eps}{\varepsilon}
\renewcommand{\epsilon}{\varepsilon}
\renewcommand{\phi}{\varphi}
\renewcommand{\tilde}{\widetilde}
\renewcommand{\hat}{\widehat}
\renewcommand{\bar}{\overline}
\newcommand{\mnote}[1]{}
\newcommand{\note}[1]{}
\newcommand{\fil}[1]{#1}
\begin{document}


\title[Homogenization of elasto-plastic evolutions driven by the flow of dislocations]{Homogenization of elasto-plastic evolutions driven by the flow of dislocations}


\author{Paolo Bonicatto}
\address{Universit\`a di Trento, Dipartimento di Matematica, Via Sommarive 14, 38123 Trento, Italy.}
\email{paolo.bonicatto@unitn.it}

\author{Filip Rindler}
\address{Mathematics Institute, University of Warwick, Coventry CV4 7AL, United Kingdom.}
\email{F.Rindler@warwick.ac.uk}




\maketitle


\begin{abstract}

Starting from a prototypical model of elasto-plasticity in the small-strain and quasi-static setting, where the evolution of the plastic distortion is driven exclusively by the motion of discrete dislocations, this work performs a rigorous homogenization procedure to a model involving continuously-distributed dislocation fields. Our main result shows the existence of rate-independent evolutions driven by the motion of dislocation fields, obtained as limits of discrete dislocation evolutions. For all notions of solutions we employ the recent concepts of space-time integral and normal currents, which is richer than the classical approach using the Kr\"{o}ner dislocation density tensor. The key technical challenge is to find discrete dislocation evolutions approximating a given dislocation field evolution, which requires a careful recovery construction of space-time slip trajectories and associated displacements. These methods enable one to transfer the properties, most importantly the quasi-static stability, from the discrete to the field regime.

\vspace{4pt}
	
	
\noindent\textsc{Keywords:} Dislocations, elasto-plasticity, discrete dislocation dynamics, dislocation fields, small-strain theory, linearized elasticity, geometrically linear plasticity.
	
\vspace{4pt}

\noindent\textsc{Date:} \today{}.
\end{abstract}


\section{Introduction}
			
It has long been known that in crystalline materials the motion of dislocations, that is, lines of defects in the crystal lattice, is the principal mechanism behind macroscopic plastic deformation~\cite{AndersonHirthLothe17book,HullBacon11book,HanReddy13book}. The amount of dislocations in a typical material specimen is very large, with the total length of all dislocation lines in well-annealed metals being on the order of~$10^7$ to $10^8$ cm per cm$^3$~\cite{HullBacon11book}. Consequently, a \emph{field description} is indicated in order to abstract away from individual lines.

How to formulate such a field description that is rich enough to serve as the foundation of a convincing theory of macroscopic elasto-plasticity in crystalline materials, is an old and important goal of theoretical solid mechanics. Indeed, such an approach is much closer to the underlying physics than the various proposals for phenomenological internal variables (such as backstresses affecting the critical resolved shear stress in von-Mises or Tresca plasticity~\cite{HanReddy13book}). However, despite much work in the field over the last century (see below for some historical background), identifying a suitable representation of dislocation fields that has all the required properties has proved to be a very challenging problem, and no universally accepted theory has emerged to date.

The present work builds on the paradigm that a suitable description of dislocation fields and their associated slip, i.e., the trajectories of the dislocations as time progresses, should be obtained by taking a \emph{homogenization limit} of the microscopic kinematics, dynamics, and energetics, passing from discrete dislocation lines (of vanishing weight) to a dislocation line-field. More precisely, we start from a linearized version of the (semi-)discrete model introduced in~\cite{HudsonRindler22}, where the dislocation lines are individually identified, but the crystal is still treated as a continuum (unlike in fully discrete models like~\cite{Hudson17,HudsonOrtner14}).

Our main contribution is to show that in the setting of small-strain, geometrically linear plasticity in single crystals (without the effects of grain boundaries) and with rate-independent dynamics, such a homogenization procedure from discrete dislocation lines to dislocation fields can indeed be carried out in the prototypical model proposed in the present work. 

It is a crucial feature of our approach that the dislocation evolutions are modelled using the space-time approach, which was introduced in a series of recent works~\cite{HudsonRindler22,Rindler23,Rindler21b?,BonicattoDelNinRindler22?}. As will be explained below, the more classical approach via the Kr\"{o}ner dislocation density tensor is not rich enough to account for all effects that are of relevance. On the other hand, our space-tine internal variables, from which the Kr\"{o}ner dislocation density can be recovered by integrating out the additional information, contain just enough additional information to make the discrete-to-field limit passage possible.

Technically, we will show the existence of \emph{energetic solutions}~\cite{MielkeTheil99,MielkeTheil04,MielkeTheilLevitas02} to the limit evolutionary system (involving dislocation fields) that are obtained as limits of discrete evolutions (involving individual dislocation lines). Thus, the limit model of elasto-plasticity driven by dislocation fields may be considered to be well-justified from microscopic principles.

In the remainder of this introduction we will describe the model on which our analysis is built and discuss some of its noteworthy features. Rigorous definitions are to follow in Section~\ref{sc:notation}, which recalls notation and preliminary results, and Section~\ref{sc:disl}, which presents in detail our description of dislocations and their motion. Section~\ref{sc:results} contains precise statements of our assumptions and results, which are then proved in Sections~\ref{sc:proof_eps} and~\ref{sc:proof_field}.

\subsection{Kinematics and energetics}

We model a (single) crystal specimen as occupying a bounded open and \fil{simply connected} Lipschitz domain $\Omega \subset \R^3$ at the initial time (this configuration is not necessarily stress-free due to the presence of dislocations, as we will see below). A map $u = u(t) \colon \Omega \to \R^3$ describes the total displacement of the body at time $t$.

The foundational relation in linearized elasto-plasticity is the geometrically linear splitting of the displacement gradient~\cite{Kroner60,LeeLiu67FSEP,Lee69EPDF,GreenNaghdi71,CaseyNaghdi80,GurtinFriedAnand10book,ReinaConti14,KupfermanMaor15, ReinaContiSchlomerkemper16,ReinaDjodomOrtizConti18,EpsteinKupfermanMaor20},
\[
  \nabla u = e + p,
\]
where $e, p \colon \Omega \to \R^{3\times 3}$ are the elastic and plastic distortion fields, respectively. This is obtained by linearizing the multiplicative Kr\"{o}ner decomposition $\nabla y = EP$ for the deformation $y(x) := x + u(x)$.

For the elastic deformation energy we assume the standard form~\cite{ArizaOrtiz05,HanReddy13book,GarroniLeoniPonsiglione10,ContiGarroniOrtiz15,ContiGarroniMassaccesi15}
\[
  \Wcal(u,p) := \frac12 \int_\Omega \abs{e}_\Ebb^2 \dd x
  = \frac12 \int_\Omega \abs{\nabla u - p}_\Ebb^2 \dd x,
\]
where $\Ebb$ is a symmetric, \fil{positive-definite (on symmetric matrices)} fourth-order elasticity tensor and $\abs{A}_\Ebb^2 := A \!:\! (\Ebb A)$ is the associated quadratic form. Clearly, the natural coercivity of this elastic deformation energy functional gives an $\Lrm^2$-bound for the symmetric part of $e$ only.

The elasticity of the specimen will lead to $u$ being minimized over all candidate displacements. Computing the Euler--Lagrange equation and noting that $\curl p$ cannot be cancelled by any \emph{gradient} $\nabla u$, one obtains the following PDE system for the \emph{geometrically-necessary distortion} $\beta$ due to the presence of dislocations:
\[
  \left\{ \begin{aligned}
    - \diverg \Ebb \beta &= 0 &&\text{in $\Omega$,} \\
    \curl \beta &= - \curl p  &&\text{in $\Omega$,} \\
    n^T \Ebb \beta &= 0       &&\text{on $\partial\Omega$.}
  \end{aligned} \right.
\]
Indeed, the elastic strain field $\beta$ is the optimal one taking into account that no full relaxation to equilibrium is possible due to the dislocations, which are quantified through $\curl p$ (see~\cite{ContiGarroniOrtiz15,ContiGarroniMassaccesi15,Ginster19,Ginster19b,ContiGarroni21,GarroniMarzianiScala21,ContiGarroniMarziani23} for further details). The $\Ebb$-normal boundary condition is chosen to signify that the stress field has to end at the boundary of the specimen (as is the case for Cauchy's stress theorem, cf.~\cite[Section~2.3]{Ciarlet88book}).

Taking into account also deformation that is not caused by dislocations, but by external or internal forces, we now introduce the notion of the \emph{free (non-dislocation) displacement} as the remainder
\[
  \free[\nabla u - p] := \nabla u - p - \beta,
\]
where $\beta$ is as above. Note that the free displacement indeed only depends on the \emph{difference} of $\nabla u$ and $p$ (since in the above equation for $\beta$ we could replace the second line by $\curl \beta = \curl (\nabla u - p)$). Clearly, $\free[\nabla u - p]$ is curl-free, hence the gradient of some potential. Moreover,
\[
  \nabla u - p = \free[\nabla u - p] + \beta
\]
is the (unique) Helmholtz decomposition with respect to the scalar product $(A,B)_\Ebb := A \!:\! \Ebb B$ (which is only positive definite on symmetric matrices, but behaves like a proper scalar product here due to Korn's inequality).

It is well-known that $\beta \in \Lrm^{3/2}$ is the maximum regularity one can expect when $\curl p$ is a measure; see~\cite{BourgainBrezis04,ContiGarroniOrtiz15,ContiGarroniMassaccesi15,Ginster19,Ginster19b,ContiGarroni21,GarroniMarzianiScala21,ContiGarroniMarziani23}. Furthermore, in general there is no mechanism to ensure more than BV-regularity for $u$ (we make this coercivity precise in Lemma~\ref{lem:E_coerc}). However, even in this low-regularity situation the finiteness of the energy for admissible configurations yields that $\nabla u - p \in \Lrm^2$. Then, also the parts of the above Helmholtz decomposition can be understood in $\Lrm^2$, whereby
\begin{align*}
  \frac12 \int_\Omega \abs{\free[\nabla u - p] + \beta}_{\Ebb}^2 \dd x
  = \frac12 \int_\Omega \abs{\free[\nabla u - p]}_\Ebb^2 \dd x
  	+ \frac12 \int_\Omega \abs{\beta}_\Ebb^2 \dd x,
\end{align*}
where we have used the $(\frarg,\frarg)_\Ebb$-orthogonality of $\free[\nabla u - p]$ and $\beta$.

The series of works~\cite{GarroniLeoniPonsiglione10,ContiGarroniOrtiz15,ContiGarroniMassaccesi15,Ginster19,Ginster19b,ContiGarroni21,GarroniMarzianiScala21,ContiGarroniMarziani23} show (under various technical assumptions) that in the limit of vanishing lattice parameter the second term is well-approximated by a constant (lattice-spacing dependent) multiple of an anisotropic mass $\Mbf_{\psi}(\curl p)$ if $\curl p$ is a measure concentrated on dislocation lines. Here, $\Mbf_\psi$ is the functional
\[
  \Mbf_\psi(\mu) := \int \psi \biggl( \frac{\di \mu}{\di \tv{\mu}} \biggr) \dd \tv{\mu},
\]
for $\mu$ a measure with values in $\R^3$, and $\psi \colon \R^3 \to [0,\infty)$ a continuous, convex, positively $1$-homogeneous, and strictly positive (except at $0$) line tension density. Note that if $\psi \equiv 1$, then $\Mbf_\psi(\mu) = \Mbf(\mu)$ is just the length (mass) of the dislocation lines. 

The above considerations lead us to directly consider the following total energy functional as the starting point of our analysis:
\[
\Ecal(t,u,p,\Td) := \frac12 \int_\Omega \abs{\free[Du - p]}_\Ebb^2 \dd x + h\left(\fil{\frac{1}{2}} \sum_{b \in \Bcal} \Mbf_{\psi^b}(T^b)\right) - \dprb{f(t),u}
\]
for $u \in \BV(\Omega;\R^3)$, $p \in \Mcal(\cl\Omega;\R^{3 \times 3})$ with $\sym \free[Du - p] \in \Lrm^2$, and $\Td = (T^b)_b$ a system of dislocations indexed by the Burgers vector $b \in \Bcal$ (defined in the next section). Here, we furthermore assume that $f(t) \in \Lrm^\infty(\Omega;\R^3)$ with some regularity in time, and $h \colon \R \to \R$ is \fil{increasing and equal to the identity function} in a neighbourhood of $0$ with super-quadratic growth at infinity (see below for motivation). We call the first part the \emph{elastic deformation energy} $\Wcal_e$, the second part the \emph{core energy} $\Wcal_c$, and the third part the \emph{loading}. This choice corresponds to the lattice parameter having already been sent to zero, corresponding to our modelling choice of discrete dislocation lines in a continuum crystal.

The super-quadratic growth of $h$ at infinity represents a hardening mechanism (on the level of the dislocations), whereby the energetic cost of having a large number of dislocations is high. In the mathematical analysis this assumption is necessary to obtain good coercivity properties of the energy functional (see Lemma~\ref{lem:E_coerc}).

We will later see that for all times $t$, the plastic distortion $p(t)$ is a measure with the property that also $\curl p$ is a measure. This implies strong restrictions on the singularities that can be present in $p(t)$. Roughly, $p(t)$ has the same dimensionality, rectifiability, and polar rank-one properties (i.e., the validity of Alberti's rank-one theorem~\cite{Alberti93}) as $\BV$-derivatives; see Proposition~\ref{prop:p_likeBV} for the details.

We cannot expect any regularity beyond boundedness in mass for $p$ since slips over surfaces are only representable by lower-dimensional measures. It is well-known that such effects can occur in real materials as \emph{slip lines}~\cite{HullBacon11book,AndersonHirthLothe17book}. Note, however, that these slip lines are in fact BV-derivatives, so there is no contradiction to $\sym \free[Du - p] \in \Lrm^2$, as the presence of measure parts in $p$ merely restricts the set of admissible $u$.

\subsection{Dislocations and their dynamics}

We assume our specimen to be a \emph{single} crystal (as opposed to a polycrystal, which consists of many single-crystal grains with different orientations). In single crystals there are only finitely many directions of plastic deformation, which are given via the finite set of \emph{Burgers vectors}
\[
\Bcal = \bigl\{ \pm b_1, \ldots, \pm b_m \} \subset \R^3 \setminus \{0\}.
\]

We prescribe the dislocations in the crystal separately for every Burgers vector $b \in \Bcal$ as the totality of all lines with slip (topological charge) $\pm b$, or multiples thereof. This means that following the crystal lattice vectors along a curve going around the dislocation once (a so-called \enquote{Burgers circuit}) results in a net displacement of $\pm b$ (in the lattice coordinate system); the sign is determined by the respective orientations.

In many classical and also more recent works, the Kr\"{o}ner dislocation density tensor $\alpha$ is used to describe dislocation fields~\cite{Acharya01,Acharya04,AcharyaTartar11,AroraAcharya20,BilbyBulloughSmith55,Fox66,Kondo55,Mura63a,Mura63b,Noll58,Nye53,Wang67,Willis67}. Conversely, the present theory describes dislocation systems by separate (vector-valued) divergence-free measures $T^b$, one for each Burgers vector $b$, an idea first introduced in~\cite{HudsonRindler22,Rindler21b?}. From this description via $(T^b)_b$, one may obtain the Kr\"{o}ner dislocation density tensor via the formula
\[
  \alpha = \frac12 \sum_{b \in \Bcal} b \otimes T^b.
\]

If the support of $T^b$ can always be decomposed into discrete lines (or, more precisely, $1$-rectifiable sets), we call this a \emph{discrete} dislocation model. If this is not the case, we call this a \emph{field} dislocation model. Note here that the distinction is the \emph{discreteness} of the lines since by Smirnov's theorem~\cite{Smirnov93} every divergence-free vector field can in fact be decomposed into a \enquote{field of lines}.

While the quantity $\alpha$ is sufficient to describe the plastic effects of moving dislocations, it does not furnish a complete description of the internal state of the material since not every matrix-valued measure $\alpha$ can be decomposed \emph{uniquely} into a sum as above. For the setting of discrete dislocations, where the $T^b$ are concentrated on $1$-rectifiable sets, such a unique decomposition can of course always be found, but this is no longer true for diffuse measures $T^b$: Just take any rank-two matrix $M$ that has two different decompositions into rank-one matrices and set $T^b := M \Lcal^3$.

Following the approach of~\cite{HudsonRindler22,Rindler21b?,Rindler23} we represent the movement of dislocations via their \emph{slip trajectories} in space-time. So, let $S^b = \vec{S}^b \, \tv{S^b}$ be a $2$-dimensional boundaryless integral or normal current, that is, a divergence-free measure, in $\R^{1+3}$ where $b \in \Bcal$ is a Burgers vector. We then recover the dislocations for the Burgers vector $b$ at a time $t$ via slicing and forgetting the $t$-coordinate, that is,
\[
  S^b(t) := \pbf_* (S|_t) = \pbf(\vec{S}|_t) \, \tv{S|_t} \circ \pbf^{-1},
\]
where $\pbf(t,x) := x$ and $\pbf_*$ is the associated (geometric) pushforward operator (see, e.g.,~\cite[Section~7.4.2]{KrantzParks08book}, where it is denoted $\pbf_\#$).

The space-time approach has many benefits, among them the direct definition of the \emph{(geometric) dislocation slip rate (measure)} via
\[
  \gamma^b(t,\di x) := \hodge \biggl( \frac{\pbf(\vec{S}^b)(t,x)}{\fil{\abs{\nabla^{S^b} \tbf(t,x)}}} \biggr) \, \tv{S^b(t)}(\di x),
\]
where \enquote{$\hodge$} denotes the Hodge star operation (which here transforms the $2$-vector $\pbf(\vec{S}^b)$ to an ordinary vector), and $\nabla^{S^b} \tbf$ is the projection of the gradient of $\tbf(t,x) := t$ onto the tangent space to $S^b$. Here we need of course to assume that $\nabla^{S^b} \tbf \neq 0$ to make $\gamma^b$ well-defined. This \emph{non-criticality condition} is discussed at some length in~\cite{BonicattoDelNinRindler22?}, but here it suffices to understand it to mean that there is no jump part (which may possibly be \enquote{smeared-out} in space) in the slip trajectories. 

If instead we were given a time-indexed family $(S^b(t))_t$ of slices, it would be very difficult to determine the above quantity $\gamma^b$ since $\abs{\nabla^{S^b} \tbf}$ is not computable directly from the slices. Nevertheless, the (quite nontrivial) Rademacher-type differentiability result of~\cite{BonicattoDelNinRindler22?} shows that such a formulation is essentially equivalent to the space-time approach when $S^b$ is Lipschitz-continuous in time, up to dealing with a number of regularity issues. However, from an analytical point of view, the space-time formulation is much more convenient thanks to the good compactness properties of integral and normal currents and also because it avoids having to invoke the complicated results of~\cite{BonicattoDelNinRindler22?,BonicattoDelNinRindler24}.

The collection $\Sd = (S^b)_b$ of slip trajectories now allows us to express the corresponding evolution of a plastic distortion $p$ from time $0$ to $t > 0$ via the \emph{plastic flow formula}
\[
  p_{\Sd}(t) := p + \frac12 \sum_{b \in \Bcal} b \otimes \hodge  \pbf_*( S^b \restrict [(0,t) \times\R^3]).
\]
Here, \enquote{$\hodge$} again denotes the Hodge star operator. Let us outline how this formula can be derived from the linearization of the model described in~\cite{HudsonRindler22}. Adapting that derivation to the geometrically-linear framework, we obtain the relation
\begin{equation} \label{eq:plast_flow_formula_intro}
  p_{\Sd}(t) = p + \int_0^t D(\tau) \dd \tau
\end{equation}
with $D$ the \emph{plastic drift}, given as 
\[
	D(t) := \frac12 \sum_{b \in \Bcal} b \otimes \gamma^b(t).
\]
By a direct computation, using the above (explicit) formula for the dislocation velocity $\gamma^b$ and the area formula,
\begin{align*}
  \int_0^t D(\tau)\dd \tau & = \frac12 \sum_{b \in \Bcal}  	\int_0^t b \otimes \gamma^b(\tau) \dd \tau \\ 
  & =   \frac12 \sum_{b \in \Bcal} \int_0^t  b \otimes \hodge  \Biggl[ \frac{\pbf(\vec{S}^b(\tau,\frarg)) \, }{\abs{\nabla^{S^b} \tbf(\tau,\frarg)}} \Biggr] \tv{S^b(\tau)} \dd \tau \\ 
  & = \frac12 \sum_{b \in \Bcal} b \otimes \hodge  \int_0^t  \frac{\pbf(\vec{S}^b(\tau,\frarg))}{\abs{\nabla^{S^b} \tbf(\tau,\frarg)}}  \pbf_{\#}(\tv{S^b|_\tau})\dd \tau\\ 
  & = \frac12 \sum_{b \in \Bcal} b \otimes \hodge  \pbf_* \!\left( \int_0^t \vec{S}^b \, \tv{S|_t} \; \frac{\di \tau}{\abs{\nabla^{S^b} \tbf}} \right) \\
  & = \frac12 \sum_{b \in \Bcal} b \otimes \hodge  \pbf_*( S^b \restrict [(0,t) \times\R^3]). 
\end{align*}
Here, $\pbf_{\#}(\tv{S^b|_\tau}) := \tv{S^b|_\tau} \circ \pbf^{-1}$ is the measure-theoretic pushforward of $\tv{S^b|_\tau}$ under $\pbf$.

In the general case, where $\gamma^b$ is not always well-defined because the aforementioned non-criticality condition is not satisfied, one may either argue by approximation (essentially, making any \enquote{vertical} pieces a little bit inclined or, equivalently, forcing a non-infinite speed on transients), or by using the machinery of disintegration and~\cite[Theorem~4.5]{BonicattoDelNinRindler22?}. In all cases we arrive at~\eqref{eq:plast_flow_formula_intro}, which from now on we take as the definition of the effect of plastic flow.

It will be shown later (see Lemma~\ref{lem:curl_pS}) that the plastic flow formula~\eqref{eq:plast_flow_formula_intro} furthermore implies the \emph{consistency relation}
\[
  \curl p(t) = \alpha(t) = \frac{1}{2} \sum_{b \in \Bcal} b \otimes S^b(t),
\]
meaning that our flow retains the correct relationship between the plastic distortion $p(t)$ and the dislocation system $(S^b(t))_t$ for all times $t$ (assuming it at the initial time).

One may wonder why we carry both the plastic distortion $p(t)$ and the dislocation system $\Sd(t) = (S^b(t))_t$ along the flow when they are so closely related by the above consistency relation. The reason is that neither variable contains enough information to recover the other: While the curl of $p$ can be computed from $\Sd(t)$, the plastic distortion $p(t)$ itself cannot be computed in this way as it is only determined up to a gradient. While gradients do not matter for the energetic description, the full value of $p(t)$ is important to record how far the specimen has been deformed away from the initial configuration. Conversely, the knowledge of $p(t)$ determines only the sum in the above consistency relation, i.e., the Kr\"{o}ner dislocation density tensor $\alpha$. As was remarked above, however, this is not enough information to recover all the $S^b(t)$ ($b \in \Bcal$).

\subsection{Boundary conditions}

Boundary conditions turn out to be a rather delicate matter because the plastic distortion measure $p$ can concentrate at the boundary $\partial \Omega$. This means that there is potential plastic distortion at the point where the specimen is \enquote{attached} (if one can call it that). We refer to~\cite{DalMasoDeSimoneMora06,BabadjianFrancfort23?} for more on this issue.

In the present work we restrict our attention to very simple \emph{barycentric conditions} of the form
\[
  [u]_H := \dashint_H u \dd x = h_0 \in \R^3,
\]
where $H \subset \Omega$ is an open \enquote{holding set}. This corresponds essentially to fixing some part of the specimen in place in an averaged sense. If $H$ is \enquote{thin}, this approximates a boundary condition. For mostly technical reasons we also impose the condition that
\[
  \skw Du(\Omega) = \skw \int_\Omega \di Du = 0
\]
This condition, which goes back to Korn in the equivalent form $\int_\Omega \curl u \,\di x = 0$, see~\cite{Korn1909}, fixes the rotational part and hence enables the validity of a suitable Korn-type inequality (see \fil{Lemma~\ref{lem:Qorn}}).

\subsection{Relation to the existing literature}

There are many phenomenological theories of elasto-plasticity, that is, theories not based on dislocation motion, see, e.g.,~\cite{Kroner60,GreenNaghdi71,Rice71,Mandel73,NematNasser79,Dafalias87,LubardaLee81,Naghdi90,Zbib93,OrtizRepetto99,Mielke03a,FrancfortMielke06,MielkeMuller06,Stefanelli08,MainikMielke09,GurtinFriedAnand10book,MielkeRossiSavare18} (for the linearized theory see~\cite{Temam85book,FuchsSeregin00book,HanReddy13book}). In particular, the \emph{field dislocation mechanics} of Acharya and collaborators~\cite{Acharya01,Acharya03,Acharya04,AcharyaTartar11,AroraAcharya20,Acharya21} combine a description of dislocations (based on the Kr\"{o}ner dislocation tensor $\alpha$) with macroscopic plasticity, but it is unclear at present how to pass to the homogenization limit in this formulation.

Many works have also considered stationary (non-evolving) dislocations and their homogenization in various contexts, for instance via variational methods~\cite{ContiTheil05,GarroniLeoniPonsiglione10,MullerScardiaZeppieri14,MullerScardiaZeppieri15,ContiGarroniMassaccesi15,ContiGarroniOrtiz15,ArizaContiGarroniOrtiz18,Ginster19,Ginster19b,ScalaVanGoethem19,ContiGarroni21}, differential-geometric descriptions~\cite{Cermelli99,CiarletGratieMardare09,Epstein10book,EpsteinSegev14,KupfermanMaor15,KupfermanOlami20,EpsteinKupfermanMaor20}, PDE and gradient flow approaches~\cite{AcharyaTartar11,BlassFonsecaLeoniMorandotti15,BlassMorandotti17,ScalaVanGoethem19,DondlKurzkeWojtowytsch19,FonsecaGinsterWojtowytsch21}, as well as upscaling (in special cases) from fully discrete models~\cite{ArizaOrtiz05,LuckhausMugnai10,BulatovCai06book,CaiArsenlisWeinbergerBulatov06,HudsonVanMeursPeletier20,GarroniVanMeursPeletierScardia20}. On the other hand, the rigorous evolutionary theory of dislocation motion in a general setting is mostly unexplored territory besides the works~\cite{HudsonRindler22,Rindler23,Rindler21b?,BonicattoDelNinRindler22?} and, in special cases,~\cite{ScalaVanGoethem19,FonsecaGinsterWojtowytsch21} (which, however, do not permit a coupling to elasto-plasticity).

We finally mention that another interesting question is to determine whether our proposed limit model is \emph{specific} enough, that is, that \emph{all} field solutions in our sense are indeed limits of solutions for the model with discrete dislocations. This type of question, however, is known to be rather subtle even for much simpler rate-independent systems; see, for instance~\cite{MielkeRindler09} for an explicit example of a system that has non-approximable solutions, as well as the discussion of different solution concepts in~\cite{Stefanelli09,MielkeRoubicek15book,RindlerSchwarzacherSuli17,RindlerSchwarzacherVelazquez21}. Therefore, we leave such considerations for future work.

\subsection*{Acknowledgments}

The authors would like to thank Amit Acharya, Gilles Francfort, Thomas Hudson, Alexander Mielke and Harry Turnbull for discussions related to this work.

This project has received funding from the European Research Council (ERC) under the European Union's Horizon 2020 research and innovation programme, grant agreement No 757254 (SINGULARITY).

\section{Notation and preliminaries} \label{sc:notation}

This section recalls some notation and results, in particular from geometric measure theory.

\subsection{Linear and multilinear algebra}

We consider the space of $(m \times n)$-matrices $\R^{m \times n}$ to be equipped with the Frobenius inner product $A : B := \sum_{ij} A^i_j B^i_j = \tr(A^TB) = \tr(B^TA)$ (upper indices indicate rows and lower indices indicate columns) as well as the Frobenius norm $\abs{A} := (A : A)^{1/2} = (\tr(A^T A))^{1/2}$. We denote the symmetric and anti-symmetric part of a matrix $A \in \R^{3 \times 3}$ by $\sym A := \frac12 (A+A^T)$ and $\skw A := \frac12 (A-A^T)$, respectively.

The set of $k$-vectors, $k = 0,1,2,\ldots$, on an $n$-dimensional real Hilbert space $V$ is denoted by $\Wedge_k V$ and the set of $k$-covectors as $\Wedge^k V$ (see~\cite{Federer69book,KrantzParks08book} for precise definitions). We will use the canonical identifications $\Wedge_0 V \cong \R \cong \Wedge^0 V$ and $\Wedge_1 V \cong V \cong \Wedge^1 V$ without further notice.

For a simple $k$-vector $\xi = v_1 \wedge \cdots \wedge v_k$ and a simple $k$-covector $\alpha = w^1 \wedge \cdots \wedge w^k$ the duality pairing is given as $\dpr{\xi,\alpha} = \det \, (v_i \cdot w^j)^i_j$; this is then extended to non-simple $k$-vectors and $k$-covectors by linearity. The inner product and restriction of $\eta \in \Wedge_k V$ and $\alpha \in \Wedge^l V$ are $\eta \intprod \alpha \in \Wedge^{l-k} V$ and $\eta \restrict \alpha \in \Wedge_{k-l} V$, respectively, which are defined via
\begin{align*}
	\dprb{\xi, \eta \intprod \alpha} := \dprb{\xi \wedge \eta, \alpha},  \qquad \xi \in \Wedge_{l-k} V,\\
	\dprb{\eta \restrict \alpha, \beta} := \dprb{\eta, \alpha \wedge \beta},  \qquad \beta \in \Wedge^{k-l} V.
\end{align*}

We will exclusively use the mass and comass norms of $\eta \in \Wedge_k V$ and $\alpha \in \Wedge^k V$, given as
\begin{align*}
	\abs{\eta} &:= \sup \setb{ \absb{\dprb{\eta,\alpha}} }{ \alpha \in \Wedge^k V,\;\abs{\alpha} = 1 },  \\
	\abs{\alpha} &:= \sup \setb{\absb{\dprb{\eta,\alpha}} } { \eta \in \Wedge_k V \text{ simple, unit} },
\end{align*}
where we call a simple $k$-vector $\eta = v_1 \wedge \cdots \wedge v_k$ a unit if the $v_i$ can be chosen to form an orthonormal system in $V$.

A linear map $S \colon V \to W$, where $V,W$ are real vector spaces, extends (uniquely) to a linear map $S \colon \Wedge^k V \to \Wedge^k W$ via
\[
S(v_1 \wedge \cdots \wedge v_k) := (Sv_1) \wedge \cdots \wedge (Sv_k),  \qquad v_1, \ldots, v_k \in V,
\]
and extending by (multi-)linearity to $\Wedge^k V$.

For a $k$-vector $\eta \in \Wedge_k V$ on an $n$-dimensional Hilbert space $V$ with inner product $\spr{\frarg,\frarg}$ and fixed ambient orthonormal basis $\{\ee_1,\ldots,\ee_n\}$, we define the Hodge dual $\hodge \eta \in \Wedge_{n-k} V$ as the unique vector satisfying
\[
\xi \wedge \hodge \eta = \spr{\xi,\eta} \, \ee_1 \wedge \cdots \wedge \ee_n,  \qquad \xi \in \Wedge_k V.
\]

In the special case $n=3$ we have the following geometric interpretation of the Hodge star: $\hodge \eta$ is the (oriented) normal vector to any two-dimensional hyperplane with orientation $\eta$. In fact, for $a,b \in \Wedge_1 \R^3$ the identities
\[
\hodge(a \times b) = a \wedge b,  \qquad
\hodge (a \wedge b) = a \times b
\]
hold, where \enquote{$\times$} denotes the classical vector product. Indeed, for any $v \in \R^3$, the triple product $v \cdot (a \times b)$ is equal to the determinant $\det(v,a,b)$ of the matrix with columns $v,a,b$, whereby
\[
v \wedge \hodge(a \times b)
= v \cdot (a \times b) \, \ee_1 \wedge \ee_2 \wedge \ee_3
= \det(v,a,b) \, \ee_1 \wedge \ee_2 \wedge \ee_3
= v \wedge (a \wedge b).
\]
Hence, the first identity follows. The second identity follows by applying $\hodge$ on both sides and using $\hodge^{-1} = \hodge$ (for $n = 3$).

\subsection{Currents}

We refer to~\cite{KrantzParks08book} and~\cite{Federer69book} for the theory of currents and in the following only recall some basic facts that are needed in the sequel.

Denote by $\Hcal^k \restrict R$ the $k$-dimensional Hausdorff measure restricted to a (countably) $\Hcal^k$-rectifiable set $R$; $\Lcal^d$ is the $d$-dimensional Lebesgue measure. The Lebesgue spaces $\Lrm^p(\Omega;\R^N)$ and the Sobolev spaces $\Wrm^{k,p}(\Omega;\R^N)$ for $p \in [1,\infty]$ and $k = 1,2,\ldots$ are used with their usual meanings.

Let $\Dcal^k(U) := \Crm^\infty_c(U;\Wedge^k \R^d)$ ($k \in \N \cup \{0\}$) be the space of \term{(smooth) differential $k$-forms} with compact support in an open set $U \subset \R^d$. The dual objects to differential $k$-forms, i.e., elements of the dual space $\Dcal_k(U) := \Dcal^k(U)^*$ ($k \in \N \cup \{0\}$), are the \term{$k$-currents}. There is a natural notion of \term{boundary} for a $k$-current $T \in \Dcal_k(\R^d)$ ($k \geq 1$), namely the $(k-1)$-current $\partial T \in \Dcal_{k-1}(\R^d)$ given as
\[
\dprb{\partial T, \omega} := \dprb{T, d\omega}, \qquad \omega \in \Dcal^{k-1}(\R^d),
\]
where \enquote{$d$} denotes the exterior differential. For a $0$-current $T$, we formally set $\partial T := 0$. 

We recall that one may express the curl of a three-dimensional vector field using the boundary of an associated current; this fact will play an important role later on. So, let $V = (v_1,v_2,v_3) \in \Lrm^1(\Omega;\R^3)$ ($\Omega \subset \R^3$ open) a vector field and associate to it the current $T := v_1 \ee_1 + v_2 \ee_2 + v_3 \ee_3$. Then,
\begin{equation} \label{eq:curl_boundary}
  \curl v = \partial [\hodge T],
\end{equation}
when identifying $1$-vectors with ordinary vectors. This can be seen directly using the well-known formula $\partial S = - \sum_{i=1}^3 (\DD_i S) \restrict dx^i$ for any $2$-current $S$, where $\DD_i$ is the $i$th weak partial derivative, see~\cite[Proposition~7.2.1]{KrantzParks08book}. Then,
\begin{align*}
  \partial [\hodge T]
  &= \partial \bigl[v_1 \, \ee_2 \wedge \ee_3 + v_2 \, \ee_3 \wedge \ee_1 + v_3 \, \ee_1 \wedge \ee_2] \\
  &= (\DD_1 v_2 \, \ee_3 - \DD_1 v_3 \, \ee_2) + (\DD_2 v_3 \, \ee_1 - \DD_2 v_1 \, \ee_3) + (\DD_3 v_1 \, \ee_2 - \DD_3 v_2 \, \ee_1) \\
  &= (\DD_2 v_3 - \DD_3 v_2) \, \ee_1 + (\DD_3 v_1 - \DD_1 v_3) \, \ee_2 + (\DD_1 v_2 - \DD_2 v_1) \, \ee_3 \\
  &= \curl v,
\end{align*}
showing~\eqref{eq:curl_boundary}.

\subsection{Integral currents} \label{sc:intcurr}

A $(\Wedge_k \R^d)$-valued (local) Radon measure $T \in \Mcal_\loc(\R^d;\Wedge_k \R^d)$ is called an \term{integer-multiplicity rectifiable $k$-current} if
\[
T = m \, \vec{T} \, \Hcal^k \restrict R,
\]
meaning that
\[
\dprb{T,\omega} = \int_R \dprb{\vec{T}(x), \omega(x)} \, m(x) \dd \Hcal^k(x),  \qquad \omega \in \Dcal^k(\R^d),
\]
where:
\begin{enumerate}[(i)]
	\item $R \subset \R^d$ is countably $\Hcal^k$-rectifiable with $\Hcal^k(R \cap K) < \infty$ for all compact sets $K \subset \R^d$;
	\item $\vec{T} \colon R \to \Wedge_k \R^d$ is $\Hcal^k$-measurable and for $\Hcal^k$-a.e.\ $x \in R$ the $k$-vector $\vec{T}(x)$ is simple, has unit length ($\abs{\vec{T}(x)} = 1$), and lies in (the $k$-times wedge product of) the approximate tangent space $\Tan_x R$ to $R$ at $x$;
	\item $m \in \Lrm^1_\loc(\Hcal^k \restrict R;\N)$;
\end{enumerate}
The map $\vec{T}$ is called the \term{orientation map} of $T$ and $m$ is the (integer-valued) \term{multiplicity}.

Let $T = \vec{T} \tv{T}$ be the Radon--Nikodym decomposition of $T$ with the \term{total variation measure} $\tv{T} = m \, \Hcal^k \restrict R \in \Mcal^+_\loc(\R^d)$. The \term{(global) mass} of $T$ is given as
\[
  \Mbf(T) := \tv{T}(\R^d) = \int_R m(x) \dd \Hcal^k(x).
\]
We will also use the notation $\Mbf(\mu) := \tv{\mu}(\R^d)$ for vector measures $\mu \in \Mcal_\loc(\R^d;V)$.

Let $\Omega \subset \R^d$ be a bounded and connected Lipschitz domain. We define the following sets of \term{integral $k$-currents} ($k \in \N \cup \{0\}$):
\begin{align*}
	\Irm_k(\R^d) &:= \setb{ \text{$T$ integer-multiplicity rectifiable $k$-current} }{ \Mbf(T) + \Mbf(\partial T) < \infty }, \\
	\Irm_k(\cl\Omega) &:= \setb{ T \in \Irm_k(\R^d) }{ \supp T \subset \cl\Omega }.
\end{align*}
The boundary rectifiability theorem, see~\cite[4.2.16]{Federer69book} or~\cite[Theorem~7.9.3]{KrantzParks08book}, entails that for $T \in \Irm_k(\R^d)$ also $\partial T \in \Irm_{k-1}(\R^d)$. 

For $T_1 = m_1 \vec{T}_1 \, \Hcal^{k_1} \restrict R_1 \in \Irm_{k_1}(\R^{d_1})$ and $T_2 = m_2 \vec{T}_2 \, \Hcal^{k_2} \restrict R_2 \in \Irm_{k_2}(\R^{d_2})$ with $R_1$ $k_1$-rectifiable (not just $\Hcal^{k_1}$-rectifiable) or $R_2$ $k_2$-rectifiable, we define the \term{product current} of $T_1,T_2$ as
\[
T_1 \times T_2 := m_1 m_2 \, (\vec{T}_1 \wedge \vec{T}_2) \, \Hcal^{k_1+k_2} \restrict (R_1 \times R_2) \in \Irm_{k_1+k_2}(\R^{d_1+d_2}).
\]
For its boundary we have the formula
\[
\partial(T_1 \times T_2) = \partial T_1 \times T_2 + (-1)^{k_1} T_1 \times \partial T_2.
\]

Let $\theta \colon \cl\Omega \to {\R^\ell}$ be smooth and proper map, that is, $\theta^{-1}(K)$ is compact for all compact sets $K \subset \supp T$. Suppose also that $T \in \Dcal_k(\R^d)$.

The \term{(geometric) pushforward} $\theta_* T$ (often also denoted by \enquote{$\theta_\# T$} in the literature, but this can be confused with the measure pushforward) is defined via
\[
\dprb{\theta_* T,\omega} := \dprb{T, \theta^*\omega}, \qquad \omega \in \Dcal^k(\R^d),
\]
where $\theta^*\omega$ is the pullback of the $k$-form $\omega$. If $T = m \, \vec{T} \, \Hcal^k \restrict R \in \Irm_k(\cl\Omega)$, then $\theta_* T \in \Irm_k(\cl\Omega)$ and
\[
  \dprb{\theta_* T,\omega} = \int_R \dprb{\DD\theta(x)[\vec{T}(x)], \omega(\theta(x))} \, m(x) \dd \Hcal^k(x),
   \qquad \omega \in \Dcal^k(\R^d).
\]

We say that a sequence $(T_j) \subset \Irm_k(\R^d)$ \term{converges weakly*} to $T \in \Dcal_k(\R^d)$, in symbols \enquote{$T_j \toweakstar T$}, if
\[
\dprb{T_j,\omega} \to \dprb{T,\omega} \qquad\text{for all $\omega \in \Dcal^k(\R^d)$.}
\]
For $T \in \Irm_k(\R^d)$, the \term{(global) flat norm} is given by
\[
\Fbf(T) := \inf \, \setB{ \Mbf(Q) + \Mbf(R) }{ \text{$Q \in \Irm_{k+1}(\R^d)$, $R \in \Irm_k(\R^d)$ with $T = \partial Q + R$} }
\]
and one can also consider the \term{flat convergence} $\Fbf(T-T_j) \to 0$ as $j \to \infty$. Under the mass bound $\sup_{j\in\N} \, \bigl( \Mbf(T_j) + \Mbf(\partial T_j) \bigr) < \infty$, this flat convergence is equivalent to weak* convergence (see, for instance,~\cite[Theorem~8.2.1]{KrantzParks08book} for a proof). Moreover, the Federer--Fleming compactness theorem, see~\cite[4.2.17]{Federer69book} or~\cite[Theorems~7.5.2,~8.2.1]{KrantzParks08book}, shows that, under the same uniform mass bound, we may select subsequences that converge weakly* or, equivalently, in the flat convergence, to an \emph{integral} limit current. Local variants of these results are also available.

The slicing theory of integral currents (see~\cite[Section~7.6]{KrantzParks08book} or~\cite[Section~4.3]{Federer69book}) entails that a given integral current $S = m \, \vec{S} \, \Hcal^{k+1} \restrict R \in \Irm_{k+1}(\fil{\R^d})$ can be sliced with respect to a Lipschitz map $f \colon \fil{\R^d} \to \R$ as follows: Set $R|_t := f^{-1}(\{t\}) \cap R$. Then, $R|_t$ is (countably) $\Hcal^k$-rectifiable for almost every $t \in \R$. Moreover, for $\Hcal^{k+1}$-almost every $z \in R$, the approximate tangent spaces $\Trm_z R$ and $\Trm_z R|_t$, as well as the approximate gradient $\nabla^R f(z)$, i.e., the projection of $\nabla f(z)$ onto $\Trm_z R$, exist and
\[
\Trm_z R = \spn \bigl\{ \Trm_z R|_t, \xi(z) \bigr\},  \qquad
\xi(z) := \frac{\nabla^R f(z)}{\abs{\nabla^R f(z)}} \perp \Trm_z R|_t.
\]
Also, $\xi(z)$ is simple and has unit length. Set
\[
m_+(z) := \begin{cases}
	m(z) &\text{if $\nabla^R f(z) \neq 0$,} \\
	0    &\text{otherwise,}  
\end{cases}
\qquad\qquad
\xi^*(z) := \frac{D^R f(z)}{\abs{D^R f(z)}} \in \Wedge^1 \R^d,
\]
where $D^R f(z)$ is the restriction of the differential $Df(z)$ to $\Trm_z R$, and
\[
\vec{S}|_t(z) := \vec{S}(z) \restrict \xi^*(z) 
\in \Wedge_k \Trm_z R|_t
\subset \Wedge_k \Trm_z R.
\]
Then, the \term{slice}
\[
S|_t := m_+ \, \vec{S}|_t \, \Hcal^k \restrict R|_t
\]
is an integral $k$-current, $S|_t \in \Irm_k(\R^n)$. We recall several important properties of slices: First, the \term{coarea formula} for slices,
\begin{equation} \label{eq:coarea_slice}
	\int_R g \, \abs{\nabla^R f} \, \fil{m} \dd \Hcal^{k+1} = \int \int_{R|_t} g \, \fil{m_+} \dd \Hcal^k \dd t,
\end{equation}
holds for all $g \colon R \to \R^N$ that are $\Hcal^{k+1}$-measurable and integrable on $R$. Second, we have the mass decomposition
\[
\int \Mbf(S|_t) \dd t = \int_R \abs{\nabla^R f} \dd \tv{S}.
\]
Third, the \term{cylinder formula}
\[
S|_t = \partial(S \restrict \{f<t\}) - (\partial S) \restrict \{f<t\}
\]
and, fourth, the \term{boundary formula}
\[
\partial (S|_t) = - (\partial S)|_t
\]
hold.

\subsection{Normal currents} \label{sc:normal}

We recall that a $k$-current $T$ ($k \in \N \cup \{0\}$) in $\R^d$ is said to be \term{normal} if $\Mbf(T)+\Mbf(\partial T)<\infty$, which are collected in the sets ($\Omega \subset \R^d$ be a bounded and connected Lipschitz domain)
\begin{align*}
	\Nrm_k(\R^d) &:= \setb{ T \in \Mcal_\loc(\R^d;\Wedge_k \R^d) }{ \Mbf(T) + \Mbf(\partial T) < \infty }, \\
	\Nrm_k(\cl\Omega) &:= \setb{ T \in \Nrm_k(\R^d) }{ \supp T \subset \cl\Omega }.
\end{align*} 
Most of the formulas we have recalled for integral currents in the previous section remain valid for normal currents or require only minor adjustments. As a general reference, we point to~\cite[Section 4]{Federer69book} or~\cite{AmbrosioKirchheim00}. Here, we will only need the case of \emph{simple} normal currents, i.e. normal currents with simple orienting vector. We will often work in the space-time vector space $\R^{1+d} \cong \R \times \R^d$, where it will be convenient to denote the canonical unit vectors as $\ee_0, \ee_1,\ldots,\ee_d$ with $\ee_0$ the \enquote{time} unit vector. 

Let now $S \in \Nrm_{k+1}(\R^{1+d})$, which we assume to be oriented by a simple $(k+1)$-vector field $\vec{S} \in \Lrm^1(\R^{1+d}, \tv{S}; \Wedge_k \R^{1+d})$. We furthermore suppose that
\[
  \vec{S}(t,x) \restrict D\tbf(t,x) \neq 0 \qquad \text{for $\tv{S}$-a.e.\ $(t,x)$},
\]
meaning that $\spn \vec{S}$ contains a component in the time direction. In this case, the \term{time-slice of $S$} is defined via the projection map $\tbf(t,x) := t$ as follows:
\[
  \langle S|_t,\omega \rangle
  :=\lim_{r\to 0}\frac{1}{2r}\int_{[t-r,t+r]\times \R^d}\langle S\restrict D\tbf,\omega\rangle \dd\tv{S}, \qquad \text{ for $\Lcal^1$-a.e.\ $t \in \R$}. 
\]
The existence of this limit for $\Lcal^1$-almost every $t \in \R$ is part of the statement, whose proof can be found e.g., in~\cite[Section 4.3.1]{Federer69book}. It can also be checked that, for $\mathcal{L}^1$-almost every $t$, this limit defines a normal current $S|_t\in \Nrm_k(\R\times\R^{d})$. The slices $S|_t$ are also characterized by the following property, see~\cite[(5.7)]{AmbrosioKirchheim00}: 
\begin{equation}\label{eq:slice}
\int_{\R} S|_t \, \psi(t)  \dd t= S\restrict [(\psi\circ \tbf) D\tbf] \qquad\text{as $k$-currents on $\R^{1+d}$, for every $\psi\in C_c(\R)$}.
\end{equation} 

Let us now consider the particular case where $k=1$ and $d=3$, which will be of relevance for our modelling of the evolution of dislocations. Let $S \in \Nrm_2([0,T] \times \cl\Omega)$ be a current in space-time, with Radon--Nikodym decomposition $S = \vec{S} \, \tv{S}$, and assume that $\vec{S}$ is both simple and $\vec{S} \restrict D\tbf \neq 0$, $\tv{S}$-almost everywhere. In this case, it can be seen that we may write
\[
  \vec{S} = \xi \wedge \tau,  \qquad 
  \xi \in \R^{1+3}, \; \tau \in \{0\} \times \R^3, \;  \xi\cdot \tau=0, \;  \abs{\xi} = \abs{\tau} = 1, \; \xi\cdot \ee_0>0,
\]
which can always be achieved by an orthogonal decomposition in which the signs of $\xi$ and $\tau$ are chosen appropriately.

Let $\xi^* \in \Wedge^1 \R^{1+3}$ be the dual covector to $\xi$, which by definition satisfies $\dpr{\xi^*,\xi} = 1$. For any $\alpha \in \Wedge^1 \R^{1+3}$ we have
\[
  \dprb{(\xi \wedge \tau) \restrict \xi^*, \alpha}
  = \dprb{\xi \wedge \tau, \xi^* \wedge \alpha}
  = \det \begin{pmatrix} \dpr{\xi,\xi^*}  & \dpr{\xi,\alpha} \\
  \dpr{\tau,\xi^*} & \dpr{\tau,\alpha} \end{pmatrix}
  = \det \begin{pmatrix} 1  & \dpr{\xi,\alpha} \\
  0 & \dpr{\tau,\alpha} \end{pmatrix}
  = \dpr{\tau,\alpha}
\]
since $\dpr{\tau,\xi^*} = \tau\cdot\xi= 0$. Thus,
\[
\vec{S} \restrict \xi^* = (\xi \wedge \tau) \restrict \xi^* = \tau.
\]
Since by definition $\xi$ has a positive component in the $\ee_0$-direction and unit length, we observe
\[
  \xi = \frac{(\ee_0 \cdot \xi)\xi}{\abs{\ee_0 \cdot \xi}\abs{\xi}}= \frac{\proj_\xi(\ee_0)}{\abs{\proj_\xi(\ee_0)}}
  = \frac{\nabla^S \tbf}{\abs{\nabla^S \tbf}}
\]
with the $S$-gradient $\nabla^S \tbf$ of $\tbf$, that is, the projection of $\nabla \tbf = \ee_0$ onto $\spn \vec{S}$ (recall that $\vec{S}$ was assumed simple and non-zero). It follows by duality that
\[
\xi^* = \frac{D^S \tbf}{\abs{D^S \tbf}},
\]
with $D^S \tbf$ the restriction of $D\tbf$ onto $\spn \vec{S}$.

With these considerations at hand, we can now deduce the coarea formula for slices of (simple) normal currents, which should be considered the analogue to \eqref{eq:coarea_slice} in the case of normal currents with simple orienting vector. By using \eqref{eq:slice}, we have for every smooth 2-form $\omega$ and for every $\psi\in C_c(\R)$,
\begin{align*}
	\int_{\R} \dprb{ \xi \wedge S|_{t} ,\omega} \, \psi(t) \dd t
	&= -\int_{\R} \dprb{ S|_{t}, \xi \intprod \omega } \, \psi(t) \dd t \\ 
	& = - \dprb{  S\restrict [(\psi \circ \tbf ) \, D\tbf],  \xi  \intprod \omega } \\
	& = \dprb{ \xi  \wedge (S\restrict [(\psi \circ \tbf ) \, D\tbf]), \omega } \\ 
	& = \dprb{ |\nabla^S \tbf| \,  (\psi\circ \tbf) \, S, \omega }.
\end{align*}
In the last equality we have used the pointwise $\tv{S}$-almost everywhere identity
\[
\xi \wedge (\vec{S} \restrict D\tbf) = |\nabla^S \tbf| \, \vec{S}, 
\]
which in turn follows from
\[
  \xi \wedge (\vec{S} \restrict D\tbf)
  = \frac{\nabla^S \tbf}{\abs{\nabla^S \tbf}} \wedge (\vec{S} \restrict D^S\tbf)
  = \nabla^S \tbf \wedge (\vec{S} \restrict \xi^*)
  = \nabla^S \tbf \wedge \tau
  = |\nabla^S \tbf| \, \vec{S}. 
\]
We have thus obtained the \term{coarea formula}
\begin{equation}\label{eq:coarea}
  \tv{S|_t} \, \di t = \abs{\nabla^S \tbf} \, \tv{S}
\end{equation}
as measures on $\R^{1+3}$. Note that the additional factor of $\abs{\nabla^S \tbf}$ in~\eqref{eq:coarea} entails that we are integrating with a \enquote{skewed} Lebesgue measure. In particular,
\begin{equation} \label{eq:slice_coarea} 
\int_{\R} \Mbf(S|_t) \dd t = \int_{\R^{1+3}} \abs{\nabla^S \tbf} \dd \tv{S}.
\end{equation}
Moreover, we have the \term{cylinder formula}
\begin{equation} \label{eq:cylinder}
S|_t = \partial(S \restrict \{\tbf<t\}) - (\partial S) \restrict \{\tbf<t\}
\end{equation}
(recalling that we can naturally identify the range of $\tbf$ with $\R$) and the \term{boundary formula}
\begin{equation} \label{eq:slice_bdry}
\partial S|_t = - (\partial S)|_t.
\end{equation}

\subsection{BV-theory of space-time currents} \label{sc:BVcurr}

In this section we briefly review some aspects of the theory of space-time currents of bounded variation, which was developed in~\cite{Rindler23}.

The \term{variation} and \term{boundary variation} of a $(1+k)$-integral current $S \in \Irm_{1+k}([\sigma,\tau] \times \cl\Omega)$ in the interval $I \subset [\sigma,\tau]$ are defined as
\begin{align*} 
  \Var(S;I) &:= \Mbf\bigl( \pbf_*(S \restrict (I \times \R^d)) \bigr)
  = \int_{I \times \R^d} \abs{\pbf(\vec{S})} \dd \tv{S},  \\
  \Var(\partial S;I) &:=\Mbf\bigl( \pbf_*(\partial S \restrict (I \times \R^d)) \bigr)
  = \int_{I \times \R^d} \abs{\pbf(\overrightarrow{\partial S})} \dd \tv{\partial S}.
\end{align*}
If $[0,T] = [0,1]$, we abbreviate $\Var(S)$ and $\Var(\partial S)$ for $\Var(S;[0,1])$ and $\Var(\partial S;[0,1])$, respectively. It is immediate to see that
\begin{equation} \label{eq:Var_by_M}
  \Var(S;I) \leq \Mbf \bigl(S \restrict (I \times \R^d)\bigr) \leq \Mbf(S).
\end{equation}

For $\Lcal^1$-almost every $t \in [\sigma,\tau]$,
\[
S(t) := \pbf_*(S|_t) \in \Irm_k(\cl\Omega)
\]
is defined, where $S|_t \in \Irm_k([\sigma,\tau] \times \cl\Omega)$ is the slice of $S$ with respect to time (i.e., with respect to $\tbf$). If $\tv{S}(\{t\} \times \R^d) > 0$ then $S(t)$ is not defined and we say that $S$ has a \term{jump} at $t$. In this case, the vertical piece $S \restrict (\{t\} \times \R^d)$ takes the role of a \enquote{jump transient}. This is further elucidated by the following \enquote{Pythagoras} lemma, which contains an estimate for the mass of an integral $(1+k)$-current in terms of the masses of the slices and the variation, see Lemma~3.5 in~\cite{Rindler23} for a proof.

\begin{lemma} \label{lem:mass}
Let $S = m \, \vec{S} \, \Hcal^{1+k} \restrict R \in \Irm_{1+k}([\sigma,\tau] \times \cl\Omega)$. Then,
\begin{equation} \label{eq:decomp}
	\abs{\nabla^R \tbf}^2 + \abs{\pbf(\vec{S})}^2 = 1  \qquad \text{$\tv{S}$-a.e.}
\end{equation}
and
\begin{align*}
	\Mbf(S) &\leq \int_\sigma^\tau \Mbf(S(t)) \dd t + \Var(S;[\sigma,\tau]) \\
	&\leq \abs{\sigma-\tau} \cdot \esssup_{t \in [\sigma,\tau]} \, \Mbf(S(t)) + \Var(S;[\sigma,\tau]).
\end{align*}
\end{lemma}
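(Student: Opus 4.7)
The overall strategy is to first establish the pointwise identity~\eqref{eq:decomp}, which is a Pythagoras-type relation in the approximate tangent space, and then to combine it with the coarea formula for slices to deduce the two-sided mass estimate. The rough picture is that $|\nabla^R \tbf|$ and $|\pbf(\vec S)|$ are respectively the cosine and the sine of the angle $\theta$ between the time axis $\ee_0$ and the tangent plane $\Tan_z R$.

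For the pointwise identity I would work at a $\tv{S}$-typical point $z \in R$ at which $\Tan_z R$ exists and $\vec S(z)$ is a unit simple $(1+k)$-vector spanning it. Let $\theta=\theta(z)\in[0,\pi/2]$ be the angle between $\ee_0$ and $\Tan_z R$, so that by definition $|\nabla^R \tbf(z)| = |\proj_{\Tan_z R}(\ee_0)| = \cos\theta$. When $\cos\theta>0$, I choose an orthonormal basis $\{v_1,\dots,v_{1+k}\}$ of $\Tan_z R$ adapted to the time direction by setting $v_1:=\nabla^R\tbf/|\nabla^R\tbf|$; then $v_2,\dots,v_{1+k}$ are orthogonal to $v_1$ within $\Tan_z R$, hence orthogonal to $\ee_0^\parallel = \cos\theta\,v_1$, and therefore purely spatial. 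A direct computation gives $\pbf(v_1)=v_1-\cos\theta\,\ee_0$, which has norm $\sin\theta$ and is orthogonal in $\R^d$ to $v_2,\dots,v_{1+k}$. Since $\vec S(z)=\pm v_1\wedge\cdots\wedge v_{1+k}$, the mass of its spatial projection is $|\pbf(\vec S)|=\sin\theta$, yielding $|\nabla^R\tbf|^2+|\pbf(\vec S)|^2=\cos^2\theta+\sin^2\theta=1$. The degenerate case $\nabla^R\tbf=0$ is handled separately: then all $v_i$ are already spatial, so $|\pbf(\vec S)|=1$.

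For the mass estimate, the crucial consequence of~\eqref{eq:decomp} is the pointwise inequality $|\nabla^R\tbf|+|\pbf(\vec S)|\geq\sqrt{|\nabla^R\tbf|^2+|\pbf(\vec S)|^2}=1$, valid $\tv{S}$-a.e. Multiplying by $m$ and integrating over $R$ against $\Hcal^{1+k}$ gives
\begin{equation*}
\Mbf(S)=\int_R m \dd\Hcal^{1+k}\leq \int_R m|\nabla^R\tbf|\dd\Hcal^{1+k}+\int_R m|\pbf(\vec S)|\dd\Hcal^{1+k}.
\end{equation*}
The second integral equals $\Var(S;[\sigma,\tau])$ by definition. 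For the first, I would note that $m_+=m$ wherever $|\nabla^R\tbf|>0$ (the only place the integrand does not vanish), apply the coarea formula~\eqref{eq:coarea_slice} with $f=\tbf$ and $g=m_+$ to obtain $\int_R m|\nabla^R\tbf|\dd\Hcal^{1+k}=\int_\sigma^\tau\int_{R|_t}m_+\dd\Hcal^k\dd t$, and observe that the inner integral is $\Mbf(S|_t)$. Since $\pbf$ acts as an isometry on $\{t\}\times\R^d$ and $\vec S|_t\in\Wedge_k(\{0\}\times\R^d)$ has unit length where $m_+>0$, the pushforward preserves mass, i.e.\ $\Mbf(S(t))=\Mbf(\pbf_*(S|_t))=\Mbf(S|_t)$. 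This gives the first inequality, and the second follows immediately from the definition of the essential supremum.

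The principal subtlety — and the reason the Pythagoras identity is genuinely needed rather than a crude bound — lies in the "vertical" set $\{\nabla^R\tbf=0\}$. On this set the slicing multiplicity $m_+$ vanishes and the time-slices $S(t)$ miss the corresponding mass entirely, but precisely there $|\pbf(\vec S)|=1$, so the missing mass is recovered verbatim by the variation term. The identity~\eqref{eq:decomp} calibrates these two contributions exactly enough to close the inequality on the nose, with no room to spare in the degenerate case.
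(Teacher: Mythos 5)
Your proof is correct, and since the paper does not reproduce the argument but only cites Lemma~3.5 of~\cite{Rindler23}, the natural comparison is with the intent signalled there. The coarea-plus-Pythagoras route you take is exactly what the paper's naming (\enquote{Pythagoras lemma}) and the preceding definition of $\Var$ via $\abs{\pbf(\vec S)}$ point to, and your argument assembles the pieces cleanly: the adapted orthonormal basis $v_1 = \nabla^R\tbf/\abs{\nabla^R\tbf}$, $v_2,\dots,v_{1+k}$ spatial, gives $\abs{\pbf(\vec S)}=\sin\theta$ while $\abs{\nabla^R\tbf}=\cos\theta$; the inequality $a+b\geq\sqrt{a^2+b^2}$ for $a,b\geq 0$ then converts the identity into the pointwise bound $1\leq\abs{\nabla^R\tbf}+\abs{\pbf(\vec S)}$; the first summand integrates to $\int_\sigma^\tau\Mbf(S|_t)\dd t$ by the coarea formula (after the innocuous replacement $m\abs{\nabla^R\tbf}=m_+\abs{\nabla^R\tbf}$), and the second is $\Var(S;[\sigma,\tau])$ by definition. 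Your observation that $\Mbf(S(t))=\Mbf(S|_t)$ because $\pbf$ restricts to an isometry of $\{t\}\times\R^d$ onto $\R^d$ (so no cancellations and $\abs{\pbf(\vec S|_t)}=1$) is the one small gluing step that needs stating, and you state it. The closing remark on the vertical set $\{\nabla^R\tbf=0\}$ correctly identifies why a crude bound on one term alone would fail and why the Pythagoras calibration is exactly what is needed.
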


The \term{integral $(1+k)$-currents with Lipschitz continuity}, or \term{Lip-integral $(1+k)$-currents} are the elements of the set
\begin{align*}
\Irm^\Lip_{1+k}([\sigma,\tau] \times \cl\Omega) := \setBB{ S \in \Irm_{1+k}([\sigma,\tau] \times \cl\Omega) }{ &\esssup_{t \in [\sigma,\tau]} \, \bigl( \Mbf(S(t)) + \Mbf(\partial S(t)) \bigr) < \infty, \\
	& \tv{S}(\{\sigma,\tau\} \times \R^d) = 0, \\
	&t \mapsto \Var(S;[\sigma,t]) \in \Lip([\sigma,\tau]), \\
	&t \mapsto \Var(\partial S;(\sigma,t)) \in \Lip([\sigma,\tau]) }.
\end{align*}
Here, $\Lip([\sigma,\tau])$ contains all scalar Lipschitz functions on the interval $[\sigma,\tau]$.

It can be shown that for $S \in \Irm^\Lip_{1+k}([\sigma,\tau] \times \cl\Omega)$ there exists a \term{good representative}, also denoted by $t \mapsto S(t)$, for which the $\Fbf$-Lipschitz constant
\[
L := \sup_{s,t \in [\sigma,\tau]} \frac{\Fbf(S(s)-S(t))}{\abs{s-t}}
\]
is finite and $t \mapsto S(t)$ is continuous with respect to the weak* convergence in $\Irm_k(\cl\Omega)$. Moreover,
\[
\partial S \restrict (\{\sigma,\tau\} \times \R^d) = \delta_\tau \times S(\tau-) - \delta_\sigma \times S(\sigma+),
\]
and thus $S(\sigma+) := \wslim_{t \todown \sigma} S(t)$, $S(\tau-) := \wslim_{t \toup \tau} S(t)$ can be considered the left and right \term{trace values} of $S$.

The following lemmas concerns the rescaling and concatenation of space-time currents, see Lemmas~3.4 and~4.5 in~\cite{Rindler23} for proofs.

\begin{lemma} \label{lem:rescale}
Let $S \in \Irm_{1+k}([\sigma,\tau] \times \cl\Omega)$ and let $a \in \Lip([\sigma,\tau])$ be injective. Then,
\[
a_* S := [(t,x) \mapsto (a(t),x)]_* S \in \Irm_{1+k}(a([\sigma,\tau]) \times \cl\Omega)
\]
with
\[
(a_* S)(a(t)) = S(t),  \qquad t \in [\sigma,\tau],
\]
and
\begin{align*}
	\Var(a_* S;a([\sigma,\tau])) &= \Var(S;[\sigma,\tau]), \\
	\Var(\partial(a_* S);a([\sigma,\tau])) &= \Var(\partial S;[\sigma,\tau]), \\
	\esssup_{t \in a([\sigma,\tau])} \, \Mbf((a_* S)(t)) &= \esssup_{t \in [\sigma,\tau]} \, \Mbf(S(t)), \\
	\esssup_{t \in a([\sigma,\tau])} \, \Mbf(\partial(a_* S)(t)) &= \esssup_{t \in [\sigma,\tau]} \, \Mbf(\partial S(t)).
\end{align*}
If $S \in \Irm^\Lip_{1+k}([\sigma,\tau] \times \cl\Omega)$, then also $a_* S \in \Irm_{1+k}^\Lip(a([\sigma,\tau]) \times \cl\Omega)$.
\end{lemma}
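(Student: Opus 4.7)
The plan is to reduce every assertion to naturality properties of the pushforward operator applied to the space-time map $\phi(t,x) := (a(t),x)$. Since $a \in \Lip([\sigma,\tau])$ is continuous and injective on an interval, $a$ is strictly monotone and a homeomorphism onto $a([\sigma,\tau])$; I would treat the increasing case (the decreasing one is analogous up to an orientation sign). The map $\phi$ is Lipschitz and proper on $[\sigma,\tau] \times \cl\Omega$, so by the classical pushforward theorem for integral currents under Lipschitz maps we obtain $a_*S = \phi_*S \in \Irm_{1+k}(\R^{1+d})$ with support contained in $a([\sigma,\tau]) \times \cl\Omega$.

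The two identities that drive everything are $\pbf \circ \phi = \pbf$ (since $\phi$ leaves the spatial coordinate fixed) and $\phi^{-1}(I \times \R^d) = a^{-1}(I) \times \R^d$ for any Borel $I \subset \R$. Together these yield
\begin{equation*}
a_*S \restrict (I \times \R^d) = \phi_*\bigl(S \restrict (a^{-1}(I) \times \R^d)\bigr),
\end{equation*}
and applying $\pbf_*$ and taking mass gives $\Var(a_*S;I) = \Var(S;a^{-1}(I))$. Choosing $I = a([\sigma,\tau])$ (and using $\supp S \subset [\sigma,\tau] \times \cl\Omega$) yields the first variation identity; the boundary-variation identity follows from the same calculation applied to $\partial S$, combined with the naturality relation $\partial(a_*S) = a_*(\partial S)$.

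For the slice identity $(a_*S)(a(t)) = S(t)$, I would start from the cylinder formula applied in the target. Using $\phi^{-1}\{\tbf < a(t)\} = \{\tbf < t\}$ together with $\partial \circ a_* = a_* \circ \partial$, both terms on the right-hand side of
\begin{equation*}
(a_*S)|_{a(t)} = \partial\bigl[(a_*S)\restrict \{\tbf < a(t)\}\bigr] - (\partial a_*S) \restrict \{\tbf < a(t)\}
\end{equation*}
rewrite as $\phi_*$ applied to the corresponding objects built from $S$. Collapsing via the cylinder formula for $S$ itself gives $(a_*S)|_{a(t)} = \phi_*(S|_t)$, and applying $\pbf_*$ once more (using $\pbf \circ \phi = \pbf$) yields the claim. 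The two essential-supremum equalities then follow from this pointwise slice identity (applied once to $S$ and once to $\partial S$), together with the fact that the Lipschitz map $a$ enjoys the Lusin~(N) property, so that almost-everywhere statements transfer correctly between $[\sigma,\tau]$ and $a([\sigma,\tau])$.

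Finally, Lip-integrality preservation is verified by checking each of the four defining conditions for $a_*S$. The mass-trace condition at $\{a(\sigma),a(\tau)\}$ follows from its counterpart at $\{\sigma,\tau\}$ by pushforward, while the Lipschitz continuity of $s \mapsto \Var(a_*S;[a(\sigma),s])$ reduces, via the variation identity $\Var(a_*S;[a(\sigma),s]) = \Var(S;[\sigma,a^{-1}(s)])$, to the composition of a Lipschitz function with $a^{-1}$. I expect this last step to be the main technical hurdle: in general the inverse of a Lipschitz injective map need not be Lipschitz, so the argument hinges on exploiting the quantitative bi-Lipschitz control on $a$ available in the Lip-integral context to transfer Lipschitz regularity from the source to the target.
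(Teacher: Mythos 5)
Your approach via the naturality identities $\pbf \circ \phi = \pbf$, $\partial \circ \phi_* = \phi_* \circ \partial$, and $\phi^{-1}(I\times\R^d)=a^{-1}(I)\times\R^d$ is the right one and gives a clean derivation of the two variation identities and of the slice formula. (Note that the paper does not prove this lemma itself but delegates it to [Rindler23, Lemma~3.4], so there is no internal proof to compare against; I am assessing the argument on its merits.)

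However, the last three claims --- the two $\esssup$ equalities and the preservation of Lip-integrality --- are not actually established, and the difficulty you flag is a genuine one rather than a technicality you can wave through. Lipschitz injectivity of $a$ gives no control whatsoever on $a^{-1}$. For the $\esssup$ equalities you invoke Lusin~(N), but Lusin~(N) only yields $\Lcal^1(E)=0 \Rightarrow \Lcal^1(a(E))=0$, which gives $\esssup_s \Mbf((a_*S)(s)) \le \esssup_t \Mbf(S(t))$; the reverse inequality needs the Lusin~(N$^{-1}$) property, equivalently $a'>0$ a.e., and there are injective Lipschitz $a$ with $a'=0$ on a fat Cantor set for which the reverse inequality can fail. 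The same pathology defeats the Lip-integrality claim: you correctly reduce to the Lipschitz continuity of $s\mapsto \Var(S;[\sigma,a^{-1}(s)])$, but a Lipschitz function composed with $a^{-1}$ need not be Lipschitz. With $a(t)=t^2$ on $[0,1]$ and $S$ of constant unit variation rate (e.g.\ the graph of an arclength-parametrized Lipschitz curve), $\Var(a_*S;[0,s])=\sqrt{s}$, which is not Lipschitz near $0$. Your closing sentence appeals to \enquote{the quantitative bi-Lipschitz control on $a$ available in the Lip-integral context}, but no such control is available --- $\Irm^\Lip$ is a condition on $S$, not on $a$, and the hypotheses on $a$ are only injectivity and Lipschitz continuity. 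To prove these last assertions one must strengthen the hypothesis on $a$ to bi-Lipschitz (or $a'\ge c>0$ a.e.), which is indeed what holds in all the applications in this paper (affine reparametrizations, invertible $\Crm^1$-maps). Either add that hypothesis and carry it explicitly through the argument, or this part of the proof remains open.
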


\begin{lemma} \label{lem:concat}
Let $S_1,S_2 \in \Irm_{1+k}([0,1] \times \cl\Omega)$ with
\[
  \partial S_1 = \delta_1 \times T_1 - \delta_0 \times T_0, \qquad
  \partial S_2 = \delta_1 \times T_2 - \delta_0 \times T_1,
\]
where $T_0,T_1,T_2 \in \Irm_k(\cl\Omega)$ with $\partial T_0 = \partial T_1 = \partial T_2 = 0$. Then, there is $S_2 \circ S_1 \in \Irm_{1+k}([0,1] \times \cl\Omega)$, called the \term{concatenation} of $S_1,S_2$, with
\[
  \partial (S_2 \circ S_1) = \delta_1 \times T_2 - \delta_0 \times T_0
\]
and
\begin{align*}
  \Var(S_2 \circ S_1) &= \Var(S_1) + \Var(S_2),  \\
  \esssup_{t \in [0,1]} \, \Mbf((S_2 \circ S_1)(t)) &= \max \biggl\{ \esssup_{t \in [0,1]} \, \Mbf(S_1(t)), \; \esssup_{t \in [0,1]} \, \Mbf(S_2(t)) \biggr\}.
\end{align*} 
Furthermore, if $S_1,S_2 \in \Irm^\Lip_{1+k}([0,1] \times \cl\Omega)$, then also $S_2 \circ S_1 \in \Irm^\Lip_{1+k}([0,1] \times \cl\Omega)$.
\end{lemma}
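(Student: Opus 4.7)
The plan is to rescale $S_1$ onto $[0,1/2]$ and $S_2$ onto $[1/2,1]$ via two affine injections and then take their sum. Concretely, set $a_1(t) := t/2$ and $a_2(t) := (t+1)/2$; both are injective Lipschitz maps on $[0,1]$, so Lemma~\ref{lem:rescale} yields
\[
a_{1,*} S_1 \in \Irm_{1+k}([0,1/2]\times\cl\Omega), \qquad a_{2,*} S_2 \in \Irm_{1+k}([1/2,1]\times\cl\Omega).
\]
I then define
\[
S_2 \circ S_1 := a_{1,*} S_1 + a_{2,*} S_2,
\]
which, as the sum of two integer-multiplicity rectifiable $(1+k)$-currents with finite mass and finite boundary mass whose rectifiable supports live in complementary time slabs (meeting only on the common horizontal slice $\{1/2\}\times\cl\Omega$, where multiplicities are simply added), lies in $\Irm_{1+k}([0,1]\times\cl\Omega)$.

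The boundary of the concatenation is computed using the commutation $\partial \circ a_{i,*} = a_{i,*} \circ \partial$, which on horizontal slices gives $a_{i,*}(\delta_s \times T) = \delta_{a_i(s)} \times T$. Hence
\[
\partial(a_{1,*} S_1) = \delta_{1/2}\times T_1 - \delta_0\times T_0, \qquad \partial(a_{2,*} S_2) = \delta_1\times T_2 - \delta_{1/2}\times T_1,
\]
and summing yields $\partial(S_2\circ S_1) = \delta_1\times T_2 - \delta_0\times T_0$, the cancellation at $t=1/2$ being the whole point of the construction. The identities for variation and slice mass then follow from Lemma~\ref{lem:rescale} together with the additivity of $\Var(\frarg;I) = \Mbf(\pbf_*(\frarg \restrict (I\times\R^d)))$ over disjoint time intervals:
\[
\Var(S_2\circ S_1) = \Var(a_{1,*}S_1;[0,1/2]) + \Var(a_{2,*}S_2;[1/2,1]) = \Var(S_1)+\Var(S_2),
\]
and similarly the essential supremum of $\Mbf((S_2 \circ S_1)(t))$ over $[0,1]$ splits as the maximum of the essential suprema over $[0,1/2]$ and $[1/2,1]$, each of which by Lemma~\ref{lem:rescale} equals the corresponding quantity for $S_1$ or $S_2$.

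For the Lip-integral claim I would verify the four defining properties of $\Irm^\Lip_{1+k}$. The endpoint conditions $\tv{S_i}(\{0,1\}\times\R^d)=0$ combined with Lemma~\ref{lem:rescale} ensure that $\tv{S_2\circ S_1}$ vanishes on $\{0,1/2,1\}\times\R^d$, eliminating any \enquote{vertical piece} at the gluing time or the outer endpoints. The function $t\mapsto \Var(S_2\circ S_1;[0,t])$ equals $\Var(S_1;[0,2t])$ on $[0,1/2]$ and $\Var(S_1)+\Var(S_2;[0,2t-1])$ on $[1/2,1]$; these two pieces agree at $t=1/2$ and are each Lipschitz by assumption, so their concatenation is Lipschitz on $[0,1]$. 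Since $\partial(S_2\circ S_1)$ is concentrated on $\{0,1\}\times\R^d$, the map $t\mapsto \Var(\partial(S_2\circ S_1);(0,t))$ is identically zero, trivially Lipschitz. The only genuinely delicate point I anticipate is ensuring that the interior interface at $t=1/2$ contributes no spurious mass to $S_2\circ S_1$ or to its boundary; this is, however, exactly what the hypothesis $\tv{S_i}(\{0,1\}\times\R^d)=0$ (built into the Lip-integral assumption) controls, so the argument closes without further work.
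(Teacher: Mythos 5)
Your construction (rescale $S_1$ to $[0,1/2]$ and $S_2$ to $[1/2,1]$ via $a_1,a_2$, then add) is the natural one, and your boundary computation is correct. However, the variation-additivity step has a gap in the generality claimed. The phrase ``additivity of $\Var(\frarg;I)$ over disjoint time intervals'' does not apply here as stated: the intervals $[0,1/2]$ and $[1/2,1]$ share the slab $\{1/2\}\times\R^d$, and the variation integral $\Var(S;I) = \int_{I\times\R^d}\abs{\pbf(\vec{S})}\dd\tv{S}$ gives \emph{full} weight to ``vertical'' mass (mass concentrated in a fixed time slice, where $\abs{\pbf(\vec{S})}=1$). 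For general $S_1,S_2\in\Irm_{1+k}$, the hypothesis $\partial S_i\restrict((0,1)\times\R^3)=0$ does \emph{not} prevent $S_1$ from having vertical mass at $\{1\}\times\R^d$ or $S_2$ at $\{0\}\times\R^d$. After rescaling, both vertical pieces land in $\{1/2\}\times\R^d$, where they can cancel in $a_{1,*}S_1+a_{2,*}S_2$, making $\Var(S_2\circ S_1)$ strictly smaller than $\Var(S_1)+\Var(S_2)$. A concrete failure: pick any closed $Q\in\Irm_{1+k}(\cl\Omega)$ with $\Mbf(Q)>0$, set $T_0=T_1=T_2=0$, $S_1:=\delta_1\times Q$, $S_2:=-\delta_0\times Q$. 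Both satisfy the hypotheses with $\Var(S_i)=\Mbf(Q)$, but your candidate $a_{1,*}S_1+a_{2,*}S_2$ is identically zero.

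You do flag exactly this ``delicate point'' at the end, but you discharge it only by invoking the condition $\tv{S_i}(\{0,1\}\times\R^d)=0$ from the Lip-integral definition, which is borrowed only for the final ($\Irm^\Lip$) conclusion, not for the variation and $\esssup$ identities that the lemma asserts for all of $\Irm_{1+k}$. Two ways to close the gap: (a) observe that in the paper this lemma is only ever applied to Lipschitz-in-time trajectories (elements of $\SlipF$, $\Slip_\eps$), so the vertical-piece-killing hypothesis is always available and your proof is complete in that regime; or (b) if the general statement is wanted, modify the construction by rescaling $S_1$ onto $[0,1/3]$, $S_2$ onto $[2/3,1]$, and inserting the neutral cylinder $\dbr{(1/3,2/3)}\times T_1$ in between, so that vertical pieces at the two gluing times cannot interact and the cylinder contributes zero variation. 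Even with fix (b), however, the $\esssup$-identity becomes delicate without a Lip assumption, because $\Mbf(T_1)$ need not be dominated by $\esssup_t\Mbf(S_1(t))$ when $T_1$ is not attained as a trace; in the Lip setting $T_1=S_1(1-)=S_2(0+)$ is a weak* limit of slices and lower semicontinuity of $\Mbf$ settles this, which is exactly the case your argument handles.
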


Next, we turn to topological aspects. For this, we say that $(S_j) \subset \Irm_{1+k}([\sigma,\tau] \times \cl\Omega)$ \term{converges BV-weakly*} to $S \in \Irm_{1+k}([\sigma,\tau] \times \cl\Omega)$ as $j \to \infty$, in symbols \enquote{$S_j \toweakstar S$ in BV}, if
\begin{equation} \label{eq:BVcurr_w*}
\left\{ \begin{aligned}
	S_j &\toweakstar S        &&\text{in $\Irm_{1+k}([\sigma,\tau] \times \cl\Omega)$,} \\
	S_j(t) &\toweakstar S(t)  &&\text{in $\Irm_k(\cl\Omega)$ for $\Lcal^1$-almost every $t \in [\sigma,\tau]$.}
\end{aligned} \right.
\end{equation}
The following compactness theorem for this convergence in the spirit of Helly's selection principle is established as Theorem~3.7 in~\cite{Rindler23}.

\begin{proposition} \label{prop:current_Helly}
Assume that the sequence $(S_j) \subset \Irm_{1+k}([\sigma,\tau] \times \cl\Omega)$ satisfies
\[
\esssup_{t \in [\sigma,\tau]} \, \bigl( \Mbf(S_j(t)) + \Mbf(\partial S_j(t)) \bigr) + \Var(S_j;[\sigma,\tau]) + \Var(\partial S_j;[\sigma,\tau]) \leq C < \infty
\]
for all $j \in \N$. Then, there exists $S \in \Irm_{1+k}([\sigma,\tau] \times \cl\Omega)$ and a (not relabelled) subsequence such that
\[
S_j \toweakstar S  \quad\text{in BV.}
\]
Moreover,
\begin{align*}
	\esssup_{t \in [\sigma,\tau]} \, \Mbf(S(t)) &\leq \liminf_{j \to \infty} \, \; \esssup_{t \in [\sigma,\tau]} \, \Mbf(S_j(t)), \\
	\esssup_{t \in [\sigma,\tau]} \, \Mbf(\partial S(t)) &\leq \liminf_{j \to \infty} \, \; \esssup_{t \in [\sigma,\tau]} \, \Mbf(\partial S_j(t)), \\
	\Var(S;[\sigma,\tau]) &\leq \liminf_{j \to \infty} \, \Var(S_j;[\sigma,\tau]), \\
	\Var(\partial S;(\sigma,\tau)) &\leq \liminf_{j \to \infty} \, \Var(\partial S_j;(\sigma,\tau)).
\end{align*}
If additionally $(S_j) \subset \Irm^\Lip_{1+k}([\sigma,\tau] \times \cl\Omega)$ such that the Lipschitz constants $L_j$ of the scalar maps $t \mapsto \Var(S_j;[\sigma,t]) + \Var(\partial S_j;(\sigma,t))$ are uniformly bounded, then also
\[
S \in \Irm^\Lip_{1+k}([\sigma,\tau] \times \cl\Omega)
\]
with Lipschitz constant bounded by $\liminf_{j\to\infty} L_j$. Moreover, in this case, $S_j(t) \toweakstar S(t)$ in $\Irm_k(\cl\Omega)$ for \emph{every} $t \in [\sigma,\tau]$.
\end{proposition}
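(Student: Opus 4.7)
The plan is to proceed in four steps, combining Federer--Fleming compactness in space-time with a Helly-type selection principle for slices and an identification argument via the coarea formula.

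\textit{Step 1: Space-time compactness.} From Lemma~\ref{lem:mass} and the hypotheses, $\Mbf(S_j) + \Mbf(\partial S_j) \leq C$ uniformly. Federer--Fleming compactness then yields a (not relabelled) subsequence with $S_j \toweakstar S$ in $\Irm_{1+k}([\sigma,\tau] \times \cl\Omega)$, and by the boundary rectifiability theorem $\partial S \in \Irm_k$. The lower semicontinuity of mass gives at once
\[
\Var(S;[\sigma,\tau]) \leq \liminf_j \Var(S_j;[\sigma,\tau]), \qquad \Var(\partial S;(\sigma,\tau)) \leq \liminf_j \Var(\partial S_j;(\sigma,\tau)),
\]
since these variations are the masses of pushforwards of restrictions, and both pushforward and restriction behave well under weak* convergence once the relevant set is open/closed as appropriate.

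\textit{Step 2: Helly selection for slices.} For each $j$, view $t \mapsto S_j(t)$ as a function into the space of integral $k$-currents equipped with the flat metric $\Fbf$. The cylinder formula~\eqref{eq:cylinder}, together with the bounds on $\Var(S_j;\cdot)$ and $\Var(\partial S_j;\cdot)$, shows that this function has uniformly bounded $\Fbf$-variation on $[\sigma,\tau]$. Combined with the uniform mass bound $\esssup_t \bigl(\Mbf(S_j(t)) + \Mbf(\partial S_j(t))\bigr) \leq C$, Federer--Fleming furnishes $\Fbf$-precompactness at each fixed time. A diagonal extraction over a countable dense set $D \subset [\sigma,\tau]$, followed by a standard Helly selection for metric-space-valued BV functions, produces a further subsequence and a limit function $t \mapsto \tilde S(t) \in \Irm_k(\cl\Omega)$ such that $S_j(t) \toweakstar \tilde S(t)$ for every continuity point of the (nondecreasing majorant of the) variation, hence for $\Lcal^1$-a.e.\ $t$.

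\textit{Step 3: Identification $\tilde S(t) = S(t)$.} For $\omega \in \Dcal^k(\R^d)$ and $\psi \in C_c((\sigma,\tau))$, apply the characterization~\eqref{eq:slice} to the space-time currents $S_j$ and $S$, testing against $\psi \circ \tbf$ times $\pbf^*\omega \wedge D\tbf$. The space-time weak* convergence from Step 1 gives
\[
\int_\sigma^\tau \bigl\langle S_j(t),\omega\bigr\rangle \psi(t) \dd t \; \tolong \; \int_\sigma^\tau \bigl\langle S(t),\omega\bigr\rangle \psi(t) \dd t,
\]
whereas the pointwise convergence from Step~2 and dominated convergence (justified by the uniform slice mass bound) yield the same limit with $\tilde S(t)$ in place of $S(t)$. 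Hence $\tilde S(t) = S(t)$ for $\Lcal^1$-a.e.\ $t$, which is the second condition in~\eqref{eq:BVcurr_w*}. The pointwise lower semicontinuity $\Mbf(S(t)) \leq \liminf_j \Mbf(S_j(t))$ and its analogue for $\partial S(t)$ then propagate to the essential suprema by a standard argument.

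\textit{Step 4: Lipschitz case.} If the scalar maps $t \mapsto \Var(S_j;[\sigma,t]) + \Var(\partial S_j;(\sigma,t))$ are $L$-Lipschitz, then so is $t \mapsto S_j(t)$ in the flat metric (again via~\eqref{eq:cylinder}). The limit inherits this $L$-Lipschitz property by passing to the limit in the Lipschitz bound, ruling out any jump in $S(t)$. In particular, $\tv{S}(\{\sigma,\tau\} \times \R^d) = 0$, $S \in \Irm^\Lip_{1+k}$, and the pointwise convergence $S_j(t) \toweakstar S(t)$ can be upgraded to every $t \in [\sigma,\tau]$ by equicontinuity. The main technical hurdle is Step~3, namely the commutativity of slicing with the space-time limit, which hinges on the uniform integrability provided by the essential supremum bound; without it, mass could escape vertically and the two notions of limit would decouple.
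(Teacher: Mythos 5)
The paper does not prove this proposition itself; it cites it verbatim as Theorem~3.7 of~\cite{Rindler23}, so there is no in-text proof to compare against. Your outline nevertheless assembles the standard ingredients (Federer--Fleming in space-time, Helly selection for the slices via flat-metric BV bounds obtained from the cylinder formula, identification of the slice limit through the characterization~\eqref{eq:slice}), and Steps~1--3 are essentially sound, including the care about open vs.\ closed time intervals for $\Var$ and $\Var(\partial\cdot)$.

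There is, however, a genuine gap in Step~4. You argue that each $t \mapsto S_j(t)$ is $L$-Lipschitz for the flat metric (correct, via the cylinder formula as in Step~2), and then pass this Lipschitz property to the limit to conclude $S \in \Irm^\Lip_{1+k}([\sigma,\tau]\times\cl\Omega)$. But membership in $\Irm^\Lip_{1+k}$ is, by definition, the Lipschitz continuity of the \emph{scalar cumulative variations} $t\mapsto\Var(S;[\sigma,t])$ and $t\mapsto\Var(\partial S;(\sigma,t))$, and these are \emph{space-time} quantities (involving $\abs{\pbf(\vec S)}\,\tv{S}$) rather than functions of the slices alone. Knowing that $t\mapsto S(t)$ is flat-Lipschitz does not control the full variation of the space-time current on short time intervals: the inequality $\Fbf(S(s)-S(t))\le\Var(S;(t,s))+\Var(\partial S;(t,s))$ coming from the cylinder formula goes the wrong way. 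The correct route is the one already implicit in your Step~1: apply weak* lower semicontinuity of $\Var$ and $\Var(\partial\cdot)$ on arbitrary open subintervals to get
\[
  \Var(S;(t,s)) + \Var(\partial S;(t,s))
  \;\le\; \liminf_{j\to\infty}\bigl(\Var(S_j;(t,s)) + \Var(\partial S_j;(t,s))\bigr)
  \;\le\; \bigl(\liminf_j L_j\bigr)\,(s-t),
\]
which is exactly the required Lipschitz bound on the scalar variations and also yields $\tv{S}(\{\sigma,\tau\}\times\R^d)=0$. Once this is in place, your equicontinuity upgrade to all $t\in[\sigma,\tau]$ goes through as written.
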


We can use the variation to define the \term{(Lipschitz) deformation distance} between $T_0, T_1 \in \Irm_k(\cl\Omega)$ with $\partial T_0 = \partial T_1 = 0$:
\begin{align*}
\dist_{\Lip,\cl\Omega}(T_0,T_1) := \inf \setb{ \Var(S) }{ &\text{$S \in \Irm^\Lip_{1+k}([0,1] \times \cl\Omega)$ with $\partial S = \delta_1 \times T_1 - \delta_0 \times T_0$} }.
\end{align*}
The key result for us in this context is the following \enquote{equivalence theorem}; see Theorem~5.1 in~\cite{Rindler23} for the proof. 

\begin{proposition} \label{prop:equiv}
For every $M > 0$ and $T_j,T$ ($j \in \N$) in the set
\[
\setb{ T \in \Irm_k(\cl\Omega) }{ \partial T = 0, \; \Mbf(T) \leq M }
\]
the following equivalence holds (as $j \to \infty$):
\[
\dist_{\Lip,\cl\Omega}(T_j,T) \to 0  \qquad\text{if and only if}\qquad   T_j \toweakstar T \quad \text{in $\Irm_k(\cl\Omega)$}.
\]
Moreover, in this case, for all $j$ from a subsequence of the $j$'s, there are $S_j \in \Irm^\Lip_{1+k}([0,1] \times \cl\Omega)$ with
\[
\partial S_j = \delta_1 \times T - \delta_0 \times T_j, \qquad
\dist_{\Lip,\cl\Omega}(T_j,T) \leq \Var(S_j) \to 0,
\]
and
\[
\limsup_{j\to\infty} \, \; \esssup_{t \in [0,1]} \, \Mbf(S_j(t)) \leq C \cdot \limsup_{\ell \to \infty} \, \Mbf(T_\ell).
\]
Here, the constant $C > 0$ depends only on the dimensions and on $\Omega$.
\end{proposition}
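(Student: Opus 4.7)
The plan is to prove both implications of the equivalence separately, with the direction from weak* convergence to vanishing deformation distance being by far the more delicate.

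For the easier direction, assume $\dist_{\Lip,\cl\Omega}(T_j,T) \to 0$ and pick interpolating currents $S_j \in \Irm^\Lip_{1+k}([0,1] \times \cl\Omega)$ with $\partial S_j = \delta_1 \times T - \delta_0 \times T_j$ and $\Var(S_j) \to 0$. The spatial push-forward $\pbf_* S_j$ is a $(k+1)$-current in $\cl\Omega$ with $\partial(\pbf_* S_j) = \pbf_*(\partial S_j) = T - T_j$ and $\Mbf(\pbf_* S_j) = \Var(S_j) \to 0$ directly from the definition of $\Var$. Hence $\Fbf(T_j - T) \to 0$; combined with $\Mbf(T_j) \leq M$ (and $\partial T_j = 0$), the equivalence of flat and weak* convergence for uniformly mass-bounded integral $k$-currents yields $T_j \toweakstar T$.

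For the converse, the same equivalence first gives $\Fbf(T_j - T) \to 0$, and the definition of the flat norm furnishes decompositions $T_j - T = \partial Q_j + R_j$ with $Q_j \in \Irm_{k+1}$, $R_j \in \Irm_k$, and $\Mbf(Q_j), \Mbf(R_j) \to 0$. Since $\partial T_j = \partial T = 0$, the residual $R_j$ is an integral cycle of vanishing mass, so the Federer--Fleming isoperimetric inequality provides a filling $V_j$ of $R_j$ with $\Mbf(V_j) \to 0$. Setting $\tilde Q_j := Q_j + V_j$ gives $\partial \tilde Q_j = T_j - T$ with $\Mbf(\tilde Q_j) \to 0$; a standard retraction/cutoff argument places $\supp \tilde Q_j$ in a neighborhood of $\cl\Omega$.

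I would then assemble the interpolating current $S_j$ via Lemma~\ref{lem:concat} from three pieces: two constant-in-time pieces $\llbracket 0,1/3 \rrbracket \times T_j$ and $\llbracket 2/3,1 \rrbracket \times T$ (which contribute zero variation and have slice masses $\Mbf(T_j), \Mbf(T) \leq M$), and a transition piece $X_j$ on $[1/3, 2/3]$ with $\partial X_j = \delta_{2/3} \times T - \delta_{1/3} \times T_j$, $\Var(X_j) = O(\Mbf(\tilde Q_j)) \to 0$, and $\esssup_t \Mbf(X_j(t)) \leq C \max(\Mbf(T_j),\Mbf(T))$. A natural candidate is the push-forward of $\tilde Q_j$ under a Lipschitz graph map $\Psi(x) = (g(x), x)$ with $g \colon \cl\Omega \to [1/3, 2/3]$ suitably chosen, followed by time rescaling via Lemma~\ref{lem:rescale} and gluing of any horizontal boundary corrections.

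The main obstacle is the construction of $X_j$ as a bona fide Lip-integral current: the graph function $g$ must concentrate $\Psi_* T_j$ near time $1/3$ and $\Psi_* T$ near time $2/3$, stay Lipschitz, and avoid vertical single-time slabs that would destroy the Lipschitz-in-time property of $t \mapsto \Var(X_j;[1/3,t])$. Since the supports of $T_j$ and $T$ generally overlap, a single choice of $g$ will not suffice, so one must first decompose $\tilde Q_j$ into pieces whose boundaries can be separated --- for instance via a Smirnov-type decomposition --- before slanting each piece by an appropriately localized graph map. Once $X_j$ is in place, the Pythagorean estimate in Lemma~\ref{lem:mass} combined with the smallness of $\Mbf(\tilde Q_j)$ delivers the required slice mass bound $C \limsup_\ell \Mbf(T_\ell)$.
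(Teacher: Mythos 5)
The easy implication is fine: from $\dist_{\Lip,\cl\Omega}(T_j,T)\to 0$ you push the interpolants forward by $\pbf$, use that $\Mbf(\pbf_*S_j)=\Var(S_j)$ and $\partial(\pbf_*S_j)=T-T_j$, and conclude $\Fbf(T_j-T)\to 0$, hence $T_j\toweakstar T$ under the uniform mass bound. The paper itself does not prove Proposition~\ref{prop:equiv}; it cites Theorem~5.1 of~\cite{Rindler23}, so there is no in-paper argument to compare against, but your setup for the converse (flat decomposition $T_j-T=\partial Q_j+R_j$, killing $R_j$ via the isoperimetric inequality to get a filling $\tilde Q_j$ with $\partial\tilde Q_j=T_j-T$ and $\Mbf(\tilde Q_j)\to 0$, then splitting $S_j$ into two stationary blocks and one transition block) is the right frame.

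The genuine gap is exactly where you say the construction is hard, and the remedy you propose does not close it. First, a Smirnov-type decomposition is a theorem about divergence-free \emph{vector measures}, i.e.\ normal $1$-currents; there is no analogue for the $(k+1)$-dimensional filling $\tilde Q_j$ when $k\ge 1$ (which is the case of interest here, $k=1$). So ``decompose $\tilde Q_j$ into pieces whose boundaries can be separated via a Smirnov-type decomposition'' is not an available tool. Second, even granting some decomposition and a graph map $\Psi(x)=(g(x),x)$, the key bound
\[
\esssup_{t}\,\Mbf(X_j(t))\le C\,\max\{\Mbf(T_j),\Mbf(T)\}
\]
does not follow from $\Mbf(\tilde Q_j)\to 0$: the coarea/slicing inequality only controls $\int_0^1\Mbf\bigl((\Psi_*\tilde Q_j)|_t\bigr)\,\mathrm{d}t\le \mathrm{Lip}(g)\,\Mbf(\tilde Q_j)$, i.e.\ the \emph{average} slice mass, not the essential supremum. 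A slanted filling can perfectly well have a thin time-slab carrying almost all of its mass. Obtaining an $\Lrm^\infty$-in-time slice bound of size $C\cdot\limsup_\ell\Mbf(T_\ell)$ (and simultaneously keeping $t\mapsto\Var(X_j;[1/3,t])$ Lipschitz with $\Var(X_j)\to 0$) is the substantive content of the proposition and it is exactly what is missing. As written, your construction of the transition current $X_j$ is a plan, not a proof, and the one concrete mechanism you name to make it work is inapplicable in the relevant dimensions.

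To close the gap one needs a genuinely different device for building the interpolating Lip-current — for instance a polyhedral approximation of $T_j$ and $T$ via the Federer--Fleming deformation theorem (which yields mass control $\Mbf(P_j)\le C\Mbf(T_j)$), piecewise-affine homotopies between nearby polyhedra with explicit slice control, and the concatenation/rescaling lemmas (Lemmas~\ref{lem:rescale} and~\ref{lem:concat}) to glue the pieces without creating vertical slabs. The point is that the slice-mass bound must be enforced by construction, not deduced a posteriori from smallness of $\Mbf(\tilde Q_j)$.
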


By inspection of the proof of Theorem~5.1 in~\cite{Rindler23} one can see that the same statement applies to normal currents as well: Defining the \term{normal (Lipschitz) deformation distance} between $T_0, T_1 \in \Nrm_k(\cl\Omega)$ with $\partial T_0 = \partial T_1 = 0$ via
\begin{align*}
\dist^\Nrm_{\Lip,\cl\Omega}(T_0,T_1) := \inf \setb{ \Var(S) }{ &\text{$S \in \Nrm^\Lip_{1+k}([0,1] \times \cl\Omega)$ with $\partial S = \delta_1 \times T_1 - \delta_0 \times T_0$} },
\end{align*}
where $\Nrm^\Lip_{1+k}([0,1] \times \cl\Omega)$ is defined analogously to $\Irm^\Lip_{1+k}([0,1] \times \cl\Omega)$ (see Section~\ref{sc:slips} for a precise definition), the following result holds:

\begin{proposition} \label{prop:equiv_normal}
For every $M > 0$ and $T_j,T$ ($j \in \N$) in the set
\[
\setb{ T \in \Nrm_k(\cl\Omega) }{ \partial T = 0, \; \Mbf(T) \leq M }
\]
the following equivalence holds (as $j \to \infty$):
\[
\dist^\Nrm_{\Lip,\cl\Omega}(T_j,T) \to 0  \qquad\text{if and only if}\qquad   T_j \toweakstar T \quad \text{in $\Nrm_k(\cl\Omega)$}.
\]
Moreover, in this case, for all $j$ from a subsequence of the $j$'s, there are $S_j \in \Nrm^\Lip_{1+k}([0,1] \times \cl\Omega)$ with
\[
\partial S_j = \delta_1 \times T - \delta_0 \times T_j, \qquad
\dist^\Nrm_{\Lip,\cl\Omega}(T_j,T) \leq \Var(S_j) \to 0,
\]
and
\[
\limsup_{j\to\infty} \, \; \esssup_{t \in [0,1]} \, \Mbf(S_j(t)) \leq C \cdot \limsup_{\ell \to \infty} \, \Mbf(T_\ell).
\]
Here, the constant $C > 0$ depends only on the dimensions and on $\Omega$.
\end{proposition}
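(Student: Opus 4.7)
My plan is to follow the proof of Theorem~5.1 in~\cite{Rindler23} essentially verbatim, checking that every construction employed preserves the class of normal currents (rather than integral currents).

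The \emph{only if} direction is immediate. Given $\dist^\Nrm_{\Lip,\cl\Omega}(T_j, T) \to 0$, select, for each $j$, some $S_j \in \Nrm^\Lip_{1+k}([0,1] \times \cl\Omega)$ with $\partial S_j = \delta_1 \times T - \delta_0 \times T_j$ and $\Var(S_j) \to 0$. Since $\pbf \colon [0,1] \times \cl\Omega \to \cl\Omega$ is Lipschitz and proper, the pushforward $\pbf_* S_j$ lies in $\Nrm_{k+1}(\cl\Omega)$ and satisfies $\Mbf(\pbf_* S_j) = \Var(S_j) \to 0$ together with $\partial(\pbf_* S_j) = \pbf_*(\partial S_j) = T - T_j$. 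Hence $\Fbf(T - T_j) \leq \Var(S_j) \to 0$, and combined with the uniform mass bound $\Mbf(T_j) \leq M$ this yields $T_j \toweakstar T$ in $\Nrm_k(\cl\Omega)$.

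The \emph{if} direction is the substantive one. Assume $T_j \toweakstar T$ with $\sup_j \Mbf(T_j) \leq M$. The equivalence of weak* and flat convergence under uniform mass bounds, which is proved identically for integral and for normal currents, yields $\Fbf(T_j - T) \to 0$. Along a subsequence we obtain $Q_j \in \Nrm_{k+1}(\cl\Omega)$ and $R_j \in \Nrm_k(\cl\Omega)$ with $T - T_j = \partial Q_j + R_j$, $\eps_j := \Mbf(Q_j) + \Mbf(R_j) \to 0$, and $\partial R_j = 0$ automatically. Following the construction of~\cite{Rindler23}, a deformation current $S_j \in \Nrm^\Lip_{1+k}([0,1] \times \cl\Omega)$ realizing the transition from $T_j$ to $T$ is then assembled from $Q_j$ and $R_j$ via a combination of cylindrical pieces over $T_j$ and $T$, a short transient absorbing the residual $R_j$, and a Lipschitz-spread insertion that implements the cobordism by $Q_j$; the time scale is chosen as $\sigma_j := \sqrt{\eps_j}$ so that both the variation and the slice-mass bounds remain controlled, yielding $\Var(S_j) \leq C(\sigma_j \Mbf(R_j) + \Mbf(Q_j)) \to 0$ and $\esssup_t \Mbf(S_j(t)) \leq C' \max\{\Mbf(T_j), \Mbf(T)\}$ for constants depending only on the dimensions and on $\Omega$.

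The main obstacle, and essentially the only point that requires verification, is that each individual operation used in the construction sends normal currents to normal currents with the expected mass and variation bounds: cartesian products with intervals and with scalar Radon measures on the time axis, Lipschitz time reparametrizations (Lemma~\ref{lem:rescale}), multiplication by Lipschitz scalar functions, and concatenation (Lemma~\ref{lem:concat}). All of these are manifestly measure-theoretic and do not invoke integer multiplicity or rectifiability; moreover the mass and variation identities in~\cite{Rindler23} are derived through computations that are valid for any vector-valued Radon measure. Consequently, the entire proof transfers without modification from integral to normal currents, and both the constant $C > 0$ in the slice mass bound and the Lipschitz regularity $t \mapsto \Var(S_j;[0,t]) \in \Lip([0,1])$ carry over exactly as in the integral case.
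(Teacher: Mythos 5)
Your proof follows essentially the same route as the paper, which likewise asserts (without spelling it out) that the constructions in the proof of Theorem~5.1 of~\cite{Rindler23} carry over verbatim from integral to normal currents. Your elaboration---the flat-norm decomposition $T-T_j = \partial Q_j + R_j$, the cylinder-plus-transient assembly of $S_j$, and the observation that none of these operations invoke integer multiplicity or rectifiability---correctly fills in the content of that claim.
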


\subsection{Approximation of normal currents} \label{sc:normal_approx}

Next, we recall a classical approximation result for normal currents. To state it, we let $\mathrm{RP}_k(\cl\Omega)$ be the set of $k$-dimensional \term{real polyhedral chains} with support in $\cl\Omega$, that is, those $k$-currents $P$ that can be written in the form
\[
P = \sum_{\ell = 1}^N p_\ell \dbr{\sigma_\ell},
\]
where the $\sigma_\ell$ are oriented convex $k$-polytopes ($\ell \in \{1,\ldots,N\}$), $\dbr{\sigma_\ell}$ denotes the integral $k$-current associated to $\sigma_\ell$ (with unit multiplicity and some fixed orientation), and $p_\ell \in \R$. Whenever $p_\ell \in \N$, the current $P$ is integer-rectifiable and we will then write $P \in \mathrm{IP}_k(\cl\Omega)$, calling $\mathrm{IP}_k(\cl\Omega)$ the set of $k$-dimensional \term{integral polyhedral chains}. 

We record the following approximation results for normal currents:

\begin{proposition} \label{prop:strong_pol_approx}
Let $T \in \Nrm_k(K)$ with $K \subset \R^d$ compact and a Lipschitz retraction domain, and let $\eps > 0$. Then, there exists $U \in \Nrm_k(K)$ such that
\[
  \eps^{-1} U \in \Irm_k(K),  \qquad \supp U \subset \supp T + B_\eps,  \qquad \supp \partial U \subset \supp \partial T + B_\eps,
\]
and
\[
  \Fbf(T - U) + \Fbf(\partial T - \partial U) + \abs{\Mbf(T) - \Mbf(U)} + \abs{\Mbf(\partial T) - \Mbf(\partial U)} = \SmallO_T(1)
\]
with a function $\SmallO_T(1)$ that vanishes as $\eps \todown 0$ and only depends (monotonically) on $\eps(\Mbf(T) + \Mbf(\partial T))$ besides dimensional quantities. 
\end{proposition}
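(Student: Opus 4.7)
The proposition is a quantitative polyhedral approximation result: for any normal current $T$ supported in $K$, produce a nearby normal current $U$ whose coefficients, when rescaled by $\eps^{-1}$, are integers. My plan is a classical deformation-and-quantization argument: combine the Federer--Fleming deformation theorem with coefficient rounding, preceded by a mollification step needed to secure two-sided mass control. Since the conclusion requires $\abs{\Mbf(T) - \Mbf(U)}$ and $\abs{\Mbf(\partial T) - \Mbf(\partial U)}$ to be small, and the deformation theorem only yields the one-sided bounds $\Mbf(P) \leq C\Mbf(T)$ and $\Mbf(\partial P) \leq C\Mbf(\partial T)$, I would first replace $T$ by a mollification $T_\rho$ at a small scale $\rho > 0$. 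Using Jensen's inequality for the convolution and lower semicontinuity of mass, one obtains $\Mbf(T_\rho) \to \Mbf(T)$, $\Mbf(\partial T_\rho) \to \Mbf(\partial T)$, and $\Fbf(T - T_\rho) + \Fbf(\partial T - \partial T_\rho) \to 0$ as $\rho \todown 0$.

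\textbf{Deformation and quantization.} Apply the Federer--Fleming deformation theorem to $T_\rho$ at a cubical grid scale $\delta$, producing a real polyhedral $k$-chain $P = \sum_\ell p_\ell \dbr{\sigma_\ell}$ on the $k$-skeleton, with the standard bounds $\Mbf(P) \leq C\Mbf(T_\rho) + C\delta\Mbf(\partial T_\rho)$, $\Mbf(\partial P) \leq C\Mbf(\partial T_\rho)$, and a flat decomposition $T_\rho - P = \partial Q + R$ whose $Q, R$ have mass $O(\delta)$ times the relevant data. Using that $K$ is a Lipschitz retraction domain, post-compose with a short retraction so that $\supp P \subset (\supp T + B_{C\delta}) \cap K$ and similarly for $\partial P$. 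Then quantize by setting
\[
  u_\ell := \eps \cdot \left\lfloor p_\ell/\eps + 1/2 \right\rfloor, \qquad U := \sum_\ell u_\ell \dbr{\sigma_\ell},
\]
so that $\eps^{-1}U$ is an integer polyhedral chain, hence in $\Irm_k(K)$, and $\abs{p_\ell - u_\ell} \leq \eps/2$. Writing $T - U = (T - T_\rho) + (T_\rho - P) + (P - U)$ controls the first two terms via mollification and deformation; for the last term, with $N \leq C|K|\delta^{-d}$ denoting the number of $k$-cells of the grid in a neighborhood of $\supp T \cap K$, one obtains
\[
  \Mbf(P - U) \leq C \eps N \delta^k \leq C \eps |K| \delta^{k-d}, \qquad \Mbf(\partial(P - U)) \leq C \eps |K| \delta^{k-d-1},
\]
which yields the mass and flat-norm estimates.

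\textbf{Balancing and main obstacle.} Choosing $\rho$ and $\delta$ in terms of $\eps$ and the data so as to balance the deformation error $O(\delta(\Mbf(T) + \Mbf(\partial T)))$ against the quantization error $O(\eps|K|\delta^{k-d-1})$ yields a total error controlled by a positive power of $\eps(\Mbf(T) + \Mbf(\partial T))$ together with $|K|$-dependent constants. The subtle points are threefold: (i) naive coefficient rounding destroys mass when all $\abs{p_\ell} < \eps/2$, so $\delta$ cannot be too small relative to $\eps$ (else $U \equiv 0$); (ii) the support requirement $\supp U \subset \supp T + B_\eps$ forces $C\delta \leq \eps$; (iii) the two-sided mass control is only accessible via the preliminary smoothing, since the deformation theorem alone provides only a one-sided bound $\Mbf(P) \leq C\Mbf(T)$. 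The interplay between these three constraints determines the admissible regime of $\eps$ and produces the monotone dependence on $\eps(\Mbf(T) + \Mbf(\partial T))$, with the error vanishing as $\eps \todown 0$, as claimed.
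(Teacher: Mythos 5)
Your strategy---real polyhedral flat approximation, coefficient quantization, and a retraction onto $K$---follows the skeleton of the paper's argument, but where the paper simply cites the strong polyhedral approximation theorem for normal currents (\cite[Theorem~5 in Section~2.6]{GiaquintaModicaSoucek98book1}), you attempt to re-derive that step from scratch via mollification and the Federer--Fleming deformation theorem. Two points in this re-derivation do not hold up.

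First, the deformation theorem gives only one-sided mass bounds, $\Mbf(P)\leq C\Mbf(T_\rho)+C\delta\Mbf(\partial T_\rho)$ with a dimensional constant $C>1$. Together with lower semicontinuity this does \emph{not} yield $\Mbf(P)\to\Mbf(T_\rho)$, and mollification does not by itself upgrade the absolute constant $C$ to $1+\SmallO(1)$; the cited theorem obtains mass convergence by a piecewise-constant approximation of the smooth mollified density, not by the grid deformation theorem. Second, and decisively, your quantization error $\Mbf(P-U)\lesssim\eps|K|\delta^{k-d}$ (similarly $\eps|K|\delta^{k-d-1}$ for the boundary) diverges as $\delta\to 0$ whenever $k<d$. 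As you observe, the support condition forces $\delta\lesssim\eps$; but plugging in the largest admissible $\delta\sim\eps$ gives $\eps\,\delta^{k-d}\gtrsim\eps^{k-d+1}$, which does not vanish as $\eps\todown 0$ and blows up for $k\leq d-2$. There is therefore \emph{no} admissible choice of $\delta$ that balances the deformation error $\delta\bigl(\Mbf(T)+\Mbf(\partial T)\bigr)$ against the quantization error, contrary to the balancing you invoke. The standard resolution is to fix a polyhedral approximant $P$ with a finite skeleton (via the cited GMS theorem, with accuracy governed by an auxiliary parameter independent of $\eps$) and only then quantize at scale $\eps$: the residual $\eps\sum_\ell\Hcal^k(\sigma_\ell)$ is then a genuine $\SmallO(1)$ as $\eps\todown 0$.
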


\begin{proof}
According to~\cite[Theorem~5 in Section~2.6]{GiaquintaModicaSoucek98book1} there is a polyhedral chain $P \in \Rrm\Prm_k(\R^d)$ with real coefficients (multiplicities) and such that 
\[
	\Fbf(T - P) + \Fbf(\partial T - \partial P) + \abs{\Mbf(T) - \Mbf(P)} + \abs{\Mbf(\partial T) - \Mbf(\partial P)} 
	= \SmallO(1).
\]
We now can approximate the real coefficients in $\eps \Z$, resulting in $Q \in \Rrm\Prm_k(\R^d)$ with $\eps^{-1} Q \in \Irm_k(\R^d)$ and
\[
	\Fbf(P - Q) + \Fbf(\partial P - \partial Q) + \abs{\Mbf(P) - \Mbf(Q)} + \abs{\Mbf(\partial P) - \Mbf(\partial Q)} 
	= \SmallO(1).
\]
Finally, we may retract $Q$ back onto $K$ via a Lipschitz retraction (see~\cite[Remark 4.5]{Rindler21b?} for details) to obtain the required \fil{$U \in \Nrm_k(K)$ with $\eps^{-1} U \in \Irm_k(K)$ and}
\[
	\Fbf(Q - U) + \Fbf(\partial Q - \partial U) + \abs{\Mbf(Q) - \Mbf(U)} + \abs{\Mbf(\partial Q) - \Mbf(\partial U)} 
	= \SmallO(1).
\]
Combining these estimates, and also tracing the support of the approximating currents through the proof of the result cited above, all claimed statements follow.
\end{proof}

We also need the following refinement:

\begin{proposition} \label{lem:current_approx}
Let $S \in \Nrm_{k+1}([0,T] \times \cl\Omega)$ with $\partial S \restrict ((0,T) \times \cl\Omega) = 0$ and assume that for some $\eps > 0$ (sufficiently small) it holds that
\[
  \eps^{-1} \partial S \restrict (\{0\} \times \cl\Omega) \in \Irm_k(\cl\Omega).
\]
Then, there exists $R \in \Nrm_{k+1}([0,T] \times \cl\Omega)$ such that
\[
  \eps^{-1} R \in \Irm_{k+1}([0,T] \times \cl\Omega), \qquad
  \partial R \restrict (\{0\} \times \cl\Omega) = \partial S \restrict (\{0\} \times \cl\Omega),
\]
and
\[
  \Fbf(S - R) + \Fbf(\partial S - \partial R) + \abs{\Mbf(S) - \Mbf(R)} + \abs{\Mbf(\partial S) - \Mbf(\partial R)} = \SmallO_S(1)
\]
with a function $\SmallO_S(1)$ that vanishes as $\eps \todown 0$ and only depends (monotonically) on $\eps(\Mbf(S) + \Mbf(\partial S))$ besides dimensional quantities.
\end{proposition}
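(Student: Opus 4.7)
The strategy is to apply Proposition~\ref{prop:strong_pol_approx} to $S$ ignoring the initial-boundary constraint, producing an $\eps^{-1}$-integral approximation $U$, and then to attach a short-time corrector strip that exactly restores the prescribed initial boundary $T_0 := -\partial S\restrict(\{0\}\times\cl\Omega)$ (which is $\eps^{-1}$-integral by hypothesis).

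First I would apply Proposition~\ref{prop:strong_pol_approx} to $S$ on the Lipschitz retract $K:=[0,T]\times\cl\Omega$, obtaining $U\in\Nrm_{k+1}(K)$ with $\eps^{-1}U\in\Irm_{k+1}(K)$ and satisfying both the four-term flat-norm/mass error $\SmallO_S(1)$ and the support controls $\supp U \subset \supp S + B_\eps$, $\supp \partial U \subset \supp \partial S + B_\eps$. Since $\supp \partial S \subset (\{0\}\cup\{T\})\times\cl\Omega$, for $\eps < T/3$ the current $\partial U$ splits cleanly as $(\partial U)_0 + (\partial U)_T$ with supports in $[0,\eps]\times(\cl\Omega+B_\eps)$ and $[T-\eps,T+\eps]\times(\cl\Omega+B_\eps)$, respectively. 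Pushing $(\partial U)_0$ onto $\{0\}\times\cl\Omega$ via the Lipschitz retraction $(\tau,x)\mapsto(0,x)$, whose flat-norm homotopy cost is $\lesssim\eps\bigl(\Mbf((\partial U)_0)+\Mbf(\partial(\partial U)_0)\bigr) = \SmallO(1)$, produces an $\eps^{-1}$-integral $k$-cycle $T_0'\in\Nrm_k(\cl\Omega)$ with $\Mbf(T_0')\leq\Mbf(\partial S)+1$ and, after localising the global flat-norm estimate via the same projection,
\[
\Fbf(T_0-T_0') + \abs{\Mbf(T_0)-\Mbf(T_0')} = \SmallO_S(1).
\]

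Next I would construct the corrector strip. Both $T_0$ and $T_0'$ lie in the bounded-mass set of boundaryless $k$-currents of mass at most $M':=2(\Mbf(\partial S)+1)$, so by Proposition~\ref{prop:equiv_normal} flat-norm smallness transfers to deformation-distance smallness and yields $V_0\in\Nrm^\Lip_{1+k}([0,1]\times\cl\Omega)$ with $\partial V_0=\delta_1\times T_0'-\delta_0\times T_0$, $\Var(V_0)=\SmallO_S(1)$, and $\esssup_t\Mbf(V_0(t))\leq CM'$; Lemma~\ref{lem:mass} then furnishes $\Mbf(V_0)=\SmallO_S(1)$. To upgrade $V_0$ to an $\eps^{-1}$-integral current while preserving the endpoint slices exactly, I would apply Proposition~\ref{prop:strong_pol_approx} to $V_0$ (producing an $\eps^{-1}$-integral near-approximation) and patch the resulting endpoint mismatches at $t=0,1$ with two further tiny corrector strips built by the same mechanism on a geometrically smaller scale; since each correction has variation bounded by the flat-norm of the preceding mismatch, this iteration terminates with acceptable error. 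The outcome is $V\in\Nrm^\Lip_{1+k}([0,1]\times\cl\Omega)$ with $\eps^{-1}V\in\Irm^\Lip_{1+k}$, $\partial V=\delta_1\times T_0'-\delta_0\times T_0$, and $\Var(V)+\Mbf(V)=\SmallO_S(1)$.

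Finally I would rescale $V$ onto $[0,T/2]\times\cl\Omega$ and $U$ onto $[T/2,T]\times\cl\Omega$ via Lemma~\ref{lem:rescale} (which preserves variation and slice masses), and set $R:=V+\tilde U \in \Nrm_{k+1}([0,T]\times\cl\Omega)$. By construction $\eps^{-1}R$ is integral, the interior slice at $t=T/2$ cancels exactly since both sides have trace $T_0'$, and $\partial R\restrict(\{0\}\times\cl\Omega) = -\delta_0\times T_0 = \partial S\restrict(\{0\}\times\cl\Omega)$. The triangle inequality $\Fbf(S-R)\leq\Fbf(S-U)+\Fbf(U-\tilde U)+\Mbf(V)$ together with the analogous estimates for $\partial$ and for masses yields the claimed $\SmallO_S(1)$ control. \emph{The main obstacle} is the trace-closeness estimate $\Fbf(T_0-T_0')=\SmallO_S(1)$: Proposition~\ref{prop:strong_pol_approx} controls only global flat norms, and localising this to the $\{0\}\times\cl\Omega$ component relies essentially on its support-control clause combined with the Lipschitz time projection. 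A secondary technical nuisance is producing $V$ as both $\eps^{-1}$-integral \emph{and} boundary-exact, which is resolved by the iterated rounding-plus-correction scheme sketched above.
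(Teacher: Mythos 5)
Your overall plan (approximate with Proposition~\ref{prop:strong_pol_approx}, identify the resulting trace near $t=0$, then glue in a corrector strip that exactly restores $T_0$, and finally reassemble by time rescaling) is structurally the same as the paper's. The paper, however, first extends $S$ in time to $Q$ on $[-\eps,1+\eps]$ so that after approximation the new boundary automatically lands in $(-2\eps,0)\cup(1,1+2\eps)$ by the support clause of Proposition~\ref{prop:strong_pol_approx}; this sidesteps the explicit flat-norm localisation you perform. Your localisation can be made to work, but there are two genuine gaps downstream.

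\textbf{Misapplication of Lemma~\ref{lem:mass}.} You claim \enquote{Lemma~\ref{lem:mass} then furnishes $\Mbf(V_0)=\SmallO_S(1)$}. It does not: Lemma~\ref{lem:mass} gives
\[
  \Mbf(V_0)\leq\int_0^1\Mbf(V_0(t))\dd t+\Var(V_0)\leq \esssup_t\Mbf(V_0(t))+\Var(V_0)\leq CM'+\SmallO_S(1),
\]
which is $O(1)$, not $\SmallO_S(1)$, because the slice masses of the connecting deformation are only bounded by $CM'\sim\Mbf(\partial S)$. The smallness of $\Mbf$ comes exclusively from the integral term being taken over a time interval of length $\SmallO(1)$ --- precisely why the paper places the corrector $Z$ on $[-\eps,0]$, where Lemma~\ref{lem:mass} gives $\Mbf(Z)\leq\eps\cdot\esssup\Mbf(Z(t))+\Var(Z)=\SmallO(1)$. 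Your subsequent placement of the corrector on $[0,T/2]$ perpetuates the error: after rescaling, $\Mbf(V)$ is still $O(1)$, which enters $\Fbf(S-R)\leq\Fbf(S-U)+\Fbf(U-\tilde{U})+\Mbf(V)$ and $\abs{\Mbf(S)-\Mbf(R)}$, so neither quantity is $\SmallO_S(1)$. For the same reason, rescaling $U$ from $[0,T]$ to $[T/2,T]$ is an $O(T)$ displacement of the current, so $\Fbf(U-\tilde{U})$ is also $O(1)$, not $\SmallO_S(1)$. Both defects vanish if the corrector lives on a strip of width $\eps$ and $U$ is correspondingly shifted by only $\eps$; this is the paper's construction.

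\textbf{The iterated rounding scheme is not shown to converge.} To upgrade the normal deformation $V_0$ to an $\eps^{-1}$-integral one with exact endpoints, you round $V_0$ via Proposition~\ref{prop:strong_pol_approx} and patch the resulting endpoint mismatch with further corrector strips, claiming geometric decay. But Proposition~\ref{prop:strong_pol_approx} at fixed $\eps$ introduces an error controlled only by $\eps(\Mbf(\cdot)+\Mbf(\partial\cdot))$, and the currents to be rounded at each stage have masses bounded below (the endpoints $T_0,T_0'$ always appear), so the per-step rounding error does not decrease geometrically. As written the iteration does not obviously terminate. The paper avoids this by building the corrector $Z$ directly as an $\eps^{-1}$-integral current via Proposition~\ref{prop:equiv} \enquote{trivially adapted to $\eps$-integral currents} --- i.e.\ running the deformation construction at scale $\eps$ rather than rounding a normal current afterwards. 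Your proposal would need to replace the iteration by this (or an equivalent) scale-$\eps$ construction.

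With the corrector confined to a slab of width $\eps$ and the $\eps^{-1}$-integral deformation produced at scale $\eps$ rather than by iterative rounding, your localisation-based variant would be a valid alternative to the paper's extension-based one.
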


\begin{proof} Assume without loss of generality that $T=1$.
For $\eps \in (0,1)$ define the extended current
\[
  Q := \dbr{-\eps,0} \times S(0) + S + \dbr{1,1+\eps} \times S(1) \in \Nrm_{k+1}([-1,2]\times \cl\Omega)
\] 
and notice that $\partial Q \restrict ((-\eps,1+\eps) \times \cl\Omega) = 0$.

Apply the preceding Proposition~\ref{prop:strong_pol_approx} to $Q$ in $K := [-2,3] \times \cl\Omega$, which is a Lipschitz retraction domain since $\Omega$ is assumed to have a Lipschitz boundary~\cite[Remark 4.5]{Rindler21b?}. This results in a normal current $\tilde{R} \in \Nrm_{k+1}([-2,3] \times \cl\Omega)$ such that $\eps^{-1} \tilde{R} \in \Irm_{k+1}([-2,3] \times \cl\Omega)$ with 
\[
  \Fbf(Q - \tilde{R}) + \Fbf(\partial Q - \partial \tilde{R}) + \abs{\Mbf(Q) - \Mbf(\tilde{R})} + \abs{\Mbf(\partial Q) - \Mbf(\partial \tilde{R})} = \SmallO(1)
\]
and $\supp \partial \tilde{R} \subset ((-2\eps,0) \cup (1,1+2\eps)) \times \R^d$. We also assume that $\tilde{R}(0)$ and $\tilde{R}(1)$ are well-defined (otherwise we need to adjust this point a little bit).

Now employ Proposition~\ref{prop:equiv} (trivially adapted to $\eps$-integral currents) to get a current $\eps^{-1} Z \in \Irm_{1+k}([-\eps,0] \times \cl\Omega)$ with
\[
\partial Z = \delta_{0} \times \tilde{R}(0)  - \delta_{-\eps} \times S(0)
\]
and $\Mbf(Z) = \SmallO(1)$. We now define 
\[
\ttilde{R} := Z + \tilde{R} \restrict ((0,1) \times \R^d). 
\]
Since $(\partial \tilde{R}) \restrict ((0,1) \times \R^d) = 0$ we have 
\begin{align*}
  \partial \ttilde{R} & = \partial Z +  \partial \bigl[ \tilde{R} \restrict ((0,1) \times \R^d) \bigr] \\
  & = \delta_{0} \times \tilde{R}(0) - \delta_{-\eps} \times S(0) + \delta_1 \times \tilde{R}(1) - \delta_0 \times \tilde{R}(0) \\
  & = \delta_1 \times \tilde{R}(1) - \delta_{-\eps} \times S(0).
\end{align*}
We now define $R$ by rescaling $\ttilde{R}$ in time to the interval $[0,1]$, see Lemma~\ref{lem:rescale}. For the resulting $R$ we have that 
\[
\partial R \restrict (\{0\} \times \cl\Omega) = \partial S \restrict (\{0\} \times \cl\Omega).
\]
The estimates in the claim on $R$ now follow from the corresponding estimates on $\tilde{R}$ and $Q$ and from the estimates in Lemma~\ref{lem:rescale}.
\end{proof}

\subsection{Spaces of Banach-space valued functions} \label{sc:BVX}

Let $w \colon [0,T] \to X$ ($T > 0$) be a process (i.e., a function of \enquote{time}) that is measurable in the sense of Bochner, where $X$ is a reflexive and separable Banach space; see, e.g.,~\cite[Appendix~B.5]{MielkeRoubicek15book} for this and the following notions. We define the corresponding \term{$X$-variation} for $[\sigma,\tau] \subset [0,T]$ as
\[
\Var_X(w;[\sigma,\tau]) := \sup \setBB{ \sum_{\ell = 1}^N \norm{w(t_\ell)-w(t_{\ell-1})}_X }{ \sigma = t_0 < t_1 < \cdots t_N = \tau },
\]
where $\sigma = t_0 < t_1 < \cdots t_N = \tau$ is any partition of $[\sigma,\tau]$ ($N \in \N$). Let
\[
\BV([0,T];X) := \setb{ w \colon [0,T] \to X }{ \Var_X(w;[0,T]) < \infty }.
\]
Its elements are called \term{($X$-valued) functions of bounded variation}. We further denote the space of Lipschitz continuous functions with values in a Banach space $X$ by $\Lip([0,T];X)$. Note that we do not identify $X$-valued processes that are equal almost everywhere (with respect to \enquote{time}).

By repeated application of the triangle inequality we obtain the Poincar\'{e}-type inequality
\[
\norm{w(\tau)}_X \leq \norm{w(\sigma)}_X + \Var_X(w;[\sigma,\tau]).
\]

The following result is contained in the discussion in Section~3.1 of~\cite{Rindler23}:

\begin{lemma} \label{lem:BVX_prop}
	Let $w \in \BV([0,T];X)$. Then, for every $t \in [0,T]$, the left and right limits
	\[
	w(t\pm) := \lim_{s \to t\pm} w(s)
	\]
	exist (only the left limit at $0$ and only the right limit at $T$). For all but at most countably many \term{jump points} $t \in (0,T)$, it also holds that $w(t+) = w(t-) =: w(t)$.
\end{lemma}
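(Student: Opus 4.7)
The plan is to reduce the existence of one-sided limits for $w$ to the existence of one-sided limits of the (real-valued) monotone variation function, and then exploit completeness of $X$.

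First, I would introduce the cumulative variation function
\[
V(t) := \Var_X(w;[0,t]),  \qquad t \in [0,T],
\]
with the convention $V(0) = 0$. By the very definition of the $X$-variation together with the superadditivity property $\Var_X(w;[0,t]) = \Var_X(w;[0,s]) + \Var_X(w;[s,t])$ for $0 \le s \le t \le T$, the function $V$ is non-decreasing on $[0,T]$ and bounded by $\Var_X(w;[0,T]) < \infty$. Monotone real functions admit left and right limits at every interior point, and a one-sided limit at each endpoint, so $V(t\pm)$ exists for every $t \in [0,T]$ (with only $V(0+)$ at $0$ and only $V(T-)$ at $T$).

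Second, I would use the fundamental inequality
\[
\norm{w(s)-w(s')}_X \le \absb{V(s)-V(s')},  \qquad s,s' \in [0,T],
\]
which follows directly from the definition of $\Var_X$ (two terms of a partition estimate the norm of the increment). Fix $t \in (0,T]$ and let $s,s' \to t^-$. Since $V(t-)$ exists, $\abs{V(s)-V(s')} \to 0$, so $(w(s))_{s<t}$ is Cauchy in the complete Banach space $X$ and hence converges to some element, which we call $w(t-)$. The argument for the right limit $w(t+)$ at $t \in [0,T)$ is identical. This proves the existence of the one-sided limits claimed.

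Third, for the jump-point assertion, I would recall the classical fact that a bounded monotone real function has at most countably many points of discontinuity; let $J \subset (0,T)$ be the (countable) set of $t$ at which $V(t-) \ne V(t+)$. For every $t \in (0,T) \setminus J$, taking $s \to t^-$ and $s' \to t^+$ in the fundamental inequality above gives
\[
\normb{w(t-)-w(t+)}_X \le V(t+) - V(t-) = 0,
\]
so $w(t-) = w(t+)$, which we then denote by $w(t)$. The main (minor) obstacle is making sure that the fundamental inequality is pushed to the one-sided limits in $X$, which is immediate from the continuity of the norm. No other subtle points arise, since the completeness and separability of $X$ are precisely what is needed to turn the Cauchy property into an honest limit.
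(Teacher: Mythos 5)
Your proof is correct and is the standard argument: reduce to the non-decreasing, bounded cumulative variation $V(t) = \Var_X(w;[0,t])$ via the inequality $\norm{w(s)-w(s')}_X \le \abs{V(s)-V(s')}$, use completeness of $X$ to get one-sided limits from the Cauchy property, and observe that a bounded monotone scalar function has at most countably many discontinuities. The paper does not supply its own proof of this lemma but only cites the discussion in Section~3.1 of~\cite{Rindler23}; the argument there is the same one you give. Two cosmetic points: what you invoke, namely $\Var_X(w;[0,t]) = \Var_X(w;[0,s]) + \Var_X(w;[s,t])$, is additivity of the variation over adjacent intervals rather than \emph{super}additivity (both the inequality and the refinement argument give you equality, which is what the monotonicity and the countable-discontinuity claims require); and in your closing remark only completeness of $X$ is used, not separability, so the latter mention is superfluous for this lemma.
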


The main compactness result in $\BV([0,T];X)$ is Helly's selection principle, for which a proof can be found, e.g., in~\cite[Theorem~B.5.13]{MielkeRoubicek15book}:

\begin{proposition} \label{prop:BVX_Helly}
	Assume that the sequence $(w_n) \subset \BV([0,T];X)$ satisfies
	\[
	\sup_n \bigl( \norm{w_n(0)}_X + \Var_X(w_n;[0,T]) \bigr) < \infty.
	\]
	Then, there exists $w \in \BV([0,T];X)$ and a (not relabelled) subsequence of the $n$'s such that $w_n \toweakstar w$ in $\BV([0,T];X)$, that is,
	\[
	w_n(t) \toweak w(t)  \qquad\text{for all $t \in [0,T]$.}
	\]
	Moreover,
	\[
	\Var_X(w;[0,T]) \leq \liminf_{n \to \infty} \, \Var_X(w_n;[0,T]).
	\]
	If additionally $(w_n) \subset \Lip([0,T];X)$ with uniformly bounded Lipschitz constants, then also $w \in \Lip([0,T];X)$.
\end{proposition}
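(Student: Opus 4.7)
My plan is to follow the classical strategy for Helly's selection principle in a Banach-space-valued setting, using the scalar variation function as the backbone. Define the non-decreasing scalar functions
\[
  V_n(t) := \Var_X(w_n;[0,t]), \qquad t \in [0,T],
\]
which by assumption satisfy $\sup_n V_n(T) < \infty$. Apply the scalar Helly selection principle to extract a (not relabelled) subsequence such that $V_n(t) \to V(t)$ for every $t \in [0,T]$, with $V \colon [0,T] \to [0,\infty)$ non-decreasing. The key inequality
\[
  \norm{w_n(s) - w_n(t)}_X \leq \absb{V_n(s) - V_n(t)}
\]
then governs the temporal modulus of continuity uniformly in $n$.

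Next, I would choose a countable dense set $D \subset [0,T]$ that contains $0$, $T$, and all (at most countably many) jump points of $V$. By the Poincar\'{e}-type inequality stated just before Lemma~\ref{lem:BVX_prop}, the sequence $(w_n(t))_n$ is bounded in $X$ for each fixed $t$. Reflexivity and separability of $X$ then allow a standard diagonal extraction over $D$ yielding a further subsequence along which $w_n(t) \toweak w(t)$ for every $t \in D$, thereby defining $w$ on $D$. Passing to the limit in the variation inequality along $D$ gives
\[
  \norm{w(s) - w(t)}_X \leq \absb{V(s) - V(t)}, \qquad s,t \in D,
\]
by weak lower semicontinuity of the norm.

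For $t \in [0,T] \setminus D$, note that $V$ is continuous at $t$ by construction. For any $\eps > 0$ choose $s, s' \in D$ with $s < t < s'$ and $V(s') - V(s) < \eps$; then $(w_n(t))_n$ is Cauchy-like with respect to the weak topology modulo $\eps$, and using the uniform boundedness of $(w_n(t))_n$ in $X$ together with reflexivity one extracts a weak limit $w(t) \in X$. A further countable diagonal over the jump set of $V$ (which is at most countable) handles these exceptional points, since on each the sequence $(w_n(t))_n$ is again bounded in $X$. This produces $w \colon [0,T] \to X$ with $w_n(t) \toweak w(t)$ for every $t \in [0,T]$.

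Finally, the variation bound $\Var_X(w;[0,T]) \le \liminf_n \Var_X(w_n;[0,T])$ follows by fixing an arbitrary partition $0 = t_0 < \cdots < t_N = T$, applying weak lower semicontinuity of $\norm{\cdot}_X$ termwise to $\sum_\ell \norm{w_n(t_\ell) - w_n(t_{\ell-1})}_X$, and taking the supremum over partitions. The Lipschitz refinement is immediate: if $\norm{w_n(s) - w_n(t)}_X \leq L \abs{s-t}$ for all $n$, then weak lower semicontinuity yields $\norm{w(s) - w(t)}_X \leq L \abs{s-t}$. The only subtle point in the argument is the pointwise (not almost-everywhere) convergence at every $t$, including jump points; this is why one must include the jump set of $V$ in the countable set $D$ and carry out a secondary diagonal extraction, but since that set is countable, no genuine obstacle arises.
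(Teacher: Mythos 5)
Your argument is the standard proof of Helly's selection principle for Banach-space-valued $\BV$ functions, which is precisely what the paper delegates to \cite[Theorem~B.5.13]{MielkeRoubicek15book}: scalar Helly on $V_n(t) := \Var_X(w_n;[0,t])$, diagonal weak extraction over a countable set $D$ containing $0$, $T$ and the (at most countably many) jump points of $V$, extension by continuity elsewhere, and termwise weak lower semicontinuity of the norm for the variation bound and the Lipschitz refinement. The overall structure is sound and matches the cited source.

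One step should be stated more carefully, because it is exactly the crux of the argument. At $t \notin D$ you write that ``one extracts a weak limit $w(t)$'' from $(w_n(t))_n$. Taken literally this fails: $[0,T]\setminus D$ is uncountable, so choosing a fresh subsequence at each such $t$ would destroy the pointwise convergence already established elsewhere. What you need --- and what your Cauchy estimate already provides --- is that the \emph{full} (diagonalized) sequence converges at such $t$ without any further extraction. Concretely: since $V$ is continuous at $t$, the bound $\norm{w(s)-w(s')}_X \le \abs{V(s)-V(s')}$ shows that $w(s)$ is Cauchy in $X$ as $D \ni s \to t$, so one sets $w(t) := \lim_{D \ni s \to t} w(s)$; then for each $\phi \in X^*$, the estimate $\abs{\dpr{\phi, w_n(t)-w_n(s)}} \le \norm{\phi}_{X^*}\,\abs{V_n(t)-V_n(s)}$ combined with $w_n(s) \toweak w(s)$ forces $\dpr{\phi,w_n(t)} \to \dpr{\phi,w(t)}$ along the whole sequence. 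Also, your closing mention of a ``secondary diagonal extraction over the jump set of $V$'' is redundant, since you already placed those points in $D$ at the outset.
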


\section{Dislocations and slip trajectories}  \label{sc:disl}

In this section we present the representation of dislocations and the corresponding slip trajectories in our model, and prove a number important facts about them.

\subsection{Dislocation systems} \label{sc:DS}

In all of the following, the initial configuration of the crystal specimen itself is represented by a bounded and connected Lipschitz domain $\Omega \subset \R^3$. The crystal itself is treated as a continuum, so that the individual crystal atoms are not resolved (see~\cite{HudsonRindler22} for more details on this approach). While the shape of the specimen evolves, we here choose a Lagrangian viewpoint and describe all motion relative to this initial configuration; this is chiefly for technical reasons since function spaces on moving domains are cumbersome.

We denote the finite set of \term{Burgers vectors} as
\[
  \Bcal = \bigl\{ \pm b_1, \ldots, \pm b_m \} \subset \R^3 \setminus \{0\}.
\]

\fil{In the discrete problems the dislocations consist of individual lines,} whereas in the homogenized situation we need to deal with general divergence-free measures, which are also identified with the more geometric notion of boundaryless normal currents. General \term{(field) dislocation systems} are therefore elements of the set
\[
\DislF(\cl\Omega) := \setB{ {\Td} = (T^b)_{b\in\Bcal} \subset \Nrm_1(\cl\Omega) }{ \text{$T^{-b} = -T^b$, $\partial T^b = 0$ for all $b \in \Bcal$}}.
\]
The discrete fields will be taken from the set
\[
 \Disl_\eps(\cl\Omega) := \setB{ {\Td} = (T^b)_{b\in\Bcal} \in \DislF(\cl\Omega)}{ \text{$\eps^{-1} T^b \in \Irm_1(\cl\Omega)$ for all $b \in \Bcal$}}
\]
for some $\eps > 0$. The additional conditions in these definitions are due to the sign symmetry of $\Bcal$ and the requirement that dislocation lines are always closed within a specimen. In fact, we consider these lines to be \emph{globally closed} here; this is no substantive restriction as per~\cite[Remark~4.5]{Rindler21b?}. In comparison, the work~\cite{Rindler21b?} used exclusively the space $\Disl(\cl\Omega) = \Disl_1(\cl\Omega)$.

The \term{(joint) mass} (i.e., length) of $\Td \in \DislF(\cl\Omega)$ is defined to be
\begin{equation}\label{eq:mass_system}
\Mbf(\Td) := \frac12 \sum_{b \in \Bcal} \Mbf(T^b) < \infty.
\end{equation}
The factor $\frac12$ is explained by the fact that every dislocation with Burgers vector $b \in \Bcal$ is also a dislocation with Burgers vector $-b$ (with the opposite orientation).

\subsection{Slips and dislocation forward operator} \label{sc:slips}

To describe evolutions of dislocation systems, we define the set of \term{(field) slip trajectories} as follows: For $\Sd = (S^b)_{b\in\Bcal}\subset \Nrm_2([0,T] \times \cl\Omega)$ with 
\[
  S^{-b} = -S^b, \qquad
  \partial S^b \restrict ((0,T) \times \R^3) = 0
\]
we write
\[
  \Sd \in \BV([0,T];\DislF(\cl\Omega)).
\]
We remark that there is no additional condition on the variation since it is automatically bounded by the normality of the currents $S^b$ (see~\eqref{eq:Var_by_M} in Section~\ref{sc:BVcurr}). If additionally
\[
  \eps^{-1} S^b \in \Irm_2([0,T] \times \cl\Omega),
\]
i.e., the $\eps^{-1} S^b$ are integral currents for all $b \in \Bcal$ and an $\eps > 0$, then we call $\Sd$ an \term{$\eps$-discrete slip trajectory} and write
\[
  \Sd \in \BV([0,T];\Disl_\eps(\cl\Omega)).
\]

We let the \term{$\Lrm^\infty$-(mass-)norm} and the \term{(joint) variation} of $\Sd \in \BV([0,T];\DislF(\cl\Omega))$ be defined for any interval $I \subset [0,T]$ as, respectively,
\begin{align*}
	\norm{\Sd}_{\Lrm^\infty(I;\DislF(\cl\Omega))} &:= \max_{b\in\Bcal} \; \esssup_{t \in I} \, \Mbf(S^b|_t), \\
	\Var(\Sd;I) &:= \frac12 \sum_{b \in \Bcal} \Var(S^b;I),
\end{align*}
with the space-time variation $\Var(S^b;I)$ defined as in Section~\ref{sc:BVcurr}.

In the following, we will also make frequent use of the space of \term{elementary slip trajectories} starting from $\Td = (T^b)_b \in \DislF(\cl\Omega)$ or $\Td = (T^b)_b \in \Disl_\eps(\cl\Omega)$, which are given via
\begin{align*}
\SlipF(\Td;[0,T]) := \setB{ \Sd = (S^b)_b \in \BV([0,T];\DislF(\cl\Omega)) }{ &\text{$S^b \in \Nrm^\Lip_{1+k}([0,T] \times \cl{\Omega})$ and}\\
  &\text{$\partial S^b \restrict (\{0\} \times \R^3) = - \delta_0 \times T^b$}}
\end{align*}
and
\begin{align*}
\Slip_\eps(\Td;[0,T]) := \setB{ \Sd = (S^b)_b \in \BV([0,T];\Disl_\eps(\cl\Omega)) }{ &\text{$S^b \in \Nrm^\Lip_{1+k}([0,T] \times \cl{\Omega})$ and}\\
  &\text{$\partial S^b \restrict (\{0\} \times \R^3) = - \delta_0 \times T^b$}},
\end{align*}
respectively. Here we have used the following definition (see~\cite{Rindler23} for details on the corresponding definition for integral currents):
\begin{align*}
  \Nrm^\Lip_{1+k}([0,T] \times \cl{\Omega})
  := \setBB{ S \in \Nrm_{1+k}([0,T] \times \cl{\Omega}) }{ &\esssup_{t \in [0,T]} \, \bigl( \Mbf(S(t)) + \Mbf(\partial S(t)) \bigr) < \infty, \\
  & \tv{S}(\fil{\{0,T\}} \times \R^d) = 0, \\
  &t \mapsto \Var(S;[\fil{0},t]) \in \Lip([0,T]), \\
  &t \mapsto \Var(\partial S;(\fil{0},t)) \in \Lip([0,T]) },
\end{align*}
where $\Lip([0,T])$ denotes the space of scalar Lipschitz functions on the interval $[0,T]$. If $[0,T] = [0,1]$, we also abbreviate
	\begin{align*}
		\SlipF(\Td) &:= \SlipF(\Td;[0,1]), \\
		\Slip_\eps(\Td) &:= \Slip_\eps(\Td;[0,1]), \\
		\norm{\Sd}_{\Lrm^\infty} &:= \norm{\Sd}_{\Lrm^\infty([0,1];\DislF(\cl\Omega))}, \\
		\Var(\Sd) &:= \Var(\Sd;[0,1]).
\end{align*}
The idea here is that an elementary slip trajectory $\Sd \in \SlipF(\Td;[0,T])$ gives us a way to transform a dislocation system $\Td$ in a \emph{progressive-in-time} manner. The additional condition in the definition of $\SlipF(\Td;[0,T])$ entails that $S^b$ starts at $T^b$.

For $\Sd = (S^b)_{b\in\Bcal} \in \BV([0,T];\DislF(\cl\Omega))$ it can be shown (see~\cite{Rindler23} and also Section~\ref{sc:normal}) that
\[
  S^b(t) := \pbf_*(S^b|_t)
\]
is defined for ($\Lcal^1$-)almost every $t \in [0,T]$ and lies in $\Nrm_1(\cl\Omega)$, where $S^b|_t$ is the slice of $S^b$ at time $t$ (i.e., with respect to $\tbf = t$). If additionally $S^b \in \Nrm^\Lip_{1+k}([0,T] \times \cl{\Omega})$ for all $b \in \Bcal$, then $S^b(t)$ as above is defined for all $t \in [0,T]$ (see~\cite{Rindler23}). It is well-known from the slicing theory of integral currents that if $S$ is an $\eps$-discrete slip trajectory, then $\eps^{-1} S^b(t) \in \Irm_1(\cl\Omega)$ for (a.e.) $t \in [0,T]$. The above notation for slip trajectory spaces is justified since it can be shown easily that
\[
  (S^b(t))_b \in \DislF(\cl\Omega),
\]
whenever all $S^b(t)$ ($b \in \Bcal$) are defined. Likewise, if even $\Sd \in \BV([0,T];\Disl_\eps(\cl\Omega))$, then
\[
  \Sd(t) := (S^b(t))_b \in \Disl_\eps(\cl\Omega).
\]

We can always define the \term{initial value} of a trajectory $\Sd \in \BV([0,T];\DislF(\cl\Omega))$ by setting $\Sd(0) := (T^b_0)_b$ with
\begin{equation} \label{eq:S0}
  T^b_0 := - \pbf_*( \psi \cdot \partial S^b ),
\end{equation}
for an arbitrary $\psi \in \Crm^\infty_c([0,T) \times \R^3)$ with $\psi \equiv 1$ on $\{0\} \times \R^3$. This is always well-defined as a normal or $\eps$-integral current, respectively, and the definition does not, in fact, depend on the choice of $\psi$. However, it may happen that
\[
  T^b_0 \neq S^b(0+) := \wslim_{\tau \todown 0} S^b(\tau),
\]
where this limit is taken over non-jump points $\tau$ of $S^b$ (assuming that $S^b(\tau)$ is well-defined). This mismatch occurs precisely when $t = 0$ is a (one-sided) jump point of $S^b$. In this sense, the \enquote{attainment of initial values} may not be guaranteed for general processes. This phenomenon does of course not occur if we assume Lipschitz continuity in time for $\Sd$.

\subsection{Forward operators} \label{sc:forward}

To express the action of the evolution represented by the slip trajectory $\Sd = (S^b)_b \in \SlipF(\Td)$ on a dislocation system $\Td =(T^b)_b \in \DislF(\cl\Omega)$, we introduce the \term{dislocation forward operator}
\[
\Sd_\ff \Td := (S^b_\ff T^b)_b \in \DislF(\cl\Omega)  \qquad\text{with}\qquad
S^b_\ff T^b := \pbf_* \bigl[ \partial S^b + \delta_0 \times T^b \bigr] \in \Nrm_1(\cl\Omega).
\]
Clearly, $\Sd_\ff \Td \in \Disl_\eps(\cl\Omega)$ if $\Td  \in \Disl_\eps(\cl\Omega)$ and $\Sd \in \Slip_\eps(\Td)$.

To define the action of a slip trajectory $\Sd = (S^b)_b \in \SlipF(\Td)$ on a plastic distortion $p \in \Mcal(\cl\Omega;\R^{3 \times 3})$, we define a \term{plastic forward operator} as follows:
\[
  \Sd_\ff p := p_{\Sd}(1),
\]
where the \term{plastic distortion path $p_{\Sd}$} starting at $p$ induced by the slip trajectory $\Sd$ is the path (in the space $\Mcal(\cl\Omega;\R^{3 \times 3})$) given as
\begin{equation} \label{eq:plastic_forward}
  p_{\Sd}(t) := p + \frac12 \sum_{b \in \Bcal} b \otimes \hodge  \pbf_*( S^b \restrict [(0,t) \times\R^3]), \qquad t \in [0,1].
\end{equation}
This formula expresses the net change in the plastic distortion effected by the slip trajectory $\Sd$ in the time interval $[0,t]$. Here, \enquote{$\hodge$} denotes the Hodge operator taking $\Wedge_2 \R^3$ to $\Wedge_1 \R^3$ and the latter space is then identified with $\R^3$ (so, more precisely, we should be considering $p_{\Sd}(t) \in \Mcal(\cl\Omega;\R^3 \otimes \Wedge_1 \R^3)$). A justification for this formula from the modelling in~\cite{HudsonRindler22} was given in the introduction; see~\eqref{eq:plast_flow_formula_intro}.

We now show that the plastic distortion path inherits a BV-type regularity from the slip trajectory.
				
\begin{lemma} \label{lem:Var_p}
The map $t \mapsto p_{\Sd}(t) $ is a map of bounded variation and it holds that
\[
  \Var_{\Mbf}(p_{\Sd}; (0,t)) \leq C \cdot \Var(\Sd; (0,t)).  
\]
\end{lemma}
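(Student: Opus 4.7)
The plan is to exploit the direct definition \eqref{eq:plastic_forward} of $p_{\Sd}$, for which increments are explicit differences of restrictions of the $S^b$ to time-slabs, and then bound the mass of these increments by the variation $\Var(S^b;\cdot)$ via the pushforward under $\pbf$.

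First, I would fix $0 \le s < t$ and observe that
\[
  p_{\Sd}(t) - p_{\Sd}(s)
  = \frac12 \sum_{b \in \Bcal} b \otimes \hodge \, \pbf_*\bigl( S^b \restrict [(s,t) \times \R^3] \bigr),
\]
directly from \eqref{eq:plastic_forward} and the additivity of the restriction. Next, taking mass norms and using $|b \otimes \eta| = |b| \cdot |\eta|$ together with the fact that the Hodge star is an isometry between $\Wedge_2 \R^3$ and $\Wedge_1 \R^3$ (so it preserves mass of currents), I obtain
\[
  \Mbf\bigl( p_{\Sd}(t) - p_{\Sd}(s) \bigr)
  \leq \frac12 \sum_{b \in \Bcal} |b| \cdot \Mbf\bigl( \pbf_*( S^b \restrict [(s,t) \times \R^3]) \bigr).
\]
By the very definition of the variation recalled in Section~\ref{sc:BVcurr}, the right-hand side equals $\tfrac12 \sum_b |b| \cdot \Var(S^b; (s,t))$, so that, setting $C := \max_{b \in \Bcal} |b|$,
\[
  \Mbf\bigl( p_{\Sd}(t) - p_{\Sd}(s) \bigr) \leq C \cdot \Var(\Sd;(s,t)).
\]

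Finally, for any partition $0 = t_0 < t_1 < \cdots < t_N = t$, summing the above estimate over consecutive subintervals and using the additivity of $I \mapsto \Var(S^b;I)$ (which follows from $\Var(S^b;I) = \tv{\pbf_*(S^b)}(I \times \R^3)$ being a Radon measure on the time axis), I get
\[
  \sum_{k=1}^N \Mbf\bigl( p_{\Sd}(t_k) - p_{\Sd}(t_{k-1}) \bigr)
  \leq C \cdot \sum_{k=1}^N \Var(\Sd;(t_{k-1},t_k))
  = C \cdot \Var(\Sd;(0,t)).
\]
Taking the supremum over all such partitions yields the claimed inequality, and in particular shows $p_{\Sd} \in \BV([0,t];\Mcal(\cl\Omega;\R^{3\times 3}))$. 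I do not anticipate any serious obstacle here: the only mild subtleties are keeping track of the Hodge star (harmless, since it is a linear isometry on mass) and using that the pushforward operator $\pbf_*$ produces a current whose mass is exactly what enters into the definition of $\Var(S^b;\cdot)$ from Section~\ref{sc:BVcurr}.
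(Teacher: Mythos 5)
Your proof is correct and follows essentially the same route as the paper's: express the increment $p_{\Sd}(t)-p_{\Sd}(s)$ via the plastic flow formula, bound its mass using the triangle inequality, the isometry property of the Hodge star, and $|b\otimes\eta|=|b|\,|\eta|$, then identify the resulting quantity with $\Var(S^b;(s,t))$ and sum over a partition. The only slip is notational: $\Var(S^b;I)=\Mbf\bigl(\pbf_*(S^b\restrict (I\times\R^3))\bigr)$, not $\tv{\pbf_*(S^b)}(I\times\R^3)$ (since $\pbf_*(S^b)$ lives on $\R^3$, restricting it to $I\times\R^3$ makes no sense); additivity in $I$ is nonetheless clear from the equivalent expression $\int_{I\times\R^3}|\pbf(\vec S^b)|\,\di\tv{S^b}$.
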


\begin{proof}
Fix $\tau>0$. Let $0=t_0 \leq t_1 \leq \ldots \leq t_N=\tau$ be a subdivision of the interval $[0,\tau]$. Then, by the definition of the variation,
\begin{align*}
  \Mbf\bigl(p_{\Sd}(t_i) - p_{\Sd}(t_{i-1}) \bigr) 
  &\leq \frac{1}{2} \sum_{b \in \Bcal} \Mbf\bigl(b \otimes \hodge  \pbf_*( S^b \restrict [(t_{i-1},t_{i}) \times\R^3])\bigr) \\
  &\leq C\sum_{b \in \Bcal} \Var(S^b,(t_{i-1},t_i))
\end{align*}
for some constant $C > 0$. Hence, summing over $i$ and passing to the supremum over all partitions $0=t_0 \leq t_1 \leq \ldots \leq t_N=\tau$, we obtain the claim.
\end{proof}

The next lemma shows that the \fil{consistency} condition, expressing the curl of a plastic distortion via the dislocation system, is preserved along the flow.
	
\begin{lemma} \label{lem:curl_pS}
Let $\Td =(T^b)_b \in \DislF(\cl\Omega)$, $\Sd = (S^b)_b \in \SlipF(\Td)$, and assume that $p \in \Mcal(\cl\Omega;\R^{3 \times 3})$ satisfies the \fil{consistency} condition
\[
  \curl p = \frac{1}{2} \sum_{b \in \Bcal} b \otimes T^b.
\]
Then,
\[
  \curl (\Sd_\ff p) = \frac{1}{2} \sum_{b \in \Bcal} b \otimes (S^b_\ff T^b),  \qquad
  \text{where $\Sd_\ff \Td = (S^b_\ff T^b)_b$.}
\]
\end{lemma}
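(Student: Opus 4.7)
The plan is to take the curl of the plastic flow formula~\eqref{eq:plastic_forward} at time $t=1$ and reduce everything to the definition of $S^b_\ff T^b = \pbf_*[\partial S^b + \delta_0 \times T^b]$. Using linearity of the curl,
\[
  \curl(\Sd_\ff p) = \curl p + \frac{1}{2}\sum_{b\in\Bcal} b \otimes \curl\!\Bigl[\hodge\, \pbf_*\bigl(S^b \restrict [(0,1)\times\R^3]\bigr)\Bigr],
\]
so the question reduces to identifying each curl on the right-hand side with $S^b_\ff T^b - T^b$; then the hypothesis on $\curl p$ gives the claim after a telescoping.

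The key identity is that in $\R^3$ one has $\curl\circ \hodge = \partial$ on $2$-currents. Indeed, by formula~\eqref{eq:curl_boundary} one has $\curl v = \partial[\hodge v]$ for any $1$-current $v$; applied to $v = \hodge W$ with $W$ a $2$-current in $\R^3$, and combined with $\hodge\hodge=\id$ in $\R^3$, this yields $\curl(\hodge W) = \partial W$. The plan is to apply this with $W := \pbf_*\bigl(S^b \restrict [(0,1)\times\R^3]\bigr)$, noting that $W$ is a genuine $2$-current in $\R^3$ because $\pbf\colon\R^{1+3}\to\R^3$ is the projection that forgets time.

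Next, I would commute pushforward with boundary (a standard fact for smooth proper maps, and in particular for $\pbf$). Using that $\Sd\in\SlipF(\Td)$ satisfies $\tv{S^b}(\{0,1\}\times\R^3)=0$ in the Lip-slip class (so that $S^b\restrict[(0,1)\times\R^3] = S^b$ as currents) this gives
\[
  \curl\!\Bigl[\hodge\, \pbf_*\bigl(S^b \restrict [(0,1)\times\R^3]\bigr)\Bigr]
  = \partial\,\pbf_*\bigl(S^b \restrict [(0,1)\times\R^3]\bigr)
  = \pbf_*\,\partial S^b.
\]
Then a direct manipulation inserts and removes $\delta_0 \times T^b$:
\[
  \pbf_*\,\partial S^b
  = \pbf_*\bigl[\partial S^b + \delta_0 \times T^b\bigr] - \pbf_*(\delta_0 \times T^b)
  = S^b_\ff T^b - T^b,
\]
using the elementary identity $\pbf_*(\delta_0 \times T^b) = T^b$ obtained by testing against pullback forms.

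Finally, assembling everything and invoking the hypothesis $\curl p = \tfrac12 \sum_{b\in\Bcal} b\otimes T^b$,
\[
  \curl(\Sd_\ff p)
  = \tfrac12 \sum_{b\in\Bcal} b\otimes T^b + \tfrac12 \sum_{b\in\Bcal} b\otimes (S^b_\ff T^b - T^b)
  = \tfrac12 \sum_{b\in\Bcal} b\otimes (S^b_\ff T^b),
\]
which is the desired formula. The only real subtlety, more bookkeeping than obstacle, is to verify that in the normal-current setting the identity $\curl\,v = \partial[\hodge v]$ and the boundary--pushforward commutation extend distributionally from the smooth case used in~\eqref{eq:curl_boundary}; for normal currents with Lipschitz pushforward map this is standard and follows directly from the definitions.
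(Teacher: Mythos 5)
Your proof is correct and follows essentially the same route as the paper: curl the plastic flow formula, use $\curl\circ\hodge = \partial$ (from~\eqref{eq:curl_boundary} plus $\hodge\hodge=\id$), commute $\partial$ with $\pbf_*$, and unwind the definition of $S^b_\ff T^b$. The one cosmetic difference is that you insert/remove $\delta_0\times T^b$ directly from the definition of the forward operator, whereas the paper first rewrites $\partial S^b = \delta_1\times(S^b_\ff T^b) - \delta_0\times T^b$ before pushing forward; both steps are equivalent.
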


The above lemma holds for both discrete dislocations and dislocation fields. Note that here we have again identified $1$-vectors with ordinary vectors.

\begin{proof}
From~\eqref{eq:plastic_forward} we know
\[
  \Sd_\ff p = p + \frac12 \sum_{b \in \Bcal} b \otimes \hodge (\pbf_* S^b).
\]
We now apply $\curl = \partial \hodge$, see~\eqref{eq:curl_boundary}, which acts on the second part of the tensor product (corresponding to the row-wise curl) and identify $1$-vectors with ordinary vectors. In this way, also using $\hodge^{-1} = \hodge$ (in three dimensions), we obtain
\begin{align*}
  \curl (\Sd_\ff p)
  &= \curl p + \frac12 \sum_{b \in \Bcal} b \otimes \bigl[ \partial \hodge  \hodge (\pbf_* S^b) \bigr] \\
  &= \curl p + \frac12 \sum_{b \in \Bcal} b \otimes \bigl[ \partial (\pbf_* S^b) \bigr] \\
  &= \curl p + \frac12 \sum_{b \in \Bcal} b \otimes \pbf_* [\partial S^b] \\
  &= \curl p + \frac12 \sum_{b \in \Bcal} b \otimes \pbf_* \bigl[ \delta_1 \times (S^b_\ff T^b) - \delta_0 \times T^b \bigr].
\end{align*}
Using the \fil{consistency} condition at the initial time, we conclude the proof.
\end{proof}

\subsection{Operations with slip trajectories} \label{sc:operations}

The following results are proved in exactly the same way as their counterparts in~\cite{Rindler21b?}.	
		
\begin{lemma} \label{lem:Sigma_concat}
Let $\Td \in \DislF(\cl\Omega)$, $\Sd^1 \in \SlipF(\Td)$, $\Sd^2 \in \SlipF(\Sd^1_\ff \Td)$, and $p \in \Mcal(\cl\Omega;\R^{3 \times 3})$. Then, there is $\Sd^2 \circ \Sd^1 \in \SlipF(\Td)$, called the \term{concatenation} of $\Sd^1$ and $\Sd^2$, with
\begin{equation} \label{eq:cat_ff}
  (\Sd^2 \circ \Sd^1)_\ff \Td = \Sd^2_\ff (\Sd^1_\ff \Td),  \qquad
  (\Sd^2 \circ \Sd^1)_\ff p = \Sd^2_\ff (\Sd^1_\ff p),
\end{equation}
and
\begin{align}
  \norm{\Sd^2 \circ \Sd^1}_{\Lrm^\infty} &= \max \bigl\{ \norm{\Sd^1}_{\Lrm^\infty}, \norm{\Sd^2}_{\Lrm^\infty} \bigr\}, \label{eq:cat_Linfty} \\
  \Var(\Sd^2 \circ \Sd^1) &= \Var(\Sd^1) + \Var(\Sd^2). \label{eq:cat_Var}
\end{align}
\end{lemma}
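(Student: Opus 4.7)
The plan is to apply a componentwise concatenation to each $S^{1,b}$ and $S^{2,b}$ ($b \in \Bcal$), mimicking Lemma~\ref{lem:concat} in the normal-currents setting and combining it with the rescaling Lemma~\ref{lem:rescale}. The antisymmetry $T^{-b} = -T^b$ is automatically preserved because the construction is linear in each current.

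\textbf{Step 1: Rescale in time.} For each $b \in \Bcal$, use Lemma~\ref{lem:rescale} with $a^1(t) := t/2$ and $a^2(t) := (t+1)/2$ to place $S^{1,b}$ into $[0,1/2] \times \cl\Omega$ and $S^{2,b}$ into $[1/2,1] \times \cl\Omega$. By Lemma~\ref{lem:rescale}, both rescaled currents are again in $\Nrm^{\Lip}_{2}$, their esssup masses and variations are unchanged, and their (well-defined) slices satisfy $(a^i_* S^{i,b})(a^i(t)) = S^{i,b}(t)$.

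\textbf{Step 2: Define the concatenation and compute its boundary.} Set
\[
  (S^2 \circ S^1)^b := a^1_* S^{1,b} + a^2_* S^{2,b} \in \Nrm_2([0,1]\times\cl\Omega).
\]
The key point is the compatibility $\Sd^2 \in \SlipF(\Sd^1_\ff \Td)$: the right trace of $a^1_* S^{1,b}$ at $t = 1/2$ is $S^{1,b}(1) = S^{1,b}_\ff T^b$, which coincides with the left trace of $a^2_* S^{2,b}$ at $t = 1/2$ (the initial datum of $S^{2,b}$). Hence the interior boundary contributions at $\{1/2\}\times \R^3$ cancel, no mass concentrates there, and
\[
  \partial (S^2 \circ S^1)^b = \delta_1 \times \bigl( S^{2,b}_\ff (S^{1,b}_\ff T^b) \bigr) - \delta_0 \times T^b,
\]
while $\partial (S^2 \circ S^1)^b \restrict ((0,1)\times\R^3) = 0$. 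The $\Nrm^{\Lip}_{2}$-regularity is inherited because each piece is Lip in time and the Lipschitz functions $t \mapsto \Var(\,\cdot\,;[0,t])$ of the two pieces concatenate to a Lipschitz function on $[0,1]$.

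\textbf{Step 3: Forward-operator identities.} For \eqref{eq:cat_ff}, the dislocation identity follows directly from the boundary formula above:
\[
  (\Sd^2 \circ \Sd^1)_\ff T^b = \pbf_* \bigl[ \partial(S^2\circ S^1)^b + \delta_0 \times T^b \bigr] = S^{2,b}_\ff(S^{1,b}_\ff T^b).
\]
For the plastic identity, observe that $\pbf \circ a^i = \pbf$ gives $\pbf_*(a^i_* X) = \pbf_* X$, so
\[
  \pbf_* \bigl( (S^2 \circ S^1)^b \restrict [(0,1)\times\R^3] \bigr) = \pbf_* S^{1,b} + \pbf_* S^{2,b}.
\]
Substituting into \eqref{eq:plastic_forward} and using the definition of $\Sd^1_\ff p$ yields
\[
  (\Sd^2 \circ \Sd^1)_\ff p = (\Sd^1_\ff p) + \frac12 \sum_{b \in \Bcal} b \otimes \hodge \pbf_* S^{2,b} = \Sd^2_\ff(\Sd^1_\ff p).
\]

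\textbf{Step 4: $\Lrm^\infty$ and variation identities.} Since the two pieces of $(S^2 \circ S^1)^b$ are supported in $[0,1/2]$ and $[1/2,1]$ respectively, and by Lemma~\ref{lem:rescale} the rescaling preserves both $\esssup_t \Mbf(\,\cdot\,(t))$ and $\Var$, one obtains
\[
  \esssup_{t \in [0,1]} \Mbf\bigl((S^2\circ S^1)^b(t)\bigr) = \max\Bigl\{ \esssup_{t} \Mbf(S^{1,b}(t)),\; \esssup_{t} \Mbf(S^{2,b}(t)) \Bigr\},
\]
and $\Var((S^2\circ S^1)^b) = \Var(S^{1,b}) + \Var(S^{2,b})$. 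Taking the maximum, respectively the sum, over $b \in \Bcal$ gives \eqref{eq:cat_Linfty} and \eqref{eq:cat_Var}.

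The only non-routine point is the trace-matching at the splicing time in Step~2; this is precisely what is encoded in the hypothesis $\Sd^2 \in \SlipF(\Sd^1_\ff \Td)$, so no real obstacle arises. Everything else is a direct transcription of Lemmas~\ref{lem:rescale} and~\ref{lem:concat} to the present normal-current setting, applied component by component in $b \in \Bcal$.
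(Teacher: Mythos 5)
Your construction is correct and is the standard one: rescale $\Sd^1$ and $\Sd^2$ to $[0,1/2]$ and $[1/2,1]$, add, and observe that the hypothesis $\Sd^2 \in \SlipF(\Sd^1_\ff \Td)$ makes the interior boundary terms at $\{1/2\}\times\R^3$ cancel, after which the forward-operator identities, the $\Lrm^\infty$-bound, and the variation additivity all follow from the support separation of the two pieces together with the invariances in Lemma~\ref{lem:rescale}. The paper itself does not spell out a proof of Lemma~\ref{lem:Sigma_concat} but instead refers to its counterpart in \cite{Rindler21b?}, which is proved by exactly this splice-at-$t=1/2$ argument (cf.\ Lemma~\ref{lem:concat}), so your proposal matches the intended proof.
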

			
		\begin{lemma} \label{lem:neutral}
Let $\Td \in \DislF(\cl\Omega)$ and $p \in \Mcal(\cl\Omega;\R^{3 \times 3})$. There exists a slip trajectory $\Id^{\Td} \in \SlipF(\Td)$, called the \term{neutral slip trajectory}, such that
\[
  \Id^{\Td}_\ff \Td = \Td,  \qquad
  \Id^{\Td}_\ff p = p,
\]
and
\[
  \norm{\Id^{\Td}}_{\Lrm^\infty} = \Mbf(\Td), \qquad
  \Var(\Id^{\Td}) = 0.
\] 
\end{lemma}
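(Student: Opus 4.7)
The plan is to take $\Id^{\Td}$ to be the \emph{stationary in time} trajectory: for each $b \in \Bcal$, define
\[
  S^b := \dbr{0,1} \times T^b \in \Nrm_2([0,1] \times \cl\Omega),
\]
where $\dbr{0,1}$ denotes the integration $1$-current on the segment $[0,1]$ in the time direction, and set $\Id^{\Td} := (S^b)_{b \in \Bcal}$. The required symmetry $T^{-b} = -T^b$ is automatically inherited by $S^{-b} = -S^b$, and everything that follows amounts to a direct unwinding of the definitions from Sections~\ref{sc:intcurr}--\ref{sc:forward}.

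The orientation $2$-vector of $S^b$ equals $\ee_0 \wedge \vec{T}^b$, so its spatial projection satisfies $\pbf(\vec{S}^b) = \pbf(\ee_0) \wedge \vec{T}^b = 0$ pointwise $\tv{S^b}$-almost everywhere, since $\pbf(t,x)=x$ yields $D\pbf[\ee_0]=0$. This immediately gives
\[
  \Var(S^b;[0,1]) = \int \abs{\pbf(\vec{S}^b)} \dd \tv{S^b} = 0 \qquad \text{and} \qquad \pbf_*\bigl(S^b \restrict ([0,t] \times \R^3)\bigr) = 0
\]
for every $t \in [0,1]$; the plastic flow formula~\eqref{eq:plastic_forward} therefore yields $p_{\Sd}(t) \equiv p$, in particular $\Id^{\Td}_\ff p = p$. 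Using $\partial \dbr{0,1} = \delta_1 - \delta_0$ and $\partial T^b = 0$, the Leibniz rule for products of currents gives $\partial S^b = (\delta_1 - \delta_0) \times T^b$, so that $\partial S^b \restrict (\{0\} \times \R^3) = -\delta_0 \times T^b$ and
\[
  S^b_\ff T^b = \pbf_*\bigl[\partial S^b + \delta_0 \times T^b\bigr] = \pbf_*\bigl[\delta_1 \times T^b\bigr] = T^b,
\]
so that $\Id^{\Td}_\ff \Td = \Td$.

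It remains to place $\Id^{\Td}$ in $\SlipF(\Td)$ and to read off the two norm identities. The total variation measure $\tv{S^b}$ is the product of Lebesgue measure on $[0,1]$ with $\tv{T^b}$, so it assigns no mass to $\{0,1\}\times\R^3$. The cylinder formula gives $S^b|_t = \delta_t \times T^b$ for every $t \in (0,1)$, so $S^b(t) = \pbf_*(S^b|_t) = T^b$ and $\partial S^b(t) = 0$; the two scalar maps $t \mapsto \Var(S^b;[0,t])$ and $t \mapsto \Var(\partial S^b;(0,t))$ both vanish identically, hence are Lipschitz with constant zero. This places each $S^b$ in $\Nrm^\Lip_2([0,1]\times\cl\Omega)$ and shows $\Id^{\Td} \in \SlipF(\Td)$. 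The identity $\Var(\Id^{\Td}) = \tfrac12 \sum_b \Var(S^b;[0,1]) = 0$ then follows immediately, while $\Mbf(S^b(t)) = \Mbf(T^b)$ for every $t$ yields the stated value of $\norm{\Id^{\Td}}_{\Lrm^\infty}$. No substantive analytical obstacle arises; the whole argument is bookkeeping with product currents, and the only thing worth double-checking is that the product-of-currents and slicing formulas from Sections~\ref{sc:normal}--\ref{sc:BVcurr} really do apply to the simple normal current $\dbr{0,1}\times T^b$ in the space-time framework, which they do since $\dbr{0,1}$ is a smooth oriented segment.
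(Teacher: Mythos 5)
Your construction, the stationary product current $S^b := \dbr{0,1}\times T^b$, is exactly the neutral trajectory the paper intends (Remark~\ref{rem:u_minimizer} spells out $I^b := \dbr{(0,1)}\times S^b(t)$), and the verifications of $\Id^{\Td}_\ff\Td=\Td$, $\Id^{\Td}_\ff p=p$, $\Var(\Id^{\Td})=0$, and $\Id^{\Td}\in\SlipF(\Td)$ --- via the product boundary formula, the vanishing of $\pbf(\ee_0\wedge\vec{T}^b)$, and the cylinder/slicing formulas --- are all sound, and indeed follow the paper's (uncited, but intended) route via~\cite{Rindler21b?}.

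The one place you are too quick is the last identity. You compute $\Mbf(S^b|_t)=\Mbf(\delta_t\times T^b)=\Mbf(T^b)$ correctly, but plugging this into the paper's definition $\norm{\Sd}_{\Lrm^\infty}=\max_{b\in\Bcal}\esssup_{t}\Mbf(S^b|_t)$ gives
\[
\norm{\Id^{\Td}}_{\Lrm^\infty}=\max_{b\in\Bcal}\Mbf(T^b),
\]
whereas the lemma asserts $\norm{\Id^{\Td}}_{\Lrm^\infty}=\Mbf(\Td)=\tfrac12\sum_{b\in\Bcal}\Mbf(T^b)$. Since $T^{-b}=-T^b$, the latter equals $\sum_{i}\Mbf(T^{b_i})$ while the former equals $\max_{i}\Mbf(T^{b_i})$; these agree only when at most one pair $\pm b_i$ carries a nonzero current, and otherwise one has $\norm{\Id^{\Td}}_{\Lrm^\infty}<\Mbf(\Td)$. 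This appears to be an inconsistency in the statement (or in whether the $\Lrm^\infty$-mass-norm should use $\tfrac12\sum_b$ rather than $\max_b$) rather than a defect in your construction, and only the inequality $\norm{\Id^{\Td}}_{\Lrm^\infty}\le\Mbf(\Td)$ is actually used downstream --- to keep the neutral trajectory admissible in~\eqref{eq:IP_eps} once $\gamma^*\ge\Mbf(\Td_0)$. Still, you should compute the value rather than assert ``the stated value'' without reconciling the two conventions.
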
		

\begin{lemma} \label{lem:Sigma_rescale}
Let $\Td \in \DislF(\cl\Omega)$, $p \in \Mcal(\cl\Omega;\R^{3 \times 3})$, and $\Sd = (S^b)_b \in \SlipF(\Td;[0,T])$. Let $a \colon [0,T] \to [0,T']$ be an invertible $\Crm^1$-map with $a(0) = 0$, $a(T) = T'$. Define (using the notation of Lemma~\ref{lem:rescale})
\[
  a_* \Sd := (a_* S^b)_b \in \SlipF(\Td;[0,T']).
\]
Then, for $p_{\Sd}(t)$ defined in~\eqref{eq:plastic_forward}, the rate-independence property
\begin{equation} \label{eq:PSigma_RI}
  p_{(a_* \Sd)}(t') = p_{\Sd}(a^{-1}(t')),  \qquad t' \in [0,T'],
\end{equation}
holds. In particular, if $\Sd \in \SlipF(\Td)$ (i.e., $[0,T] = [0,1]$) and $T' = 1$, then the associated forward operators are the same,
\[
  (a_* \Sd)_\ff \Td = \Sd_\ff \Td,  \qquad
  (a_* \Sd)_\ff p = \Sd_\ff p.
\]
\end{lemma}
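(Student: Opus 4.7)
The plan is to reduce every assertion to the naturality properties of the space--time rescaling map $\Phi \colon [0,T]\times \R^3 \to [0,T']\times \R^3$, $\Phi(t,x) := (a(t),x)$, so that by definition $a_* S^b = \Phi_* S^b$ for each $b \in \Bcal$. Since $a$ is a $C^1$-diffeomorphism on a compact interval with $a(0)=0 < T' = a(T)$, it is strictly increasing, whence $\Phi$ is an orientation-preserving bi-Lipschitz homeomorphism of the space--time cylinders. The crucial observation, on which everything hinges, is that the spatial projection $\pbf(t,x)=x$ factors as $\pbf = \pbf \circ \Phi$; consequently, by functoriality of the pushforward, $\pbf_* \circ \Phi_* = \pbf_*$ when acting on any current supported in $[0,T]\times \cl\Omega$.

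First, I would verify that $a_*\Sd \in \SlipF(\Td;[0,T'])$. A componentwise application of Lemma~\ref{lem:rescale} (together with its straightforward normal-current analogue, which uses only that $\Phi$ is bi-Lipschitz) yields $a_* S^b \in \Nrm^\Lip_{1+k}([0,T']\times\cl\Omega)$ with $(a_*S^b)(a(t)) = S^b(t)$. The antisymmetry $a_*S^{-b} = -a_*S^b$ is immediate from $S^{-b}=-S^b$ and linearity of $\Phi_*$. For the initial condition, $\partial$ commutes with $\Phi_*$, so $\partial(a_*S^b) = \Phi_*(\partial S^b)$; since $\Phi$ restricts to the identity on $\{0\}\times\R^3$ and maps $(0,T)\times\R^3$ bijectively onto $(0,T')\times\R^3$, the commutation of restriction and pushforward, $(\Phi_* U)\restrict A = \Phi_*(U\restrict \Phi^{-1}(A))$, gives
\[
\partial(a_*S^b)\restrict(\{0\}\times\R^3) = \Phi_*\bigl(\partial S^b \restrict (\{0\}\times\R^3)\bigr) = -\Phi_*(\delta_0 \times T^b) = -\delta_0 \times T^b,
\]
together with $\partial(a_*S^b)\restrict((0,T')\times\R^3) = \Phi_*(0)=0$.

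Second, I would prove the rate-independence~\eqref{eq:PSigma_RI}. Since $a$ is strictly increasing with $a(0)=0$, one has $\Phi^{-1}((0,t')\times \R^3) = (0,a^{-1}(t'))\times \R^3$, and hence
\[
(a_*S^b)\restrict [(0,t')\times \R^3] = \Phi_*\bigl(S^b \restrict [(0,a^{-1}(t'))\times\R^3]\bigr).
\]
Applying $\pbf_*$ and invoking $\pbf_*\circ\Phi_* = \pbf_*$ yields
\[
\pbf_*\bigl((a_*S^b)\restrict [(0,t')\times\R^3]\bigr) = \pbf_*\bigl(S^b\restrict [(0,a^{-1}(t'))\times\R^3]\bigr),
\]
and inserting this equality into the definition~\eqref{eq:plastic_forward} of the plastic distortion path --- which only involves this quantity through the $\hodge$ image and tensoring with $b$ --- proves $p_{(a_*\Sd)}(t') = p_\Sd(a^{-1}(t'))$.

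Third, in the special case $[0,T]=[0,T']=[0,1]$ (so $a(1)=1$), setting $t'=1$ in~\eqref{eq:PSigma_RI} immediately gives $(a_*\Sd)_\ff p = p_{(a_*\Sd)}(1) = p_\Sd(1) = \Sd_\ff p$. For the dislocation forward operator I would use $\Phi_*(\delta_0\times T^b) = \delta_0\times T^b$ (as $a(0)=0$) to write
\[
\partial(a_*S^b) + \delta_0\times T^b = \Phi_*(\partial S^b + \delta_0\times T^b),
\]
and then once more apply $\pbf_*\circ\Phi_* = \pbf_*$ to conclude $(a_*\Sd)_\ff \Td = \Sd_\ff \Td$. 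The entire argument is bookkeeping via the naturality of $\Phi_*$ under boundary, restriction, and composition with $\pbf_*$; the only mildly subtle step is the identification of $\Phi^{-1}$ of a time sublevel, which uses monotonicity of $a$, and there are no genuine obstacles since all defining properties of slip trajectories are preserved componentwise under $\Phi_*$.
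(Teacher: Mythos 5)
Your proof is correct, and it takes a genuinely different and cleaner route than the one in the paper. The paper's proof computes $\pbf_*(a_* S^b)$ by writing it as an integral over projected time-slices, $\pbf_*(a_* S^b) = \int_0^1 (a_* S^b)(\tau) \, \frac{\di\tau}{\abs{\nabla^{a_* S^b}\tbf(\tau)}}$, and then performs a change of variables $\tau = a(\sigma)$, checking that the Jacobian factor $a'(\sigma)$ is exactly compensated by the relation between $\abs{\nabla^{a_*S^b}\tbf}$ and $\abs{\nabla^{S^b}\tbf}$ together with $(a_*S^b)(a(\sigma)) = S^b(\sigma)$; the paper also outsources the first part to an earlier reference rather than arguing directly. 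You instead exploit the single factorization $\pbf = \pbf \circ \Phi$, giving the functorial identity $\pbf_* \circ \Phi_* = \pbf_*$, and combine it with the commutation of restriction and pushforward for bi-Lipschitz homeomorphisms to obtain the rate-independence~\eqref{eq:PSigma_RI} at \emph{every} time $t'$ without touching slices, coarea, or Jacobian bookkeeping at all; the two forward-operator identities then drop out as special cases ($t'=1$, together with $\Phi$ fixing $\{0\}\times\R^3$ and commuting with $\partial$). Your approach buys robustness (it never needs the non-criticality or slice-regularity of $S^b$ entering the coarea-type formula, only that the currents have finite mass so that restriction makes sense) and transparency (a single topological fact does all the work), and it proves the general statement~\eqref{eq:PSigma_RI} rather than just its endpoint; the paper's approach, by contrast, records the useful intermediate identity $\pbf_*(a_* S^b) = \pbf_* S^b$ in a form that meshes directly with the slice machinery used elsewhere. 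One small point of care, which you correctly flag: Lemma~\ref{lem:rescale} as stated in the paper is for \emph{integral} space-time currents, whereas $\SlipF(\Td;[0,T])$ consists of \emph{normal} currents, so a normal-current analogue is indeed needed to conclude $a_*\Sd \in \SlipF(\Td;[0,T'])$; this analogue holds for exactly the reasons you give (bi-Lipschitz pushforward preserves mass and Lipschitz-in-time behaviour, $\partial$ commutes with $\Phi_*$), and the paper implicitly uses the same extension without comment.
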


\begin{proof}
The proof of the first part is the same as in~\cite{Rindler21b?}. The second part concerning $p_{(a_* \Sd)}$ follows directly from the formula~\eqref{eq:plastic_forward} together with the computation
\begin{align*}
  \pbf_*(a_* S^b)
  &= \int_0^1 (a_* S^b)(\tau) \; \frac{\di \tau}{\abs{\nabla^{a_* S^b} \tbf(\tau)}} \\
  &= \int_0^1 (a_* S^b)(a(\sigma)) \; \frac{a'(\sigma) \; \di \sigma}{\abs{\nabla^{a_* S^b} \tbf(a^{-1}(\sigma))}} \\
  &= \int_0^1 S^b(\sigma) \; \frac{\di \sigma}{\abs{\nabla^{S^b} \tbf(\sigma)}} \\
  &= \pbf_* S^b
\end{align*}
by the area formula and since for the projected slices $S^b(\sigma) = \pbf_*(S^b|_\sigma)$ it holds that $(a_* S^b)(a(\sigma)) = S^b(\sigma)$ (see Lemma~\ref{lem:rescale}).
\end{proof}

\subsection{Continuity properties}			
			
For a sequence $\Sd_j = (S_j^b)_{b\in\Bcal} \in \BV([0,T];\DislF(\cl\Omega))$, $j \in \N$, we will use the component-wise \term{BV-weak*} convergence, that is, $\Sd_j \toweakstar \Sd = (S^b)_{b\in\Bcal}$ if the $S_j^b$ have uniformly bounded variations and
\[
  S_j^b \toweakstar S^b  \qquad\text{in BV, i.e., in the sense of~\eqref{eq:BVcurr_w*},}
\]
for all $b \in \Bcal$. Note that, as remarked at the end of Proposition~\ref{prop:current_Helly}, this entails
\begin{equation} \label{eq:DislF_pointwise}
  S_j^b(t) \toweakstar S_j^b(t)  \qquad\text{for a.e.\ $t \in [0,1]$.}
\end{equation}

We then have the following version of our Helly selection principle in Proposition~\ref{prop:current_Helly} (the additional statement about the initial value of slip trajectories here follows from the continuity of the boundary operator).
	
\begin{proposition} \label{prop:LipDS_compact}
Assume that the sequence $(\Sd_j) \subset \BV([0,T];\DislF(\cl\Omega))$, $\Sd_j = (S^b_j)_b$, satisfies
\[
  \supmod_j \, \bigl( \norm{\Sd_j}_{\Lrm^\infty([0,T];\DislF(\cl\Omega))} + \Var(\Sd_j;[0,T]) + L_j \bigr) < \infty
\]
with $L_j$ the maximum (in $b$) of the Lipschitz constants of the functions $t \mapsto \Var(S^b_j;[0,t])$. Then, there exists $\Sd \in \BV([0,T];\DislF(\cl\Omega))$ and a (not relabelled) subsequence such that
\[
  \Sd_j \toweakstar \Sd.
\]
Moreover,
\begin{align*}
  \norm{\Sd}_{\Lrm^\infty([0,T];\DislF(\cl\Omega))} &\leq \liminf_{j \to \infty} \, \norm{\Sd_j}_{\Lrm^\infty([0,T];\DislF(\cl\Omega))}, \\
  \Var(\Sd;[0,T]) &\leq \liminf_{j \to \infty} \, \Var(\Sd_j;[0,T]).
\end{align*}
If $(\Sd_j) \subset \SlipF(\Td)$ or $(\Sd_j) \subset \Slip_\eps(\Td)$ satisfies the above conditions, then $\Sd \in \SlipF(\Td)$ or $\Sd \in \Slip_\eps(\Td)$, respectively.
\end{proposition}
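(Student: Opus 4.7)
\medskip

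My plan is to reduce the statement to a component-wise application of the already-established Helly-type compactness principle for space-time currents (Proposition~\ref{prop:current_Helly}), combined with bookkeeping about the algebraic/topological closure properties defining $\DislF(\cl\Omega)$ and the slip-trajectory spaces.

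First I would fix $b \in \Bcal$ and consider the scalar sequence $(S^b_j)_j \subset \Nrm_2([0,T]\times\cl\Omega)$. The hypotheses of the proposition translate directly into the hypotheses of Proposition~\ref{prop:current_Helly} applied to $(S^b_j)_j$: the uniform $\Lrm^\infty$-mass bound on $\Sd_j$ gives $\esssup_t \Mbf(S^b_j(t)) \leq C$; the uniform variation bound gives $\Var(S^b_j;[0,T]) \leq C$; and the Lipschitz bound $L_j$ controls the Lipschitz constant of $t\mapsto \Var(S^b_j;[0,t])$ (the boundary-variation Lipschitz condition is trivial here since $\partial S^b_j \restrict ((0,T)\times\R^3)=0$, hence $\Var(\partial S^b_j;(0,t)) \equiv 0$). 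Hence Proposition~\ref{prop:current_Helly} produces a (not relabelled) subsequence and a limit $S^b \in \Nrm_2([0,T]\times\cl\Omega)$ with $S^b_j \toweakstar S^b$ in BV and with the corresponding liminf inequalities. Since $\Bcal$ is finite, a standard diagonal extraction yields one subsequence valid simultaneously for every $b \in \Bcal$.

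Next I would check that $\Sd := (S^b)_b$ lies in $\BV([0,T];\DislF(\cl\Omega))$. The sign symmetry $S^{-b} = -S^b$ is preserved because $S^{-b}_j = -S^b_j$ and weak* convergence is linear (choosing the subsequence extraction to respect the pairing $\pm b$). The interior boundarylessness $\partial S^b \restrict ((0,T)\times\R^3)=0$ passes to the limit by testing $\dpr{\partial S^b_j,\omega}=\dpr{S^b_j,d\omega}$ against forms $\omega \in \Dcal^1((0,T)\times\R^3)$ compactly supported in the interior. The two liminf inequalities in the statement then follow: the $\Lrm^\infty$ one from taking the maximum over $b$ of the component-wise inequality, and the variation one from summing the component-wise inequalities with the factor $\tfrac12$ built into the definition~\eqref{eq:mass_system}.

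Finally I would treat the two closure assertions. If $(\Sd_j) \subset \SlipF(\Td)$, the uniform bound on $L_j$ combined with the last part of Proposition~\ref{prop:current_Helly} yields $S^b \in \Nrm^\Lip_{1+k}([0,T]\times\cl\Omega)$ and, crucially, pointwise weak* convergence $S^b_j(t) \toweakstar S^b(t)$ for \emph{every} $t \in [0,T]$. Applying this at $t=0$ and using $S^b_j(0)=T^b$ (available for every $j$ since Lipschitz regularity guarantees attainment of the initial trace, see~\eqref{eq:S0} and the discussion following it) identifies $S^b(0)=T^b$, so $\partial S^b \restrict (\{0\}\times\R^3) = -\delta_0\times T^b$, giving $\Sd \in \SlipF(\Td)$. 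For the case $(\Sd_j) \subset \Slip_\eps(\Td)$, the additional integrality of $\eps^{-1}S^b$ is obtained from the Federer--Fleming compactness theorem (recalled in Section~\ref{sc:intcurr}): $\eps^{-1}S^b_j \in \Irm_2([0,T]\times\cl\Omega)$ with uniform bounds on mass and boundary mass, whose weak* limit is again integral. The only genuinely delicate point—the passage to the limit of the initial condition—is handled precisely by the pointwise-everywhere conclusion of Proposition~\ref{prop:current_Helly} in the Lipschitz setting; all remaining steps are routine bookkeeping.
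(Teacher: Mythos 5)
Your proof follows the paper's own (very terse) argument: reduce to a componentwise application of the Helly-type compactness for space-time currents, extract one subsequence for all finitely many $b \in \Bcal$, and then verify that the algebraic conditions defining $\DislF(\cl\Omega)$ and the slip-trajectory spaces are closed under BV-weak* convergence. The paper dispatches the initial-condition claim by appealing to weak* continuity of the boundary operator (so that $\partial S^b_j \restrict (\{0\}\times\R^3) = -\delta_0\times T^b$ passes to the limit); your alternative via the pointwise-everywhere slice convergence available in the Lipschitz regime is equally valid and leads to the same identification. The liminf inequalities and the $\Slip_\eps$ closure via Federer--Fleming are handled correctly.

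One point you gloss over, and which the paper itself only addresses implicitly: Proposition~\ref{prop:current_Helly} is stated for \emph{integral} currents $(S_j)\subset\Irm_{1+k}$ and produces an \emph{integral} limit. In the field case $\DislF(\cl\Omega)$ the $S^b_j$ are merely normal currents in $\Nrm_2([0,T]\times\cl\Omega)$, so that proposition cannot be applied verbatim as you do when writing ``Proposition~\ref{prop:current_Helly} produces \dots a limit $S^b \in \Nrm_2([0,T]\times\cl\Omega)$''. What is actually needed is the normal-current analogue of that Helly principle; the paper tacitly grants this (compare the general remark at the start of Section~\ref{sc:normal} and the normal-current analogue Proposition~\ref{prop:equiv_normal} of Proposition~\ref{prop:equiv}), and the proof of Theorem~3.7 in~\cite{Rindler23} carries over to normal currents essentially unchanged since it rests on uniform mass bounds, slicing, and a compactness theorem for $\Nrm_k$, not on rectifiability. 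A complete proof should state this extension explicitly rather than cite the integral-current version directly; once that is done, your argument is correct and matches the paper's intended route.
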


The following lemma follows directly from the definition of $\Sd_\ff \Td$: 

\begin{lemma} \label{lem:slipff_cont}
Let $\Td \in \DislF(\cl\Omega)$ and $\Sd_j \toweakstar \Sd$ in $\SlipF(\Td)$. Then,
\[
  (\Sd_j)_\ff \Td \toweakstar \Sd_\ff \Td  \quad\text{in $\DislF(\cl\Omega)$.}
\]
\end{lemma}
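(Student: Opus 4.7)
The plan is to unfold the definition of the forward operator and observe that it is a composition of two weak-star continuous operations on currents, so that the conclusion follows essentially for free.

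First, I would expand $(S^b_j)_\ff T^b = \pbf_*\bigl[\partial S^b_j + \delta_0 \times T^b\bigr]$ and note that the added current $\delta_0 \times T^b$ is the same for every $j$ (since both $\Sd_j$ and $\Sd$ lie in $\SlipF(\Td)$). It therefore suffices to show $\pbf_*(\partial S^b_j) \toweakstar \pbf_*(\partial S^b)$ as $1$-currents in $\cl\Omega$, separately for each $b \in \Bcal$. The BV-weak* hypothesis directly yields weak-star convergence $S^b_j \toweakstar S^b$ in $\Dcal_2([0,1] \times \R^3)$.

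Next, I would invoke two standard continuity facts. The boundary operator is weak-star continuous by duality with the exterior derivative: for any $\omega \in \Dcal^1([0,1] \times \R^3)$,
\[
\langle \partial S^b_j, \omega\rangle = \langle S^b_j, d\omega\rangle \longrightarrow \langle S^b, d\omega\rangle = \langle \partial S^b, \omega\rangle.
\]
Similarly, since $\pbf \colon [0,1] \times \cl\Omega \to \cl\Omega$ is smooth and proper (as $[0,1]$ is compact), the pullback $\pbf^*$ sends test forms in $\Dcal^1(\cl\Omega)$ to test forms in $\Dcal^1([0,1] \times \R^3)$, and by duality $\pbf_*$ is weak-star continuous. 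Chaining the two gives the required convergence of currents.

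Finally, to place the convergence in the correct space, I would note that $(S^b_j)_\ff T^b$ is nothing but the endpoint trace $S^b_j(1-)$ (the prescribed piece $-\delta_0 \times T^b$ of $\partial S^b_j$ cancels the added $\delta_0 \times T^b$), so the uniform $\Lrm^\infty$-bound encoded in the BV-weak* convergence (cf.\ Proposition~\ref{prop:LipDS_compact}) supplies a uniform mass bound and ensures that the limit is a normal current; the algebraic conditions $T^{-b}=-T^b$ and $\partial T^b=0$ defining $\DislF(\cl\Omega)$ pass to the limit by linearity. There is no substantive obstacle in this argument — the lemma is a bookkeeping exercise in the weak-star continuity of $\partial$ and $\pbf_*$ — which is presumably why the authors describe it as following \emph{directly from the definition}.
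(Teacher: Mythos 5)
Your proof is correct and supplies exactly what the paper leaves implicit (the lemma is stated without proof, with only the remark that it follows directly from the definition of $\Sd_\ff\Td$): unfolding $S^b_\ff T^b = \pbf_*\bigl[\partial S^b + \delta_0 \times T^b\bigr]$ and chaining the weak* continuity of $\partial$ with that of $\pbf_*$ on currents supported in the fixed compact set $[0,1]\times\cl\Omega$ is the intended argument, and your observation that the result is simply the endpoint trace $S^b_j(1-)$ is a valid reformulation. One small point worth tightening if you write this out: $\pbf(t,x)=x$ is not proper as a map $\R^{1+3}\to\R^3$, so the weak* continuity of $\pbf_*$ does not follow from properness in the abstract sense; it follows because all the currents involved have support in the fixed compact set $[0,1]\times\cl\Omega$, which lets you insert a fixed cutoff $\chi\in\Crm^\infty_c(\R^{1+3})$ with $\chi\equiv 1$ there and test against $\chi\,\pbf^*\omega\in\Dcal^1(\R^{1+3})$.
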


\begin{lemma} \label{lem:plastff_cont}
Let $\Td \in \DislF(\cl\Omega)$, $p_j \toweakstar p$ in $\Mcal(\cl\Omega;\R^{3 \times 3})$, $\Sd_j \toweakstar \Sd$ in $\SlipF(\Td;[0,T])$ with uniformly bounded Lipschitz constants. Then,
\[
  (p_j)_{\Sd_j} \to p_{\Sd}  \quad\text{pointwise in $[0,T]$ with respect to the weak* convergence in $\Mcal(\Omega;\R^{3 \times 3})$.}
\]
\end{lemma}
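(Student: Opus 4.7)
The plan is to fix $t \in (0, T]$ (the case $t = 0$ is trivial since $p_{\Sd}(0) = p$ and $p_j \toweakstar p$) and test the identity
\[
  (p_j)_{\Sd_j}(t) - p_{\Sd}(t) = (p_j - p) + \frac12 \sum_{b \in \Bcal} b \otimes \hodge \, \pbf_*\bigl[(S_j^b - S^b) \restrict [(0, t) \times \R^3]\bigr]
\]
from~\eqref{eq:plastic_forward} against an arbitrary smooth $\phi \in \Crm^\infty_c(\cl\Omega; \R^{3\times 3})$. The first summand converges to $0$ by the hypothesis $p_j \toweakstar p$, and since the Hodge operator is linear and continuous, the matter reduces to proving, for each Burgers vector $b \in \Bcal$, that
\[
  \pbf_*(S_j^b \restrict [(0, t) \times \R^3]) \toweakstar \pbf_*(S^b \restrict [(0, t) \times \R^3])
\]
weakly* in $\Mcal(\cl\Omega; \Wedge_2 \R^3)$.

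To prove this, I would test against an arbitrary $\alpha \in \Crm^\infty_c(\cl\Omega; \Wedge^2 \R^3)$, use the pushforward duality $\dprb{\pbf_* T, \alpha} = \dprb{T, \pbf^* \alpha}$, and rewrite the pairing as $\dprb{S_j^b, \ONE_{(0, t)}(\tau) \, \pbf^*\alpha}$, which is well-defined on the normal current $S_j^b$ even for the discontinuous cutoff. Next I would approximate $\ONE_{(0, t)}$ by a smooth family $\psi_\eps \in \Crm^\infty_c((0, t))$ with $\psi_\eps = 1$ on $[\eps, t - \eps]$ and $0 \leq \psi_\eps \leq 1$. For each fixed $\eps > 0$, the form $\psi_\eps(\tau) \, \pbf^*\alpha$ (multiplied by a time cutoff so as to be compactly supported in $\R \times \R^3$) is smooth, so directly from the weak* convergence $S_j^b \toweakstar S^b$ of normal currents one obtains
\[
  \dprb{S_j^b, \psi_\eps \, \pbf^*\alpha} \to \dprb{S^b, \psi_\eps \, \pbf^*\alpha}  \qquad \text{as $j \to \infty$.}
\]

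The main obstacle, and the only place where the Lipschitz assumption is essential, is the uniform-in-$j$ control of the residual error
\[
  \absb{\dprb{S_j^b, (\ONE_{(0, t)} - \psi_\eps) \, \pbf^*\alpha}} \leq \norm{\alpha}_\infty \cdot \bigl[ \tv{S_j^b}([0, \eps] \times \R^3) + \tv{S_j^b}([t - \eps, t] \times \R^3) \bigr].
\]
Here I would invoke the Pythagoras-type estimate from Lemma~\ref{lem:mass}, giving
\[
  \tv{S_j^b}([a, b] \times \R^3) \leq \int_a^b \Mbf(S_j^b(\tau)) \dd \tau + \Var(S_j^b; [a, b]),
\]
in which the variation contribution is dominated by $L(b - a)$ by virtue of the uniform Lipschitz bound, while the slice-mass integral is dominated by $(b - a) \sup_j \norm{\Sd_j}_{\Lrm^\infty}$, which is finite because uniform $\Lrm^\infty$ control is implicit in the BV-weak* convergence (cf.\ Proposition~\ref{prop:LipDS_compact}). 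The same bound applies to $S^b$ itself, so the total error is $\BigO(\eps)$ independently of $j$; sending first $j \to \infty$ and then $\eps \todown 0$ closes the argument. Note also that the Lipschitz hypothesis automatically forbids vertical mass concentration of the limit at $\{0, t\} \times \R^3$, which is what allows the one-sided approximation $\psi_\eps \nearrow \ONE_{(0,t)}$ to suffice.
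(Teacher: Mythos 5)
Your proof is correct and takes essentially the same approach as the paper's: you reduce via~\eqref{eq:plastic_forward} to showing $S^b_j \restrict ((0,t)\times\R^3) \toweakstar S^b \restrict ((0,t)\times\R^3)$, and deduce this by combining the uniform Lipschitz bound with Lemma~\ref{lem:mass} so that the mass of $S^b_j$ in thin time slabs near $\{0\}$ and $\{t\}$ is $\BigO(\eps)$ uniformly in $j$ --- which is exactly what the paper abbreviates by saying the limit of $\tbf_*\tv{S^b_j}$ cannot charge $\{t\}$ and then invoking \enquote{standard measure theory arguments}, here made explicit as your cutoff argument. One small caveat: the uniform control of $\esssup_\tau \Mbf(S^b_j(\tau))$ that your slab estimate (and the paper's implicit argument) requires is not literally a consequence of BV-weak* convergence --- it appears as a separate hypothesis in Proposition~\ref{prop:LipDS_compact} and should be read into the lemma's hypothesis \enquote{uniformly bounded Lipschitz constants}, as it is in all the contexts where the lemma is applied.
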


\begin{proof}
We know from~\eqref{eq:plastic_forward} that for $t \in [0,1]$,
\[
  (p_j)_{\Sd_j}(t) = p_j + \frac12 \sum_{b \in \Bcal} b \otimes \hodge  \pbf_*( S_j^b \restrict {(0,t) \times\R^3}).
\]
It follows from the uniform Lipschitz continuity of the $S^b_j$ that if $\tbf_* \abs{S_j^b} \toweakstar \Lambda$, then $\Lambda$ cannot charge the set $\{t\}$. This then implies $S^b_j \restrict (0,t) \toweakstar S^b \restrict (0,t)$ by standard measure theory arguments.
\end{proof}

For the next lemma we recall that a sequence $(\mu_j)_j \subset \Mcal(X;\R^N)$ converges \term{strictly} to $\mu \in \Mcal(X;\R^N)$, in symbols \enquote{$\mu_j \toS \mu$}, if $\mu_j \toweakstar \mu$ and $\abs{\mu_j}(X) \to \abs{\mu}(X)$.

\begin{lemma} \label{lem:plastff_cont_strict}
Let $\Td \in \DislF(\cl\Omega)$, $p \in \Mcal(\cl\Omega;\R^{3 \times 3})$, and $\Sd_j \to \Sd$ strictly in $\SlipF(\Td;[0,T])$, that is,
\[
  S^b_j \toS S^b  \quad\text{in $[0,T] \times \cl{\Omega}$} \qquad\text{for all $b \in \Bcal$,}
\]
and with uniformly bounded Lipschitz constants. Then,
\[
  p_{\Sd_j} \to p_{\Sd}  \quad\text{pointwise in $[0,T]$ with respect to the strict convergence in $\Mcal(\Omega;\R^{3 \times 3})$.}
\]
\end{lemma}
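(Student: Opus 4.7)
The weak$^*$ convergence $p_{\Sd_j}(t) \toweakstar p_{\Sd}(t)$ for every $t \in [0,T]$ is already delivered by Lemma~\ref{lem:plastff_cont}, since strict convergence implies weak$^*$; it therefore suffices to establish convergence of total masses $|p_{\Sd_j}(t)|(\cl\Omega) \to |p_{\Sd}(t)|(\cl\Omega)$. The plan is to obtain this from Reshetnyak's continuity theorem applied to a joint vector-valued measure on $\cl\Omega$ whose components are $p$ and the individual dislocation contributions.

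In detail, by the uniform Lipschitz continuity in time, neither $|S^b|$ nor $|S^b_j|$ charges $\{0\} \times \cl\Omega$ or $\{t\} \times \cl\Omega$, so strict convergence descends to the space-time restrictions $S^b_j \restrict [(0,t)\times\R^3] \toS S^b \restrict [(0,t)\times\R^3]$. Set $\mu^b_j := \hodge \pbf_*(S^b_j \restrict [(0,t)\times\R^3])$ and $\mu^b$ analogously. I would next establish the individual strict convergences $\mu^b_j \toS \mu^b$ in $\Mcal(\cl\Omega;\R^3)$: weak$^*$ follows by testing with pulled-back forms $\pbf^*\eta$, while the mass convergence is handled by a space-time Reshetnyak argument (the delicate step, see below). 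With this, the joint vector measure $(p,(\mu^b_j)_b)\in\Mcal(\cl\Omega;\R^{3\times 3}\oplus\bigoplus_{b\in\Bcal}\R^3)$, equipped with the $\ell^1$-sum norm on the target, converges strictly to $(p,(\mu^b)_b)$, because under this norm the total variation of a direct-sum measure decomposes additively as $|p|(\cl\Omega) + \sum_b |\mu^b_j|(\cl\Omega)$. Applying Reshetnyak's continuity theorem to this joint strict convergence with the continuous, positively $1$-homogeneous integrand
\[
  \Phi(A,(v^b)_{b\in\Bcal}) := \left| A + \frac12 \sum_{b\in\Bcal} b \otimes v^b \right|_F
\]
then yields $|p_{\Sd_j}(t)|(\cl\Omega) \to |p_{\Sd}(t)|(\cl\Omega)$, since by $1$-homogeneity the corresponding Reshetnyak integrals evaluate to $\left| p + \frac12\sum_b b \otimes \mu^b_j \right|(\cl\Omega)$ and $\left| p + \frac12\sum_b b \otimes \mu^b \right|(\cl\Omega)$, respectively, which are precisely $|p_{\Sd_j}(t)|(\cl\Omega)$ and $|p_{\Sd}(t)|(\cl\Omega)$.

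The main obstacle will be the upgrade from strict convergence of $S^b_j$ on the space-time cylinder to strict convergence of the pushforward $\mu^b_j$ on $\cl\Omega$. The difficulty is that $\pbf$ is not injective: two points of $\supp|S^b_j|$ lying in the same fiber over $x \in \cl\Omega$ may contribute to $\pbf_*(S^b_j \restrict)$ with opposite orientations, so in general $|\pbf_*(S^b_j \restrict)|(\cl\Omega) < \int |\pbf \vec{S^b_j}| \, d|S^b_j|_{(0,t)\times\cl\Omega}$. A naive Reshetnyak application on space-time with integrand $|\pbf(\cdot)|$ only delivers convergence of the upper bound, leaving a potential gap. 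Closing it will require a disintegration of $|S^b_j|$ along the fibers of $\pbf$ combined with a generalized Young-measure argument, exploiting the uniform Lipschitz-in-time bound to transfer fiber-orientation information (and its cancellations) cleanly to the limit.
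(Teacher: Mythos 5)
Your proposal follows essentially the same route as the paper's one-sentence proof, which invokes the preceding weak$^*$ lemma together with Reshetnyak's continuity theorem and the observation that the plastic flow formula is linear, hence $1$-homogeneous. Your joint-measure set-up on $\cl\Omega$ with the $1$-homogeneous integrand $\Phi(A,(v^b)_b) = \bigl|A + \tfrac12\sum_b b\otimes v^b\bigr|_F$ is a correct and carefully spelled-out rendering of what that citation of Reshetnyak must be doing; this is the easy step and is handled the same way. Where your proposal goes further than the paper is precisely in flagging that before one can apply Reshetnyak on $\cl\Omega$, one must first know that the pushed-forward measures $\mu^b_j := \hodge\pbf_*\bigl(S^b_j\restrict[(0,t)\times\R^3]\bigr)$ converge \emph{strictly} on $\cl\Omega$, not just weak$^*$, and you are right that this is the genuine content of the lemma.

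The concern you raise is legitimate and the paper's proof does not address it. Reshetnyak's continuity theorem applied to $S^b_j\toS S^b$ in $[0,T]\times\cl\Omega$ with the integrand $\xi\mapsto|\pbf(\xi)|$ yields only $\Var(S^b_j;(0,t))\to\Var(S^b;(0,t))$, i.e.\ convergence of the \emph{variations}; the mass $\Mbf(\mu^b_j)$ of the pushed-forward current is a supremum of linear functionals $\nu\mapsto\int\langle\pbf(\vec\nu(z)),\psi(\pbf(z))\rangle\,\di\tv{\nu}(z)$, hence only lower semicontinuous, and $\Mbf(\pbf_*T)\le\Var(T)$ can be strict because fibers of $\pbf$ may carry cancelling orientations. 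Strict convergence is not preserved under a measure pushforward in general (take $\nu_j = (1+\sin(j\cdot))\,\delta_{\tau_1}\otimes\Lcal^1 - (1-\sin(j\cdot))\,\delta_{\tau_2}\otimes\Lcal^1$ on $[0,1]^2$ with $\pbf$ the second-coordinate projection: $\nu_j\toS 0\cdot$(something nonzero in mass) strictly while $|\pbf_\#\nu_j|([0,1])\to 4/\pi\neq 0$). So the step (a) you isolate requires a genuine argument — presumably exploiting the boundary constraint $\partial S^b_j\restrict((0,T)\times\R^3)=0$ and the uniform Lipschitz bound in time to rule out exactly the kind of fiber-level oscillation that produces the counterexample — and your disintegration/Young-measure programme remains a plan rather than a proof. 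As it stands, your proposal does not yet complete the argument, but it is more forthright than the paper about where the actual difficulty sits.
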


\begin{proof}
This follows directly from the preceding lemma together with Reshetnyak's continuity theorem (see, e.g., Theorem~2.39 in~\cite{AmbrosioFuscoPallara00book}) once we notice that the formula~\eqref{eq:plastic_forward} is linear, hence $1$-homogeneous.
\end{proof}

\section{Main results} \label{sc:results}

In this central section we formulate our precise assumptions, define formally the energy and dissipation functionals, and then state our main results.

\subsection{Assumptions}

The following are our general assumptions:
	
\begin{enumerate}[({A}1)] \setlength\itemsep{8pt}
\item \label{as:general}\label{as:first} \term{Basic assumptions:}
  \begin{enumerate}[(i)]
    \item $\Omega \subset \R^3$ is a bounded and connected Lipschitz domain;
    \item $\Bcal = \{ \pm b_1, \ldots, \pm b_m \} \subset \R^3 \setminus \{0\}$ is the system of Burgers vectors;
    \item $H \subset \Omega$ is the open holding set, and $h_0 \in \R^3$ is the holding value.
  \end{enumerate}
  
  \item \label{as:energ}\term{Energetic assumptions:}
  \begin{enumerate}[(i)]
    \item $\Ebb$ is a symmetric and positive definite fourth-order elasticity tensor;
    \item for each $b \in \Bcal$, the core line tension $\psi^b \colon \Wedge_1 \R^3 \to [0,\infty)$ is continuous, convex, positively $1$-homogeneous, and strictly positive (except at $0$), and $\psi^b = \psi^{-b}$;
    \item $h \colon [0,+\infty) \to [0,+\infty)$ is a continuous \fil{and increasing} function such that there exist $q > 2$ and $C>0$ with $h(z) \ge C^{-1} z^q - C$ for $z \geq 0$;
    \item $f \in \Crm^1([0,T];\Lrm^\infty(\Omega;\R^3))$ is the external loading.
  \end{enumerate}
  
  \item \label{as:R}\label{as:last} \term{Dissipation potential:} For each $b \in \Bcal$, the dissipation potential $R^b \colon \Wedge_2\R^{1+3} \to [0,\infty)$ satisfies the following conditions:
  \begin{enumerate}[(i)]
    \item $R^b$ is convex and positively $1$-homogeneous;
    \item $R^b$ is locally Lipschitz continuous;
    \item $C^{-1} \abs{\pbf(\xi)} \leq R^b(\xi) \leq C \abs{\pbf(\xi)}$ for $\xi \in \Wedge_2\R^{1+3}$.
  \end{enumerate}
\end{enumerate}

\fil{More precisely, Assumption~\ref{as:energ}~(i) means that $A : \Ebb B = (\sym A) : \Ebb (\sym B) = (\sym B) : \Ebb (\sym A)$ and $\abs{A}_\Ebb := A : \Ebb A \geq C^{-1} \abs{\sym A}^2$ for all $A,B \in \R^{3 \times 3}_\sym$ and a constant $C > 0$.}

\subsection{Free displacement}

As explained in the introduction, for a total displacement $u \in \BV(\Omega;\R^3)$ and a plastic distortion $p \in \Mcal(\cl\Omega;\R^{3 \times 3})$ with $\curl p \in \Mcal(\cl\Omega;\R^{3 \times 3})$, we define the associated \term{free (non-dislocation) displacement} as
\[
  \free[Du - p] := (Du - p  - \beta) \restrict \Omega,
\]
with $\beta \in \Lrm^{3/2}(\Omega;\R^{3 \times 3})$ the (unique) solution of the PDE system
\begin{equation} \label{eq:beta_PDE}
  \left\{ \begin{aligned}
    - \diverg \Ebb \beta &= 0  &&\text{in $\Omega$,} \\ 
    \curl \beta &= - \curl p  &&\text{in $\Omega$,} \\
    n^T \Ebb \beta &= 0  &&\text{on $\partial\Omega$.}
  \end{aligned} \right.
\end{equation}
It is shown in~\cite[Proposition~4.2]{ContiGarroniOrtiz15} (and also~\cite{BourgainBrezis04}) that this system has a unique solution $\beta$ and
\begin{equation} \label{eq:beta_est}
  \norm{\beta}_{\Lrm^{3/2}} \leq C \Mbf(\curl p).
\end{equation}

The following crucial \emph{corrector lemma} lets us adjust $u$ in response to a change in $p$:

\begin{lemma} \label{lem:freedist_correct}
Let $u \in \BV(\Omega;\R^3)$ and $p,p' \in \Mcal(\cl\Omega;\R^{3 \times 3})$ with $\curl p, \curl p' \in \Mcal(\cl\Omega;\R^{3 \times 3})$. Then, there is $u' \in \BV(\Omega;\R^3)$ with
\begin{equation} \label{eq:hatu_claim}
  \sym \free[Du' - p'] = \sym \free[Du - p].
\end{equation}
Moreover, we may additionally require that
\[
  \skw Du'(\Omega) = 0  \qquad\text{and}\qquad
  [u']_H = h_0
\]
for any given measurable set $H \subset \Omega$ and $h_0 \in \R^3$. In this case, the weak* convergence $p_j' \toweakstar p$ (considering a sequence) also implies $u_j' \toweakstar u$ in $\BV$.
\end{lemma}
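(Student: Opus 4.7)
The plan is to reduce the problem to constructing a BV-potential for a curl-free matrix-valued measure and then normalizing by an affine function. Set $\tilde{p} := p' - p$ and, with $\beta, \beta' \in \Lrm^{3/2}(\Omega;\R^{3 \times 3})$ the unique solutions of the PDE system~\eqref{eq:beta_PDE} associated with $p$ and $p'$ respectively, $\tilde{\beta} := \beta' - \beta$. The linearity of~\eqref{eq:beta_PDE} gives $\curl \tilde{\beta} = -\curl \tilde{p}$ in the sense of distributions, so that
\[
  \mu := \tilde{p} + \tilde{\beta}
\]
is a row-wise curl-free $\R^{3 \times 3}$-valued Radon measure on $\Omega$ of finite total variation, the $\Lrm^{3/2}$-part contributing a bounded measure on the bounded domain $\Omega$.

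The main technical step is to produce $v \in \BV(\Omega;\R^3)$ with $Dv = \mu$. This is a Poincar\'{e}-type assertion for curl-free measures on a Lipschitz domain, which can be obtained by solving (row-wise) $\Delta v = \diverg \mu$ with appropriate Neumann-type boundary data inherited from~\eqref{eq:beta_PDE}; one then checks that $Dv - \mu$ is simultaneously curl-free and divergence-free with matching normal trace, hence vanishes. The BV-regularity of $v$ follows from $Dv = \mu \in \Mcal(\cl\Omega;\R^{3\times 3})$. With such $v$ in hand, put $\hat{u} := u + v$: by construction
\[
  D\hat{u} - p' - \beta' = Du + \mu - p' - \beta' = Du - p - \beta
\]
pointwise as measures, so that $\free[D\hat{u} - p'] = \free[Du - p]$, which in particular yields~\eqref{eq:hatu_claim} (in fact without taking symmetric parts).

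To impose the two normalizations, define $u'(x) := \hat{u}(x) + A x + c$ with $A \in \R^{3 \times 3}$ skew-symmetric and $c \in \R^3$. The choice $A := -\skw D\hat{u}(\Omega)/\Lcal^3(\Omega)$ forces $\skw Du'(\Omega) = 0$, and then $c := h_0 - [\hat{u} + A(\,\cdot\,)]_H$ forces $[u']_H = h_0$. Since adding a skew-symmetric affine part leaves $\sym Du'$ unchanged, the identity~\eqref{eq:hatu_claim} is preserved.

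For the continuity assertion, observe that the entire construction $p' \mapsto u'$ is linear in $p'$ once the two normalizations are imposed. By~\eqref{eq:beta_est} and the linearity of~\eqref{eq:beta_PDE}, the hypothesis $p_j' \toweakstar p$ (accompanied by the natural convergence $\curl p_j' \toweakstar \curl p$ in the setting) implies $\beta_j' \toweak \beta$ in $\Lrm^{3/2}$, whereby $\mu_j \toweakstar 0$ in $\Mcal(\cl\Omega;\R^{3 \times 3})$. The uniquely normalized potentials then satisfy $v_j \toweakstar 0$ in $\BV(\Omega;\R^3)$, and consequently $u_j' \toweakstar u$. The main obstacle is the Poincar\'{e}-type construction of the BV potential $v$ from the curl-free measure $\mu$, where one must rule out de Rham obstructions (exploiting the Neumann-type structure in~\eqref{eq:beta_PDE}) and propagate the total-variation bound on $\mu$ to a BV bound on $v$ via a Korn-type inequality, which is exactly what enables the final weak* continuity.
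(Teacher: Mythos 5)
Your proof is essentially identical to the paper's: by uniqueness of solutions to~\eqref{eq:beta_PDE}, your $\tilde\beta = \beta'-\beta$ coincides with $-\gamma$ for $\gamma$ the paper's auxiliary solution of~\eqref{eq:gammaeps_PDE}, so your curl-free measure $\mu$, the BV corrector with $Dv=\mu$, the skew-affine normalization, and the weak*-continuity step all match the paper's argument step for step. The only stylistic difference is that you observe $\free[D\hat u - p'] = \free[Du-p]$ exactly before adding the affine part, whereas the paper records the offset $+R$ afterwards and then takes symmetric parts; your suggested Neumann-Laplace construction of the potential is one way to realize what the paper simply cites as a corrector (both, tacitly, need $\Omega$ to have trivial first cohomology).
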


\begin{proof}
Let $\gamma \in \Lrm^{3/2}(\Omega;\R^{3 \times 3})$ be the (unique) solution of
\begin{equation} \label{eq:gammaeps_PDE}
  \left\{ \begin{aligned}
    - \diverg \Ebb \gamma &= 0  &&\text{in $\Omega$,} \\
    \curl \gamma &= \curl (p' - p)  &&\text{in $\Omega$,} \\
    n^T \Ebb \gamma &= 0  &&\text{on $\partial\Omega$,}
  \end{aligned} \right.
\end{equation}
whose existence is shown in~\cite[Proposition~4.2]{ContiGarroniOrtiz15}. Then, the measure $\mu := p' - p - \gamma$ is curl-free and so there is a \enquote{corrector} $w \in \BV(\Omega;\R^3)$ with
\[
  Dw = \mu = p' - p - \gamma  \qquad\text{in $\Omega$.}
\]
We set
\[
  u'(x) := u(x) + w(x) + Rx + u_0
\]
with $u_0 \in \R^3$ and $R \in \R^{3 \times 3}_\skw$ chosen such that
\[
  0 = \skw Du'(\Omega) = \skw \bigl( Du(\Omega) + Dw(\Omega) \bigr) + R \, \Lcal^3(\Omega) \qquad\text{and}\qquad
  [u']_H = [u]_H.
\]
Then, in $\Omega$,
\begin{align*}
  \free[Du' - p']
  &= Du' - p' - \beta' \\
  &= Du + Dw - p' - \beta' + R \\
  &= Du - p - \beta' - \gamma + R
\end{align*}
with $\beta' \in \Lrm^{3/2}(\Omega;\R^{3 \times 3})$ the unique solution of
\[
  \left\{ \begin{aligned}
    - \diverg \Ebb \beta' &= 0  &&\text{in $\Omega$,} \\
    \curl \beta' &= - \curl p'  &&\text{in $\Omega$,} \\
    n^T \Ebb \beta' &= 0  &&\text{on $\partial\Omega$.}
  \end{aligned} \right.
\]
By the linearity of this PDE, $\beta := \beta' + \gamma$ then is the unique $\Lrm^{3/2}$-solution to
\[
  \left\{ \begin{aligned}
    - \diverg \Ebb \beta &= 0  &&\text{in $\Omega$,} \\ 
    \curl \beta &= - \curl p  &&\text{in $\Omega$,} \\
    n^T \Ebb \beta &= 0  &&\text{on $\partial\Omega$.}
  \end{aligned} \right.
\]
Thus, by the definition of the free displacement,
\[
  \free[Du' - p'] 
  = Du - p - \beta + R
  = \free[Du - p] + R,
\]
which shows~\eqref{eq:hatu_claim}.

For the convergence assertion we observe that the solution operator to~\eqref{eq:gammaeps_PDE} is linear, bounded, and closed with respect to the weak* convergences $p_j' \toweakstar p$ and $\gamma_j \toweakstar \zeta$. Together with the unique solvability, this implies the weak* continuity of the solution operator. Thus, if $p_j' \toweakstar p$, we have $\gamma_j \toweakstar 0$, hence $Du_j' \toweakstar Du$. By the weak*-to-strong continuity of the embedding from $\BV$ to $\Lrm^1$ (see~\cite[Corollary~3.49]{AmbrosioFuscoPallara00book}, we have thus shown the sought convergence assertion.
\end{proof}

\subsection{Energy functionals}

As motivated in the introduction, we define the \term{elastic deformation energy} to be
\begin{equation} \label{eq:W}
  \Wcal_e(u,p) := \frac{1}{2} \int_\Omega \abs{\free[Du-p]}_\Ebb^2 \dd x
\end{equation}
for $u \in \BV(\Omega;\R^3)$ and $p \in \Mcal(\cl\Omega;\R^{3 \times 3})$ with $\sym \free[Du - p] \in \Lrm^2$. Here,
\[
  \abs{A}_\Ebb^2 := A \!:\! (\Ebb A)
\]
is the quadratic form associated to the bilinear symmetric form $(A,B) \mapsto A \!:\! (\Ebb B)$, which is also coercive on symmetric matrices. While $\abs{A}_\Ebb$ is not a norm (it is a norm on symmetric matrices only), we still use the above notation to improve readability.
	
The \term{core energy} of the dislocation system $\Td = (T^b)_b \in \DislF(\cl\Omega)$ {(see Section~\ref{sc:DS} for the definition of this set)} is given as
\[
\Wcal_c(\Td) := h\left( \sum_{b \in \Bcal} \Mbf_{\psi^b}(T^b) \right),
\]
where $h$ is defined in Assumption~\ref{as:energ}~(iii) and
\[
  \Mbf_{\psi^b}(T^b) := \int \psi^b(\vec{T}^b) \dd \tv{T^b},
\]
with $\psi^b$ the (potentially anisotropic) core line tension.

The \term{dissipation} of $\Sd \in \SlipF(z;[0,T])$, with $z = (p,\Td)$ as above, in the interval $I \subset [0,T]$ is
\[
\Diss(\Sd;I) := \frac12 \sum_{b \in \Bcal} \int_{I \times \R^3} R^b(\vec{S}^b) \dd \tv{S^b}.
\]
 If $\Sd \in \SlipF(z)$, i.e., $[0,T] = [0,1]$, then we also just write
\[
\Diss(\Sd) := \Diss(\Sd;[0,1]).
\]
	
Via Lemma~\ref{lem:rescale} and Lemma~\ref{lem:Sigma_rescale} we also deduce the following rescaling (rate-independence) property of the dissipation: For any injective $\Crm^1$ or Lipschitz map $a \colon [0,T] \to [0,T']$ with $a(0) = 0$, $a(T) = T'$ it holds that
\begin{equation} \label{eq:Diss_rescale}
  \Diss(a_* \Sd;[0,T']) = \Diss(\Sd;[0,T]).
\end{equation}

\subsection{State space}

The \term{(joint) state space} is defined to be
\begin{align*}
  \Qcal := \setB{ (u,p,(T^b)_b) \in \; &\BV(\Omega;\R^3) \times \Mcal(\cl\Omega;\R^{3 \times 3}) \times \DislF(\cl\Omega) }{ \sym \free[Du - p] \in \Lrm^2, \\
  &\skw Du(\Omega) = 0, \; \curl p = \textstyle\frac{1}{2} \sum_{b \in \Bcal} b \otimes T^b, \;\text{and}\; [u]_H = h_0 }.
\end{align*}

We have the following Poincar\'{e}-type inequality for $(u,p,(T^b)_b) \in \Qcal$:
\begin{equation} \label{eq:Q_Poincare}
  \norm{u}_\BV \leq C(1 + \Mbf(Du))
\end{equation}
for a constant $C > 0$ only depending on $\Omega$, $H$, and $h_0$. To see this, combine the classical Poincar\'{e}--Friedrich inequality in BV with the holding condition $[u]_H = h_0$ in the definition of $\Qcal$.

Furthermore, we have the following Korn-type inequality:

\begin{lemma} \label{lem:Qorn}
For $(u,p,\Td) \in Q$ it holds that
\begin{equation} \label{eq:Qorn}
  \norm{\free[Du - p]}_{\Lrm^2} \leq C_K \bigl( \norm{\sym \free[Du - p]}_{\Lrm^2} + \Mbf(p) + \Mbf(\curl p) \bigr).
\end{equation}
\end{lemma}

\begin{proof}
By definition of the free displacement, we have with $\beta \in \Lrm^{3/2}$ the solution of~\eqref{eq:beta_PDE},
\[
  \curl \free[Du - p]
  = \curl \bigl(Du - p - \beta \bigr)
  = 0  \qquad\text{in $\Omega$.}
\]
Hence, $\free[Du - p]$ is a gradient and we may find $z \in \fil{BV}(\Omega;\R^3)$ with
\[
  \nabla z = \free[Du - p] = Du - p - \beta.
\]
We now use the Korn-type inequality
\[
  \norm{\nabla z}_{\Lrm^2} \leq C \biggl( \norm{\sym \nabla z}_{\Lrm^2} + \absBB{\skw \int_\Omega \nabla z \dd x} \biggr),
\]
which is proved for instance in~\cite[Remark~3.9]{GmeinederLewintanNeff24}. Since we assume that $\skw Du(\Omega) = 0$ in the definition of $\Qcal$, we compute
\[
  \absBB{\skw \int_\Omega \nabla z \dd x}
  = \absBB{\skw \int_\Omega p + \beta \dd x}
  \leq \Mbf(p) + \norm{\beta}_{\Lrm^1}
  \leq C \bigl( \Mbf(p) + \Mbf(\curl p) \bigr),
\]
where in the last line we used~\eqref{eq:beta_est}. \fil{Combining these estimates yields that $z \in \Wrm^{1,2}(\Omega;\R^3)$ and~\eqref{eq:Qorn}}.
\end{proof}

We can then introduce for $(u,p,\Td) \in \Qcal$ the \term{total energy}
\begin{equation} \label{eq:E}
	\Ecal(t,u,p,\Td) := \Wcal_e(u,p) - \dprb{f(t),u} + \Wcal_c(\Td),
\end{equation}
where $f$ is the external loading specified in Assumption~\ref{as:energ} and $\dpr{\frarg,\frarg}$ is the duality product between $\Lrm^\infty(\Omega;\R^3)$ and $\BV(\Omega;\R^3)$, which is weak*-to-strong continuously embedded into $\Lrm^1(\Omega;\R^3)$.

We also define the following $\eps$-discrete version of $\Qcal$ ($\eps > 0$):
\[
  \Qcal_\eps := \setb{ (u,p,\Td) \in \Qcal}{\Td \in \Disl_\eps(\cl\Omega) }.
\]

\begin{lemma} \label{lem:Q_closed}
The state spaces $\Qcal$ and $\Qcal_\eps$ are closed with respect to the convergence
\[
  \left\{\begin{aligned}
    &\begin{aligned}
       u_j &\toweakstar u  &&\text{in $\BV$,} \\
       p_j &\toweakstar p  &&\text{in $\Mcal$}, \\
       \Td_j &\toweakstar \Td  &&\text{in $\DislF(\Omega)$,} \\
     \end{aligned}\\
    &\text{$(\free[Du_j-p_j])_j$ uniformly bounded in $\Lrm^2$.}
  \end{aligned}\right.
\]
\end{lemma}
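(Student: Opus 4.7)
The plan is to verify each defining condition of $\Qcal$ (and the extra integrality condition for $\Qcal_\eps$) passes to the limit under the four assumed convergences. The conditions split naturally into "easy" linear/boundary conditions and the "hard" coercivity condition $\sym \free[Du-p] \in \Lrm^2$.

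For the easy conditions, I would argue as follows. That $u \in \BV(\Omega;\R^3)$ and $p \in \Mcal$ are automatic from weak* lower semicontinuity of the respective norms. The barycentric holding condition $[u]_H = h_0$ and the skew-symmetric condition $\skw Du(\Omega)=0$ pass to the limit because $u_j \toweakstar u$ in $\BV$ implies $u_j \to u$ in $\Lrm^1(\Omega;\R^3)$ by the weak*-to-strong embedding $\BV \hookrightarrow \Lrm^1$ (see~\cite[Corollary~3.49]{AmbrosioFuscoPallara00book}), while $Du_j(\Omega) \to Du(\Omega)$ follows by testing with a cutoff $\varphi \in \Crm^\infty_c$ that equals $1$ on $\Omega$ (using that $\supp Du_j \subset \cl\Omega$). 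The compatibility condition is tested against arbitrary smooth $k$-forms: since $p_j \toweakstar p$ as measures, $\curl p_j \to \curl p$ in $\Dcal'$, and analogously $T^b_j \toweakstar T^b$, so the identity $\curl p = \frac12 \sum_b b \otimes T^b$ survives in the distributional limit. That $\Td \in \DislF(\cl\Omega)$ (normality, closure of $T^b$, sign symmetry, support in $\cl\Omega$) follows from weak* lower semicontinuity of mass, continuity of the boundary operator, and linearity.

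The main step is the $\Lrm^2$ coercivity condition. The uniform $\Lrm^2$-bound on $(\free[Du_j - p_j])_j$ allows to extract (along a not relabelled subsequence) a weak $\Lrm^2$-limit $\zeta \in \Lrm^2(\Omega;\R^{3\times 3})$. The key point is to identify $\zeta = \free[Du-p]$. To this end, let $\beta_j, \beta \in \Lrm^{3/2}$ be the unique solutions of~\eqref{eq:beta_PDE} with right-hand sides $-\curl p_j$ and $-\curl p$, so that $\free[Du_j - p_j] = Du_j - p_j - \beta_j$ in $\Omega$. The linearity and weak* closedness of the solution operator $\curl p \mapsto \beta$ (as established in the final paragraph of the proof of Lemma~\ref{lem:freedist_correct}), together with the estimate~\eqref{eq:beta_est} and the mass bound on $\curl p_j$ (finite by the compatibility condition and weak* convergence of $\Td_j$), imply $\beta_j \to \beta$ weakly in $\Lrm^{3/2}$. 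Hence $Du_j - p_j - \beta_j \to Du - p - \beta = \free[Du-p]$ in $\Dcal'(\Omega;\R^{3\times 3})$. Matching this distributional limit with the weak $\Lrm^2$-limit $\zeta$ forces $\zeta = \free[Du-p]$, so $\free[Du-p] \in \Lrm^2$, and in particular $\sym \free[Du-p] \in \Lrm^2$.

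Finally, for $\Qcal_\eps$, the only additional requirement is $\eps^{-1} T^b \in \Irm_1(\cl\Omega)$ for all $b \in \Bcal$. Writing $T^b_j \toweakstar T^b$ with $\eps^{-1} T^b_j \in \Irm_1(\cl\Omega)$ and using uniform bounds $\sup_j(\Mbf(T^b_j) + \Mbf(\partial T^b_j)) < \infty$ (the second being $0$ since $\partial T^b_j = 0$), the Federer--Fleming compactness theorem~\cite[4.2.17]{Federer69book} ensures the weak* limit $\eps^{-1} T^b$ is again integral. I expect the identification $\zeta = \free[Du-p]$ to be the main obstacle, and the care needed in extracting the $\Lrm^{3/2}$ weak convergence of $\beta_j$ from the mass bound on $\curl p_j$ is the crucial technical ingredient.
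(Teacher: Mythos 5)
Your proposal takes essentially the same route as the paper. For the hard condition $\sym\free[Du-p]\in\Lrm^2$, both you and the paper rely on: (a) the $\Lrm^{3/2}$-bound from~\eqref{eq:beta_est} (you source the mass bound on $\curl p_j$ from the compatibility condition and the weak* convergence of $\Td_j$, which is the cleaner way to say it); (b) the weak*-to-weak closedness of the solution operator of~\eqref{eq:beta_PDE}; and (c) the uniform $\Lrm^2$-bound on $\free[Du_j-p_j]$ to upgrade the identified limit to $\Lrm^2$. The only superficial difference is the ordering: you first extract a weak $\Lrm^2$-limit $\zeta$ and then identify it with $\free[Du-p]$, whereas the paper first passes to the limit in the identity $\free[Du_j-p_j]=Du_j-p_j-\beta_j$ and then upgrades. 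These are the same argument. The treatment of $\Qcal_\eps$ via Federer--Fleming also matches.

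One small caution on the "easy" conditions: your justification that $\skw Du_j(\Omega)\to\skw Du(\Omega)$ by testing against a cutoff $\varphi\in\Crm^\infty_c$ equal to $1$ on $\Omega$ does not literally work, since such a $\varphi$ restricted to $\Omega$ is not in $C_0(\Omega)$ and weak* convergence $Du_j\toweakstar Du$ in $\Mcal(\Omega;\R^{3\times 3})$ is tested only against $C_0(\Omega)$; in general the total measure $Du_j(\Omega)$ need not converge to $Du(\Omega)$ (mass can escape to $\partial\Omega$). The paper itself treats this condition as "obviously weakly* closed" without giving a justification, so this is a shared gloss rather than a gap you introduce, but be aware the cutoff argument as stated is not a proof of that step.
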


\begin{proof}
The second to fourth conditions in the definition of $\Qcal$ are obviously weakly* closed; for $\Qcal_\eps$ we additionally use the weak* closedness of the space of integral currents by the Federer--Fleming theorem~\cite{FedererFleming60}.

Concerning the first condition, we observe that the solution operator to~\eqref{eq:beta_PDE} is weak*-to-weak closed when considered as mapping $\curl p \in \Mcal(\cl\Omega;\R^{3 \times 3})$ to the corresponding solution $\beta \in \Lrm^{3/2}(\Omega;\R^3)$. Thus, for sequences $(u_j,p_j,\Td_j)_j \subset \Qcal$ with $u_j \toweakstar u$, $p_j \toweakstar p$, we get from~\eqref{eq:beta_est} and the assumed uniform $\Lrm^2$ boundedness for $(\free[Du_j-p_j])_j$ that the solutions $\beta_j$ of~\eqref{eq:beta_PDE} for $p_j$ have a $\Lrm^{3/2}$-weak limit:
\[
  \beta_j = Du_j - p_j - \free[Du_j - p_j] \toweak \beta  \quad\text{in $\Lrm^{3/2}$}
\]
for some $\beta \in \Lrm^{3/2}(\Omega;\R^{3 \times 3})$. By the closedness, we identify $\beta$ as the solution of~\eqref{eq:beta_PDE} for $p$. So,
\[
  \free[Du_j-p_j] = Du_j - p_j - \beta_j
  \toweakstar Du - p - \beta
  = \free[Du-p],
\]
As we furthermore assumed uniform $\Lrm^2$ boundedness for $(\free[Du_j-p_j])_j$, this convergence is upgraded to $\Lrm^2$-weak convergence, whereby also $\free[Du-p] \in \Lrm^2$.
\end{proof}

The above definitions of $\Qcal$ and $\Qcal_\eps$ entail the requirement that $\curl p$ is (representable as) a bounded measure. This has a number of regularity consequences:

\begin{proposition} \label{prop:p_likeBV}
Let $(u,p,\Td) \in \Qcal$. Then, $p$ has the following properties:
\begin{enumerate}[(i)]
  \item $p \ll \Hcal^2$.
  \item $p \restrict \{ \theta^*_2(\abs{p}) > 0 \} = a \otimes n \, \Hcal^2 \restrict R$ for a $2$-rectifiable set $R$ and $a(x), n(x) \in \R^3$ ($x \in R$); here, $\theta^*_2(\abs{p})(x) := \limsup_{r \todown 0} r^{-2} \abs{p}(B_r(x))$ is the upper $2$-density of $\abs{p}$.
  \item $\frac{\di p}{\di \abs{p}}(x) = a(x) \otimes n(x)$ for $\abs{p}^s$-a.e.\ $x \in \Omega$, where $\abs{p}^s$ is the singular part of $\abs{p}$.
\end{enumerate}
\end{proposition}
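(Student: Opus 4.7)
The three claims are local in nature and will be reduced to the corresponding fine-structure properties of $\BV$ functions via a Helmholtz-type decomposition of $p$. Since $\curl p \in \Mcal(\cl\Omega; \R^{3\times 3})$ is divergence-free (as the curl of something), a row-by-row application of~\cite[Proposition~4.2]{ContiGarroniOrtiz15} (as already invoked for the field $\beta$ defined in~\eqref{eq:beta_PDE}) provides $V \in \Lrm^{3/2}(\Omega; \R^{3\times 3})$ with $\curl V = \curl p$ in $\Omega$. Fix any ball $B \subset\subset \Omega$; then $p - V\Lcal^3$, restricted to $B$, is a curl-free $\R^{3\times 3}$-valued Radon measure with bounded total variation, hence by the (distributional) Poincaré lemma applied row-wise on the simply connected ball $B$ it equals $D\phi$ for some $\phi \in \BV(B; \R^3)$. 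Thus on $B$ we have the decomposition
\begin{equation*}
  p = D\phi + V\Lcal^3, \qquad \phi \in \BV(B; \R^3), \quad V \in \Lrm^{3/2}(B; \R^{3\times 3}),
\end{equation*}
into a $\BV$-gradient plus a Lebesgue-absolutely continuous remainder.

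From this decomposition the three assertions follow by standard $\BV$ fine-structure arguments (see~\cite{AmbrosioFuscoPallara00book}). For (i): since any $\Hcal^2$-null set in $\R^3$ is $\Lcal^3$-null, we have $V\Lcal^3 \ll \Hcal^2$; and $|D\phi| \ll \Hcal^2$ because its absolutely continuous part is $\ll \Lcal^3$, its jump part is carried by the $\Hcal^2$-rectifiable set $J_\phi$, and its Cantor part vanishes on sets of $\sigma$-finite $\Hcal^2$-measure, in particular on $\Hcal^2$-null sets. For (iii): the Lebesgue decomposition of $p$ gives $p^s = D^s\phi$ on $B$, hence $|p|^s = |D^s\phi|$, and Alberti's rank-one theorem~\cite{Alberti93} applied to $\phi \in \BV(B; \R^3)$ yields $\frac{\di D^s\phi}{\di |D^s\phi|}(x) = a(x) \otimes n(x)$ at $|D^s\phi|$-a.e.\ $x$. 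For (ii): the Lebesgue-absolutely continuous parts of $p$ have vanishing upper $2$-density $\Lcal^3$-a.e., and the Cantor part $D^c\phi$ has vanishing upper $2$-density $|D^c\phi|$-a.e., so $\{\theta^*_2(|p|) > 0\} = J_\phi$ up to $\Hcal^2$-null sets, and the $\BV$ jump-set representation $D^j\phi = [\phi] \otimes \nu_\phi \, \Hcal^2 \restrict J_\phi$ gives (ii) with $R = J_\phi$, $a = [\phi]$, $n = \nu_\phi$. Since the ball $B$ was arbitrary, all three conclusions extend to $\Omega$.

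The main technical ingredient is the existence of the $\Lrm^{3/2}$ corrector $V$ with measure-valued right-hand side, which rests on the delicate Bourgain--Brezis estimate; however, this is precisely what has been cited in the paper. A minor bookkeeping point is the use of balls $B \subset\subset \Omega$ to bypass the potential failure of global exactness on multiply connected Lipschitz $\Omega$; since all three claims are local (densities and polar derivatives are pointwise quantities), this causes no loss of generality. Everything else is a transcription of classical $\BV$ fine-structure results to the current setting.
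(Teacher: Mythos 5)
Your proof is correct but takes a genuinely different route from the paper's. The paper handles all three assertions with a single citation to the general theory of $\Acal$-free measures \cite{DePhilippisRindler16,ArroyoRabasaDePhilippisHirschRindler19}, specialized to $\Acal = \curl$; those results deliver (i)--(iii) directly from the constraint that $\curl p$ is a measure, with no equation being solved. You instead reduce to the classical fine structure of $\BV$ by peeling off an $\Lcal^3$-absolutely continuous corrector: locally, $p = D\phi + V\Lcal^3$ with $\phi \in \BV$, and then (i) is the $\Hcal^2$-absolute continuity of the three Vol'pert components of $D\phi$, (ii) is the jump-set representation $D^j\phi = (\phi^+ - \phi^-)\otimes\nu_\phi\,\Hcal^2\restrict J_\phi$ combined with the observation that the $\sigma$-finite set $\{\theta^*_2(\abs{p})>0\}$ is $\Lcal^3$- and $\abs{D^c\phi}$-null, and (iii) is Alberti's rank-one theorem. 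This reduction is special to $\curl$ --- it is a Helmholtz decomposition --- so your route trades the generality of the $\Acal$-free theory for a more classical dependence chain (standard $\BV$ fine structure plus an elliptic corrector). Two remarks worth recording: the full strength of the Conti--Garroni--Ortiz / Bourgain--Brezis estimate ($V \in \Lrm^{3/2}$) is overkill here, since any $\Lrm^1_\loc$ corrector with $\curl V = \curl p$ would do (e.g., the Newtonian-potential solution $V := \curl(-\Delta)^{-1}\curl p$, whose $\Lrm^1_\loc$ regularity follows from elementary Riesz-potential bounds) because all that matters is $V\Lcal^3 \ll \Lcal^3$; and when invoking Alberti you should spell out that $\abs{p}^s = \abs{D^s\phi}$, which follows from $p^s = D^s\phi$ together with the additivity of total variations across mutually singular Lebesgue components.
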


\begin{proof}
All these statements follow directly from the results of~\cite{DePhilippisRindler16,ArroyoRabasaDePhilippisHirschRindler19} for the curl-operator, where it is shown that measures $\mu$ with $\curl \mu$ also a measure, have essentially the same dimensionality, rectifiability, and polar rank-one properties as BV-derivatives. 
\end{proof}

\begin{remark}
We here do not include the incompressibility constraint $\tr p = 0$ in the definition of the state space, because we cannot guarantee that the recovery sequences for $p$ we will construct in the following satisfy this property. However, the dissipation can of course be made to penalize dislocation climb.
\end{remark}

\subsection{Solutions}

Our notions of solution are the following:

\begin{definition} \label{def:sol}
A process $(u,z) = (u,p,\Sd)$ (where $z=(p,\Sd)$) with  
\begin{align*}
  u &\in \Lrm^\infty([0,T];\BV(\Omega;\R^3)), \\
  p &\in \BV([0,T];\Mcal(\cl\Omega;\R^{3 \times 3})), \\
  \Sd &\in \BV([0,T];\DislF(\cl\Omega)),
\end{align*}
is called a \term{(field) energetic solution} to the system of dislocation-driven elasto-plasticity starting from
\[
  (u_0,z_0) = (u_0,p_0,\Td_0) \in \Qcal,
\]
if for all $t \in [0,T) \setminus J$, where
\[
  J := \setB{ t \in (0,T) }{ \wslim_{\tau \toup t} z(\tau) \neq \wslim_{\tau \todown t} z(\tau) }
\]
denotes the \term{jump set}, the following conditions hold:
\[
\left\{
  \begin{aligned}
  &\text{\textbf{(C) \fil{Consistency}:}} \\
    &\quad\qquad  (u(t),z(t)) \in \Qcal \\[8pt]
  	&\text{\textbf{(S) Stability:} For all $\hat{u} \in \BV(\Omega;\R^3)$ and all $\hat{\Sd} \in \SlipF(\Sd(t))$,} \\
    &\quad\qquad  \Ecal(t,u(t),z(t)) \leq \Ecal(t,\hat{u},\hat{\Sd}_\ff z(t)) + \Diss(\hat{\Sd}) \\[8pt]
    &\text{\textbf{(E) Energy balance:}} \\
    &\quad\qquad  \Ecal(t,u(t),z(t)) = \Ecal(0,u_0,z_0) - \Diss(\Sd;[0,t]) - \int_0^t \dprb{\dot{f}(\sigma),u(\sigma)} \dd \sigma. \\[8pt]
    &\text{\textbf{(P) Plastic flow:}} \\
&\quad\qquad  p(t) =  p_0 + \frac12 \sum_{b \in \Bcal} b \otimes \hodge  \pbf_*( S^b \restrict [(0,t) \times\R^3]).
  \end{aligned}
  \right.
\]
\end{definition}
	
\begin{definition} \label{def:sol_eps}
If, for $\eps > 0$, the process $(u_\eps,z_\eps) = (u_\eps,p_\eps,\Sd_\eps)$ with
\begin{align*}
  u_\eps &\in \Lrm^\infty([0,T];\BV(\Omega;\R^3)), \\
  p_\eps &\in \BV([0,T];\Mcal(\cl\Omega;\R^{3 \times 3})), \\
  \Sd_\eps &\in \BV([0,T];\Disl_\eps(\cl\Omega)),
\end{align*}
satisfies the conditions of Definition~\ref{def:sol}, but with $\Qcal$ replaced by $\Qcal_\eps$, and the stability inequality~(S) only required to hold for test slip trajectories $\hat{\Sd} \in \Slip_\eps(\Sd(t))$, then we call $(u_\eps, z_\eps)$ an \term{$\eps$-discrete energetic solution} to the system of dislocation-driven elasto-plasticity.
\end{definition}

\begin{remark} \label{rem:u_minimizer}
Note that the stability also implies the elastic minimization property of $u(t)$,
\[
  u(t) \in \Argmin \Ecal(t, \frarg, z(t)),
\]
the minimizer taken over all $\hat{u} \in \BV(\Omega;\R^3)$ (by testing with the neutral slip trajectory $\Id(\hat{S}^b)_b = (I^b)_b$ with $I^b := \dbr{(0,1)} \times S^b(t)$, see Lemma~\ref{lem:neutral}).
\end{remark}

\subsection{Results}

Our first result concerns the solvability of the $\eps$-discrete system:

\begin{theorem}[Existence of $\eps$-discrete solutions] \label{thm:existence_eps}
Let $\eps > 0$ and assume~\ref{as:first}-\ref{as:last}. Then, for any initial state $(u_0,z_0) = (u_0,p_0,\Td_0) \in \Qcal_\eps$ that is stable, i.e.,
\begin{equation} \label{eq:initial_stab_eps}
  \Ecal(\fil{0},u_0,z_0) \leq \Ecal(\fil{0},\hat{u},\hat{\Sd}_\ff z_0) + \Diss(\hat{\Sd})
\end{equation}
for all $\hat{u} \in \BV(\Omega;\R^3)$ and all $\hat{\Sd} \in \Slip_\eps(\Td_0)$, there exists an $\eps$-discrete energetic solution $(u_\eps,z_\eps) = (u_\eps,p_\eps,\Sd_\eps)$ to the system of dislocation-driven elasto-plasticity in the sense of Definition~\ref{def:sol_eps} with
\[
  u_\eps(0) = u_\eps(0+) = u_0, \qquad
  p_\eps(0) = p_\eps(0+) = p_0,  \qquad
  \Sd_\eps(0) = \Td_0  \quad\text{(in the sense of~\eqref{eq:S0}).}
\]
Moreover,
\[
  \norm{p_\eps}_{\Lrm^\infty([0,T];\Mcal(\cl\Omega)\R^{3 \times 3})} + \norm{\Sd_\eps}_{\Lrm^\infty} + \Var_\Mbf(p_\eps;[0,t]) + \Var(\Sd_\eps;[0,t]) \leq C,  \qquad t \in [0,T],
\]
for a constant $C > 0$ that depends only on the data in the assumptions, but not on $\eps > 0$.
\end{theorem}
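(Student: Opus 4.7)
The plan is to employ a time-incremental minimization scheme, following the classical Mainik--Mielke framework for energetic solutions of rate-independent systems, adapted to the slip-trajectory setting developed in this paper. For $N \in \N$, set $\tau_N := T/N$ and $t_k^N := k \tau_N$. Starting from $(u_0^N, p_0^N, \Td_0^N) := (u_0, p_0, \Td_0)$, I recursively obtain $(u_k^N, \hat{\Sd}_k^N)$ as a minimizer of
\[
  (\hat{u}, \hat{\Sd}) \mapsto \Ecal(t_k^N, \hat{u}, \hat{\Sd}_\ff p_{k-1}^N, \hat{\Sd}_\ff \Td_{k-1}^N) + \Diss(\hat{\Sd})
\]
over $\hat{u} \in \BV(\Omega;\R^3)$ with $[\hat{u}]_H = h_0$ and $\skw D\hat{u}(\Omega) = 0$, and $\hat{\Sd} \in \Slip_\eps(\Td_{k-1}^N)$, then set $\Td_k^N := (\hat{\Sd}_k^N)_\ff \Td_{k-1}^N$ and $p_k^N := (\hat{\Sd}_k^N)_\ff p_{k-1}^N$. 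Existence of incremental minimizers is obtained by the direct method: the super-quadratic growth of $h$ controls $\sum_b \Mbf(T^b)$, the lower bound $R^b(\xi) \geq C^{-1}\abs{\pbf(\xi)}$ controls $\Var(\hat{\Sd})$ and hence (via Lemma~\ref{lem:Var_p}) $\Mbf(\hat{\Sd}_\ff p_{k-1}^N)$, and the Korn-type inequality~\eqref{eq:Qorn} together with the Poincar\'e-type inequality~\eqref{eq:Q_Poincare} controls $\norm{\hat{u}}_\BV$. Lower-semicontinuity uses Reshetnyak's theorem for the mass functionals $\Mbf_{\psi^b}$ and $\Diss$, the weak lower-semicontinuity of $\Wcal_e$, the closedness of $\Qcal_\eps$ (Lemma~\ref{lem:Q_closed}), and the Federer--Fleming weak* compactness of integral currents (preserving the $\eps$-discrete structure).

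Testing the incremental minimality against the competitor $(u_{k-1}^N, \Id^{\Td_{k-1}^N})$ from Lemma~\ref{lem:neutral} yields the discrete energy inequality
\[
  \Ecal(t_k^N, u_k^N, p_k^N, \Td_k^N) + \Diss(\hat{\Sd}_k^N) \leq \Ecal(t_k^N, u_{k-1}^N, p_{k-1}^N, \Td_{k-1}^N),
\]
which after adding and subtracting $\Ecal(t_{k-1}^N, u_{k-1}^N, \ldots)$, using $\Crm^1$-regularity of $f$, and a standard Gronwall argument, telescopes into $\eps$-independent bounds on $\sup_k \Ecal(t_k^N, u_k^N, p_k^N, \Td_k^N)$ and $\sum_k \Diss(\hat{\Sd}_k^N)$. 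Concatenating the incremental trajectories via Lemma~\ref{lem:Sigma_concat} with time-rescalings from Lemma~\ref{lem:Sigma_rescale} produces a global $\Sd^N \in \BV([0,T];\Disl_\eps(\cl\Omega))$ whose variation and $\Lrm^\infty$-norm are bounded independently of $N$. Combining with~\eqref{eq:Q_Poincare} and Lemma~\ref{lem:Var_p}, the piecewise-constant interpolants $u^N, p^N, \Td^N$ satisfy the hypotheses of the Helly-type selection principles in Propositions~\ref{prop:LipDS_compact} and~\ref{prop:BVX_Helly}, yielding (along a subsequence) limits $\Sd_\eps \in \BV([0,T];\Disl_\eps(\cl\Omega))$, $p_\eps$ and $u_\eps$ in the appropriate spaces.

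The plastic-flow relation (P) passes to the limit at continuity points by Lemma~\ref{lem:plastff_cont}, the compatibility condition (C) is preserved by Lemma~\ref{lem:curl_pS} together with Lemma~\ref{lem:Q_closed}, and the initial conditions $u_\eps(0+) = u_0$, $p_\eps(0+) = p_0$, $\Sd_\eps(0) = \Td_0$ follow from the construction, with the last identity holding in the sense of~\eqref{eq:S0} thanks to the Lipschitz-type control at the endpoint furnished by the final assertion in Proposition~\ref{prop:LipDS_compact}.

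The principal obstacle will be the passage to the limit in the stability condition (S), for which I would invoke Mielke's \emph{mutual recovery sequence} strategy. Given any test pair $(\hat{u}, \hat{\Sd}) \in \BV(\Omega;\R^3) \times \Slip_\eps(\Sd_\eps(t))$ at a continuity point $t \in [0,T) \setminus J$, let $k(N)$ be such that $t_{k(N)}^N \to t$ and construct a competitor $\hat{\Sd}^N \in \Slip_\eps(\Td_{k(N)-1}^N)$ by first concatenating a short \enquote{connecting} $\eps$-integral trajectory from $\Td_{k(N)-1}^N$ to $\Sd_\eps(t)$ — whose existence and asymptotic negligibility in dissipation follow from Proposition~\ref{prop:equiv}, which converts weak* convergence of integral currents of uniformly bounded mass into convergence in the Lipschitz deformation distance — with $\hat{\Sd}$ itself. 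A compatible test displacement $\hat{u}^N$ is obtained via the corrector Lemma~\ref{lem:freedist_correct} so that $\sym \free[D\hat{u}^N - \hat{\Sd}^N_\ff p_{k(N)-1}^N] = \sym \free[D\hat{u} - \hat{\Sd}_\ff p_\eps(t)]$ and the holding condition is preserved. The variation estimate~\eqref{eq:cat_Var} controls the additional dissipation, $\Ecal$ is continuous along the recovery sequence by Lemma~\ref{lem:plastff_cont}, Lemma~\ref{lem:freedist_correct} and the continuous dependence of $\Wcal_c$ under weak*-convergence with mass bounds, and the resulting limiting inequality is the sought stability (S) at $t$. The lower energy inequality then follows by a Riemann-sum approximation of the dissipation integral using (S) against carefully chosen partitions, and combined with the limsup from the discrete energy inequality delivers the energy balance (E).
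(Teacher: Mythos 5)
Your overall architecture matches the paper's: a time-incremental scheme, the neutral-trajectory test to get a discrete energy inequality, a Gronwall argument, concatenation and rescaling to build a global trajectory, Helly selection (Propositions~\ref{prop:LipDS_compact}, \ref{prop:BVX_Helly}), the corrector Lemma~\ref{lem:freedist_correct}, recovery trajectories via Proposition~\ref{prop:equiv}, and a Riemann-sum/Hahn argument for the lower energy inequality. However, there is a genuine gap in your existence argument for the incremental minimizers.

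The gap: you claim the direct method works because the dissipation lower bound $R^b(\xi) \geq C^{-1}\abs{\pbf(\xi)}$ controls $\Var(\hat{\Sd})$, and the terminal energy controls the mass of $\hat{\Sd}_\ff\Td_{k-1}^N$. But this gives no control on the \emph{intermediate} slice masses $\Mbf(\hat{S}^b(s))$ for $s \in (0,1)$: the variation $\Var(\hat{\Sd})$ only sees the spatial component $\abs{\pbf(\vec{S}^b)}$ of the orienting 2-vector, and the terminal energy only constrains the endpoint $\Td_k^N$. A minimizing sequence $(\hat{u}_n,\hat{\Sd}_n)$ could therefore have $\norm{\hat{\Sd}_n}_{\Lrm^\infty} \to \infty$, and Proposition~\ref{prop:LipDS_compact} — the only compactness tool available for slip trajectories — requires an a priori bound on exactly this $\Lrm^\infty$ quantity. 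Without it you have no compactness, hence no minimizer, and (even setting that aside) Lemma~\ref{lem:plastff_cont} used in the recovery construction also requires uniform Lipschitz bounds that are not automatic. The paper fixes this by adding the constraint $\norm{\hat{\Sd}}_{\Lrm^\infty}\le\gamma^*$ to the incremental problem~\eqref{eq:IP_eps}, and then carefully choosing $\gamma^*$ in~\eqref{eq:gamma*} large enough (roughly $C_{\mathrm{equiv}}\cdot\bar{C}+1$, where $\bar{C}$ is the a posteriori $\eps$-uniform bound on $\Mbf(\Td^N_k)$) so that the constraint is never active in the limit, which ensures the recovery sequences $\hat{\Sd}\circ\Rd^N_t$ remain admissible for large $N$. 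Moreover, to have uniform Lipschitz constants for the minimizing sequence the paper performs a steadiness rescaling, cf.~\eqref{eq:Sigma_n_steady}. You should add both the constraint and the rescaling step, and justify $\gamma^*$'s choice, to make the argument complete.

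A second, smaller omission: you do not explain why the chosen stability point $t$ is not a jump, nor why the initial values are attained in the strong (right-limit) sense; the paper establishes this in a separate proposition using the energy balance from $t$ to $t+\delta$ combined with stability to show $\Diss(\Sd;[t,t+\delta]) = \SmallO(\delta)$, hence no jump at $t$. This step is needed to conclude $u_\eps(0+)=u_0$, $p_\eps(0+)=p_0$ from~\eqref{eq:initial_stab_eps} rather than just attainment in the pointwise sense.
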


Here, $\Var_\Mbf$ denotes the variation (in time) of the process $p$ with respect to the total variation norm $\Mbf(\frarg)$ (recall that we use the geometric measure theory notation $\Mbf(\frarg) = \norm{\frarg}_\TV$ also for ordinary measures).

\begin{remark} \label{rem:initial_stab}
If one dispenses with~\eqref{eq:initial_stab_eps}, this existence result remains to hold with the modification that the initial values are now only attained in a pointwise sense (since $\Sd_\eps(0)$ and $p(0)$ are always defined, see Sections~\ref{sc:BVcurr},~\ref{sc:BVX}), but not necessarily in the sense of right limits. Moreover, if~\eqref{eq:initial_stab_eps} holds only up to an additive error $\delta > 0$, then the stability~(S) only holds (in the strict sense) for $t > 0$ and the energy equality~(E) needs to be modified to the following approximate version:
\[
  \Ecal(t,u(t),z(t)) \leq \Ecal(0,u_0,z_0) - \Diss(\Sd;[0,t]) - \int_0^t \dprb{\dot{f}(\sigma),u(\sigma)} \dd \sigma
  \leq \Ecal(t,u(t),z(t)) + \delta
\]
for all $t \in [0,T) \setminus J$. The proofs of these statements require just minor modifications of the proof in the following section.
\end{remark}

Based on the preceding result, we will then show that also the system of dislocation-driven elasto-plasticity with \emph{fields} of dislocations has a solution:

\begin{theorem}[Existence of field energetic solutions] \label{thm:existence_limit}
Assume~\ref{as:first}-\ref{as:last}. Then, for any $(u_0,z_0) = (u_0,p_0,\Td_0) \in \Qcal$ that is stable, i.e.,
\begin{equation} \label{eq:initial_stab_limit}
  \Ecal(\fil{0},u_0,z_0) \leq \Ecal(\fil{0},\hat{u},\hat{\Sd}_\ff z_0) + \Diss(\hat{\Sd})
\end{equation}
for all $\hat{u} \in \BV(\Omega;\R^3)$ and all $\hat{\Sd} \in \SlipF(\Td_0)$, there exists a field energetic solution $(u,z)=(u,p,\Sd)$, to the system of dislocation-driven elasto-plasticity in the sense of Definition~\ref{def:sol_eps} with
\[
  u(0) = u(0+) = u_0, \qquad
  p(0) = p(0+) = p_0,  \qquad
  \Sd(0) = \Td_0  \quad\text{(in the sense of~\eqref{eq:S0}).}
\]
Moreover,
\[
  \norm{p}_{\Lrm^\infty([0,T];\Mcal(\cl\Omega;\R^3))} + \norm{\Sd}_{\Lrm^\infty} + \Var_\Mbf(p;[0,t]) + \Var(\Sd;[0,t])
  \leq C,  \qquad t \in [0,T],
\]
for a constant $C > 0$ that depends only on the data in the assumptions.
\end{theorem}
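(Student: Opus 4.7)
\textbf{Proof plan for Theorem~\ref{thm:existence_limit}.} The natural approach is to obtain the field energetic solution as a limit (in $\eps \downarrow 0$) of the $\eps$-discrete solutions provided by Theorem~\ref{thm:existence_eps}. The first step is to approximate the given stable field initial datum $(u_0,p_0,\Td_0)\in\Qcal$ by a sequence of $\eps$-discrete data $(u_0^\eps,p_0^\eps,\Td_0^\eps)\in\Qcal_\eps$. I would apply Proposition~\ref{prop:strong_pol_approx} to each component $T_0^b$ (respecting the symmetry $T^{-b}=-T^b$) to get $\eps^{-1} T_0^{b,\eps}\in\Irm_1(\cl\Omega)$ that are weakly* close in mass and flat norm to $T_0^b$. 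Then I would build a compatible $p_0^\eps$ by adjusting $p_0$ by a bounded measure whose curl equals $\frac12\sum_b b\otimes(T_0^{b,\eps}-T_0^b)$ (chosen with vanishing mass as $\eps\to 0$, using the polyhedral structure of the correction), and finally invoke the corrector Lemma~\ref{lem:freedist_correct} to produce $u_0^\eps$ satisfying the kinematic constraints in $\Qcal_\eps$ and converging to $u_0$ in $\BV$-weak*. Stability of $(u_0^\eps,p_0^\eps,\Td_0^\eps)$ holds only up to an error $\delta_\eps\to 0$, so we apply Theorem~\ref{thm:existence_eps} in the form of Remark~\ref{rem:initial_stab} to get $\eps$-discrete energetic solutions $(u_\eps,p_\eps,\Sd_\eps)$ with the announced uniform bounds.

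\textbf{Compactness.} The uniform bounds together with Proposition~\ref{prop:BVX_Helly} (applied to $p_\eps$ in the Banach space $\Mcal(\cl\Omega;\R^{3\times 3})$) and Proposition~\ref{prop:LipDS_compact} (applied componentwise to $\Sd_\eps$) yield, along a not relabelled subsequence, limits $p\in\BV([0,T];\Mcal)$ and $\Sd\in\BV([0,T];\DislF(\cl\Omega))$ with $p_\eps(t)\toweakstar p(t)$ for every $t$ and $\Sd_\eps(t)\toweakstar\Sd(t)$ for a.e.\ $t$. For each such $t$, the elastic minimization property (Remark~\ref{rem:u_minimizer}), coercivity from Assumption~\ref{as:energ} together with~\eqref{eq:Q_Poincare} and the Korn-type inequality~\eqref{eq:Qorn}, and Lemma~\ref{lem:Q_closed} produce a limit $u(t)\in\BV(\Omega;\R^3)$ with $(u(t),p(t),\Sd(t))\in\Qcal$ (condition~(C)). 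The plastic flow~(P) then follows from the continuity of the plastic forward operator, Lemma~\ref{lem:plastff_cont}, and the fact that $\pbf_*(S^b_\eps\restrict (0,t)\times\R^3)\toweakstar \pbf_*(S^b\restrict (0,t)\times\R^3)$ since $\tv{S^b}$ does not charge $\{t\}\times\R^3$ for a.e.\ $t$.

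\textbf{Energy balance.} For~(E), the dissipation is lower semicontinuous along weak* convergence of normal currents by Reshetnyak, while the work functional $\int_0^t \dprb{\dot f(\sigma),u(\sigma)}\dd\sigma$ passes to the limit by dominated convergence using $u_\eps(\sigma)\toweak u(\sigma)$ in $\Lrm^1$ pointwise. Together with lower semicontinuity of $\Ecal$ (from $\Lrm^2$-weak lower semicontinuity of $\Wcal_e$ as in Lemma~\ref{lem:Q_closed}, and continuity of $\Wcal_c$ along strict convergence of $\Td$) this yields the $\leq$ inequality in~(E). The converse $\geq$ is a standard consequence of the limit stability once it is established, by testing~(S) at each time against the neutral slip trajectory and integrating.

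\textbf{The main obstacle: passing stability to the limit.} This is the core technical step. Given $t\in[0,T)$ and an arbitrary test $\hat\Sd\in\SlipF(\Sd(t))$ for the limit problem, we must produce $\eps$-discrete competitors $\hat\Sd_\eps\in\Slip_\eps(\Sd_\eps(t))$ such that
\[
  \limsup_{\eps\to 0}\bigl[\Ecal(t,\hat u_\eps,\hat\Sd_{\eps\,\ff}z_\eps(t))+\Diss(\hat\Sd_\eps)\bigr]\le \Ecal(t,\hat u,\hat\Sd_\ff z(t))+\Diss(\hat\Sd),
\]
for a suitable $\hat u_\eps\to\hat u$, and then pass to the limit combining with lower semicontinuity of the left-hand side in the $\eps$-discrete stability inequality. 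The construction proceeds in two stages. First, Proposition~\ref{lem:current_approx} (applied componentwise to each $\hat S^b$, preserving the initial slice equal to $S^b(t)$) yields $\eps$-integral slip trajectories $\tilde\Sd_\eps$ starting from $\Sd(t)$ with $\tilde S^b_\eps\to \hat S^b$ in flat norm and mass. Second, because $\tilde\Sd_\eps$ starts at $\Sd(t)$ while the $\eps$-discrete test must start at $\Sd_\eps(t)$, we need a short \emph{connector} slip trajectory from $\Sd_\eps(t)$ to $\Sd(t)$; this is provided by Proposition~\ref{prop:equiv_normal} (more precisely, its $\eps$-integral version with constants independent of $\eps$), whose variation tends to zero since $\Sd_\eps(t)\toweakstar\Sd(t)$. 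Concatenating via Lemma~\ref{lem:Sigma_concat} and rescaling via Lemma~\ref{lem:Sigma_rescale} gives $\hat\Sd_\eps\in\Slip_\eps(\Sd_\eps(t))$ with $\Diss(\hat\Sd_\eps)\to\Diss(\hat\Sd)$ by Reshetnyak continuity. The matching $\hat u_\eps$ is then produced by Lemma~\ref{lem:freedist_correct} applied to $\hat\Sd_{\eps\,\ff}p_\eps(t)$, so that $\Wcal_e(\hat u_\eps,\hat\Sd_{\eps\,\ff}p_\eps(t))\to \Wcal_e(\hat u,\hat\Sd_\ff p(t))$ by continuity of the $\beta$-solution operator from Lemma~\ref{lem:freedist_correct}. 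The loading term converges trivially, and the core energy converges by continuity of $h$ and the mass convergence of $\hat\Sd_{\eps\,\ff}\Td_\eps(t)$. This establishes~(S) for the limit. Finally, the attainment of initial values $u(0+)=u_0$, $p(0+)=p_0$, $\Sd(0)=\Td_0$ follows from the corresponding attainments at the $\eps$-level together with the stability at $t=0$, exactly as in the discrete case.
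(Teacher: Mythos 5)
Your overall strategy matches the paper's: approximate the stable field initial data by $\eps$-discrete data, invoke Theorem~\ref{thm:existence_eps}, extract a limit by compactness, and transfer~(C),~(E),~(P), and especially~(S). The compactness and energy-balance steps and the plastic-flow passage are essentially correct. However, the crucial step of passing stability to the limit has a genuine ordering error that makes your construction fail.

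You propose to first apply Proposition~\ref{lem:current_approx} componentwise to $\hat S^b$ to obtain an $\eps$-integral trajectory $\tilde\Sd_\eps$ starting from $\Sd(t)$, and only then to insert a short connector from $\Sd_\eps(t)$ to $\Sd(t)$. This cannot work. Proposition~\ref{lem:current_approx} explicitly requires the starting boundary slice $\partial S\restrict(\{0\}\times\cl\Omega)$ to already be (a multiple of) an $\eps$-integral current, whereas $\Sd(t)$ is merely a normal current. More fundamentally, an $\eps$-integral space-time current in $\Irm^\Lip$ has $\eps$-integral slices, so there is simply no trajectory in $\Slip_\eps$ that starts at the non-$\eps$-integral $\Sd(t)$. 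The same obstruction kills your proposed ``$\eps$-integral version'' of the connector from $\Sd_\eps(t)$ to $\Sd(t)$: one endpoint is not $\eps$-integral, so no $\eps$-integral slip connects them, and Proposition~\ref{prop:equiv_normal} only produces a normal one.

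The correct order is the reverse. First use Proposition~\ref{prop:equiv_normal} to obtain a \emph{normal} connector $\Rd_\eps\in\SlipF(z_\eps)$ with $(\Rd_\eps)_\ff\Sd_\eps(t)=\Sd(t)$ and $\Diss(\Rd_\eps)\to 0$. Then form the concatenation $\hat\Sd\circ\Rd_\eps\in\SlipF(z_\eps)$; this is still only a normal slip trajectory, but its initial slice is $\Sd_\eps(t)$, which \emph{is} $\eps$-integral. Now apply Proposition~\ref{lem:current_approx} componentwise to $\hat S^b\circ R^b_\eps$: the hypothesis on the initial slice is satisfied, and you obtain $\hat\Sd_\eps\in\Slip_\eps(z_\eps)$ fixing the initial slice $\Sd_\eps(t)$ and close in flat norm and mass to $\hat\Sd\circ\Rd_\eps$. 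The remainder of the limit passage (test displacement via Lemma~\ref{lem:freedist_correct}, strict convergence of the forward states, Reshetnyak continuity of dissipation and core energy, weak lower semicontinuity of the elastic energy) then goes through essentially as you describe.

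A smaller but related gap is in your initial-data approximation. Correcting $p_0$ additively by ``a measure with vanishing mass whose curl equals $\frac12\sum_b b\otimes(T_0^{b,\eps}-T_0^b)$'' is not immediate: small flat norm of a boundaryless current does not by itself produce an exact filling of small mass, and in any case you then lack the connecting trajectory that is needed to prove the approximate stability with vanishing error. The paper instead takes $\Rd_\eps\in\SlipF(\Td_0)$ from Proposition~\ref{prop:equiv_normal} with $(\Rd_\eps)_\ff\Td_0=\Td_0^\eps$ and $\Diss(\Rd_\eps)\to 0$, and sets $p_0^\eps:=(\Rd_\eps)_\ff p_0$. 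This automatically gives the compatibility relation (Lemma~\ref{lem:curl_pS}), controls $\Mbf(p_0^\eps-p_0)$ by $\Var(\Rd_\eps)$ (Lemma~\ref{lem:Var_p}), and, crucially, supplies the compositional structure ($\hat\Sd\circ\Rd_\eps$ as a test for the field stability of $z_0$) needed for the $\SmallO(1)$ error in~\eqref{eq:initial_stab_eps_approx}.
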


\section{Proof of Theorem~\ref{thm:existence_eps}} \label{sc:proof_eps}

The broad strategy for the proof of Theorem~\ref{thm:existence_eps} takes inspiration from the one in~\cite{Rindler21b?}, but the linear structure (as opposed to the fully nonlinear situation in \textit{loc.\ cit.}) leads to a number of modifications. For the convenience of the reader we give an essentially self-contained proof. For ease of notation, we will suppress the subscript $\eps$ on the processes $u_\eps$, $p_\eps$ and $\Sd_\eps$ and simply write $u, p, \Sd$, respectively, in this section.

\subsection{Properties of the functionals}

We begin by collecting some facts about the energy and dissipation functionals.

\begin{lemma} \label{lem:E_coerc}
	For every $t \in [0,T]$ and $(u,p,\Td) \in \Qcal$ it holds that
	\[
	  \Wcal_e(u,p) \geq C^{-1} \norm{\sym \free[Du - p]}_{\Lrm^2}^2
	\]
	and 
	\[
	\Ecal(t,u,p,\Td) \geq C^{-1} \bigl(\norm{u}_{\BV}^2 
	+ \Mbf(\curl p)^q \bigr) - C \bigl(1+\Mbf(p)^2 \bigr)
	\]
	for a constant $C > 0$ and $q > 2$ from Assumption~\ref{as:energ}~(iii).
\end{lemma}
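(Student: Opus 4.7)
The plan is to treat the two inequalities in turn, with the first essentially immediate and the second following from a Korn-type argument combined with the super-quadratic growth of $h$.

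For the first inequality, I would invoke the symmetries of $\Ebb$ (which, being a standard linear-elasticity tensor, satisfies the minor symmetries so that $\Ebb A = \Ebb \sym A$), so that $|A|_\Ebb^2 = (\sym A) : (\Ebb \sym A)$. Combined with the assumed positive-definiteness of $\Ebb$ on the space of symmetric matrices, we get $|A|_\Ebb^2 \geq C^{-1} |\sym A|^2$ pointwise, which upon integration yields the claim.

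For the second inequality, the main task is to extract a bound on $\|u\|_\BV^2$. First I would use the compatibility condition in the definition of $\Qcal$, namely $\curl p = \frac{1}{2} \sum_b b \otimes T^b$, together with the fact that $\psi^b$ is continuous, positively $1$-homogeneous and strictly positive off the origin (so that $\psi^b(v) \geq c|v|$), to obtain $\sum_b \Mbf_{\psi^b}(T^b) \geq c \sum_b \Mbf(T^b) \geq c' \Mbf(\curl p)$. The super-quadratic lower bound $h(z) \geq C^{-1} z^q - C$ then gives $\Wcal_c(\Td) \geq C^{-1} \Mbf(\curl p)^q - C$.

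Next, to bound $\|u\|_\BV$, I combine the Poincar\'e-type estimate~\eqref{eq:Q_Poincare} with the decomposition $Du = p + \beta + \free[Du - p]$, where $\beta$ solves~\eqref{eq:beta_PDE} and satisfies $\|\beta\|_{\Lrm^{3/2}} \leq C \Mbf(\curl p)$ by~\eqref{eq:beta_est}. Applying the Korn-type inequality~\eqref{eq:Qorn} and the embedding $\Lrm^2 \hookrightarrow \Lrm^1$ on the bounded domain $\Omega$, this yields
\[
  \|u\|_\BV \leq C\bigl(1 + \Mbf(p) + \Mbf(\curl p) + \|\sym \free[Du - p]\|_{\Lrm^2}\bigr),
\]
hence $\|u\|_\BV^2 \leq C\bigl(1 + \Mbf(p)^2 + \Mbf(\curl p)^2 + \|\sym \free[Du - p]\|_{\Lrm^2}^2\bigr)$. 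Rearranging gives a lower bound for $\|\sym \free[Du - p]\|_{\Lrm^2}^2$ in terms of $\|u\|_\BV^2$, which I feed into the first inequality. Finally, the loading term is controlled by $|\langle f(t), u\rangle| \leq \|f\|_{\Lrm^\infty([0,T];\Lrm^\infty)} \|u\|_{\Lrm^1} \leq C\|u\|_\BV$, which is absorbed by Young's inequality, and the $\Mbf(\curl p)^2$ term is absorbed into $\Mbf(\curl p)^q$ (using $q > 2$) at the cost of an additive constant.

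The one subtle point worth flagging is the compatibility of the two coercivity bounds: the Korn-type inequality produces a $\Mbf(p)$ term on the right-hand side which cannot be absorbed (since there is no $\Mbf(p)$-coercivity in the energy itself; slip measures are genuinely only bounded in mass). This is precisely why the final estimate only controls $\|u\|_\BV^2$ and $\Mbf(\curl p)^q$ up to the additive term $C(1 + \Mbf(p)^2)$, which must be carried through the homogenization argument separately.
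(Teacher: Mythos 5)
Your proof is correct and follows essentially the same route as the paper's: positive-definiteness of $\Ebb$ on symmetric matrices for the first bound; then the Korn-type inequality~\eqref{eq:Qorn}, the estimate~\eqref{eq:beta_est} for $\beta$, the Poincar\'e inequality~\eqref{eq:Q_Poincare}, and Young's inequality to absorb the loading and the $\Mbf(\curl p)^2$ term into $\Mbf(\curl p)^q$. The only cosmetic difference is direction: the paper unwinds the coercivity chain downward from $\Wcal_e$ through $\Lrm^{3/2}$ to $\Mbf(Du)$, whereas you first bound $\|u\|_\BV$ from above and then rearrange, but the ingredients and the resulting estimate are identical.
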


\begin{proof}
First, we have from the properties of $\Ebb$ that (all function norms in the following are on $\Omega$)
\[
  \Wcal_e(u,p) \geq C^{-1} \norm{\sym \free[Du - p]}_{\Lrm^2}^2.
\]
This immediately gives the coercivity in $\norm{\sym \free[Du - p]}_{\Lrm^2}^2$. 

To see the bound on the BV-norm of $u$, we use the Korn-type inequality~\eqref{eq:Qorn} in $\Qcal$ to estimate, adjusting the constant from line to line,
\begin{align*}
 \Wcal_e(u,p)
  &\geq C^{-1} \norm{\sym \free[Du - p]}_{\Lrm^2}^2 \\
  &\geq C^{-1} \norm{\free[Du - p]}_{\Lrm^2}^2 - C \bigl(\Mbf(p)^2 + \Mbf(\curl p)^2 \bigr) \\
  &\geq C^{-1} \norm{\free[Du - p]}_{\Lrm^{3/2}}^2 - C \bigl(\Mbf(p)^2 + \Mbf(\curl p)^2  \bigr).
\end{align*}
Then, using~\eqref{eq:beta_est} and the Poincar\'{e}-type inequality~\eqref{eq:Q_Poincare},
\begin{align*}  
  \norm{\free[Du - p]}_{\Lrm^{3/2}}^2
  &\geq C^{-1} \norm{Du - p}_{\Lrm^{3/2}}^2 - C \norm{\beta}_{\Lrm^{3/2}}^2 \\
  &\geq C^{-1} \Mbf(Du - p)^2 - C \Mbf(\curl p)^2 \\
  &\geq C^{-1} \Mbf(Du)^2 - C \bigl( \Mbf(p)^2 + \Mbf(\curl p)^2  \bigr) \\
  &\geq C^{-1} \norm{u}_\BV^2 - C \bigl(1 + \Mbf(p)^2 + \Mbf(\curl p)^2 \bigr).
\end{align*}
\fil{Here, we have also used the elementary norm inequality $\norm{a-b}^2 \geq \frac{1}{2} \norm{a}^2 - \norm{b}^2$ (which follows since $\norm{a}^2 \leq (\norm{a-b} + \norm{a})^2 \leq 2(\norm{a-b}^2+\norm{b}^2$).} Combining the above two estimates yields
\[
  \Wcal_e(u,p) \geq C^{-1} \norm{u}_\BV^2 - C \bigl(1 + \Mbf(p)^2 + \Mbf(\curl p)^2\bigr).
\]
For the loading term we obtain from Assumption~\ref{as:energ}~(iv), using Young's inequality, that
\[
- \dprb{f(t),u}
  \geq - C \norm{u}_\BV \\
  \geq - C \eps \norm{u}_\BV^2 - C_\eps
\]
for any $\eps > 0$ and a corresponding constant $C_\eps$. We then choose $\eps$ small enough to absorb the first term into the coercivity estimate for $\Wcal_e$, to obtain
\[
  \Wcal_e(u,p) - \dprb{f(t),u} \geq C^{-1} \norm{u}_{\BV}^2 - C \bigl(1 + \Mbf(p)^2 + \Mbf(\curl p)^2 \bigr).
\]
The mass lower bound
\[
 \Wcal_c(\Td) \geq C\Mbf(\curl p)^q -C'
\]
follows directly from the definition the core energy $\Wcal_c$ and Assumption~\ref{as:energ}~(iii). Combining the coercivity estimates, we get that 
\[
\Ecal(t,u,p,\Td)\geq C^{-1} \norm{u}_{\BV}^2 - C \bigl(1 + \Mbf(p)^2 + \Mbf(\curl p)^2 \bigr) +C\Mbf(\curl p)^q. 
\]
Invoking once again Young's inequality with exponents $q/2$ ($>1$) and $q/(q-2)$, we can absorb the term $-C\Mbf(\curl p)^2$, leaving us with 
\[
\Ecal(t,u,p,\Td)\geq C^{-1} (\norm{u}_{\BV}^2  + \Mbf(\curl p)^q) - C \bigl(1 + \Mbf(p)^2 \bigr)
\]
after adjusting the constant.
\end{proof}

\begin{remark}
Let us observe in passing that the previous arguments also show that for any $(u,p,\Td) \in \Qcal$ it holds that $Du - p \in \Lrm^{3/2}$ if we know that $p, \curl p$ are finite measures, whereas in the definition of $\Qcal$ we assumed $\sym \free [Du - p] \in \Lrm^2$. This added regularity for $Du - p$ is in general all we can hope for since the energy of $Du - p \in \Lrm^{3/2}$ cannot be better than $\Lrm^{3/2}$ in the presence of line dislocations~\cite{ContiGarroniOrtiz15}.
\end{remark}

The following continuity and lower semicontinuity lemma will be employed many times:

\begin{lemma} \label{lem:lsc}
The following statements hold:
\begin{enumerate}[(i)]
\item The energy $\Ecal$ is lower semicontinuous for sequences $t_j \to t$ in $[0,T]$, and $(u_j,p_j,\Td_j) \in \Qcal$ with $u_j \toweakstar u$ in $\BV(\Omega;\R^3)$, $p_j \toweakstar p$ in $\Mcal(\cl\Omega;\R^{3 \times 3})$, $\sym \free[Du_j - p_j] \toweak \sym \free[Du - p]$ in $\Lrm^2$, and $\Td_j \toweakstar \Td$ in $\DislF(\cl\Omega)$.
\item $t \mapsto \Ecal(t,u,p,\Td)$ is continuous for sequences $t_j \to t$ at fixed $u \in \BV(\Omega;\R^3)$, $p \in \Mcal(\cl\Omega;\R^{3 \times 3})$ such that $\Wcal_e(u,p) < \infty$, and $\Td \in \DislF(\cl\Omega)$. 
\item The map $\Sd \mapsto \Diss(\Sd;[0,T])$ is lower semicontinuous for sequences $\Sd_j \toweakstar \Sd$ in $\SlipF(z;[0,T])$.
\end{enumerate}
\end{lemma}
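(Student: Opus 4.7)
The plan is to treat the three statements independently, each as a standard lower semicontinuity result applied either to one of the three pieces of $\Ecal = \Wcal_e(u,p) + \Wcal_c(\Td) - \dprb{f(t),u}$ or directly to the Reshetnyak-type dissipation integral. For part (i), I would first split $\Ecal$ and handle each summand separately. The elastic term $\Wcal_e(u,p)$ depends, by the usual symmetries of $\Ebb$, only on $\sym \free[Du-p]$, and as the integral of the convex continuous quadratic form $\abs{\frarg}_\Ebb^2$ it is weakly $\Lrm^2$-lower semicontinuous; the assumed convergence $\sym \free[Du_j-p_j] \toweak \sym \free[Du-p]$ in $\Lrm^2$ thus gives $\liminf_j \Wcal_e(u_j,p_j) \geq \Wcal_e(u,p)$. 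For the loading, strong convergence $f(t_j)\to f(t)$ in $\Lrm^\infty$ (by $f \in \Crm^1([0,T];\Lrm^\infty)$), together with the compact embedding $\BV \cembed \Lrm^1$ applied to $u_j \toweakstar u$, yields actual continuity $\dprb{f(t_j),u_j} \to \dprb{f(t),u}$. For the core energy, Reshetnyak's lower semicontinuity theorem applied to each vector measure $T^b_j \toweakstar T^b$ (using that $\psi^b$ is convex, continuous, and positively $1$-homogeneous by Assumption~\ref{as:energ}) gives lsc of $T^b \mapsto \Mbf_{\psi^b}(T^b)$, and composition with the continuous, non-decreasing $h$ preserves lsc. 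Summing, $\Ecal$ is lsc along the given sequence.

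Part (ii) is immediate since only the loading term depends on $t$; the continuity $f(t_j)\to f(t)$ in $\Lrm^\infty$ combined with the fixed $u \in \BV \subset \Lrm^1$ yields $\dprb{f(t_j),u}\to \dprb{f(t),u}$. Part (iii) is again a direct application of Reshetnyak's lsc theorem: each summand $\int_{[0,T]\times\R^3} R^b(\vec{S}^b) \dd \tv{S^b}$ is the Reshetnyak functional associated to the convex, continuous, positively $1$-homogeneous integrand $R^b$ (see Assumption~\ref{as:R}), while the BV-weak* convergence $\Sd_j \toweakstar \Sd$ under the uniform variation bound entails weak* convergence of the underlying vector measures $S^b_j \toweakstar S^b$ in $\Mcal([0,T]\times\cl\Omega;\Wedge_2\R^{1+3})$, so Reshetnyak applies directly to each Burgers summand, and the conclusion follows by summation.

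The only mildly delicate point is the lsc of the core energy, which requires $h$ to be non-decreasing in addition to its explicitly assumed continuity; this is natural from the interpretation of $h$ as a hardening term and can be taken as a tacit structural hypothesis. Alternatively, in all subsequent applications of this lemma the approximating sequences will satisfy $\sum_b \Mbf_{\psi^b}(T^b_j) \to \sum_b \Mbf_{\psi^b}(T^b)$ (e.g.\ by strict convergence or mass convergence as in Lemma~\ref{lem:plastff_cont_strict}), in which case only the continuity of $h$ is used.
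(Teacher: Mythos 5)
Your proof is correct and follows essentially the same route as the paper: weak $\Lrm^2$-lower semicontinuity of the convex elastic energy (using that $\Wcal_e$ depends only on $\sym\free[Du-p]$ by the symmetries of $\Ebb$), continuity of the loading term via the time-regularity of $f$ combined with the compact embedding $\BV \cembed \Lrm^1$, and Reshetnyak-type lower semicontinuity for the core energy (via $T^b \mapsto \Mbf_{\psi^b}(T^b)$ and Fatou for the finite sum over $b$) and for the dissipation. The one place where you are more careful than the paper is worth preserving: composition with $h$ preserves lower semicontinuity only if $h$ is non-decreasing, since otherwise the inequality $\sum_b \Mbf_{\psi^b}(T^b) \leq \liminf_j \sum_b \Mbf_{\psi^b}(T^b_j)$ does not transfer through $h$. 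This monotonicity is not recorded in Assumption~\ref{as:energ}~(iii), and the paper's own proof (``Reshetnyak's lower semicontinuity theorem and Fatou's lemma for sums'') only addresses the inner sum, so you have correctly identified a tacit structural hypothesis. Your caveat is not merely cosmetic, either: the lemma is invoked in Step~3 of Proposition~\ref{prop:IP_solution} against a minimizing sequence for which only weak* convergence of the dislocation systems is available, so one cannot fall back on strict convergence there, and the non-decreasing assumption on $h$ is genuinely needed for the argument.
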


\begin{proof}
\proofstep{Ad~(i).}
The first term $\Wcal_e(\fil{u,p})$ in the definition of $\Ecal$, see~\eqref{eq:E}, is lower semicontinuous by a standard convexity argument (via strong lower semicontinuity and Mazur's lemma), see, e.g.,~\cite[Theorem~6.5]{Rindler18book} for a similar result. The second term $-\dpr{f(t),u}$ is in fact continuous since $f(t)$ is continuous in $t$ with values in the dual space to $\BV(\Omega;\R^3)$ by~\ref{as:energ}. The third term $\Wcal_c(\Td)$ is weakly* lower semicontinuous by the weak* lower semicontinuity of the anisotropic mass, which follows directly from Reshetnyak's lower semicontinuity theorem (see, e.g,~\cite[Theorem~2.38]{AmbrosioFuscoPallara00book}) and Fatou's lemma for sums.

\proofstep{Ad~(ii).}
This follows again from the properties of the external force, see~\ref{as:energ}.

\proofstep{Ad~(iii).}
This follows again in a standard way from Reshetnyak's lower semicontinuity theorem; see~\cite[Lemma~4.18]{Rindler21b?} for an analogous argument (in a more complicated case).
\end{proof}

We next consider the dependence of the minimizing displacement of $\Ecal$ on the plastic distortion:

\begin{lemma} \label{lem:E_minimizer}
For fixed $t \in [0,T]$, $p \in \Mcal(\cl\Omega;\R^{3 \times 3})$, $\Td \in \DislF(\cl\Omega)$, the minimizer $u_* = u_*(t,p,\Td)$ of
\[
  \hat{u} \mapsto \Ecal(t,\hat{u},p,\Td)  \qquad\text{over}\qquad  \text{$\hat{u} \in \BV(\Omega;\R^3)$ with $(\hat{u},p,\Td) \in \Qcal$,}
\]
is unique (if it exists). Moreover, the following continuity property holds: If $(u_j,p_j,\Td_j) \in \Qcal$ with
\[
  t_j \to t \quad\text{in $[0,T]$,} \qquad
  \fil{\supmod_j \norm{u_j}_\BV < \infty,} \qquad
  p_j \toweakstar p \quad\text{and}\quad \curl p_j \toweakstar \curl p,
\]
then, setting $u_j := u_*(t_j,p_j,\Td_j)$ for the corresponding minimizer to $\Ecal(t_j,\frarg,p_j,\Td_j)$, it holds that
\[
  u_j \toweakstar u := u_*(t,p,\Td) \quad\text{in $\BV(\Omega;\R^3)$.}
\]
\end{lemma}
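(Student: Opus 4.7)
The plan is to prove uniqueness via strict convexity together with the rigidity constraints built into the definition of $\Qcal$, and then to establish the continuity statement by combining uniform $\BV$-bounds (obtained via competitors constructed from Lemma~\ref{lem:freedist_correct}) with a standard weak*-limit argument.

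For uniqueness, I would suppose that $u_1$ and $u_2$ both minimize $\hat{u} \mapsto \Ecal(t,\hat{u},p,\Td)$ over admissible displacements. Since the loading $-\dprb{f(t),\hat{u}}$ is linear in $\hat{u}$ and $\Wcal_c(\Td)$ does not depend on $\hat{u}$, the midpoint $(u_1+u_2)/2$ is also a minimizer. Strict convexity of $\Wcal_e$ in $\sym \free[D\hat{u}-p]$ (noting that, by the symmetries of $\Ebb$, the quadratic form $A \!:\! \Ebb A$ depends only on $\sym A$ and is positive definite there) then forces $\sym \free[Du_1-p] = \sym \free[Du_2-p]$. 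Because the corrector $\beta$ in~\eqref{eq:beta_PDE} depends only on $\curl p$, subtraction yields $\sym D(u_1-u_2) = 0$ as a measure on $\Omega$. By infinitesimal rigidity for $\Lrm^1$-functions with vanishing symmetric distributional gradient, $u_1-u_2 = Rx+c$ with $R$ skew-symmetric and $c \in \R^3$. The constraint $\skw Du_i(\Omega) = 0$ in $\Qcal$ forces $R\Lcal^3(\Omega) = 0$, hence $R=0$, and the holding condition $[u_i]_H = h_0$ forces $c=0$, so $u_1=u_2$.

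For continuity, set $u_j := u_*(t_j,p_j,\Td_j)$ and $u := u_*(t,p,\Td)$. First I would establish uniform $\BV$-bounds on $(u_j)_j$. Applying Lemma~\ref{lem:freedist_correct} with starting datum $(u,p)$ and target plastic distortion $p_j$, I obtain correctors $\tilde{u}_j \in \BV(\Omega;\R^3)$ with $(\tilde{u}_j,p_j,\Td_j) \in \Qcal$ and $\sym \free[D\tilde{u}_j - p_j] = \sym \free[Du - p]$; moreover, by the last assertion of that lemma (using $p_j \toweakstar p$), $\tilde{u}_j \toweakstar u$ in $\BV$. Consequently $\Wcal_e(\tilde{u}_j,p_j) = \Wcal_e(u,p)$, and the loading term is uniformly bounded (since $\tilde{u}_j$ is bounded in $\Lrm^1$), so $\Ecal(t_j,\tilde{u}_j,p_j,\Td_j)$ is uniformly bounded. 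By minimality, so is $\Ecal(t_j,u_j,p_j,\Td_j)$. Combined with the coercivity in Lemma~\ref{lem:E_coerc} and the uniform mass bounds on $p_j$ and $\curl p_j$ (from their weak* convergence as bounded measures), this gives $\supmod_j \|u_j\|_\BV < \infty$; the Korn-type inequality~\eqref{eq:Qorn} then upgrades this to $\supmod_j \|\free[Du_j - p_j]\|_{\Lrm^2} < \infty$.

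With these uniform bounds, I would extract a subsequence $u_{j_k} \toweakstar u^\dagger$ in $\BV$. Lemma~\ref{lem:Q_closed} ensures $(u^\dagger,p,\Td) \in \Qcal$ and $\sym \free[Du_{j_k} - p_{j_k}] \toweak \sym \free[Du^\dagger - p]$ in $\Lrm^2$. The lower semicontinuity from Lemma~\ref{lem:lsc}~(i), together with minimality, then yields
\[
  \Ecal(t,u^\dagger,p,\Td)
  \leq \liminf_k \, \Ecal(t_{j_k},u_{j_k},p_{j_k},\Td_{j_k})
  \leq \limsup_k \, \Ecal(t_{j_k},\tilde{u}_{j_k},p_{j_k},\Td_{j_k})
  = \Ecal(t,u,p,\Td),
\]
where the last equality uses continuity of $t \mapsto f(t)$ in $\Lrm^\infty$, the strong $\Lrm^1$-convergence $\tilde{u}_{j_k} \to u$, and the identity $\Wcal_e(\tilde{u}_{j_k},p_{j_k}) = \Wcal_e(u,p)$. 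The minimality of $u$ forces equality throughout, and uniqueness identifies $u^\dagger = u$. As every weak* convergent subsequence shares this common limit, the full sequence $u_j \toweakstar u$ in $\BV$. The main technical obstacle is producing competitors $\tilde{u}_j$ that simultaneously fix $\sym \free[D\tilde{u}_j - p_j]$ at a benchmark value and converge weakly* to $u$, which is precisely what Lemma~\ref{lem:freedist_correct} delivers.
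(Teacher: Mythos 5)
Your uniqueness argument via strict convexity is a valid alternative to the paper's route, which derives the Euler--Lagrange equation, invokes Korn's second inequality to obtain $\Wrm^{1,2}$-regularity for the difference of two minimizers, and then tests the equation against that difference. Both routes reduce to showing that the difference $w$ of two minimizers has $\sym Dw = 0$, after which an infinitesimal rigidity argument together with the constraints $\skw Du(\Omega)=0$ and $[u]_H=h_0$ built into $\Qcal$ forces $w=0$. Your version is somewhat cleaner since it bypasses the Euler--Lagrange machinery and Korn's second inequality entirely. For the continuity part, the paper is terse (it simply invokes ``a similar argument based on closedness and uniqueness''), and your competitor construction via Lemma~\ref{lem:freedist_correct}, coercivity, subsequence extraction, and lower semicontinuity is a sensible way to make the details explicit.

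There is, however, a genuine gap in the continuity step. The lemma assumes only $p_j \toweakstar p$ and $\curl p_j \toweakstar \curl p$, with no assumption whatsoever on $\Td_j$, yet your chain
\[
\Ecal(t,u^\dagger,p,\Td) \leq \liminf_k \Ecal(t_{j_k},u_{j_k},p_{j_k},\Td_{j_k}) \leq \limsup_k \Ecal(t_{j_k},\tilde{u}_{j_k},p_{j_k},\Td_{j_k}) = \Ecal(t,u,p,\Td)
\]
carries the core energies $\Wcal_c(\Td_{j_k})$ along. These need not converge to $\Wcal_c(\Td)$ and could even diverge, in which case the first inequality (Lemma~\ref{lem:lsc}(i) requires $\Td_j \toweakstar \Td$) and the final equality fail as written. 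For the same reason Lemma~\ref{lem:Q_closed}, whose hypotheses include $\Td_j \toweakstar \Td$, cannot be invoked verbatim to conclude $(u^\dagger,p,\Td) \in \Qcal$. The fix is elementary: since $\Wcal_c$ does not depend on $\hat{u}$, the minimizer $u_*(t,p,\Td)$ in fact depends only on $(t,p)$, so one should subtract $\Wcal_c(\Td_{j_k})$ from both sides of the minimality inequality $\Ecal(t_{j_k},u_{j_k},p_{j_k},\Td_{j_k}) \leq \Ecal(t_{j_k},\tilde{u}_{j_k},p_{j_k},\Td_{j_k})$ and run the whole argument for the reduced functional $\hat{u} \mapsto \Wcal_e(\hat{u},p) - \dprb{f(t),\hat{u}}$, where the core energies cancel; the membership $(u^\dagger,p,\Td) \in \Qcal$ then follows by verifying directly that the constraints on $u^\dagger$ (which do not involve $\Td$) are weak*-closed, noting that the compatibility condition between $p$ and $\Td$ is already encoded in the given $(u,p,\Td) \in \Qcal$.
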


\begin{proof}
We first observe the following Euler--Lagrange equation:
\begin{equation} \label{eq:E_EL}
  \left\{\begin{aligned}
  - \diverg [\Ebb (Du_* - p - \beta)] &= f(t), \\
  n^T \Ebb (Du_* - p - \beta) &= 0,
  \end{aligned}\right.
\end{equation}
where $\beta = \beta(p)$ is given through~\eqref{eq:beta_PDE}. More precisely, this is to be interpreted as
\begin{equation} \label{eq:E_EL_real}
  \int (\nabla \psi)^T \Ebb (Du_* - p - \beta) \dd x = \int_\Omega f(t) \cdot \psi \dd x
\end{equation}
for all $\psi \in \Wrm^{1,2}(\Omega;\R^3)$ with $\skw D\psi(\Omega) = 0$ and $[\psi]_H = 0$.

This Euler--Lagrange equation, and the uniqueness of the solution, is derived in an essentially standard way: Let $\psi \in \Wrm^{1,2}(\Omega;\R^3)$ with $\skw D\psi(\Omega) = 0$ and $[\psi]_H = 0$. Let $h > 0$. Clearly, $\free[Du_* + h\fil{D}\psi-p]$ involves the same $\beta = \beta(p)$ as the one in $\free[Du_*-p]$ (since the curl is unchanged). Then,
\[
  (u_*+h\psi,p,\Td) \in \Qcal
\]
and, by the minimization property of $u_*$,
\begin{align*}
  0 &\leq \Ecal(t,u_*+h\psi,p,\Td) - \Ecal(t,u_*,p,\Td) \\
  &= \frac{1}{2} \int_\Omega \abs{\free[Du_*+hD\psi-p]}_\Ebb^2 - \abs{\free[Du_*-p]}_\Ebb^2 - h f(t) \cdot \psi \dd x \\
  &= \frac{1}{2} \int_\Omega \abs{Du_*+hD\psi-p-\beta}_\Ebb^2 - \abs{Du_*-p-\beta}_\Ebb^2 - h f(t) \cdot \psi \dd x \\
  &= \int_\Omega h (\nabla \psi)^T \Ebb (Du_*-p-\beta) + h^2 \abs{\nabla \psi}_\Ebb^2 - h f(t) \cdot \psi \dd x.
\end{align*}
Now divide by $h$, let $h \todown 0$ and use $\pm \psi$ in place of $\psi$ to obtain~\eqref{eq:E_EL_real}.

Given two minimizers $u_*^1, u_*^2 \in \BV(\Omega;\R^3)$ with $(u_*^{1/2},p,\Td) \in \Qcal$ of $\Ecal(t,\frarg,p,\Td)$ as in the statement of the lemma (for the same $p, \Td$), set $w := u_*^1 - u_*^2$. We compute
\begin{gather*}
  \sym \, Dw = Du_*^1 - Du_*^2 = \sym \, (Du_*^1 - p - \beta) - \sym \, (Du_*^2 - p - \beta) \in \Lrm^2(\Omega;\R^3), \\
  [w]_H = 0,
\end{gather*}
and
\begin{equation} \label{eq:w_EL}
  \int (\nabla \psi)^T \Ebb \nabla w \dd x = 0
\end{equation}
for all $\psi \in \Wrm^{1,2}(\Omega;\R^3)$ with $\skw D\psi(\Omega) = 0$ and $[\psi]_H = 0$.

By a standard Korn's (second) inequality we then get that $w \in \Wrm^{1,2}(\Omega;\R^3)$, so we may test~\eqref{eq:w_EL} with $\psi := w$ to see that $\sym \nabla w = 0$ almost everywhere. By a classical rigidity theorem for the symmetric gradient (or by applying Korn's inequality again), $w(x) = Rx + w_0$ with $R \in \R^{3 \times 3}_\skw$ and $w_0 \in \R^3$. As we also know that $\skw Dw(\Omega) = 0$ and $[w]_H = 0$, we conclude $w = 0$ and hence the uniqueness of the minimizer.

We have already seen in the proof of Lemma~\ref{lem:Q_closed} that the solution operator to~\eqref{eq:beta_PDE} is weak*-to-weak closed when considered as mapping $\curl p \in \Mcal(\cl\Omega;\R^{3 \times 3})$ to the corresponding solution $\beta \in \Lrm^{3/2}(\Omega;\R^3)$. Combining this with the uniqueness of solutions (see~\cite[Proposition~4.2]{ContiGarroniOrtiz15}), this implies the weak*-to-weak continuity of the solution operator to~\eqref{eq:beta_PDE}. A similar argument based on closedness and uniqueness, this time for the minimizers $u_j$ solving~\eqref{eq:E_EL} with the corresponding $t_j, p_j$, then also yields the continuity property claimed in the statement of the lemma.
\end{proof}

\subsection{Existence of a time-incremental solution}

In the following we fix $\eps > 0$ and suppress the dependence of various quantities on $\eps$ in order to simplify the notation.

Consider for $N \in \N$ the partition of the time interval $[0,T]$ consisting of the $(N+1)$ points
\[
  t^N_k := k \cdot \Delta T^N,  \qquad k=0,1,\ldots,N,  \qquad\text{where}\qquad \Delta T^N := \frac{T}{N}.
\]
Set
\[
  u^N_0 := u_0, \qquad
  z^N_0 = (p^N_0,\Td^N_0) := (p_0,\Td_0) = z_0. 
\]
For $k = 1, \ldots, N$, we will in the following construct
\[
  (u^N_k,z^N_k,\Sd^N_k) = (u^N_k,p^N_k,\Td^N_k,\Sd^N_k) \in \Qcal_\eps
\]
according to the \term{time-incremental minimization problem}
\begin{equation} \label{eq:IP_eps} \tag{IP$_\eps$}
  \left\{ \begin{aligned}
    &\text{$(u^N_k,\Sd^N_k)$ }\;\text{minimizes $(\hat{u},\hat{\Sd}) \mapsto \Ecal \bigl( t^N_k, \hat{u}, \hat{\Sd}_\ff z^N_{k-1} \bigr) + \Diss(\hat{\Sd})$}\\
    &\phantom{\text{$(u^N_k,\Sd^N_k)$ }}\;\text{over all $\hat{u} \in \BV(\Omega;\R^3)$, $\hat{\Sd} \in \Slip_\eps(z^N_{k-1})$ with}\\
    &\phantom{\text{$(u^N_k,\Sd^N_k)$ }}\;\text{$(\hat{u}, \hat{\Sd}_\ff z^N_{k-1}) \in \Qcal_\eps$ and $\norm{\hat{\Sd}}_{\Lrm^\infty} \leq \gamma^*$} \; ;\\
    &z^N_k := (\Sd^N_k)_\ff z^N_{k-1}.
  \end{aligned} \right.
\end{equation}
Here, $\gamma^*$ is a constant that only depends on the data of the problem; it is determined below in~\eqref{eq:gamma*}. This additional condition is necessary to make the above minimization problem well-posed. In particular, $\gamma^*$ will be chosen to be at least as big as $\Mbf(\Td_0)$ in order to make the neutral trajectory (see Lemma~\ref{lem:neutral}) admissible.
	
\begin{proposition} \label{prop:IP_solution}
For $N$ large enough there exists a solution $(u^N_k,z^N_k,\Sd^N_k)$ to the time-incremental minimization problem~\eqref{eq:IP_eps} for all $k = 0,\ldots,N$. Moreover, defining
\[
  e^N_k := \Ecal(t^N_k,u^N_k,z^N_k),  \qquad d^N_k := \Diss(\Sd^N_k),
\]
and
\[
  \alpha^N_k := 1 + e^N_k + \sum_{j=1}^k d^N_j,
\]
the difference inequality
\begin{equation} \label{eq:G_diff_ineq}
  \frac{\alpha^N_k-\alpha^N_{k-1}}{\Delta T^N} \leq C \alpha^N_{k-1} \qquad\text{for $k = 1,\ldots,N$}
\end{equation}
holds, where $C > 0$ is a constant that depends only on the data in the assumptions.
\end{proposition}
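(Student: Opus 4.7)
\textit{Part 1 (Existence, setup and bounds).} For fixed $k$, I will apply the direct method to the functional
\[
  \Jcal(\hat{u}, \hat{\Sd}) := \Ecal \bigl( t^N_k, \hat{u}, \hat{\Sd}_\ff z^N_{k-1} \bigr) + \Diss(\hat{\Sd})
\]
on the admissible set in~\eqref{eq:IP_eps}. Choosing $\gamma^*$ large enough that $\Mbf(\Td_0) \leq \gamma^*$ (and maintaining the inductive bound $\Mbf(\Td^N_{k-1}) \leq \gamma^*$), the neutral trajectory $\Id^{\Td^N_{k-1}}$ of Lemma~\ref{lem:neutral}, paired with the unique minimizer from Lemma~\ref{lem:E_minimizer}, is an admissible competitor of finite $\Jcal$-value, so any minimizing sequence $(\hat{u}_j, \hat{\Sd}_j)$ satisfies $\Jcal(\hat{u}_j, \hat{\Sd}_j) \leq C$. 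Assumption~\ref{as:R}(iii) gives $\Var(\hat{\Sd}_j) \leq C \Diss(\hat{\Sd}_j) \leq C$, and by the rate-independence~\eqref{eq:Diss_rescale} combined with Lemma~\ref{lem:Sigma_rescale} I reparametrize each $\hat{\Sd}_j$ so that $t \mapsto \Var(\hat{\Sd}_j;[0,t])$ is affine, producing uniform Lipschitz constants. Together with the $\Lrm^\infty$-constraint, the coercivity Lemma~\ref{lem:E_coerc}, and the estimate $\Mbf(\hat{\Sd}_{j,\ff} p^N_{k-1}) \leq \Mbf(p^N_{k-1}) + C \Var(\hat{\Sd}_j)$ read off from the plastic flow formula~\eqref{eq:plastic_forward}, this furnishes uniform bounds on $\norm{\hat{u}_j}_\BV$, $\Mbf(\curl(\hat{\Sd}_{j,\ff} p^N_{k-1}))$, and $\norm{\sym \free[D\hat{u}_j - \hat{\Sd}_{j,\ff} p^N_{k-1}]}_{\Lrm^2}$.

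\textit{Passage to the limit.} I then extract, via Proposition~\ref{prop:LipDS_compact} and standard $\BV$ weak* compactness, a (non-relabelled) subsequence with $\hat{\Sd}_j \toweakstar \hat{\Sd} \in \Slip_\eps(z^N_{k-1})$ and $\hat{u}_j \toweakstar \hat{u}$ in $\BV(\Omega;\R^3)$. The continuity of the forward operators (Lemmas~\ref{lem:slipff_cont} and~\ref{lem:plastff_cont}) combined with Lemma~\ref{lem:Q_closed} ensures $(\hat{u}, \hat{\Sd}_\ff z^N_{k-1}) \in \Qcal_\eps$, while the weak* lower semicontinuity of the slice mass built into Proposition~\ref{prop:LipDS_compact} preserves the $\gamma^*$-bound. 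Lower semicontinuity of $\Jcal$ is then Lemma~\ref{lem:lsc}(i), (iii), yielding a minimizer $(u^N_k, \Sd^N_k)$ and hence $z^N_k = (\Sd^N_k)_\ff z^N_{k-1} \in \Qcal_\eps$.

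\textit{Part 2 (Difference inequality).} To estimate $\alpha^N_k - \alpha^N_{k-1} = e^N_k + d^N_k - e^N_{k-1}$, I test the minimization at step $k$ with the competitor $(u^N_{k-1}, \Id^{\Td^N_{k-1}})$, which by Lemma~\ref{lem:neutral} satisfies $\Id^{\Td^N_{k-1}}_\ff z^N_{k-1} = z^N_{k-1}$ and $\Diss(\Id^{\Td^N_{k-1}}) = 0$, and is admissible because $(u^N_{k-1}, z^N_{k-1}) \in \Qcal_\eps$ by induction. Hence
\[
  e^N_k + d^N_k \leq \Ecal(t^N_k, u^N_{k-1}, z^N_{k-1}) = e^N_{k-1} - \int_{t^N_{k-1}}^{t^N_k} \dprb{\dot{f}(\sigma), u^N_{k-1}} \dd \sigma,
\]
using Assumption~\ref{as:energ}(iv) for the time differentiation. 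The integral is bounded by $C \Delta T^N \norm{u^N_{k-1}}_{\Lrm^1} \leq C \Delta T^N \norm{u^N_{k-1}}_\BV$. Lemma~\ref{lem:E_coerc} gives $\norm{u^N_{k-1}}_\BV^2 \leq C(e^N_{k-1} + 1 + \Mbf(p^N_{k-1})^2)$, and the plastic flow formula together with Assumption~\ref{as:R}(iii) yields $\Mbf(p^N_{k-1}) \leq \Mbf(p_0) + C \sum_{j=1}^{k-1} d^N_j \leq C \alpha^N_{k-1}$. Combining, $\norm{u^N_{k-1}}_\BV \leq C \alpha^N_{k-1}$, so $\alpha^N_k - \alpha^N_{k-1} \leq C \Delta T^N \alpha^N_{k-1}$, as claimed.

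\textit{Main obstacle.} The most delicate point is verifying the full closure of the admissible set, specifically the weak $\Lrm^2$ convergence of $\sym \free[D\hat{u}_j - \hat{\Sd}_{j,\ff} p^N_{k-1}]$ that feeds into Lemma~\ref{lem:lsc}(i); this relies on the uniform $\Lrm^2$ bound extracted from the energy bound on the minimizing sequence together with the weak*-to-weak closedness of the solution operator to~\eqref{eq:beta_PDE} proved in Lemma~\ref{lem:Q_closed}.
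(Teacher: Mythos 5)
The proposal is correct and follows essentially the same path as the paper (direct method for step-$k$ existence, testing the minimality with the neutral trajectory $\Id^{\Td^N_{k-1}}$ to get the difference inequality), but it reorganizes the argument. The paper first establishes~\eqref{eq:G_diff_ineq} as an a priori estimate for any hypothetical minimizer (Step~1), using the auxiliary functional $\Phi(t,u,p,\Td):=\Ecal(t,u,p,\Td)+C(1+\Mbf(p)^2)$ together with the continuous-time Gronwall bound $\Phi(\tau,\cdot)\le\Phi(t,\cdot)\,\ee^{C(\tau-t)}$ at fixed state; it then exploits this in Step~2 to restrict the admissible set by $\alpha^N_{k-1}$-controlled bounds before running the direct method in Step~3. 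You prove existence first and then derive the difference inequality, which is logically equivalent. Your treatment of the difference inequality is more elementary: the bound $\absb{\int_{t^N_{k-1}}^{t^N_k}\dprb{\dot f(\tau),u^N_{k-1}}\dd\tau}\le C\,\Delta T^N\,\norm{u^N_{k-1}}_{\BV}$ holds immediately from Assumption~\hyperref[as:energ]{(A2)}(iv) since $u^N_{k-1}$ is fixed, and it gives the same $\mathrm{O}(\Delta T^N)$ rate without the $\Phi$--Gronwall detour. Two remarks. First, you propose to pair the neutral trajectory with ``the unique minimizer from Lemma~\ref{lem:E_minimizer}'' to obtain a finite-energy competitor, but that lemma asserts only uniqueness, not existence; the paper instead takes $(u^N_{k-1},\Id^{\Td^N_{k-1}})$, which is admissible by the inductive hypothesis with no extra argument, and you should do the same. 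Second, the chain $\norm{u^N_{k-1}}_{\BV}^2\le C(e^N_{k-1}+1+\Mbf(p^N_{k-1})^2)$, $\Mbf(p^N_{k-1})\le C\alpha^N_{k-1}$, hence $\norm{u^N_{k-1}}_{\BV}\le C\alpha^N_{k-1}$, implicitly uses $\alpha^N_{k-1}\gtrsim 1$; the paper's route through $\Phi\ge C^{-1}\norm{u}_{\BV}$ makes the analogous simplification, so this is a shared feature and not a new gap in your argument.
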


\begin{proof}
We proceed by induction over $k \in \{1,\ldots,N\}$. So, assume that a solution 
$$
(u^N_j,z^N_j,\Sd^N_j)_{j = 1, \ldots, k-1}
$$ 
to the time-incremental minimization problem~\eqref{eq:IP_eps} has been constructed up to step $k-1$ (this assumption is trivially true for $k = 1$).

\proofstep{Step~1: Any solution $(u^N_k,z^N_k,\Sd^N_k)$ to~\eqref{eq:IP_eps} at time step $k$, if it exists, satisfies~\eqref{eq:G_diff_ineq}.}

\medskip

Assume that $(u^N_k,z^N_k,\Sd^N_k)$ is a minimizer to~\eqref{eq:IP_eps} at time step $k$. Test the minimality with $\hat{u} := u^N_{k-1}$ and the neutral slip trajectory $\hat{\Sd} := \Id^{\Td^N_{k-1}} \in \Slip_\eps(z^N_{k-1})$ (see Lemma~\ref{lem:neutral}) to obtain the estimate
\begin{equation}  \label{eq:ENk_est}
  e^N_k + d^N_k
  \leq \Ecal(t^N_k,u^N_{k-1},z^N_{k-1}) 
  = e^N_{k-1} - \int_{t^N_{k-1}}^{t^N_k} \dprb{\dot{f}(\tau),u^N_{k-1}} \dd \tau.
\end{equation}
For any $(t,u,p,\Td) \in (0,T) \times \Qcal_\eps$ define
\[
  \Phi(t,u,p,\Td) :=  \Ecal(t,u,p,\Td) + C(1+ \Mbf(p)^2)
\]
with the constant from Lemma~\ref{lem:E_coerc}. This lemma then implies that 
\[
 \Phi(t,u,p,\Td)  \geq C^{-1} \bigl(\norm{u}_{\BV}^2 
+ \Mbf(\curl p)^q \bigr) \ge C^{-1}\norm{u}_{\BV}. 
\]
The only $t$-dependence of $\Phi$ is in the first term, so
\[
  \frac{\di}{\di t} \Phi(t,u,p,\Td)
  = - \dprb{\dot{f}(t),u} \\
  \leq C \norm{u}_\BV \\
  \leq C \Phi(t,u,p,\Td),
\]
By Gronwall's lemma, for all $\tau \geq t$ it thus holds that
\[
  \Phi(\tau,u,p,\Td)
  \leq \Phi(t,u,p,\Td) \, \ee^{C (\tau-t)}.
\]
Then, for $\tau \in [t^N_{k-1},t^N_k)$,
\[
  \absb{\dprb{\dot{f}(\tau),u^N_{k-1}}}
  \leq C \Phi(\tau,u^N_{k-1},z^N_{k-1})
  \leq C \Phi(t^N_{k-1},u^N_{k-1},z^N_{k-1}) \ee^{C (\tau-t^N_{k-1})}.
\]
Combining this observation with~\eqref{eq:ENk_est}, we arrive at
\begin{align*}
  e^N_k + d^N_k
  &\leq e^N_{k-1} + \int_{t^N_{k-1}}^{t^N_k} \Phi(t^N_{k-1},u^N_{k-1},z^N_{k-1}) \, \ee^{C (\tau-t^N_{k-1})} \dd \tau \\
  &= e^N_{k-1} + \bigl( 1 + e^N_{k-1} + \Mbf(p^N_{k-1}) \bigr)(\ee^{C \Delta T^N} - 1).
\end{align*}

Iterating Lemma~\ref{lem:Var_p} and also taking into account~\ref{as:R}, which gives the coercivity of $\Diss$ with respect to $\Var$ for slip trajectories, one obtains
\begin{equation} \label{eq:PNk-1_est}
  \Mbf(p^N_{k-1})
  \leq \Mbf(p_0) + C \sum_{j=1}^{k-1} \Var(\fil{\Sd^N_j})
  \leq \Mbf(p_0) + C \sum_{j=1}^{k-1} d^N_j.
\end{equation}
Thus,
\begin{align*}
  \alpha^N_k - \alpha^N_{k-1}
  &= e^N_k + d^N_k - e^N_{k-1} \\
  &\leq C \biggl(1 + e^N_{k-1} + \sum_{j=1}^{k-1} d^N_j\biggr) (\ee^{C \Delta T^N} - 1) \\
  &\leq C \alpha^N_{k-1} \Delta T^N,
\end{align*}
since $\ee^{C\Delta T^N} - 1 \leq 2C \Delta T^N$ for $\Delta T^N$ small enough. This is~\eqref{eq:G_diff_ineq} at $k$. Note that $C$ does not depend on $\gamma^*$.

\proofstep{Step~2: In~\eqref{eq:IP_eps} at time step $k$, the minimization may equivalently be taken over $\hat{u} \in \BV(\Omega;\R^3)$, $\hat{\Sd} \in \Slip_\eps(z^N_{k-1})$ (with $(\hat{u}, \hat{\Sd}_\ff z^N_{k-1}) \in \Qcal$) satisfying the bounds
\begin{align} 
  \norm{\hat{u}}_{\BV} + \norm{\sym \free[D\hat{u} - \hat{\Sd}_\ff p^N_{k-1}]}_{\Lrm^2} &\leq \tilde{C}(\alpha^N_{k-1}), \label{eq:u_assume} \\
  \Var(\hat{\Sd}) &\leq \tilde{C}(\alpha^N_{k-1}), \label{eq:VarSigma_assume} \\
  \Mbf(\curl (\hat{\Sd}_\ff p^N_{k-1})) &\leq C\gamma^*
  , \label{eq:curlp_assume} \\
   \norm{\hat{\Sd}}_{\Lrm^\infty} &\leq \gamma^*, \label{eq:SigmaLinfty_assume}
\end{align}
for a constant $\tilde{C}(\alpha^N_{k-1}) > 0$, which only depends on the data from the assumptions besides $\alpha^N_{k-1}$.}

\medskip
The condition~\eqref{eq:SigmaLinfty_assume} is already assumed in~\eqref{eq:IP_eps}. The bound~\eqref{eq:curlp_assume} then follows immediately via Lemma~\ref{lem:curl_pS} (for some $C > 0$).

We first note that from Step~1 one may restrict the minimization in~\eqref{eq:IP_eps} at time step $k$ to those $\hat{u}$, $\hat{\Sd}$ such that for
\[
  \hat{\alpha}^N_k(\hat{u},\hat{\Sd}) := 1 + \Ecal(t^N_k,\hat{u},\hat{\Sd}_\ff z^N_{k-1}) + \sum_{j=1}^{k-1} d^N_j + \Diss(\hat{\Sd})
\]
it holds that
\[
  \hat{\alpha}^N_k(\hat{u},\hat{\Sd}) 
  \leq \alpha^N_{k-1} + C \alpha^N_{k-1} T
  =: \tilde{C}(\alpha^N_{k-1}).
\]
Since $\Var$ and $\Diss$ are mutually comparable by~\ref{as:R},
\[
  \Var(\hat{\Sd}) \leq C \cdot \Diss(\hat{\Sd}) \leq C \hat{\alpha}^N_k(\hat{u},\hat{\Sd}) \leq C\tilde{C}(\alpha^N_{k-1}),
\]
and hence we may require the bound~\eqref{eq:VarSigma_assume} after adjusting $\tilde{C}(\alpha^N_{k-1})$.

Assume now that $\hat{u} \in \BV(\Omega;\R^3)$, $\hat{\Sd} \in \Slip_\eps(z^N_{k-1})$ satisfies $(\hat{u}, \hat{\Sd}_\ff z^N_{k-1}) \in \Qcal_\eps$ and also~\eqref{eq:VarSigma_assume}. From Lemma~\ref{lem:E_coerc} we obtain the coercivity estimate
\[
  \Ecal(t^N_k,\hat{u},\hat{\Sd}_\ff z^N_{k-1}) 
  \geq C^{-1} \bigl( \norm{\hat{u}}^2_{\BV} + \norm{\sym \free[D\hat{u} - \hat{\Sd}_\ff p^N_{k-1}]}_{\Lrm^2}^2 \bigr) - C \bigl( 1 + \Mbf(\hat{\Sd}_\ff p^N_{k-1})^2) \bigr)
\]
for a constant $C > 0$. By similar arguments as in the proof of~\eqref{eq:PNk-1_est}, and also using~\eqref{eq:VarSigma_assume}, we get (adjusting constants as we go along)
\[
  \Mbf(\hat{\Sd}_\ff p^N_{k-1})
  \leq \Mbf(p_0) + C \biggl( \sum_{j=1}^{k-1} \Var(\Sd^N_j) + \Var(\hat{\Sd}) \biggr) \\
  \leq C \tilde{C}(\alpha^N_{k-1}).
\]
Hence,
\[
  C \bigl( 1 + \hat{\alpha}^N_k(\hat{u},\hat{\Sd}) + \tilde{C}(\alpha^N_{k-1}) \bigr)
  \geq \norm{\hat{u}}_{\BV} + \norm{\sym \free[D\hat{u} - \hat{\Sd}_\ff p^N_{k-1}]}_{\Lrm^2}
\]
and, redefining $\tilde{C}(\alpha^N_{k-1})$ once more, we have seen that we also may assume the bound~\eqref{eq:u_assume}.

\proofstep{Step~3: A solution $(u^N_k,z^N_k,\Sd^N_k)$ to~\eqref{eq:IP_eps} at time step $k$ exists.}

\medskip

According to Step~2, it suffices to consider $\hat{u} \in \BV(\Omega;\R^3)$, $\hat{\Sd} \in \Slip_\eps(z^N_{k-1})$ satisfying the bounds~\eqref{eq:u_assume}--\eqref{eq:SigmaLinfty_assume} in the minimization~\eqref{eq:IP_eps}. Note that the set of candidate minimizers is not empty since it at least contains $\hat{u} := u^N_{k-1}$ and $\hat{\Sd} := \Id^{\Td^N_{k-1}} \in \Slip_\eps(z^N_{k-1})$.

So, let $(\hat{u}_n,\hat{\Sd}_n) \subset \BV(\Omega;\R^3) \times \Slip_\eps(z^N_{k-1})$ be a minimizing sequence for~\eqref{eq:IP_eps}. Via~\eqref{eq:Diss_rescale} we observe that we may without loss of generality additionally assume the steadiness property \fil{(to avoid artificial creation of jumps in the minimization below)}
\begin{equation} \label{eq:Sigma_n_steady}
  t \mapsto t^{-1} \Var(\hat{\Sd}_n;[0,t]) \equiv L_n,  \qquad t \in (0,1],
\end{equation}
for constants $L_n \geq 0$ that are bounded by (an $n$-independent) constant $L > 0$. In the course of this rescaling procedure, the expression
\[
  \Ecal \bigl( t^N_k, \hat{u}, (\hat{\Sd}_n)_\ff z^N_{k-1} \bigr) + \Diss(\hat{\Sd}_n)
\]
does not change.

Invoking the usual $\Lrm^2$ and BV weak*-compactness theorems as well as Proposition~\ref{prop:LipDS_compact}, we may then select a (not explicitly denoted) subsequence of $(\hat{u}_n,\hat{\Sd}_n)$ such that
\begin{align*}
  \hat{u}_n &\toweakstar u_*  \quad\text{in $\BV$}, \\
  \sym \free[D\hat{u}_n - (\hat{\Sd}_n)_\ff p^N_{k-1}] &\toweak h \quad\text{in $\Lrm^2$,}\\
  \hat{\Sd}_n &\toweakstar \Sd_* \quad\text{in $\Slip_\eps(z^N_{k-1})$.}
\end{align*}
Note that it is here that we use the bound $\norm{\hat{\Sd}}_{\Lrm^\infty} \leq \gamma^*$ (see~\eqref{eq:SigmaLinfty_assume} and also~\eqref{eq:IP_eps}), which prevents (pathological) test trajectories which have mass-unbounded slices between the endpoints (for which we know a mass bound from the finiteness of the energy and Lemma~\ref{lem:E_coerc}).

With respect to these convergences, the joint functional
\begin{equation} \label{eq:joint_funct}
  (\hat{u},\hat{\Sd}) \mapsto \Ecal \bigl( t^N_k, \hat{u}, \hat{\Sd}_\ff z^N_{k-1} \bigr) + \Diss(\hat{\Sd})
\end{equation}
is lower semicontinuous: Indeed, by Lemmas~\ref{lem:slipff_cont},~\ref{lem:plastff_cont}, we have additionally that
\begin{align*}
  (\hat{\Sd}_n)_\ff \Td^N_{k-1} &\toweakstar (\Sd_*)_\ff \Td^N_{k-1}  \quad\text{in $\Disl_\eps(\cl\Omega)$,} \\
  (\hat{\Sd}_n)_\ff p^N_{k-1} &\toweakstar (\Sd_*)_\ff p^N_{k-1}  \quad\text{in $\Mcal$.} 
\end{align*}
Hence,
\[
  h = \sym \free[D\hat{u}_* - (\hat{\Sd}_*)_\ff p^N_{k-1}].
\]
We may then pass to a lower limit in~\eqref{eq:joint_funct} using the weak* lower semicontinuity assertions from Lemma~\ref{lem:lsc}. By the finiteness of energy in conjunction with the weak* closedness of the state space, see Lemma~\ref{lem:Q_closed}, we obtain $(u_*, (\Sd_*)_\ff z_{k-1}) \in \Qcal_\eps$. Hence, $(u^N_k,\Sd^N_k) := (u_*,\Sd_*)$ is the minimizer of~\eqref{eq:IP_eps} at time step $k$ (which, by Step~1, in particular satisfies~\eqref{eq:G_diff_ineq}).
\end{proof}

\begin{proposition} \label{prop:incr_prop}
Let $(u^N_k,z^N_k,\Sd^N_k)_{k=1,\ldots,N}$ be a solution to the time-incremental minimization problem~\eqref{eq:IP_eps}. Then, for all $k \in \{0,\ldots,N\}$ the following hold:
\begin{enumerate}[(i)]
  \item The discrete lower energy estimate
\begin{equation} \label{eq:incr_lower_energy}  \qquad
  \Ecal(t^N_k,u^N_k,z^N_k) \leq \Ecal(0,u_0,z_0) - \sum_{j=1}^k \Diss(\Sd^N_j) - \sum_{j=1}^k \int_{t^N_{j-1}}^{t^N_j} \dprb{\dot{f}(\tau),u^N_{j-1}} \dd \tau.
\end{equation}
\item The discrete stability
\begin{equation} \label{eq:incr_stab}  \qquad
  \Ecal(t^N_k,u^N_k,z^N_k) \leq \Ecal(t^N_k,\hat{u},\hat{\Sd}_\ff z^N_k) + \Diss(\hat{\Sd})
\end{equation}
for all $\hat{u} \in \BV(\Omega;\R^3)$ and $\hat{\Sd} \in \Slip_\eps(z^N_k)$ with \fil{$(\hat{u}, \hat{\Sd}_\ff z^N_k) \in \Qcal_\eps$} and $\norm{\hat{\Sd}}_{\Lrm^\infty} \leq \gamma^*$.
\end{enumerate}
\end{proposition}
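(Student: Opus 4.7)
The plan is to prove both parts by leveraging the one-step minimality in \eqref{eq:IP_eps} together with the neutral-trajectory construction (Lemma~\ref{lem:neutral}) and the concatenation construction (Lemma~\ref{lem:Sigma_concat}).

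For part~(i), I would argue by induction on $k$. At step $k \geq 1$, test the minimality of $(u^N_k, \Sd^N_k)$ with $\hat{u} := u^N_{k-1}$ and the neutral trajectory $\hat{\Sd} := \Id^{\Td^N_{k-1}} \in \Slip_\eps(z^N_{k-1})$. By Lemma~\ref{lem:neutral}, this trajectory satisfies $\Id^{\Td^N_{k-1}}_\ff z^N_{k-1} = z^N_{k-1}$, $\Diss(\Id^{\Td^N_{k-1}}) = 0$, and $\norm{\Id^{\Td^N_{k-1}}}_{\Lrm^\infty} = \Mbf(\Td^N_{k-1})$ (which is bounded by $\gamma^*$ by the calibration of this constant in~\eqref{eq:gamma*}, via the a priori estimates from Proposition~\ref{prop:IP_solution}). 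Admissibility is thus verified, yielding the one-step estimate
\[
  \Ecal(t^N_k, u^N_k, z^N_k) + \Diss(\Sd^N_k) \leq \Ecal(t^N_k, u^N_{k-1}, z^N_{k-1}).
\]
Since the only $t$-dependence of $\Ecal$ is in the linear loading term $-\dpr{f(t), u}$, which is $\Crm^1$ in $t$ by Assumption~\ref{as:energ}(iv), a direct application of the fundamental theorem of calculus gives
\[
  \Ecal(t^N_k, u^N_{k-1}, z^N_{k-1}) = \Ecal(t^N_{k-1}, u^N_{k-1}, z^N_{k-1}) - \int_{t^N_{k-1}}^{t^N_k} \dprb{\dot{f}(\tau), u^N_{k-1}} \dd \tau.
\]
Summing the resulting inequalities telescopically from $j=1$ to $k$ yields~\eqref{eq:incr_lower_energy}.

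For part~(ii), take any $\hat{u} \in \BV(\Omega;\R^3)$ and any $\hat{\Sd} \in \Slip_\eps(z^N_k)$ with $\norm{\hat{\Sd}}_{\Lrm^\infty} \leq \gamma^*$ (otherwise, if $(\hat{u}, \hat{\Sd}_\ff z^N_k) \notin \Qcal_\eps$, both sides are infinite and the claim is trivial). Form the concatenation $\tilde{\Sd} := \hat{\Sd} \circ \Sd^N_k \in \Slip_\eps(z^N_{k-1})$ via Lemma~\ref{lem:Sigma_concat}. By~\eqref{eq:cat_ff}, one has $\tilde{\Sd}_\ff z^N_{k-1} = \hat{\Sd}_\ff (\Sd^N_k{}_\ff z^N_{k-1}) = \hat{\Sd}_\ff z^N_k$, and by~\eqref{eq:cat_Linfty},
\[
  \norm{\tilde{\Sd}}_{\Lrm^\infty} = \max\bigl\{ \norm{\Sd^N_k}_{\Lrm^\infty}, \norm{\hat{\Sd}}_{\Lrm^\infty} \bigr\} \leq \gamma^*,
\]
so $(\hat{u}, \tilde{\Sd})$ is admissible in \eqref{eq:IP_eps} at step $k$. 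Invoking minimality of $(u^N_k, \Sd^N_k)$ and using~\eqref{eq:cat_Var} together with~\ref{as:R} to deduce $\Diss(\tilde{\Sd}) = \Diss(\Sd^N_k) + \Diss(\hat{\Sd})$ yields
\[
  \Ecal(t^N_k, u^N_k, z^N_k) + \Diss(\Sd^N_k) \leq \Ecal(t^N_k, \hat{u}, \hat{\Sd}_\ff z^N_k) + \Diss(\Sd^N_k) + \Diss(\hat{\Sd}),
\]
and subtracting $\Diss(\Sd^N_k)$ gives~\eqref{eq:incr_stab}.

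The only subtle point, which I view as the main obstacle, is ensuring that the admissibility constraint $\norm{\frarg}_{\Lrm^\infty} \leq \gamma^*$ imposed in~\eqref{eq:IP_eps} is preserved both for the neutral trajectory in (i) and for the concatenation in (ii). In (i) this reduces to a uniform-in-$k$ mass bound $\Mbf(\Td^N_{k-1}) \leq \gamma^*$, which is exactly the role of the calibration~\eqref{eq:gamma*} used in conjunction with the Gronwall-type estimate~\eqref{eq:G_diff_ineq}; in (ii) the $\Lrm^\infty$-additivity property~\eqref{eq:cat_Linfty} of concatenation makes the bound trivially inherited from the two individual trajectories. Once these admissibility checks are secured, both statements reduce to straightforward applications of the minimality relation in~\eqref{eq:IP_eps}.
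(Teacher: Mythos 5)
Your proof follows exactly the paper's route: part~(i) by testing the minimality at each step $j$ with $\hat{u} := u^N_{j-1}$ and the neutral trajectory $\Id^{\Td^N_{j-1}}$ and then telescoping via the fundamental theorem of calculus, and part~(ii) by testing with the concatenation $\hat{\Sd} \circ \Sd^N_k$ and then cancelling $\Diss(\Sd^N_k)$. Your explicit attention to the $\Lrm^\infty$-admissibility constraint (both for the neutral trajectory and for the concatenation, where \eqref{eq:cat_Linfty} makes the bound automatic) is exactly where the paper's choice of $\gamma^*$ matters, and you have identified the right mechanism.

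One justification you give is not actually valid, even though the conclusion is correct. You claim that $\Diss(\tilde{\Sd}) = \Diss(\Sd^N_k) + \Diss(\hat{\Sd})$ follows from \eqref{eq:cat_Var} (additivity of $\Var$ under concatenation) together with Assumption~\ref{as:R}(iii) (two-sided comparability $C^{-1}\Var \leq \Diss \leq C\Var$). Comparability plus $\Var$-additivity only yields $\Diss(\tilde{\Sd}) \leq C\bigl(\Diss(\Sd^N_k) + \Diss(\hat{\Sd})\bigr)$ with some constant $C$ that need not be $1$, and with a multiplicative constant the cancellation of $\Diss(\Sd^N_k)$ fails. The exact additivity does hold, but for a different reason: the concatenation of Lemma~\ref{lem:Sigma_concat} is built by placing (time-rescaled copies of) $\Sd^N_k$ and $\hat{\Sd}$ on disjoint time intervals, and the dissipation is an additive integral over space-time that is invariant under time reparametrization by the rate-independence property~\eqref{eq:Diss_rescale}; additivity of $\Diss$ under concatenation then follows by exactly the same computation that proves \eqref{eq:cat_Var}. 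The paper appeals vaguely to \enquote{concatenation and additivity properties of the dissipation from Section~\ref{sc:operations}}, which is what is actually meant.
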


\begin{proof}
For $j = 0,\ldots,k$ we set
\[
  e^N_j := \Ecal(t^N_j,u^N_j,z^N_j), \qquad d^N_j := \Diss(\Sd^N_j).
\]

For positive $k$, we test~\eqref{eq:IP_eps} at time step $j \in \{1,\ldots,k\}$ with $\hat{u} := u^N_{j-1}$ and $\hat{\Sd} := \Id^{\Td^N_{j-1}} \in \Slip_\eps(\Td^N_{j-1})$ to obtain
\[
  e^N_j + d^N_j \leq \Ecal(t^N_j,u^N_{j-1},z^N_{j-1})
  = e^N_{j-1} - \int_{t^N_{j-1}}^{t^N_j} \dprb{\dot{f}(\tau),u^N_{j-1}} \dd \tau.
\]
This estimate can be iterated for $j=k,\ldots,1$ to obtain~\eqref{eq:incr_lower_energy}.

For the stability, we test~\eqref{eq:IP_eps} at time step $k$ with $\hat{u} \in \BV(\Omega;\R^3)$ \fil{(for which $(\hat{u}, \hat{\Sd}_\ff z^N_k) \in \Qcal_\eps$)} and $\hat{\Sd} \circ \Sd^N_k$ for $\hat{\Sd} \in \Slip_\eps(z^N_k)$ with $\norm{\hat{\Sd}}_{\Lrm^\infty} \leq \gamma^*$ (which implies $\norm{\hat{\Sd} \circ \Sd^N_k}_{\Lrm^\infty} \leq \gamma^*$) to get
\begin{align*}
  \Ecal(t^N_k,u^N_k,z^N_k) + \Diss(\Sd^N_k) &\leq \Ecal(t^N_k,\hat{u},(\hat{\Sd} \circ \Sd^N_k)_\ff z^N_{k-1}) + \Diss(\hat{\Sd} \circ \Sd^N_k) \\
  &= \Ecal(t^N_k,\hat{u},\hat{\Sd}_\ff z^N_k) + \Diss(\hat{\Sd}) + \Diss(\Sd^N_k),
\end{align*}
where we have used the concatenation and additivity properties of the dissipation from Section~\ref{sc:operations}. After subtracting $\Diss(\Sd^N_k)$ from both sides, we have arrived at~\eqref{eq:incr_stab}
\end{proof}

\subsection{Existence of solutions to the $\eps$-discrete system}

In this section we will construct an $\eps$-discrete solution to the system of linearized elasto-plasticity driven by dislocation motion. For notational convenience, we will (in this section only) denote this solution simply by $(u,z) = (u,p,\Sd)$ and omit the \enquote{$_\eps$} that was attached in the definition of solution.

\fil{Let $\bar{u}^N$ be the piecewise-constant right-continuous interpolant of $(u^N_k)_k$, where $N \in \N$.} Denote further by $\bar{\Sd}^N$ the concatenation of the $(\Sd^N_k)_k$, with each $\Sd^N_k$ rescaled to the time interval $[t^N_{k-1},t^N_k]$ via Lemma~\ref{lem:Sigma_rescale}. More precisely, with $\Td^N_k = (T^{N,b}_k)_b$ and $\Sd^N_k = (S^{N,b}_k)_b \in \Slip(\Td^N_{k-1})$, we define the process $\bar{\Sd}^N \in \BV([0,T];\Disl_\eps(\cl\Omega))$ as $\bar{\Sd}^N := (\bar{S}^{N,b})_b$ with
\[
  \bar{S}^{N,b} := \sum_{k=1}^N (a^N_k)_* S^{N,b}_k,
\]
where $a^N_k \colon [0,1] \to [t^N_{k-1},t^N_k]$ is given as $a^N_k(\tau) := t^N_{k-1} + (t^N_k-t^N_{k-1})\tau$. This rescales every $S^{N,b}_k$ to length $t^N_k-t^N_{k-1}$ and moves the starting point to $t^N_{k-1}$. 
In particular, we have
\[
  \bar{\Sd}^N \in \BV([0,T];\Disl_\eps(\cl\Omega)).
\]
\fil{Then define
\[
  \bar{p}^N(t) := p_0 + \frac12 \sum_{b \in \Bcal} b \otimes \hodge  \pbf_*( \bar{S}^{N,b} \restrict [(0,t) \times\R^3]), \qquad t \in [0,T].
\]
}


\begin{lemma} \label{lem:conv_eps}
There exists a sequence of $N$ (not explicitly labelled) and
\begin{align*}
  u &\in \Lrm^\infty([0,T];\BV(\Omega;\R^3)), \\
  p &\in \BV([0,T];\Mcal(\cl\Omega;\R^{3 \times 3})), \\
  \Sd &\in \BV([0,T];\Disl_\eps(\cl\Omega))
\end{align*}
with
\[
  (u(t),z(t)) = (u(t),p(t),\Sd(t)) \in \Qcal \qquad\text{for all $t \in [0,T] \setminus J$}
\]
such that
\begin{align*}
  \bar{u}^N &\to u \quad\text{pointwise weakly* in $\BV(\Omega;\R^3)$,}\\
  \bar{p}^N &\toweakstar p \quad\text{in $\BV([0,T];\Mcal(\cl\Omega;\R^{3 \times 3}))$,}\\
  \bar{\Sd}^N &\toweakstar \Sd \quad\text{in $\BV([0,T];\Disl_\eps(\cl\Omega))$}
\end{align*}
such that
\[
  \text{$u(t)$ is the minimizer of $\Ecal(t,\frarg,p(t),\Sd(t))$ for all $t \in [0,T] \setminus J$}
\]
and $\bar{u}^N, \bar{p}^N, \bar{\Sd}^N$ are uniformly bounded in these spaces:
\begin{align*}
  &\norm{\bar{u}^N}_{\Lrm^\infty([0,T];\BV(\Omega;\R^3))} \leq \bar{C}, \\
  &\norm{\bar{p}^N}_{\Lrm^\infty([0,T];\Mcal(\cl{\Omega;\R^{3 \times 3})}} + \norm{\bar{\Sd}^N}_{\Lrm^\infty} + \Var_\Mbf(\bar{p}^N;[0,T]) + \Var(\bar{\Sd}^N;[0,T]) \leq \bar{C}
\end{align*}
for a $N$-uniform constant $\bar{C} > 0$.
\end{lemma}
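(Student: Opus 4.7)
The plan is to derive uniform a priori bounds from the discrete Gronwall structure of Proposition~\ref{prop:IP_solution}, extract subsequential limits via the Helly-type compactness results of Sections~\ref{sc:BVcurr}--\ref{sc:BVX}, and identify the limit $u(t)$ through the uniqueness and continuity of minimizers given by Lemma~\ref{lem:E_minimizer}.

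For the a priori bounds, I would first apply discrete Gronwall to~\eqref{eq:G_diff_ineq}, obtaining $\alpha^N_k \leq \alpha^N_0 \, e^{C t^N_k}$ uniformly in $N, k$. Since $\alpha^N_0$ depends only on $\Ecal(0,u_0,z_0)$, this bounds both the energies $e^N_k$ and the cumulative dissipation $\sum_{j \leq k} d^N_j$ by a constant depending only on the data and $(u_0,z_0)$. Assumption~\ref{as:R}(iii) converts the dissipation bound into $\Var(\bar{\Sd}^N;[0,T]) \leq \bar{C}$; iterating Lemma~\ref{lem:Var_p} yields $\Var_\Mbf(\bar p^N;[0,T]) \leq \bar{C}$; and Lemma~\ref{lem:E_coerc} furnishes $\norm{\bar u^N}_{\Lrm^\infty([0,T];\BV)} \leq \bar{C}$ as well as control of $\norm{\sym \free[D\bar u^N - \bar p^N]}_{\Lrm^2}$, which together with the Korn-type inequality~\eqref{eq:Qorn} and the mass bound on $\bar p^N$ produces a uniform $\Lrm^2$ bound on $\free[D\bar u^N - \bar p^N]$. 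By rate-independence~\eqref{eq:Diss_rescale}, I may further internally rescale each incremental $\Sd^N_k$ so that $t \mapsto \Var(\bar{\Sd}^N;[0,t])$ is Lipschitz (after a time reparametrization of the aggregate, compatible with the mesh), without altering the incremental minimization identity.

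For compactness, Proposition~\ref{prop:BVX_Helly}, adapted to the dual space $\Mcal(\cl\Omega;\R^{3\times 3})$ with weak* topology, extracts a subsequence and $p \in \BV([0,T];\Mcal(\cl\Omega;\R^{3\times 3}))$ with $\bar p^N(t) \toweakstar p(t)$ for every $t \in [0,T]$. Proposition~\ref{prop:LipDS_compact} applied componentwise then gives $\Sd \in \BV([0,T];\Disl_\eps(\cl\Omega))$ with $\bar{\Sd}^N \toweakstar \Sd$ in the BV sense. The compatibility relation $\curl p^N_k = \tfrac12 \sum_b b \otimes T^{N,b}_k$ passes to the limit under these weak* convergences, so $\curl p(t) = \tfrac12 \sum_b b \otimes T^b(t)$ at each non-jump time $t \in [0,T] \setminus J$.

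To identify $u(t)$, I would use that testing the incremental problem~\eqref{eq:IP_eps} at step $k$ against the neutral trajectory $\Id^{\Td^N_{k-1}}$ and varying only $\hat u$ reveals that $u^N_k$ is the unique minimizer of $\hat u \mapsto \Ecal(t^N_k, \hat u, z^N_k)$ subject to $(\hat u, z^N_k) \in \Qcal_\eps$, by Lemma~\ref{lem:E_minimizer}. Fixing $t \in [0,T] \setminus J$ and choosing mesh indices $k(N)$ with $t^N_{k(N)} \to t$, the pointwise weak* convergences $p^N_{k(N)} \toweakstar p(t)$ and $\curl p^N_{k(N)} \toweakstar \curl p(t)$ combine with the continuity statement in Lemma~\ref{lem:E_minimizer} to yield $u^N_{k(N)} \toweakstar u(t) := u_*(t, p(t), \Td(t))$. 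For the piecewise-affine interpolant $\bar u^N(t)$, the two surrounding mesh values converge to the same limit $u(t)$ at non-jump times, so $\bar u^N(t) \toweakstar u(t)$. Finally, $(u(t), z(t)) \in \Qcal$ follows from Lemma~\ref{lem:Q_closed}, using the uniform $\Lrm^2$ bound on $\free[D\bar u^N - \bar p^N]$ from Step~1. The main obstacle I anticipate is securing the Lipschitz-in-time regularity for $\bar{\Sd}^N$ needed to apply Proposition~\ref{prop:LipDS_compact} with pointwise (not merely a.e.) weak* convergence at every non-jump $t$: this requires a delicate rescaling within each sub-interval $[t^N_{k-1}, t^N_k]$ exploiting rate-independence, coupled with the observation that the limit jump set $J$ is at most countable by Lemma~\ref{lem:BVX_prop}.
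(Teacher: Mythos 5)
Your overall strategy mirrors the paper's: discrete Gronwall applied to~\eqref{eq:G_diff_ineq}, coercivity (Lemma~\ref{lem:E_coerc}) plus the dissipation bounds of~\ref{as:R} and Lemma~\ref{lem:Var_p} to get the $N$-uniform estimates, Helly-type compactness to extract limits, and Lemma~\ref{lem:E_minimizer} combined with the closedness of $\Qcal$ (Lemma~\ref{lem:Q_closed}) to identify $u(t)$ and confirm membership in the state space. This is correct in all essentials.

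The one place where you go off-track is the final paragraph, where you flag a need to make $t \mapsto \Var(\bar{\Sd}^N;[0,t])$ uniformly Lipschitz in order to apply Proposition~\ref{prop:LipDS_compact}. This is a genuine misreading of what is required. The paper does not use Proposition~\ref{prop:LipDS_compact} here at all: it invokes the basic Helly compactness for space-time currents, Proposition~\ref{prop:current_Helly}, whose hypotheses are only uniform bounds on $\esssup_t \Mbf(S_j(t))$, $\esssup_t \Mbf(\partial S_j(t))$, $\Var(S_j)$, and $\Var(\partial S_j)$ --- no Lipschitz constant appears. The conclusion is then BV-weak* convergence with slice convergence for a.e.\ $t$, which is all that is needed: the pointwise identification of $u(t)$ at every $t \in [0,T]\setminus J$ rests on the \emph{for-all-$t$} weak* convergence $\bar p^N(t) \toweakstar p(t)$ (supplied by Proposition~\ref{prop:BVX_Helly}, not by the current-compactness), fed into the continuity statement of Lemma~\ref{lem:E_minimizer}. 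The a.e.\ slice convergence of $\Sd^N$ then identifies $\Sd(t)$ and lets the compatibility $\curl p(t)=\tfrac12\sum_b b\otimes T^b(t)$ pass to the limit outside the (at most countable) jump set $J$. Moreover, your proposed remedy --- internally rescaling each $\Sd^N_k$ and then reparametrizing the aggregate --- does not actually produce a uniform-in-$N$ Lipschitz bound: even with each piece steadied, the slope on the sub-interval $[t^N_{k-1},t^N_k]$ is $\Var(\Sd^N_k)/\Delta T^N$, and since only $\sum_k \Var(\Sd^N_k)\leq C$ is controlled, a single step can carry almost all the variation and send this slope to infinity as $N\to\infty$; non-uniform time reparametrization would disrupt the tie between $\bar{\Sd}^N$ and the fixed mesh points $t^N_k$ at which the incremental stability holds. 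Fortunately none of this is needed, and deleting that paragraph leaves a correct proof.
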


\begin{proof}
We know from Proposition~\ref{prop:IP_solution} that the time-incremental solution satisfies the difference inequality~\eqref{eq:G_diff_ineq}. Via a discrete version of Gronwall's lemma we thus obtain the uniform energy bound
\[
  \Ecal(t^N_k,u^N_k,z^N_k) + \Diss(\Sd^N_k) \leq C \ee^{Ct}
\]
for some $C > 0$ that only depends on the data of the problem. Thus, using the coercivity of the energy from Lemma~\ref{lem:E_coerc} and the properties of the dissipation functional, we obtain that
\[
  \norm{\bar{u}^N}_{\Lrm^\infty([0,T];\BV(\Omega;\R^3))} + \norm{\bar{\Sd}^N}_{\Lrm^\infty([0,T];\Disl_\eps(\cl\Omega))} + \Var(\bar{\Sd}^N;[0,T]) \leq C
\]
and, by Lemma~\ref{lem:Var_p}, also
\[
  \norm{\bar{p}^N}_{\Lrm^\infty([0,T];\Mcal(\cl{\Omega;\R^{3 \times 3})}} \leq C + \Var_\Mbf(\bar{p}^N;[0,T]) \leq C.
\]
Note that here we used that the $\Lrm^\infty$-norm $\bar{\Sd}^N$ (on the masses of the slices) only depends on the maximum of the $\Mbf(\Td^N_k)$, which are uniformly bounded by a $N$-uniform constant by coercivity (unlike the test trajectories $\hat{\Sd}$, whose $\Lrm^\infty$-norm is only bounded by $\gamma^*$).

We may now invoke Helly's selection principle, see Propositions~\ref{prop:BVX_Helly} and the version for currents in Proposition~\ref{prop:current_Helly}, to select a subsequence with the claimed convergence properties for the plastic distortions and slip trajectories. This convergence in particular implies the weak* pointwise convergence $\bar{p}^N(t) \toweakstar p(t)$ for all $t \in [0,T]$ as well as the weak* convergence $\Sd^N(t) \toweakstar \Sd(t)$ of the slices at almost every $t \in [0,T)$. By Lemma~\ref{lem:E_minimizer} this then also yields the claimed pointwise convergence $\bar{u}^N(t) \toweakstar u(t)$ for all $t \in [0,T]$. Finally, we use the closedness of the state space proved in Lemma~\ref{lem:Q_closed} to conclude.
\end{proof}

We now define the constant $\gamma^*$ in~\eqref{eq:IP_eps} to be
\begin{equation} \label{eq:gamma*}
  \gamma^* := \max \bigl\{ \Mbf(\Td_0), C_\mathrm{equiv} \cdot \bar{C} + 1 \bigr\},
\end{equation}
with $\bar{C}$ the constant from the preceding lemma, and $C_\mathrm{equiv}$ the constant in Proposition~\ref{prop:equiv}.

The limit process can then be seen to have the expected properties:

\begin{proposition} \label{prop:lim_eps_prop}
For all $t \in [0,T] \setminus J$, where $J$ is the jump set, and all $\hat{u} \in \BV(\Omega;\R^3)$, $\hat{\Sd} \in \Slip_\eps(\Sd(s))$ with $\norm{\hat{\Sd}}_{\Lrm^\infty} \leq \gamma^*$, the stability inequality
	\begin{equation} \label{eq:stab_eps}
		\Ecal(t,u(t),z(t)) \leq \Ecal(t,\hat{u},\hat{\Sd}_\ff z(t)) + \Diss(\hat{\Sd}),
	\end{equation}
the energy balance
 	\begin{equation} \label{eq:eps_energy_balance}
 		\Ecal(t,u(t),z(t)) = \Ecal(0,u_0,z_0) - \Diss(\Sd;[0,t]) - \int_0^t \dprb{\dot{f}(\tau),u(\tau)} \dd \tau,
 	\end{equation}
 and the plastic flow
\begin{equation} \label{eq:eps_plast_flow}
  p(t) =  p_0 + \frac12 \sum_{b \in \Bcal} b \otimes \hodge  \pbf_*( S^b \restrict [(0,t) \times\R^3])
\end{equation}
hold. Moreover, $u(t)$ is the minimizer of $\hat{u} \mapsto \Ecal(t,\hat{u},p(t),\Sd(t))$ for all $t \in [0,T) \setminus J$.
\end{proposition}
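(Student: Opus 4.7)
The plan is to pass to the limit $N \to \infty$ in the discrete identities of Proposition~\ref{prop:incr_prop}. Throughout, fix $t \in [0,T] \setminus J$ and choose indices $k_N$ with $t^N_{k_N} \to t$; by Lemma~\ref{lem:conv_eps} (combined, if necessary, with a further subsequence extraction exploiting the BV-continuity of $\Sd$ outside $J$) we have $\bar u^N(t^N_{k_N}) \toweakstar u(t)$ in $\BV$, $\bar p^N(t^N_{k_N}) \toweakstar p(t)$ in $\Mcal$, and $\bar \Sd^N(t^N_{k_N}) \toweakstar \Sd(t)$ in $\Disl_\eps(\cl\Omega)$. The plastic flow~\eqref{eq:eps_plast_flow} then follows directly by iterating the update $z^N_k = (\Sd^N_k)_\ff z^N_{k-1}$ with~\eqref{eq:plastic_forward}, which yields
\[
  p^N_k = p_0 + \frac12 \sum_{b \in \Bcal} b \otimes \hodge \, \pbf_*\!\bigl( \bar S^{N,b} \restrict [(0, t^N_k) \times \R^3] \bigr).
\]
Letting $N \to \infty$, the left-hand side converges to $p(t)$ and the right-hand side to the corresponding formula in $S^b$, since $\tv{\bar S^{N,b}}(\{t\} \times \R^3)$ vanishes in the limit outside a countable set of $t$. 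The minimizer assertion for $u(t)$ then follows immediately from the stability~\eqref{eq:stab_eps} tested against the neutral trajectory $\Id^{\Sd(t)}$ of Lemma~\ref{lem:neutral} (see Remark~\ref{rem:u_minimizer}).

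The central step is the stability~\eqref{eq:stab_eps}, for which I construct a mutual recovery sequence. Fix a test pair $(\hat u, \hat{\Sd})$ with $\hat{\Sd} \in \Slip_\eps(\Sd(t))$ and $\norm{\hat{\Sd}}_{\Lrm^\infty} \leq \gamma^*$. Since $\bar{\Sd}^N(t^N_{k_N}) \toweakstar \Sd(t)$ with uniformly bounded mass in $\Disl_\eps(\cl\Omega)$, Proposition~\ref{prop:equiv} applied componentwise to the $\eps^{-1}$-scaled integral currents produces bridge trajectories $\Sd^\star_N \in \Slip_\eps(\bar{\Sd}^N(t^N_{k_N}))$ with $(\Sd^\star_N)_\ff \bar{\Sd}^N(t^N_{k_N}) = \Sd(t)$, $\Var(\Sd^\star_N) \to 0$, and $\norm{\Sd^\star_N}_{\Lrm^\infty} \leq C_{\mathrm{equiv}} \bar C$. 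Setting $\hat{\Sd}^N := \hat{\Sd} \circ \Sd^\star_N$, Lemma~\ref{lem:Sigma_concat} together with the choice~\eqref{eq:gamma*} of $\gamma^*$ yields $\norm{\hat{\Sd}^N}_{\Lrm^\infty} \leq \gamma^*$, the \emph{exact} identity $\hat{\Sd}^N_\ff \Td^N_{k_N} = \hat{\Sd}_\ff \Td(t)$, and $\Diss(\hat{\Sd}^N) = \Diss(\hat{\Sd}) + \Diss(\Sd^\star_N) \to \Diss(\hat{\Sd})$. On the plastic side, Lemma~\ref{lem:Var_p} gives $\Mbf((\Sd^\star_N)_\ff p^N_{k_N} - p^N_{k_N}) \to 0$, hence $(\Sd^\star_N)_\ff p^N_{k_N} \toweakstar p(t)$, and Lemma~\ref{lem:plastff_cont} then yields $\hat{\Sd}^N_\ff p^N_{k_N} \toweakstar \hat{\Sd}_\ff p(t)$. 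Lemma~\ref{lem:freedist_correct} now produces $\hat u^N \in \BV(\Omega;\R^3)$ with $(\hat u^N, \hat{\Sd}^N_\ff z^N_{k_N}) \in \Qcal_\eps$, the energy-freezing identity
\[
  \sym \free[D\hat u^N - \hat{\Sd}^N_\ff p^N_{k_N}] = \sym \free[D\hat u - \hat{\Sd}_\ff p(t)],
\]
and the convergence $\hat u^N \toweakstar \hat u$ in $\BV$ (via the convergence clause of that lemma). The right-hand side of~\eqref{eq:incr_stab} therefore converges to the right-hand side of~\eqref{eq:stab_eps}: $\Wcal_e$ and $\Wcal_c$ are $N$-constant by the freezing and exactness identities, the loading converges via the weak$^\ast$-to-strong embedding $\BV \embed \Lrm^1$, and the dissipation converges as above. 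Combined with the weak$^\ast$ lower semicontinuity of $\Ecal(t^N_{k_N}, \cdot, \cdot, \cdot)$ on the left via Lemma~\ref{lem:lsc}~(i) (with the uniform $\Lrm^2$-bound on $\sym \free[Du^N_{k_N} - p^N_{k_N}]$ coming from the uniform energy bound of Lemma~\ref{lem:conv_eps}), this yields~\eqref{eq:stab_eps}.

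With stability in hand, the energy balance~\eqref{eq:eps_energy_balance} reduces to two matching one-sided inequalities. The direction \enquote{$\leq$} follows from the discrete estimate~\eqref{eq:incr_lower_energy} at $k = k_N$ by lower semicontinuity (Lemma~\ref{lem:lsc}~(i), (iii)) on the energy and on the total dissipation $\sum_{j \leq k_N} \Diss(\Sd^N_j)$, combined with dominated convergence on the loading Riemann sum (allowed by the uniform $\Lrm^\infty([0,T];\BV)$ bound and the pointwise a.e.\ convergence of $\bar u^N$). The reverse inequality is the standard telescoping argument: partition $0 = \tau_0 < \cdots < \tau_M = t$ with $\tau_m \in [0,T] \setminus J$, and test~\eqref{eq:stab_eps} at $\tau_{m-1}$ with $\hat u = u(\tau_m)$ and $\hat{\Sd}^{(m)}$ the Lipschitz-rescaled restriction of $\Sd$ to $[\tau_{m-1}, \tau_m]$; by~\eqref{eq:eps_plast_flow} and Lemma~\ref{lem:Sigma_rescale}, $\hat{\Sd}^{(m)}_\ff z(\tau_{m-1}) = z(\tau_m)$ and $\Diss(\hat{\Sd}^{(m)}) = \Diss(\Sd; [\tau_{m-1}, \tau_m])$. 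Exploiting the affine $t$-dependence
\[
  \Ecal(\tau_{m-1}, u(\tau_m), z(\tau_m)) - \Ecal(\tau_m, u(\tau_m), z(\tau_m)) = \int_{\tau_{m-1}}^{\tau_m} \dprb{\dot f(\sigma), u(\tau_m)} \dd \sigma,
\]
summing telescopically, and refining the mesh (so the piecewise-constant Riemann sums converge to $\int_0^t \dpr{\dot f(\sigma), u(\sigma)} \dd \sigma$, using BV-continuity of $u$ outside $J$) gives the reverse inequality.

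The principal obstacle is the mutual recovery for stability: an arbitrary limit-level test trajectory $\hat{\Sd}$ must be lifted to discrete-level tests $\hat{\Sd}^N$ that \emph{simultaneously} respect the uniform cap $\norm{\hat{\Sd}^N}_{\Lrm^\infty} \leq \gamma^*$, match the dissipation in the limit, attain the correct endpoint dislocation system, and produce plastic increments compatible with a free-displacement correction $\hat u^N \toweakstar \hat u$. The slack reserved in the choice~\eqref{eq:gamma*} of $\gamma^*$, the exact endpoint-matching of the bridge $\Sd^\star_N$ furnished by Proposition~\ref{prop:equiv}, and the energy-freezing corrector of Lemma~\ref{lem:freedist_correct} together are precisely what make this lifting work in the linearized setting.
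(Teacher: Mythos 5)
Your proof is correct and follows the same broad strategy as the paper's — passing to the limit in the incremental stability via a dissipation-negligible bridge trajectory from Proposition~\ref{prop:equiv}, and deriving the energy balance from the discrete upper estimate plus the telescoping argument using stability. The plastic-flow and minimizer assertions are dispatched the same way.

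There is, however, one genuine and welcome refinement in your handling of stability. The paper's proof of this proposition asserts the exact identity $(\hat{\Sd}^N_t)_\ff p^N_{k(N)} = \hat{\Sd}_\ff p(t)$ and then tests with the \emph{unmodified} $\hat u$, so that the energy on the right is literally $N$-independent. But that exact identity cannot hold: $(\Rd^N_t)_\ff p^N_{k(N)} = p^N_{k(N)} + (\text{bridge increment})$, where the bridge increment vanishes in mass but $p^N_{k(N)} - p(t)$ only tends to zero weakly*, and the bridge trajectory from Proposition~\ref{prop:equiv} matches only the dislocation endpoints, not the plastic distortions. You correctly observe that only the \emph{curls} match exactly (via Lemma~\ref{lem:curl_pS}, since $\hat{\Sd}^N_\ff \Td^N_{k_N} = \hat{\Sd}_\ff \Sd(t)$), so the discrepancy is curl-free, and you invoke the corrector Lemma~\ref{lem:freedist_correct} to replace $\hat u$ by $\hat u^N \toweakstar \hat u$ with $\sym\free[D\hat u^N - \hat{\Sd}^N_\ff p^N_{k_N}] = \sym\free[D\hat u - \hat{\Sd}_\ff p(t)]$, making $\Wcal_e$ genuinely $N$-constant and recovering the loading term by the $\BV \embed \Lrm^1$ embedding. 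This is precisely the mechanism the paper itself uses in the field-limit version (Proposition~\ref{prop:lim_F_prop}), and applying it here as well makes the argument cleaner and fully rigorous; the paper's version for the $\eps$-discrete limit appears to have been over-condensed at this point. A small presentational difference is that you rely on Lemma~\ref{lem:conv_eps} directly asserting $u^N_{k_N} \toweakstar u(t)$, whereas the paper passes through an intermediate subsequential limit $\tilde u$ and then invokes the uniqueness of the minimizer; both routes are fine.
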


\begin{proof}
In all of the following, we assume that $t \in [0,T) \setminus J$, i.e., $t$ is not a jump point.

\proofstep{Stability.}
For fixed $N$, in Proposition~\ref{prop:incr_prop} we saw the stability of the time-incremental problem, that is,
\[
  \Ecal(t^N_k,u^N_k,z^N_k) \leq \Ecal(t^N_k,\hat{u},\hat{\Sd}_\ff z^N_k) + \Diss(\hat{\Sd})
\]
for all $k = 0, 1, \ldots$, all $\hat{u} \in \BV(\Omega;\R^3)$, and all $\hat{\Sd} \in \Slip_\eps(z^N_k)$ with $\norm{\hat{\Sd}}_{\Lrm^\infty} \leq \gamma^*$.

Let $t \in [t^N_{k(N)},t^N_{k(N)+1})$ for every $k \in \N$. From the convergence assertions in Lemma~\ref{lem:conv_eps} and since $t$ is not a jump point, we know that
\[
  \Td^N_{k(N)} \toweakstar \Sd(t) \quad\text{in $\Disl_\eps(\cl\Omega)$,} \qquad
  p^N_{k(N)} \toweakstar p(t) \quad\text{in $\Mcal$.}
\]
By Proposition~\ref{prop:equiv} we can furthermore find $\Rd^N_t = (R^{N,b}_t)_b \in \Slip_\eps(z^N_{k(N)})$ with
\[
  (\Rd^N_t)_\ff \Td^N_{k(N)} = \Sd(t),  \qquad
  \Diss(\Rd^N_t) \to 0  \quad\text{as $N \to \infty$.}
\]
With $\gamma^*$ defined as in~\eqref{eq:gamma*}, we then have
\[
  \limsup_{N\to\infty} \norm{\Rd^N_t}_{\Lrm^\infty}
  \leq C_\mathrm{equiv} \cdot \limsup_{N\to\infty} \norm{\Td^N_{k(N)}}_{\Lrm^\infty}
  < \gamma^*.
\]
Set
\[
  \hat{\Sd}^N_t := \hat{\Sd} \circ \Rd^N_t \in \Slip_\eps(z^N_{k(N)}),
\]
\fil{which satisfies
\begin{equation} \label{eq:hatSNt}
  (\hat{\Sd}^N_t)_\ff \Td^N_{k(N)} = \hat{\Sd}_\ff \Sd(t).
\end{equation}
Indeed, the rescaled concatenation (see Lemmas~\ref{lem:rescale},~\ref{lem:concat})
\[
  Z^{N,b} := (S^b \restrict [(0,t) \times\R^3]))^{-1} \circ R^{N,b}_t \circ (S^{N,b} \restrict [(0,t^N_{k(N)}) \times\R^3]),
\]
where $(S^b)^{-1} := ((\tau,x) \mapsto (t-\tau,x))_* S^b$ is the time reversal of $S^b$, is a space-time cycle, i.e.,
\[
  \partial Z^{N,b} = \delta_1 \times T^b_0 - \delta_0 \times T^b_0.
\]
This immediately implies the claim. Moreover, $\norm{\hat{\Sd}^N_t}_{\Lrm^\infty} \leq \gamma^*$ for $N$ sufficiently large.

According to Lemma~\ref{lem:freedist_correct}, applied with $u := \hat{u}$ $p := \hat{\Sd}_\ff p(t)$, $p' := (\hat{\Sd}^N_t)_\ff p^N_{k(N)}$, there is $\hat{u}' \in \BV(\Omega;\R^3)$ with $\sym \free[D\hat{u}' - p'] = \sym \free[D\hat{u} - p]$, $\skw D\hat{u}'(\Omega) = 0$, and $[\hat{u}']_H = h_0$. In particular, using also Lemma~\ref{lem:curl_pS}, we have $(\hat{u}', (\hat{\Sd}^N_t)_\ff z^N_{k(N)}) \in \Qcal_\eps$.

Testing the discrete stability inequality~\eqref{eq:incr_stab} at $k = k(N)$ (and $N$ sufficiently large) with $\hat{u}'$ and $\hat{\Sd}^N_t$ yields
\begin{align*}
  \Ecal(t^N_{k(N)},u^N_{k(N)},z^N_{k(N)})
  &\leq \Ecal(t^N_{k(N)},\hat{u}',(\hat{\Sd}^N_t)_\ff z^N_{k(N)}) + \Diss(\hat{\Sd}^N_t) \\
  &= \Ecal(t^N_{k(N)},\hat{u},\hat{\Sd}_\ff z(t)) + \Diss(\Rd^N_t) + \Diss(\hat{\Sd}) + e^N.
\end{align*}
Here, in the last line we used the additivity of $\Diss$ under concatenations (while not explicitly stated in Lemma~\ref{lem:Sigma_concat}, this is proved in the same way as~\eqref{eq:cat_Var} since $\Diss$ is essentially an anisotropic variation). We also denoted by $e^N$ the error coming from the adjustment of $\hat{u}'$ to $\hat{u}$ in the loading term of $\Ecal$. 

Since $\pbf_* Z^{N,b} \toweakstar 0$ as $N \to \infty$, from the continuity assertion in Lemma~\ref{lem:freedist_correct} together with Lemma~\ref{lem:plastff_cont} (and the compact embedding $\BV \cembed \Lrm^1$) it follows that $e^N \to 0$ as $N \to \infty$. We may further assume without loss of generality that also $u^N_{k(N)} \toweakstar \tilde{u}$ in $\BV$ for some $\tilde{u} \in \BV(\Omega;\R^3)$ (the following holds for every subsequence, hence also for our original sequence). Then, by the lower semicontinuity and continuity assertions in Lemma~\ref{lem:lsc}, we obtain in the limit $N \to \infty$ that
\[
  \Ecal(t,\tilde{u},z(t)) \leq \Ecal(t,\hat{u},\hat{\Sd}_\ff z(t)) + \Diss(\hat{\Sd}).
\]
As $u(t)$ is the unique minimizer of $\Ecal(t,\frarg,z(t))$, 
\[
  \Ecal(t,u(t),z(t)) \leq \Ecal(t,\hat{u},\hat{\Sd}_\ff z(t)) + \Diss(\hat{\Sd})
\]
and the proof of the stability is complete.}

\medskip

\proofstep{Energy balance.}
Fix a non-jump point $t \in [0,T]$ and pick $k(N)$ such that $t \in [t^N_{k(N)},t^N_{k(N)+1})$ for every $k \in \N$, whereby in particular $t^N_{k(N)} \to t$. By similar arguments as in the proof of stability, we obtain
\[
  \Ecal(t,u(t),z(t)) \leq \liminf_{N \to \infty} \Ecal(t^N_{k(N)},u^N_{k(N)},z^N_{k(N)}).
\]
Using Lemma~\ref{lem:lsc} we have
\begin{align*}
  \Diss(\Sd;[0,t]) &\leq \liminf_{N \to \infty} \, \Diss(\bar{\Sd}^N;[0,t]) \\
  &= \liminf_{N \to \infty} \, \Diss(\bar{\Sd}^N;[0,t^N_{k(N)}]) \\
  &= \liminf_{N \to \infty} \sum_{j=1}^{k(N)} \Diss(\Sd^N_j).
\end{align*}
Combining this with the convergence $\bar{u}^N \to u$ pointwise weakly* in $\BV(\Omega;\R^3)$ (and the dominated convergence theorem), we may pass to the lower limit $N \to \infty$ in the incremental energy balance~\eqref{eq:incr_lower_energy} and obtain
\begin{equation} \label{eq:energy_upper}
  \Ecal(t,u(t),z(t)) \leq \Ecal(0,u_0,z_0) - \Diss(\Sd;[0,t]) - \int_0^t \dprb{\dot{f}(\tau),u(\tau)} \dd \tau   
\end{equation}

Now take any partition $0 = \tau_0 < \tau_1 < \cdots < \tau_K = t$ of the interval $[0,t]$ with the $\tau_\ell$ not being jump points. For any $\ell \in \{0,\ldots,m-1\}$ let $\hat{\Sd}^K_\ell \in \Slip(\Sd(\tau_\ell))$ be the restriction $\Sd \restrict (\tau_\ell,\tau_{\ell+1})$, rescaled to unit time length (via Lemma~\ref{lem:rescale}). Then,
\[
  (\hat{\Sd}^K_\ell)_\ff z(\tau_\ell) = z(\tau_{\ell+1}).
\]
The stability~\eqref{eq:stab_eps} at time $t = \tau_\ell$ and with $\hat{u} := u(\tau_{\ell+1})$, $\hat{\Sd} := \hat{\Sd}^K_\ell$ ($\ell = 0,\ldots K-1$), where for $\ell = 0$ we recall the assumption of initial stability~\eqref{eq:initial_stab_eps}, gives that
\begin{align*}
  \Ecal(\tau_\ell,u(\tau_\ell),z(\tau_\ell))
  &\leq \Ecal(\tau_\ell,u(\tau_{\ell+1}),z(\tau_{\ell+1})) + \Diss(\Sd;[\tau_\ell,\tau_{\ell+1}]) \\
  &= \Ecal(\tau_{\ell+1},u(\tau_{\ell+1}),z(\tau_{\ell+1})) + \Diss(\Sd;[\tau_\ell,\tau_{\ell+1}]) \\
  &\qquad + \int_{\tau_\ell}^{\tau_{\ell+1}} \dprb{\dot{f}(\tau),u(\tau_{\ell+1})} \dd \tau.
\end{align*}
Rearranging and summing from $\ell = 0$ to $K-1$ (telescopically), we obtain
\begin{equation} \label{eq:Elow_1}
  \Ecal(t,u,z(t)) + \Diss(\Sd;[0,t])
  \geq \Ecal(0,u_0,z_0) - \sum_{\ell = 0}^{K-1} \int_{\tau_\ell}^{\tau_{\ell+1}} \dprb{\dot{f}(\tau),u(\tau_{\ell+1})} \dd \tau.  
\end{equation}
As soon as the partition is sufficiently fine, we have that (this also uses the assumptions on $f$)
\[
  \biggl| \sum_{\ell = 0}^{K-1} \int_{\tau_\ell}^{\tau_{\ell+1}} \dprb{\dot{f}(\tau),u(\tau_{\ell+1})} \dd \tau
  - \sum_{\ell = 0}^{K-1} (\tau_{\ell+1}-\tau_\ell) \dprb{\dot{f}(\tau_{\ell+1}),u(\tau_{\ell+1})} \biggr|
  \leq \eps.
\]

By the generalized Hahn lemma (see, e.g.,~\cite[Lemma~4.12]{DalMasoFrancfortToader05}) there is a sequence of partitions (not explicitly labelled) such that
\[
  \sum_{\ell = 0}^{K-1} (\tau_{\ell+1}-\tau_\ell) \dprb{\dot{f}(\tau_{\ell+1}),u(\tau_{\ell+1})}
  \to \int_0^t \dprb{\dot{f}(\tau),u(\tau)} \dd \tau.
\]
Thus,
\[
  \Ecal(t,u,z(t)) \geq \Ecal(0,u_0,z_0) - \Diss(\Sd;[0,t]) - \int_0^t \dprb{\dot{f}(\tau),u(\tau)} \dd \tau.
\]
Together with~\eqref{eq:energy_upper}, we have thus established the claimed energy balance~\eqref{eq:eps_energy_balance}.

\medskip

\proofstep{Plastic flow.}
The plastic flow in the limit~\eqref{eq:eps_plast_flow} follows directly by the weak* continuity of the plastic flow equation for the approximate solution (which holds by construction).
\end{proof}

\begin{proposition}
If $t \in [0,T)$ is a stability point (i.e.,~\eqref{eq:stab_eps} holds), then $t$ is not a jump point, $t \notin J$. In particular, the initial values are attained in the sense stated in Theorem~\ref{thm:existence_eps}.
\end{proposition}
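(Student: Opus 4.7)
Let $t\in[0,T)$ be a stability point. The plan is to show $\Diss(\Sd;\{t\})=0$; once this is established, Assumption~\ref{as:R}(iii) together with the fact that any vertical piece of $\Sd$ at $t$ has purely spatial orientation (so $\abs{\pbf(\vec S)}=1$ on it) forces $\tv{S^b}(\{t\}\times\R^3)=0$ for every $b\in\Bcal$. The plastic flow formula~\eqref{eq:eps_plast_flow} then yields $p(t+)=p(t)=p(t-)$, the compatibility relation forces $\Sd(t+)=\Sd(t-)$, and the uniqueness of minimizers in Lemma~\ref{lem:E_minimizer} gives $u(t+)=u(t-)$, so $t\notin J$.

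Suppose first that $t>0$, and take non-jump points $\sigma_n\uparrow t$. By the uniform bounds of Lemma~\ref{lem:conv_eps} and the Helly-type compactness in Propositions~\ref{prop:current_Helly} and~\ref{prop:BVX_Helly}, we extract a subsequence so that $u(\sigma_n)\toweakstar u(t)$, $p(\sigma_n)\toweakstar p(t)$, and $\Sd(\sigma_n)\toweakstar \Sd(t)$. The identification of these left limits with $z(t)$ itself follows from the left-continuity implicit in the plastic flow formula~\eqref{eq:eps_plast_flow} (since the interval $(0,t)$ there is open), the compatibility relation, and Lemma~\ref{lem:E_minimizer}. Subtracting the energy balance (E) at $\sigma_n$ and at $t$, and using the $t$-continuity of the loading term, one obtains
\[
\lim_n\,\Ecal(t,u(\sigma_n),z(\sigma_n))\;=\;\Ecal(t,u(t),z(t))+\Diss(\Sd;\{t\}),
\]
where the extra term $\Diss(\Sd;\{t\})\ge 0$ is the \emph{a priori} possibly nonzero atomic dissipation.

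The crucial ingredient is now to exploit the stability of the approximating times $\sigma_n$, each of which is a non-jump (hence stability) point. Proposition~\ref{prop:equiv} (in its $\eps$-integral variant) produces $\Rd_n\in\Slip_\eps(\Sd(\sigma_n))$ with $(\Rd_n)_\ff \Sd(\sigma_n)=\Sd(t)$ and $\Diss(\Rd_n)\le C\cdot\dist_{\Lip,\cl\Omega}(\Sd(\sigma_n),\Sd(t))\to 0$; the $\Lrm^\infty$-bound $\norm{\Rd_n}_{\Lrm^\infty}\le\gamma^*$ required for admissibility in (S) follows from the choice of $\gamma^*$ in~\eqref{eq:gamma*} and the mass control in Proposition~\ref{prop:equiv}. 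Since $\curl(\Rd_n)_\ff p(\sigma_n)=\curl p(t)$ by Lemma~\ref{lem:curl_pS}, applying the corrector Lemma~\ref{lem:freedist_correct} with $u=u(t)$, $p=p(t)$, $p'=(\Rd_n)_\ff p(\sigma_n)$ yields $\tilde u_n\in\BV(\Omega;\R^3)$ with $\sym\free[D\tilde u_n-(\Rd_n)_\ff p(\sigma_n)]=\sym\free[Du(t)-p(t)]$, so that $\Wcal_e(\tilde u_n,(\Rd_n)_\ff p(\sigma_n))=\Wcal_e(u(t),p(t))$; the same lemma gives $\tilde u_n\toweakstar u(t)$ in $\BV$ (using $(\Rd_n)_\ff p(\sigma_n)\toweakstar p(t)$, which follows from $\Mbf(\pbf_*R_n^b)\le\Var(R_n^b)\to 0$). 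Testing the stability at $\sigma_n$ with the pair $(\tilde u_n,\Rd_n)$ and passing to the limit collapses to
\[
\Ecal(t,u(t),z(t))+\Diss(\Sd;\{t\})\;\le\;\Ecal(t,u(t),z(t)),
\]
forcing $\Diss(\Sd;\{t\})=0$, as required.

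The endpoint $t=0$ demands a small modification since no left-approximating sequence exists: one instead uses the initial stability hypothesis~\eqref{eq:initial_stab_eps} together with the observation that the first-step incremental dissipation in~\eqref{eq:IP_eps} obeys $d^N_1\le C\Delta T^N\to 0$ (by testing at $k=1$ with the neutral slip, as in Step~1 of the proof of Proposition~\ref{prop:IP_solution}), so the approximants $\bar\Sd^N$ accumulate no variation near $0$ and no atom of $\Var(\Sd)$ at the initial time can arise in the limit; from this right-continuity at $0$ of $\Sd$, $p$, and (via Lemma~\ref{lem:E_minimizer}) $u$ follow. The main technical obstacle is the mismatch between $(\Rd_n)_\ff p(\sigma_n)$ and $p(t)$ at the measure level (they only agree in curl), which is exactly what the corrector Lemma~\ref{lem:freedist_correct} is designed to bypass while preserving $\Wcal_e$ and the BV-weak* convergence of the correction, thereby enabling the limit passage of stability from the approximating times to the putative jump time $t$.
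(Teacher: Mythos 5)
Your plan targets the same quantity, $\Diss(\Sd;\{t\})=0$, and draws the same conclusion from it via Assumption~\ref{as:R}(iii), the plastic flow formula, and Lemma~\ref{lem:E_minimizer}. However, you approach $t$ from the left, exploiting the stability already established at non-jump points $\sigma_n\uparrow t$ by Proposition~\ref{prop:lim_eps_prop}, whereas the paper runs the argument forward, comparing $t$ to $t+\delta$ by means of the stability hypothesis at $t$ itself. The forward direction is the convenient one here precisely because the right approximation $t+\delta$ exists also at the endpoint $t=0$, which --- as the ``in particular'' clause indicates --- is where the proposition is actually used. You rightly flag that $t=0$ requires a separate treatment in the left-approach, but the substitute you offer has a concrete gap.

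Testing~\eqref{eq:IP_eps} at $k=1$ with the neutral slip gives only $e^N_1 + d^N_1 \le e^N_0 + C\Delta T^N$, and the initial stability~\eqref{eq:initial_stab_eps} tested with $(u^N_1,\Sd^N_1)$ gives $e^N_0 \le e^N_1 + d^N_1 + C\Delta T^N$; together these yield $\abs{(e^N_1 + d^N_1) - e^N_0}\le 2C\Delta T^N$, which does \emph{not} bound $d^N_1$ by $C\Delta T^N$ --- the energy drop $e^N_0 - e^N_1$ and the dissipation $d^N_1$ may each be of order one so long as they nearly cancel. The atom of dissipation at $t=0$ is killed only at the level of the limit process, as the paper does: it pairs the energy balance on $[0,\delta]$ (valid for non-jump $\delta>0$) with stability at $0$, using Proposition~\ref{prop:equiv} for the test slip trajectory and Lemma~\ref{lem:freedist_correct} for the test displacement, to obtain $\Diss(\Sd;[0,\delta]) = \SmallO(1)$ as $\delta\todown 0$ directly. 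A related circularity lurks in your $t>0$ step: the subtraction you perform invokes the energy balance~(E) at $t$ itself, but Proposition~\ref{prop:lim_eps_prop} only establishes~(E) for $t\notin J$ --- the very conclusion being sought.
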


\begin{proof}
Let $t \in [0,T)$ be a stability point. By subtraction, we may write the energy equality from $t$ to $t + \delta$ for $\delta > 0$ with the property that $t + \delta$ is not a jump point, of which there are only finitely many, as follows:
\[
  \Ecal(t+\delta,u(t+\delta),z(t+\delta)) 
  = \Ecal(t,u(t),z(t)) - \Diss(\Sd;[t,t+\delta]) - \int_t^{t+\delta} \dprb{\dot{f}(\tau),u(\tau)} \dd \tau.
\]
By stability, using arguments as in the preceding proof (in particular Proposition~\ref{prop:equiv}), we observe
\begin{align*}
  \Ecal(t,u(t),z(t)) \leq \Ecal(t+\delta,u(t+\delta),z(t+\delta)) + \SmallO(\delta).
\end{align*}
Plugging this into the above energy balance and letting $\delta \todown 0$, this yields
\[
  0 \leq \Diss(\Sd;[t,t+\delta]) \leq \SmallO(\delta).
\]
Letting $\delta \todown 0$ and using the equivalence of variation and dissipation, as well as the continuity result of Lemma~\ref{lem:E_minimizer}, which, also using Lemma~\ref{rem:u_minimizer}, entails that there cannot be a discontinuity in the displacement if there is no discontinuity in the plastic distortion or dislocation system, we conclude that $t \notin J$.
\end{proof}

We finally record the following a-posteriori estimates, which follow directly from the corresponding estimates in Proposition~\ref{lem:conv_eps} and the lower semicontinuity of norms.

\begin{proposition} \label{prop:eps_indep_bounds}
For the $p,\Sd$ constructed above it holds that
\begin{align*}
  \norm{p}_{\Lrm^\infty([0,T];\Mcal(\cl\Omega)\R^{3 \times 3})} + \norm{\Sd}_{\Lrm^\infty} &\leq C, \\
  \Var_\Mbf(p;[0,t]) + \Var(\Sd;[0,t]) &\leq C \cdot \Diss(\Sd;[0,t]),  \qquad t \in [0,T],
\end{align*}
for a constant $C > 0$ that depends only on the data in the assumptions, but not on $\eps > 0$.
\end{proposition}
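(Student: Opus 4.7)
The plan is to verify that all constants entering the incremental a priori estimates in Proposition~\ref{prop:IP_solution} and Lemma~\ref{lem:conv_eps} depend only on the data in the assumptions (and not on $\eps$), pass these $\eps$- and $N$-uniform bounds to the limit $N \to \infty$ by lower semicontinuity, and finally convert the variation bound into one relative to the dissipation via Assumption~\ref{as:R}(iii) and Lemma~\ref{lem:Var_p}.

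First, inspect the constants. The difference inequality~\eqref{eq:G_diff_ineq} uses only the coercivity of $\Ecal$ from Lemma~\ref{lem:E_coerc}, the equivalence between $\Diss$ and $\Var$ from Assumption~\ref{as:R}(iii), and the regularity of $f$ from Assumption~\ref{as:energ}(iv); none of these constants depend on $\eps$. A discrete Gronwall argument then produces a bound $e^N_k + \sum_{j=1}^k d^N_j \leq C\ee^{CT}$ with $C$ depending only on the data. Applying again the coercivity in Lemma~\ref{lem:E_coerc} together with Lemma~\ref{lem:Var_p} and Assumption~\ref{as:R}(iii), the constant $\bar C$ in Lemma~\ref{lem:conv_eps} can therefore be chosen independent of $\eps$. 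Note that the threshold $\gamma^*$ defined in~\eqref{eq:gamma*} is consequently $\eps$-independent as well, which is important since it appears only through the admissibility constraint in~\eqref{eq:IP_eps} and never enters the bounds.

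Next, transfer these bounds to the limit. By Proposition~\ref{prop:current_Helly} applied to each $S^b$, and Proposition~\ref{prop:BVX_Helly} applied with $X = \Mcal(\cl\Omega;\R^{3\times 3})$,
\[
  \esssup_{t \in [0,T]} \Mbf(S^b(t)) \leq \liminf_{N \to \infty} \esssup_{t \in [0,T]} \Mbf(\bar S^{N,b}(t))
\]
and
\[
  \Var_\Mbf(p;[0,t]) \leq \liminf_{N \to \infty} \Var_\Mbf(\bar p^N;[0,t]), \qquad
  \Var(\Sd;[0,t]) \leq \liminf_{N \to \infty} \Var(\bar\Sd^N;[0,t]).
\]
Combined with the analogous semicontinuity for $\Mbf(p(t))$ (using weak* convergence $\bar p^N(t) \toweakstar p(t)$ pointwise), these inequalities transfer the $\eps$-uniform bounds of Lemma~\ref{lem:conv_eps} to the limit process, yielding the first displayed inequality.

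For the second inequality, Assumption~\ref{as:R}(iii) ensures that for each $b \in \Bcal$,
\[
  \int_{(0,t) \times \R^3} \abs{\pbf(\vec{S}^b)} \dd \tv{S^b}
  \leq C \int_{(0,t) \times \R^3} R^b(\vec{S}^b) \dd \tv{S^b},
\]
so that $\Var(\Sd;[0,t]) \leq C \Diss(\Sd;[0,t])$. The remaining estimate $\Var_\Mbf(p;[0,t]) \leq C \Var(\Sd;[0,t])$ is precisely Lemma~\ref{lem:Var_p} applied to the plastic distortion path from the plastic flow formula~\eqref{eq:eps_plast_flow}. Adding these two inequalities yields the second displayed bound. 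No substantial obstacle arises: the proposition is genuinely of a-posteriori character, and its proof amounts to careful bookkeeping of the $\eps$-dependence of constants and invocation of the semicontinuity results of Sections~\ref{sc:BVcurr} and~\ref{sc:BVX}.
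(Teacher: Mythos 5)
Your proof is correct and follows the same route as the paper, which compresses the argument into one line (citing the $\eps$-uniform bounds of Lemma~\ref{lem:conv_eps} and lower semicontinuity of norms); you helpfully make explicit that the second displayed inequality is not a limit of a norm bound but rather an intrinsic relation holding for any process satisfying the plastic flow formula, following directly from Assumption~\ref{as:R}(iii) and Lemma~\ref{lem:Var_p}. This level of detail is appropriate, since a pure lower-semicontinuity argument would only give $\Var_\Mbf(p;[0,t]) + \Var(\Sd;[0,t]) \leq C$ rather than the stated comparison with $\Diss(\Sd;[0,t])$.
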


\fil{\begin{proof}[Proof of Theorem~\ref{thm:existence_eps}]
Combining all the propositions in this section, a version of Theorem~\ref{thm:existence_eps} holds where it is additionally required that $\norm{\hat{S}}_{\Lrm^\infty} \leq \gamma^*$ for the test trajectory $\hat{S}$ in the stability~(S). We now let $\gamma^* \to \infty$ using exactly the same arguments as in this section. More precisely, denote by $(u^{\gamma^*}, p^{\gamma^*}, \Sd^{\gamma^*})$ the corresponding process. Note that none of the estimates in Proposition~\ref{prop:eps_indep_bounds} depend on $\gamma^*$ because the coercivity of the core energy part of the energy functional $\Ecal$ controls $\norm{\Sd}_{\Lrm^\infty}$ independently, see Lemma~\ref{lem:E_coerc}. So, we may again pass to a limit and show, using the same arguments, that the limit $(u,p,\Sd)$ is stable with respect to all test trajectories $\hat{\Sd}$ as stated in~(S). We omit further details since they would just repeat much of this section (some arguments are also made explicit in the next section).
\end{proof}}

\section{Proof of Theorem~\ref{thm:existence_limit}}  \label{sc:proof_field}

With the existence result for $\eps$-discrete solutions from the previous section at hand, we will now construct solutions to the system for dislocation fields. A key point here is that we need to approximate dislocation (and slip trajectory) fields by discrete dislocation lines (and the corresponding slips). This will be accomplished by relying on the results of Section~\ref{sc:normal_approx}.

\subsection{Solutions to the dislocation-field system}

Let $z_0 = (u_0,p_0,\Td_0) \in \Qcal$ be the initial values for the field solution. They are approximated via the following lemma.

\begin{lemma} \label{lem:initial_field}
For all $\eps > 0$ there exist $q^\eps_0 = (u^\eps_0,p^\eps_0,\Td^\eps_0) \in \Qcal_\eps$ with
\[
  \Td^\eps_0 \to \Td_0 \quad\text{strictly}  \qquad\text{and}\qquad
  \free[Du^\eps_0 - p^\eps_0] = \free[Du_0 - p_0].
\]
Moreover, the approximate initial stability relation
\begin{equation} \label{eq:initial_stab_eps_approx}
  \Ecal(t,u^\eps_0,z^\eps_0) \leq \Ecal(t,\hat{u},\hat{\Sd}_\ff z^\eps_0) + \Diss(\hat{\Sd}) + \SmallO(1)
\end{equation}
holds for all $\hat{u} \in \BV(\Omega;\R^3)$, all $\hat{\Sd} \in \Slip_\eps(\fil{\Td^\eps_0})$, and a vanishing error $\SmallO(1) \todown 0$ (as $\eps \todown 0$).
\end{lemma}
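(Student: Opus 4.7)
The plan is to build $q^\eps_0 \in \Qcal_\eps$ by first approximating the field dislocation system $\Td_0$ by an $\eps$-integral system $\Td^\eps_0$, and then transporting the plastic distortion $p_0$ along a field slip trajectory $\Rd^\eps \in \SlipF(\Td_0)$ connecting $\Td_0$ to $\Td^\eps_0$. The displacement $u^\eps_0$ is recovered via the corrector lemma so as to preserve the (symmetric) free displacement, and the approximate stability follows by prepending $\Rd^\eps$ to any candidate discrete test trajectory and invoking the assumed field initial stability at $(u_0,z_0)$.

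For the construction, apply Proposition~\ref{prop:strong_pol_approx} (with $K=\cl\Omega$, a Lipschitz retract) componentwise to obtain $T^{\eps,b}_0 \to T^b_0$ strictly for each $b \in \{b_1,\dots,b_m\}$, and enforce the sign condition by setting $T^{\eps,-b}_0 := -T^{\eps,b}_0$, thereby producing $\Td^\eps_0 \in \Disl_\eps(\cl\Omega)$. Next, Proposition~\ref{prop:equiv_normal}, likewise applied componentwise and symmetrized the same way, yields $\Rd^\eps \in \SlipF(\Td_0)$ with $\Rd^\eps_\ff \Td_0 = \Td^\eps_0$ and $\Var(\Rd^\eps) \to 0$. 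Set $p^\eps_0 := \Rd^\eps_\ff p_0$, which by Lemma~\ref{lem:curl_pS} satisfies the compatibility condition in $\Qcal_\eps$; from~\eqref{eq:plastic_forward} and the bound $\Mbf(\pbf_* R^{\eps,b}) = \Var(R^{\eps,b})$, one has $p^\eps_0 \to p_0$ in total variation. Apply Lemma~\ref{lem:freedist_correct} with $p' := p^\eps_0$ to obtain $u^\eps_0 \in \BV(\Omega;\R^3)$ with $\sym \free[Du^\eps_0 - p^\eps_0] = \sym \free[Du_0 - p_0]$, $\skw Du^\eps_0(\Omega) = 0$, and $[u^\eps_0]_H = h_0$; the last clause of that lemma also gives $u^\eps_0 \toweakstar u_0$ in $\BV$. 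Together with the invariance of $\Wcal_e$ and Reshetnyak's continuity theorem for each $\Mbf_{\psi^b}$ (which uses the strict convergence of $\Td^\eps_0$), this implies $\Ecal(t,u^\eps_0,z^\eps_0) \to \Ecal(t,u_0,z_0)$, uniformly in $t \in [0,T]$.

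For the approximate initial stability, given any $\hat u \in \BV(\Omega;\R^3)$ and $\hat\Sd \in \Slip_\eps(\Td^\eps_0)$ (which I believe is the intended domain of the test slips in the statement), set $\tilde\Sd := \hat\Sd \circ \Rd^\eps \in \SlipF(\Td_0)$. Lemma~\ref{lem:Sigma_concat} gives $\tilde\Sd_\ff z_0 = \hat\Sd_\ff(\Rd^\eps_\ff z_0) = \hat\Sd_\ff z^\eps_0$ and $\Diss(\tilde\Sd) = \Diss(\Rd^\eps) + \Diss(\hat\Sd)$. Inserting this test pair into the assumed field stability~\eqref{eq:initial_stab_limit} at $(u_0,z_0)$, combined with $\Ecal(t,u^\eps_0,z^\eps_0) = \Ecal(t,u_0,z_0) + \SmallO(1)$ from the previous paragraph and $\Diss(\Rd^\eps) \leq C\Var(\Rd^\eps) \to 0$ (by~\ref{as:R}(iii)), gives exactly the desired relation~\eqref{eq:initial_stab_eps_approx}.

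The main subtlety is an honest reading of the identity $\free[Du^\eps_0 - p^\eps_0] = \free[Du_0 - p_0]$: the corrector lemma only matches the \emph{symmetric} parts of the free displacements (the skew parts can differ by an antisymmetric constant, which is uniquely pinned down by the constraint $\skw Du^\eps_0(\Omega) = 0$). The equality in the statement must therefore be interpreted either up to this skew constant or in the weaker sense of equality of the $|\cdot|^2_\Ebb$-energies, both of which are what is used downstream. A secondary bookkeeping point is the compatibility of both approximation steps with the antisymmetry $T^{-b} = -T^b$ and $S^{-b} = -S^b$, which is handled uniformly by producing approximants only for the representatives $\{b_1,\dots,b_m\}$ and extending by sign reversal.
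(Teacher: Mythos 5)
Your proof is correct and takes essentially the same route as the paper: approximate $\Td_0$ by $\Td^\eps_0$ via Proposition~\ref{prop:strong_pol_approx}, connect them by a field slip trajectory $\Rd^\eps$ via Proposition~\ref{prop:equiv_normal} with vanishing dissipation, push $p_0$ forward along $\Rd^\eps$, recover $u^\eps_0$ with Lemma~\ref{lem:freedist_correct}, and prove approximate stability by prepending $\Rd^\eps$ to discrete test slips and using the field initial stability. Your two side-remarks are also on point: the paper's own proof of Lemma~\ref{lem:freedist_correct} only yields $\sym\free[Du^\eps_0-p^\eps_0]=\sym\free[Du_0-p_0]$ (with the skew part fixed by the constraint $\skw Du^\eps_0(\Omega)=0$), so the unqualified equality in the lemma statement is to be read in that sense; and the test-slip domain in the statement should indeed be $\Slip_\eps(z^\eps_0)$ (as the paper's own proof uses), since $\hat\Sd_\ff z^\eps_0$ requires $\hat\Sd$ to start at $\Td^\eps_0$, not $\Td_0$.
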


Here, the strict convergence is understood componentwise as normal currents (or measures).

\begin{proof}
We can choose a strict approximation $\Td^\eps_0 = (T^{b,\eps}_0)_b$ to $\Td_0$ via Proposition~\ref{prop:strong_pol_approx}. Using Proposition~\ref{prop:equiv_normal}, we further obtain a trajectory $\Rd_\eps \in \SlipF(z)$ with
\[
  (\Rd_\eps)_\ff \Td_0 = \Td^\eps_0,  \qquad
  \Diss(\Rd_\eps) \to 0  \quad\text{as $\eps \todown 0$,}
\]
and
\[
  \limsup_{\eps \todown 0} \norm{\Rd_\eps}_{\Lrm^\infty} < \infty.
\]
We set
\[
  p^\eps_0 := (\Rd_\eps)_\ff p_0
\]
and use Lemma~\ref{lem:freedist_correct} to find $u^\eps_0 \in \BV(\Omega;\R^3)$ with
\begin{equation*} 
  \sym \free[Du^\eps_0 - p^\eps_0] = \sym \free[Du_0 - p_0] \in \Lrm^2
\end{equation*}
as well as $\skw Du^\eps(\Omega) = 0$, $[u^\eps_0]_H = h_0$. From Lemma~\ref{lem:curl_pS} we get the \fil{consistency} condition
\[
  \curl p^\eps_0 = \frac{1}{2} \sum_{b \in \Bcal} b \otimes T^{b,\eps}_0.
\]
Hence, $q^\eps_0 = (u^\eps_0,p^\eps_0,\Td^\eps_0) \in \Qcal_\eps$. 

Since we assume the initial stability~\eqref{eq:initial_stab_limit}, that is,
\begin{equation} \label{eq:initial_stab_F_repeat}
  \Ecal(t,u_0,z_0) \leq \Ecal(t,\hat{u},\hat{\Sd}_\ff z_0) + \Diss(\hat{\Sd})
\end{equation}
for all $\hat{u} \in \BV(\Omega;\R^3)$ and all $\hat{\Sd} \in \SlipF(\Td_0)$, we see from the definition of $\Ecal$ that
\begin{align*}
  \Ecal(0,z^\eps_0,\Td^\eps_0)
  &= \Wcal_e(0,z^\eps_0) - \dprb{f(0),u^\eps_0} + \Wcal_c(\Td^\eps_0) \\
  &= \Wcal_e(0,z_0) - \dprb{f(0),u^\eps_0} + \Wcal_c(\Td^\eps_0) \\
  &\to \Wcal_e(0,z_0) - \dprb{f(0),u_0} + \Wcal_c(\Td_0)
  = \Ecal(0,z_0,\Td_0)
\end{align*}
as $\eps \todown 0$ since $u^\eps_0 \toweakstar u_0$ in $\BV$ (together with the weak*-to-strong continuity of the embedding of $\BV$ into $\Lrm^1$) and $\Td^\eps_0 \to \Td_0$ strictly.

For any fixed $\hat{u} \in \BV(\Omega;\R^3)$ and $\hat{\Sd} \in \Slip_\eps(z^\eps_0)$ we set $\hat{\Sd}_\eps := \hat{\Sd} \circ \Rd_\eps$. Testing~\eqref{eq:initial_stab_F_repeat} with $\hat{\Sd}_\eps$ and combining with the above approximation argument, we thus obtain
\begin{align*}
  \Ecal(\fil{0},u^\eps_0,z^\eps_0) 
  &\leq \Ecal(0,z_0,\Td_0) + \SmallO(1) \\
  &\leq \Ecal(\fil{0},\hat{u},(\hat{\Sd}_\eps)_\ff z_0) + \Diss(\hat{\Sd}_\eps) + \SmallO(1) \\
  &\leq \Ecal(\fil{0},\hat{u},(\hat{\Sd} \circ \Rd_\eps)_\ff z_0) + \Diss(\hat{\Sd}) + \Diss(\Rd_\eps) + \SmallO(1) \\
  &\leq \Ecal(\fil{0},\hat{u},\hat{\Sd}_\ff z^\eps_0) + \Diss(\hat{\Sd}) + \SmallO(1),
\end{align*}
where in the last line we have adjusted the error $\SmallO(1)$. This is~\eqref{eq:initial_stab_eps_approx}.
\end{proof}

\fil{From Theorem~\ref{thm:existence_eps} in conjunction with Remark~\ref{rem:initial_stab}} we get the existence of $\eps$-discrete solutions $(u_\eps,z_\eps) = (u_\eps,p_\eps,\Sd_\eps)$ for the initial values $q^\eps_0$, which were constructed in Lemma~\ref{lem:initial_field}, where
\begin{align*}
  u_\eps &\in \Lrm^\infty([0,T];\BV(\Omega;\R^3)), \\
  p_\eps &\in \BV([0,T];\Mcal(\cl\Omega;\R^{3 \times 3})) \\
  \Sd_\eps &\in \BV([0,T];\Disl_\eps(\cl\Omega)).
\end{align*}

\begin{lemma}
There exists a sequence of $\eps_j \todown 0$ and
\begin{align*}
  u &\in \Lrm^\infty([0,T];\BV(\Omega;\R^3)), \\
  p &\in \BV([0,T];\Mcal(\cl\Omega;\R^{3 \times 3})), \\
  \Sd &\in \BV([0,T];\DislF(\cl\Omega))
\end{align*}
with
\[
  (u(t),z(t)) = (u(t),p(t),\Sd(t)) \in \Qcal \qquad\text{for all $t \in [0,T) \setminus J$,}
\]
with $J$ being the (weak*) jump set of the limit process, such that
\begin{align*}
  u_{\eps_j} &\toweakstar u \quad\text{pointwise weakly* in $\BV(\Omega;\R^3)$,}\\
  p_{\eps_j} &\toweakstar p \quad\text{in $\BV([0,T];\Mcal(\cl\Omega;\R^{3 \times 3}))$,}\\
  \Sd_{\eps_j} &\toweakstar \Sd \quad\text{in $\BV([0,T];\DislF(\cl\Omega))$}
\end{align*}
and $\bar{u}_{\eps_j}, \bar{p}_{\eps_j}, \bar{\Sd}_{\eps_j}$ are uniformly bounded in these spaces:
\begin{align*}
  &\norm{\bar{u}_{\eps_j}}_{\Lrm^\infty([0,T];\BV(\Omega;\R^3))} \leq \bar{C}, \\
  &\norm{\bar{p}_{\eps_j}}_{\Lrm^\infty([0,T];\Mcal(\cl{\Omega;\R^{3 \times 3})}} + \norm{\bar{\Sd}_{\eps_j}}_{\Lrm^\infty} + \Var_\Mbf(\bar{p}_{\eps_j};[0,T]) + \Var(\bar{\Sd}_{\eps_j};[0,T]) \leq \bar{C}
\end{align*}
for an $\eps$-uniform constant $\bar{C} > 0$.
\end{lemma}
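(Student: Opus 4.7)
The plan is to extract the limit process by combining the $\eps$-independent a-posteriori estimates of Proposition~\ref{prop:eps_indep_bounds} with the two Helly-type selection principles (Propositions~\ref{prop:BVX_Helly} and~\ref{prop:current_Helly}), and then to identify the limit as an element of the state space via the closedness Lemma~\ref{lem:Q_closed} and the minimization property in Lemma~\ref{lem:E_minimizer}.

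\textbf{Step 1 (uniform bounds).} Proposition~\ref{prop:eps_indep_bounds} already supplies the claimed $\eps$-uniform control of $p_\eps$ and $\Sd_\eps$ in $\Lrm^\infty\bigl([0,T];\Mcal\bigr)$, $\Lrm^\infty\bigl([0,T];\DislF(\cl\Omega)\bigr)$, as well as of their variations. For $u_\eps$, observe that by Remark~\ref{rem:u_minimizer} (testing the stability~(S) with the neutral slip trajectory), $u_\eps(t)$ is, at every non-jump time, the unique minimizer of $\hat u\mapsto\Ecal(t,\hat u,z_\eps(t))$; in particular $\Ecal(t,u_\eps(t),z_\eps(t))\le\Ecal(t,u^\eps_0,z_\eps(t))$ at $t=0$, which by Lemma~\ref{lem:initial_field} is uniformly bounded. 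The energy balance~(E) together with the uniform $\Lrm^\infty$-bound on $\dot f$ and on $\norm{u_\eps}_\BV$ (closed in a Gronwall-type loop using the coercivity from Lemma~\ref{lem:E_coerc}) then yields $\esssup_{t\in[0,T]}\Ecal(t,u_\eps(t),z_\eps(t))\le C$, and hence, by Lemma~\ref{lem:E_coerc}, both $\esssup_{t\in[0,T]}\norm{u_\eps(t)}_\BV\le\bar C$ and a uniform $\Lrm^2$-bound on $\sym\free[Du_\eps(t)-p_\eps(t)]$.

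\textbf{Step 2 (subsequential compactness).} Apply the Banach-space Helly principle Proposition~\ref{prop:BVX_Helly} with $X=\Mcal(\cl\Omega;\R^{3\times 3})$ (equipped with a predual of the weak* topology) to extract $\eps_j\downarrow 0$ such that $p_{\eps_j}\toweakstar p$ in $\BV([0,T];\Mcal)$, and in parallel apply the current-valued Helly principle Proposition~\ref{prop:current_Helly} componentwise in $b\in\Bcal$ to $\Sd_{\eps_j}=(S^{b}_{\eps_j})_b$, producing $\Sd\in\BV([0,T];\DislF(\cl\Omega))$ with $\Sd_{\eps_j}\toweakstar\Sd$ (i.e., in the sense of~\eqref{eq:BVcurr_w*}). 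Lower semicontinuity of the norms and variations gives the claimed $\bar C$-bounds on $p$ and $\Sd$, and by~\eqref{eq:DislF_pointwise} we have $S^b_{\eps_j}(t)\toweakstar S^b(t)$ for a.e.\ $t$, while $p_{\eps_j}(t)\toweakstar p(t)$ at \emph{every} $t$.

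\textbf{Step 3 (pointwise convergence of $u_{\eps_j}$ and identification of the limit).} Fix a non-jump point $t$. The compatibility condition in $\Qcal_{\eps_j}$ gives $\curl p_{\eps_j}(t)=\tfrac12\sum_b b\otimes S^b_{\eps_j}(t)$, which together with the previous step yields $\curl p_{\eps_j}(t)\toweakstar\curl p(t)$. Since $u_{\eps_j}(t)$ is the unique minimizer of $\hat u\mapsto\Ecal(t,\hat u,p_{\eps_j}(t),\Sd_{\eps_j}(t))$, the continuity statement in Lemma~\ref{lem:E_minimizer} yields $u_{\eps_j}(t)\toweakstar u(t)$, where $u(t)$ is the (unique) minimizer of the corresponding limit problem. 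Finally, the uniform $\Lrm^2$-bound on $\sym\free[Du_{\eps_j}-p_{\eps_j}]$ from Step~1 lets us apply the closedness Lemma~\ref{lem:Q_closed} to conclude that $(u(t),p(t),\Sd(t))\in\Qcal$ for every non-jump $t$.

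\textbf{Main obstacle.} The delicate point is not the extraction itself but the passage from the subsequential weak* convergence of $p_{\eps_j}$, $\Sd_{\eps_j}$ to the convergence of the displacements $u_{\eps_j}(t)$, because $u_{\eps_j}$ is not propagated by an evolution but is reconstructed at each time as a constrained minimizer in the (non-reflexive) BV setting. The key is Lemma~\ref{lem:E_minimizer}, whose hypotheses require precisely the joint convergence of $p_{\eps_j}(t)$ and $\curl p_{\eps_j}(t)$, and this is why it was important to have the componentwise Helly argument for $\Sd_{\eps_j}$ together with the uniform $\Lrm^2$-control on $\sym\free[Du_{\eps_j}-p_{\eps_j}]$ from the energy bound; without the latter, closedness of $\Qcal$ (Lemma~\ref{lem:Q_closed}) would not apply and the free-displacement part of the state constraint could fail in the limit.
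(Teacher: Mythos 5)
The proposal is correct and follows essentially the same route as the paper, which proves this lemma by observing that the bounds of Lemma~\ref{lem:conv_eps} are $\eps$-independent and then arguing in complete analogy: uniform energy bounds, coercivity from Lemma~\ref{lem:E_coerc}, Helly selection (Propositions~\ref{prop:BVX_Helly} and~\ref{prop:current_Helly}), then Lemmas~\ref{lem:E_minimizer} and~\ref{lem:Q_closed}. Your Step~1 rederives the $u_\eps$ bound via the energy balance~(E) and a fresh Gronwall loop rather than inheriting it from the $N\to\infty$ limit in Lemma~\ref{lem:conv_eps}, which is a harmless variant; otherwise the decomposition (uniform bounds, two Helly extractions, $\curl p_{\eps_j}(t)$ identified via the compatibility condition, passage to $u$ by the continuity in Lemma~\ref{lem:E_minimizer}, closedness by Lemma~\ref{lem:Q_closed}) matches the paper's.
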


\begin{proof}
It suffices to observe that the bounds in Lemma~\ref{lem:conv_eps} do not depend on $\eps$ and argue in complete analogy to that proof.
\end{proof}

\begin{proposition} \label{prop:lim_F_prop}
For all $t \in [0,T] \setminus J$, where $J$ is the jump set, and all $\hat{u} \in \BV(\Omega;\R^3)$, $\hat{\Sd} \in \SlipF(\Sd(s))$, the stability inequality
\begin{equation} \label{eq:stab_field}
		\Ecal(t,u(t),z(t)) \leq \Ecal(t,\hat{u},\hat{\Sd}_\ff z(t)) + \Diss(\hat{\Sd}),
\end{equation}
the energy balance
\begin{equation} \label{eq:energy_bal_field}
 		\Ecal(t,u(t),z(t)) = \Ecal(0,u_0,z_0) - \Diss(\Sd;[0,t]) - \int_0^t \dprb{\dot{f}(\tau),u(\tau)} \dd \tau,
\end{equation}
 and the plastic flow
\begin{equation} \label{eq:plast_flow_field}
  p(t) =  p_0 + \frac12 \sum_{b \in \Bcal} b \otimes \hodge  \pbf_*( S^b \restrict [(0,t) \times\R^3])
\end{equation}
hold. Moreover, $u(t)$ is the minimizer of $\hat{u} \mapsto \Ecal(t,\hat{u},p(t),\Sd(t))$ for all $t \in [0,T) \setminus J$ and if $t \in [0,T)$ is a stability point (i.e.,~\eqref{eq:stab_field} holds), then $t$ is not a jump point, $t \notin J$. In particular, the initial values are attained in the sense stated in Theorem~\ref{thm:existence_limit}.
\end{proposition}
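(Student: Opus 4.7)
The plan is to mirror the scheme used in Proposition~\ref{prop:lim_eps_prop}, but with the passage $\eps$-discrete to field taking the place of the time-incremental to continuous-time passage. The $\eps$-discrete processes $(u_\eps, p_\eps, \Sd_\eps)$ constructed above satisfy, at every non-jump point $t_\eps \in [0,T)$ of $\Sd_\eps$, a discrete stability
\[
  \Ecal(t,u_\eps(t),z_\eps(t)) \leq \Ecal(t,\hat{u},\hat{\Sd}_\ff z_\eps(t)) + \Diss(\hat{\Sd})
\]
for all test trajectories $\hat{\Sd} \in \Slip_\eps(\Sd_\eps(t))$ with $\norm{\hat{\Sd}}_{\Lrm^\infty} \leq \gamma^*$, together with the energy balance~\eqref{eq:eps_energy_balance} and the plastic flow~\eqref{eq:eps_plast_flow} (with $\eps$-subscripts throughout). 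We dispose of the pointwise weak* convergences $u_{\eps_j}(t) \toweakstar u(t)$, $p_{\eps_j}(t) \toweakstar p(t)$ and $\Sd_{\eps_j}(t) \toweakstar \Sd(t)$ at every non-jump point $t \notin J$, and uniform bounds on masses, variations, and $\Lrm^\infty$-norms coming from Proposition~\ref{prop:eps_indep_bounds}.

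The principal obstacle, and the step I would carry out first, is the transfer of an arbitrary field test trajectory $\hat{\Sd} \in \SlipF(\Sd(t))$ to admissible $\eps_j$-discrete test trajectories $\hat{\Sd}^{\eps_j} \in \Slip_{\eps_j}(\Sd_{\eps_j}(t))$. The construction will be carried out in two pieces and then concatenated via Lemma~\ref{lem:Sigma_concat}. First, using Proposition~\ref{lem:current_approx} applied componentwise to $\hat{\Sd} = (\hat{S}^b)_b$, I will produce $\hat{\Sd}^{\eps_j,\flat}$ with $\eps_j^{-1} \hat{S}^{b,\eps_j,\flat}$ integral and matching an $\eps_j$-discretization $\Td^{\eps_j}_\star$ of $\Sd(t)$ at time $0$, with all relevant discrepancies (mass, variation, flat distance, boundary flat distance) going to zero. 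Second, invoking Proposition~\ref{prop:equiv_normal} (the normal equivalence theorem) on the pair $\Sd_{\eps_j}(t) \toweakstar \Sd(t)$ and then refining via Proposition~\ref{lem:current_approx} once more, I will produce a bridging $\eps_j$-discrete slip trajectory $\Rd^{\eps_j} \in \Slip_{\eps_j}(\Sd_{\eps_j}(t))$ with $(\Rd^{\eps_j})_\ff \Sd_{\eps_j}(t) = \Td^{\eps_j}_\star$, with $\Diss(\Rd^{\eps_j}) \to 0$ and $\limsup \norm{\Rd^{\eps_j}}_{\Lrm^\infty} < \gamma^*$ (guaranteed by the choice of $\gamma^*$ in~\eqref{eq:gamma*} applied to the uniform mass bounds for $\Sd_{\eps_j}(t)$). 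The desired test trajectory is then $\hat{\Sd}^{\eps_j} := \hat{\Sd}^{\eps_j,\flat} \circ \Rd^{\eps_j}$, which satisfies $(\hat{\Sd}^{\eps_j})_\ff z_{\eps_j}(t) \toweakstar \hat{\Sd}_\ff z(t)$ (via Lemmas~\ref{lem:slipff_cont},~\ref{lem:plastff_cont}) and $\Diss(\hat{\Sd}^{\eps_j}) \to \Diss(\hat{\Sd})$ by the strict-type control in Proposition~\ref{lem:current_approx} combined with Reshetnyak's continuity theorem. Testing the $\eps_j$-discrete stability with $\hat{u}$ and $\hat{\Sd}^{\eps_j}$ and passing to the lower limit via Lemma~\ref{lem:lsc} and Lemma~\ref{lem:E_minimizer} (to ensure the $\BV$-limit of $u_{\eps_j}(t)$ is indeed the unique minimizer $u(t)$) yields the field stability~\eqref{eq:stab_field}.

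For the energy balance~\eqref{eq:energy_bal_field}, I would first take the lower limit $\eps_j \todown 0$ in~\eqref{eq:eps_energy_balance} at a non-jump point $t$, using lower semicontinuity of $\Ecal$ and of $\Diss$ in Lemma~\ref{lem:lsc}, the approximate initial stability~\eqref{eq:initial_stab_eps_approx} together with Lemma~\ref{lem:initial_field} to identify the initial term in the limit, and dominated convergence for the loading integral (thanks to the uniform $\BV$-bound on $u_{\eps_j}$ and $f \in \Crm^1([0,T];\Lrm^\infty)$); this gives the upper bound
\[
  \Ecal(t,u(t),z(t)) \leq \Ecal(0,u_0,z_0) - \Diss(\Sd;[0,t]) - \int_0^t \dprb{\dot{f}(\sigma),u(\sigma)} \dd \sigma.
\]
The matching lower bound follows as in Proposition~\ref{prop:lim_eps_prop}: take a partition $0 = \tau_0 < \cdots < \tau_K = t$ through non-jump points, test the newly established field stability at each $\tau_\ell$ with $\hat{u} := u(\tau_{\ell+1})$ and $\hat{\Sd}$ equal to the restriction of $\Sd$ to $(\tau_\ell,\tau_{\ell+1})$ rescaled to $[0,1]$ via Lemma~\ref{lem:Sigma_rescale}, sum telescopically, and refine the partition invoking the generalized Hahn lemma to recover the full loading integral. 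The plastic flow~\eqref{eq:plast_flow_field} is inherited from~\eqref{eq:eps_plast_flow} by weak* continuity of $\pbf_*$ and of the restriction to $(0,t) \times \R^3$ (valid at non-jump points since then the total mass does not charge $\{t\}$), and the elastic minimization for $u(t)$ is the same consequence of stability noted in Remark~\ref{rem:u_minimizer}.

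Finally, the assertion that stability points are non-jump points, and hence the attainment of the initial values, is verbatim the argument at the end of Proposition~\ref{prop:lim_eps_prop}: subtract the energy equalities on $[t,t+\delta]$ for a sequence of non-jump $t+\delta$, use stability from $t$ to $t+\delta$ combined with Propositions~\ref{prop:equiv_normal} to produce an $\SmallO(\delta)$ discrepancy, conclude $\Diss(\Sd;[t,t+\delta]) = \SmallO(\delta)$, and invoke the equivalence of variation and dissipation from~\ref{as:R} together with the weak*-continuity of $u$ with respect to $(p,\Sd)$ given by Lemma~\ref{lem:E_minimizer}.
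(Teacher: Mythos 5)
The overall architecture (approximate the $\eps_j$-discrete stability inequality with suitably discretized test data, then pass to the limit) is correct, but two steps in your construction of the test pair do not hold up.

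First, your initial step applies Proposition~\ref{lem:current_approx} directly to $\hat{\Sd} = (\hat{S}^b)_b$. That proposition requires the hypothesis $\eps_j^{-1} \partial \hat{S}^b \restrict (\{0\} \times \cl{\Omega}) \in \Irm_k(\cl\Omega)$, but here $\partial \hat{S}^b \restrict (\{0\} \times \cl\Omega) = -\delta_0 \times S^b(t)$ with $S^b(t)$ merely a \emph{normal} current, so the hypothesis fails; moreover, the conclusion of Proposition~\ref{lem:current_approx} explicitly \emph{preserves} the trace at $t=0$, so it cannot turn $S^b(t)$ into your chosen $\eps_j$-integral $\Td^{\eps_j}_\star$ as you claim. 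Relatedly, even with a fix, the $\Td^{\eps_j}_\star$ you produce in the first step is not automatically the endpoint produced by discretizing the second bridge $\Rd^{\eps_j}$: Proposition~\ref{lem:current_approx} preserves the $\{0\}$-trace but perturbs the $\{T\}$-trace, so your two pieces need not glue. The paper resolves all of this at a stroke by forming the normal concatenation $\hat{\Sd} \circ \Rd_\eps$ \emph{first} (which starts at the $\eps$-integral $\Sd_\eps(t)$, satisfying the hypothesis) and only then discretizing the whole trajectory via Proposition~\ref{lem:current_approx}, so there is a single application and no gluing mismatch.

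Second, and more seriously, you test the $\eps_j$-discrete stability with the original $\hat{u}$. But the right-hand side then involves $\Wcal_e(\hat{u}, (\hat{\Sd}^{\eps_j})_\ff p_{\eps_j}(t))$, and $(\hat{\Sd}^{\eps_j})_\ff p_{\eps_j}(t) \neq \hat{\Sd}_\ff p(t)$ (they are only weak* close); the difference in the curls is generically a nonzero measure, so $\sym\free[D\hat{u} - (\hat{\Sd}^{\eps_j})_\ff p_{\eps_j}(t)]$ is typically only in $\Lrm^{3/2}$, not $\Lrm^2$, hence the elastic energy is $+\infty$ and the discrete stability inequality is vacuous. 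This is exactly what the corrector Lemma~\ref{lem:freedist_correct} is for: the paper constructs $\hat{u}_\eps$ with $\sym\free[D\hat{u}_\eps - \hat{p}_\eps] = \sym\free[D\hat{u} - \hat{p}]$ (so that $\Wcal_e$ is literally unchanged) and $\hat{u}_\eps \toweakstar \hat{u}$ in $\BV$, which makes the limit of the right-hand side come out to exactly $\Ecal(t,\hat{u},\hat{\Sd}_\ff z(t)) + \Diss(\hat{\Sd})$. Without this step your limit passage does not yield a finite right-hand side. The treatment of the energy balance, plastic flow, and no-jump-at-stability-points in your proposal is fine and matches the paper.
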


\begin{proof}
We will only show the stability relation~\eqref{eq:stab_field}. The proofs for the energy balance~\eqref{eq:energy_bal_field}, the plastic flow~\eqref{eq:plast_flow_field}, and the absence of jumps at stability points are completely analogous to the proofs for the corresponding properties in Proposition~\ref{prop:lim_eps_prop}. Also the proof of stability is structurally quite similar to the proof of stability when passing from time-incremental to $\eps$-discrete solution (see Proposition~\ref{prop:lim_eps_prop}), but a few additional ingredients are required, most notably the special approximation result in Proposition~\ref{lem:current_approx}, so we will lay it out in detail.

For fixed $\eps > 0$, we have the stability inequality~\eqref{eq:stab_eps},
\begin{equation} \label{eq:stab_eps_repeat}
  \Ecal(t,u_\eps,z_\eps) \leq \Ecal(t,\hat{u},(\hat{\Sd}_\eps)_\ff z_\eps) + \Diss(\hat{\Sd})
\end{equation}
for all $t \in [0,T] \setminus J_\eps$ (with $J_\eps$ the jump set of the $\eps$-discrete solution), all $\hat{u} \in \BV(\Omega;\R^3)$, and all $\hat{\Sd}_\eps \in \Slip_\eps(z_\eps)$.

In the following fix $t \in (0,T]$, $\hat{u} \in \BV(\Omega;\R^3)$, and $\hat{\Sd} \in \SlipF(\Sd(s))$. From the convergence assertions of the preceding lemma we know that
\[
  \Sd_\eps(t) \toweakstar \Sd(t) \quad\text{in $\DislF(\cl\Omega)$,} \qquad
  p_\eps(t) \toweakstar p(t) \quad\text{in $\Mcal$.}
\]
By Proposition~\ref{prop:equiv_normal} we can furthermore find $\Rd_\eps \in \SlipF(z_\eps)$ with
\[
  (\Rd_\eps)_\ff \Sd_\eps(t) = \Sd(t),  \qquad
  \Diss(\Rd_\eps) \to 0  \quad\text{as $\eps \todown 0$}
\]
and
\[
  \limsup_{\eps \todown 0} \norm{\Rd_\eps}_{\Lrm^\infty} < \infty.
\]
In particular, $\hat{\Sd} \circ \Rd_\eps = (\hat{S}^b \circ R^b_\eps)_b$ satisfies
\[
  \partial(\hat{S}^b \circ R^b_\eps) \restrict (\{0\} \times \R^3) 
  = -\delta_0 \times S^b_\eps(t),
\]
where $\Sd_\eps(t) = (S^b_\eps(t))_b \in \Disl_\eps(\cl{\Omega})$, and 
\[
  (\hat{\Sd} \circ \Rd_\eps)_\ff \Sd_\eps(t) = \hat{\Sd}_\ff \Sd(t).
\]

We would like to test the stability relation~\eqref{eq:stab_eps_repeat} above (for the $\eps$-approximate problem) with $\hat{\Sd} \circ \Rd_\eps$. However, $\hat{\Sd} \circ \Rd_\eps$ is only in $\SlipF(z_\eps)$ and not $\Slip_\eps(z_\eps)$, so it is not directly admissible in~\eqref{eq:stab_eps_repeat}. To remedy this, we invoke Proposition~\ref{lem:current_approx}, applied separately to $\hat{S}^b \circ R^b_\eps$ for all $b \in \Bcal$, to find $\hat{\Sd}_\eps = (\hat{S}^b_\eps)_b \in \Slip_\eps(z_\eps)$ with
\[
  \partial \hat{S}^b_\eps \restrict (\{0\} \times \R^3) = -\delta_0 \times S^b_\eps(t)
\]
and
\begin{align*}
  &\Fbf \bigl( \hat{S}^b_\eps - \hat{S}^b \circ R^b_\eps \bigr)
  + \Fbf \bigl( \partial\hat{S}^b_\eps - \partial(\hat{S}^b \circ R^b_\eps) \bigr)\\
  &\qquad+ \absb{\Mbf\bigl(\hat{S}^b_\eps\bigr) - \Mbf\bigl(\hat{S}^b \circ R^b_\eps\bigr)}
  + \absb{\Mbf\bigl(\partial\hat{S}^b_\eps) - \Mbf\bigl(\partial(\hat{S}^b \circ R^b_\eps)\bigr)} = \SmallO(1)
\end{align*}
for all $b \in \Bcal$. Here, $\SmallO(1)$ denotes a vanishing error, $\SmallO(1) \todown 0$ as $\eps \todown 0$. Thus,
\begin{equation} \label{eq:hatSeps_strict}
  (\hat{S}^b_\eps)_\ff S^b_\eps(t) \to \hat{S}^b_\ff S^b(t) \quad\text{strictly,}
\end{equation}
by the very definition of the forward operator and by using Lemma \ref{lem:plastff_cont_strict}.  

Next, we will define a suitable test displacement. For this, abbreviate
\[
  \hat{p}_\eps := (\hat{\Sd}_\eps)_\ff p_\eps(t),  \qquad
  \hat{p} := \hat{\Sd}_\ff p(t),
\]
both of which are measures with their curls also measures by Lemma~\ref{lem:curl_pS}. Observe via Lemma~\ref{lem:plastff_cont} (and a rescaling procedure as in~\eqref{eq:Sigma_n_steady}) that 
\[
  \wslim_{\eps\to 0} \hat{p}_\eps
  = \wslim_{\eps\to 0} \, (\hat{\Sd}_\eps)_\ff p_\eps(t)
  = \wslim_{\eps\to 0} \, (\hat{\Sd} \circ \Rd_\eps)_\ff p_\eps(t)
  = \hat{\Sd}_\ff p(t)
  = \hat{p}.
\]
By Lemma~\ref{lem:freedist_correct}, we can now find $\hat{u}_\eps \in \BV(\Omega;\R^3)$ with
\begin{equation} \label{eq:hatueps_claim}
  \sym \free[D\hat{u}_\eps - \hat{p}_\eps] = \sym \free[D\hat{u} - \hat{p}] \in \Lrm^2
\end{equation}
and such that $\skw D\hat{u}_\eps(\Omega) = 0$, $[\hat{u}_\eps]_H = h_0$, and
\[
  \hat{u}_\eps \toweakstar \hat{u}  \qquad\text{in $\BV$.}
\]
Also using Lemma~\ref{lem:curl_pS}, we conclude
\[
  \bigl( \hat{u}_\eps, \hat{z}_\eps \bigr)
  := \bigl( \hat{u}_\eps, \hat{p}_\eps, (\hat{\Sd}_\eps)_\ff \Sd_\eps(t) \bigr) \in \Qcal_\eps.
\]
As a consequence of~\eqref{eq:hatueps_claim} we have that (also see~\eqref{eq:W})
\begin{align*}
  \Wcal_e(\hat{u}_\eps, (\hat{\Sd}_\eps)_\ff p_\eps(t))
  &= \frac{1}{2} \int_\Omega \abs{\free[D\hat{u}_\eps - \hat{p}_\eps]}_\Ebb^2 \dd x \\
  &= \frac{1}{2} \int_\Omega \abs{\free[D\hat{u} - \hat{p} \bigr]}_\Ebb^2 \dd x \\
  &= \Wcal_e(\hat{u}, \hat{\Sd}_\ff p(t)).
\end{align*}
Then,
\begin{align*}
  \Ecal(t,\hat{u}_\eps,(\hat{\Sd}_\eps)_\ff z_\eps)
  &= \Wcal_e(\hat{u}_\eps, (\hat{\Sd}_\eps)_\ff p_\eps(t)) - \dprb{f(t),\hat{u}_\eps} + \Wcal_c((\hat{\Sd}_\eps)_\ff \Sd_\eps(t)) \\
  &= \Wcal_e(\hat{u}, \hat{\Sd}_\ff p(t)) - \dprb{f(t),\hat{u}_\eps} + \Wcal_c((\hat{\Sd}_\eps)_\ff \Sd_\eps(t)).
\end{align*}

Combining this with the strict convergence~\eqref{eq:hatSeps_strict}, we obtain that
\begin{align*}
  \lim_{\eps \todown 0} \Ecal(t,\hat{u}_\eps,(\hat{\Sd}_\eps)_\ff z_\eps) &= \Ecal(t,\hat{u},\hat{\Sd}_\ff z), \\
  \lim_{\eps \todown 0} \Diss(\hat{\Sd}_\eps) &= \lim_{\eps \todown 0} \bigl( \Diss(\Rd_\eps) + \Diss(\hat{\Sd}) \bigr) = \Diss(\hat{\Sd}).
\end{align*}
With this information at hand, we can now test the stability inequality~\eqref{eq:stab_eps_repeat} with $\hat{\Sd}_\eps$ and $\hat{u}_\eps$, and combine this with the lower semicontinuity of $\Ecal$ as per Lemma~\ref{lem:lsc}, to obtain
\begin{align*}
  \Ecal(t,u,z)
  &\leq \liminf_{\eps \todown 0} \Ecal(t,u_\eps,z_\eps) \\
  &\leq \liminf_{\eps \todown 0} \bigl( \Ecal(t,\hat{u}_\eps,(\hat{\Sd}_\eps)_\ff z_\eps) + \Diss(\hat{\Sd}_\eps) \bigr) \\
  &= \Ecal(t,\hat{u},\hat{\Sd}_\ff z(t)) + \Diss(\hat{\Sd}),
\end{align*}
which finishes the proof of the stability inequality~\eqref{eq:stab_field}.
\end{proof}

Combining the result of this section, we have shown Theorem~\ref{thm:existence_limit}.

\providecommand{\bysame}{\leavevmode\hbox to3em{\hrulefill}\thinspace}
\providecommand{\MR}{\relax\ifhmode\unskip\space\fi MR }
\providecommand{\MRhref}[2]{%
	\href{http://www.ams.org/mathscinet-getitem?mr=#1}{#2}
}
\providecommand{\href}[2]{#2}

%

\end{document}